\pdfoutput=1
\documentclass[a4paper,10pt,reqno]{amsart} 

\usepackage{graphicx}
\usepackage{amssymb}
\usepackage{epstopdf}
\usepackage{mathrsfs}
\usepackage{amsmath}
\usepackage{amsthm}
\usepackage{enumerate}
\usepackage[all,cmtip]{xy}
\usepackage{mathtools}
\usepackage{stmaryrd}
\usepackage{wasysym}
\usepackage{youngtab}
\usepackage{ytableau}
\usepackage{enumitem}
\usepackage{multicol}
 \usepackage{hyperref}
\usepackage{scalerel}[2016/12/29]
\usepackage[makeroom]{cancel}


\usepackage{tikz}
\usepackage{tikz-cd}

\usetikzlibrary{matrix, arrows}
\makeatletter
\tikzset{
  edge node/.code={%
      \expandafter\def\expandafter\tikz@tonodes\expandafter{\tikz@tonodes #1}}}
\makeatother
\tikzset{
  succ/.style={
    draw=none,
    edge node={node [sloped, allow upside down, auto=false]{$\succ$}}},
  Succ/.style={
    draw=none,
    every to/.append style={
      edge node={node [sloped, allow upside down, auto=false]{$\succ$}}}
  }
}

\makeatletter
\tikzset{
  edge node/.code={%
      \expandafter\def\expandafter\tikz@tonodes\expandafter{\tikz@tonodes #1}}}
\makeatother
\tikzset{
  Cong/.style={
    draw=none,
    edge node={node [sloped, allow upside down, auto=false]{$\cong$}}},
  Cong/.style={
    draw=none,
    every to/.append style={
      edge node={node [sloped, allow upside down, auto=false]{$\cong$}}}
  }
}

\setcounter{tocdepth}{1}

\addtolength{\hoffset}{-1.2cm} \addtolength{\textwidth}{2.4cm}
\addtolength{\voffset}{-1cm} \addtolength{\textheight}{2cm}

\newtheoremstyle{plainx}
  {3pt} 
  {3pt} 
  {\itshape} 
  {} 
  {\bfseries} 
  {.} 
  {.5em} 
  {} 

\newtheoremstyle{definitionx}
  {3pt} 
  {3pt} 
  {} 
  {} 
  {\bfseries} 
  {.} 
  {.5em} 
  {} 

\theoremstyle{plainx}
\newtheorem{thm}{Theorem}[section]
\newtheorem{lem}[thm]{Lemma}
\newtheorem{cor}[thm]{Corollary}
\newtheorem{prop}[thm]{Proposition}

\newtheorem{thmm}{Theorem}[section]

\theoremstyle{definitionx}
\newtheorem{defi}[thm]{Definition}
\newtheorem{exa}[thm]{Example}

\newtheorem{rem}[thm]{Remark}

\makeatletter
\DeclareRobustCommand\widecheck[1]{{\mathpalette\@widecheck{#1}}}
\def\@widecheck#1#2{%
    \setbox\z@\hbox{\m@th$#1#2$}%
    \setbox\tw@\hbox{\m@th$#1%
       \widehat{%
          \vrule\@width\z@\@height\ht\z@
          \vrule\@height\z@\@width\wd\z@}$}%
    \dp\tw@-\ht\z@
    \@tempdima\ht\z@ \advance\@tempdima2\ht\tw@ \divide\@tempdima\thr@@
    \setbox\tw@\hbox{%
       \raise\@tempdima\hbox{\scalebox{1}[-1]{\lower\@tempdima\box
\tw@}}}%
    {\ooalign{\box\tw@ \cr \box\z@}}}
\makeatother

\DeclareSymbolFont{TXlettersA}{U}{txmia}{m}{it}
\SetSymbolFont{TXlettersA}{bold}{U}{txmia}{bx}{it}
\DeclareFontSubstitution{U}{txmia}{m}{it}
\DeclareMathSymbol{\varpitx}{\mathord}{TXlettersA}{36}

\def \C {\mathbb{C}}

\def \Z {\mathbb{Z}}

\def \g {\mathfrak{g}}

\def \O {\mathcal{O}}

\DeclareMathOperator{\Ima}{Im}

\DeclareMathOperator{\End}{End}

\DeclareMathOperator{\gr}{gr}

\DeclareMathOperator{\Ind}{Ind}
\DeclareMathOperator{\Res}{Res}

\DeclareMathOperator{\mmod}{mod}

\DeclareMathOperator{\Hom}{Hom}

\DeclareMathOperator{\id}{id}

\DeclareMathOperator{\Stab}{Stab}
\DeclareMathOperator{\Ext}{Ext}

\DeclareMathOperator{\Com}{Com}
\DeclareMathOperator{\Bil}{Bil}
\DeclareMathOperator{\Fun}{Fun}
\DeclareMathOperator{\Morph}{Mor}
\DeclareMathOperator{\Inv}{Inv}
\DeclareMathOperator{\pmmod}{pmod}
\DeclareMathOperator{\Sym}{Sym}

\def \eu {\mathtt{eu}}

\makeatletter
\DeclareRobustCommand{\cev}[1]{%
  \mathpalette\do@cev{#1}%
}
\newcommand{\do@cev}[2]{%
  \fix@cev{#1}{+}%
  \reflectbox{$\m@th#1\vec{\reflectbox{$\fix@cev{#1}{-}\m@th#1#2\fix@cev{#1}{+}$}}$}%
  \fix@cev{#1}{-}%
}
\newcommand{\fix@cev}[2]{%
  \ifx#1\displaystyle
    \mkern#23mu
  \else
    \ifx#1\textstyle
      \mkern#23mu
    \else
      \ifx#1\scriptstyle
        \mkern#22mu
      \else
        \mkern#22mu
      \fi
    \fi
  \fi
}

\makeatletter
\newcommand{\leqnomode}{\tagsleft@true\let\veqno\@@leqno}
\newcommand{\reqnomode}{\tagsleft@false\let\veqno\@@eqno}
\makeatother

\newcommand{\ux}[1]{\underline{\mathbf{#1}}}
\newcommand{\bx}[1]{\mathbf{#1}}
\newcommand{\Sx}[2]{\mathfrak{#1}_{\mathbf{#2}}}
\newcommand{\Sxx}[2]{\mathfrak{#1}_{\underline{\mathbf{#2}}}}
\newcommand{\Sxxy}[3]{\mathfrak{#1}_{\underline{\mathbf{#2}},\underline{\mathbf{#3}}}}

\newcommand{\Gx}[2]{\mathsf{#1}_{\mathbf{#2}}}
\newcommand{\Gxx}[2]{\mathsf{#1}_{\underline{\mathbf{#2}}}}
\newcommand{\Ax}[2]{\mathcal{#1}_{\mathbf{#2}}}
\newcommand{\Axx}[2]{\mathcal{#1}_{\underline{\mathbf{#2}}}}
\newcommand{\Axxy}[3]{\mathcal{#1}_{\underline{\mathbf{#2}},\underline{\mathbf{#3}}}}

\def \ld {\ell_{\ux{d}}}
\newcommand{\shf}{\boldsymbol{\pitchfork}}
\newcommand{\dcc}[3]{{}_{\ux{#1}}\overset{\ux{#3}}{\mathsf{D}}_{\ux{#2}}}
\newcommand{\dc}[3]{{}_{\ux{#1}}\overset{\mathbf{#3}}{\mathsf{D}}_{\ux{#2}}}

\newcommand{\dcb}[2]{\mathsf{D}_{\ux{#1}}^{\mathbf{#2}}}

\newcommand{\dccb}[2]{\mathsf{D}^{\ux{#2}}_{\ux{#1}}}
\newcommand{\mer}[2]{\bigcurlywedge_{\ux{#1}}^{\ux{#2}}}
\newcommand{\spl}[2]{\bigcurlyvee_{\ux{#1}}^{\ux{#2}}}
\newcommand{\lamxx}[1]{\Lambda_{\ux{#1}}}
\newcommand{\lamx}[1]{\Lambda_{\mathbf{#1}}}
\newcommand{\mul}[2]{\mathsf{m}_{\ux{#1}}^{\ux{#2}}}
\newcommand{\com}[2]{\mathsf{com}_{\ux{#1}}^{\ux{#2}}}

\makeatletter
\DeclareRobustCommand{\cross}{\mathbin{\mathpalette\cross@@\relax}}
\newcommand{\cross@@}[2]{%
  \vbox{\offinterlineskip
    \sbox\z@{$\m@th#1\curlywedge$}%
    \ialign{%
      \hfil##\hfil\cr
      $\m@th#1\curlyvee\kern+.0\ht\z@$\cr
      \noalign{\kern-.3\ht\z@}
      \box\z@\cr
    }%
  }%
}
\makeatother

\newcommand{\arxiv}[1]{\href{http://arxiv.org/abs/#1}{\tt arXiv:\nolinkurl{#1}}}

\def \Rsl {\mathrlap{\rightslice}-} 

\newcommand{\tGx}[2]{{}^\theta\mathsf{#1}_{\mathbf{#2}}}
\newcommand{\tGxx}[2]{{}^\theta\mathsf{#1}_{\underline{\mathbf{#2}}}}
\newcommand{\tSx}[2]{{}^\theta\mathfrak{#1}_{\mathbf{#2}}}
\newcommand{\tSxx}[2]{{}^\theta\mathfrak{#1}_{\underline{\mathbf{#2}}}}
\newcommand{\tSxxy}[3]{{}^\theta\mathfrak{#1}_{\underline{\mathbf{#2}},\underline{\mathbf{#3}}}}

\newcommand{\tAx}[2]{{}^\theta\mathcal{#1}_{\mathbf{#2}}}
\newcommand{\tAxx}[2]{{}^\theta\mathcal{#1}_{\underline{\mathbf{#2}}}}
\newcommand{\tAxxy}[3]{{}^\theta\mathcal{#1}_{\underline{\mathbf{#2}},\underline{\mathbf{#3}}}}

\setcounter{secnumdepth}{5}
\bibliographystyle{plain}

\begin{document}

\title{Quiver Schur algebras and cohomological Hall algebras}
\author{Tomasz Przezdziecki}
\date{} 

\begin{abstract} 
We establish a connection between a generalization of KLR algebras, called quiver Schur algebras, and the cohomological Hall algebras of  Kontsevich and Soibelman. More specifically, we realize quiver Schur algebras as algebras of multiplication and comultiplication operators on the CoHA, and reinterpret the shuffle description of the CoHA in terms of Demazure operators. 
We introduce ``mixed quiver Schur algebras" associated to quivers with a contravariant involution, and show that they are related, in an analogous way, to the cohomological Hall modules defined by Young. 
We also obtain a geometric realization of the modified quiver Schur algebra, which appeared in a version of the Brundan-Kleshchev-Rouquier isomorphism for the affine $q$-Schur algebra due to Miemietz and Stroppel. 

\end{abstract}

\maketitle
\tableofcontents

\section{Introduction}

The main goal of this paper is to establish a connection between two algebras, which, historically, appeared in very different mathematical  contexts and were introduced with rather different motivations in mind, namely: quiver Schur algebras and cohomological Hall algebras. 

Quiver Schur algebras are a generalization of Khovanov and Lauda's \cite{KL1} and Rouquier's \cite{Rou} quiver Hecke algebras, 
nowadays also known as KLR algebras. 
The latter can be described algebraically by generators and relations, or in terms of a certain diagrammatic calculus. However, the passage from KLR algebras to quiver Schur algebras is easiest to understand from a geometric point of view. Varagnolo and Vasserot \cite{VV} (and later Kang, Kashiwara and Park \cite{KKP}, in a somewhat more general setting) constructed KLR algebras as extension algebras of a certain semisimple complex of constructible sheaves on the moduli stack of representations of a quiver. These extension algebras can also be described as convolution algebras in the equivariant Borel-Moore homology of a certain variety of triples, reminiscent of the classical  Steinberg variety. The triples consist of a pair of full  flags together with a compatible quiver representation. By incorporating partial flags into this construction, Stroppel and Webster \cite{SW} arrived at the definition of a quiver Schur algebra. Later, these algebras were studied from a more algebraic point of view in  \cite{MS}. 

One of the main motivations for introducing KLR algebras was to construct a categorification of quantum groups and their canonical bases. For results in this direction, we refer the reader to, e.g., \cite{KK, KL1, Rou2, VV}. Quiver Schur algebras also play an important role in this context. For example, quiver Schur algebras associated to the cyclic quiver provide a categorification of the generic nilpotent Hall algebra \cite[Proposition 2.12]{SW}, and their higher level versions categorify a higher level $q$-Fock space \cite[Theorem C]{SW}. 

The second protagonist of our story, the cohomological Hall algebra (CoHA), was introduced by Kontsevich and Soibelman \cite{KS} as a 
categorification of Donaldson-Thomas invariants of three dimensional Calabi-Yau categories. One of the primary original motivations for studying the CoHA was to provide a rigorous mathematical definition of the algebra of BPS states from string theory. 
CoHAs and their generalizations have found numerous applications in representation theory, including a new proof of the Kac positivity conjecture \cite{Dav}, as well as new realizations of the elliptic Hall algebra \cite{SV} and Yangians \cite{DM, SV2, YZ2}. 

In this paper we restrict ourselves to the relatively simple case of CoHAs associated to quivers with the trivial potential. For more information about this special case, including explicit examples, we refer the reader to, e.g., \cite{Efi, Fra, Rim}. 
One of our main results, described in more detail below, says that the relations between algebra and coalgebra structures on the CoHA  can be understood in terms of actions of quiver Schur algebras. 
It would be interesting to know whether one can associate KLR-type algebras to more general categories than those of quiver representations, and whether the connection between quiver Schur algebras and the CoHA described in this paper could be extended to such categories. 

We also remark that similar connections arise in other settings. For example, Nakajima's original proposal \cite[\S 7]{Nak} for the mathematical definition of Coulomb branches in terms of the vanishing cycle associated to the Chern-Simons functional was inspired by Donaldson-Thomas theory. On the other hand, the ultimate definition of Coulomb branches from \cite{BFN} involves a convolution algebra which can be viewed as an infinite dimensional example of Sauter's generalized quiver Hecke algebras from \cite{Sau1} (see \cite[Remark 3.9.4]{BFN}). 

\subsection{Main results.} 

We will now describe our results in more detail. Given a quiver $Q$ and a dimension vector $\bx{c}$, we consider the space $\Sx{Q}{c}$ of flagged representations of $Q$ with dimension vector~$\bx{c}$, together with the forgetful map onto the space $\Sx{R}{c}$ of  unflagged representations. In contrast to KLR algebras, we allow arbitrary partial flags instead of full flags only. 
The quiver Schur algebra $\Ax{Z}{c}$ is the equivariant Borel-Moore homology of the corresponding Steinberg-type variety 
\[ \Ax{Z}{c} = H_*^{\Gx{G}{c}}(\Sx{Q}{c} \times_{\Sx{R}{c}} \Sx{Q}{c}), \]
equipped with the convolution product as in \cite{CG}. We remark that our construction differs slightly from the construction of Stroppel and Webster \cite{SW} - they impose the additional condition on the space $\Sx{Q}{c}$ that each flagged quiver representation is nilpotent and its associated graded must be semisimple. To distinguish the two constructions, we refer to their convolution algebra $\Ax{Z}{c}^{SW}$ as the Stroppel-Webster quiver Schur algebra, and reserve the simpler name ``quiver Schur algebra" for $\Ax{Z}{c}$.

Our first result deals with the basic structural properties of quiver Schur algebras. It is well known that KLR algebras are generated by certain distinguished elements, called idempotents, polynomials and crossings, and that they admit a PBW-type basis. We prove an analogous result for quiver Schur algebras, with crossings replaced by fundamental classes called (elementary) merges and splits (see Definition \ref{defi:mergesandsplits}).

\newcounter{tmp}
\begingroup
\setcounter{tmp}{\value{thmm}}
\setcounter{thmm}{0} 
\renewcommand\thethmm{\Alph{thmm}}

\begin{thmm}[Theorem \ref{thm:basis}, Corollary \ref{cor:el spl mer}] 
The following hold: 
\begin{enumerate}[label=\alph*), font=\textnormal,noitemsep,topsep=3pt,leftmargin=1cm]
\item The quiver Schur algebra $\Ax{Z}{c}$ has a ``Bott-Samelson" basis consisting of pushforwards of fundamental classes of certain vector bundles on diagonal Bott-Samelson varieties. 
\item Elementary merges, elementary splits and polynomials generate $\Ax{Z}{c}$ as an algebra. 
\end{enumerate}
\end{thmm}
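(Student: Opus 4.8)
The plan is to prove (a) and (b) essentially simultaneously, since the generation statement (b) will fall out of an analysis of how the Bott--Samelson basis in (a) multiplies. First I would set up the geometric input: the space $\Sx{Q}{c}$ of flagged representations is (after choosing a flag type, i.e. a composition of $\bx{c}$) a vector bundle over a product of partial flag varieties, hence smooth, and the fibre product $\Sx{Q}{c}\times_{\Sx{R}{c}}\Sx{Q}{c}$ stratifies according to the relative position of the two flags. Over each $\Gx{G}{c}$-orbit of relative positions one gets a "diagonal Bott--Samelson" variety — the iterated fibre product of the two flag varieties along the common representation — which is smooth, and whose $\Gx{G}{c}$-equivariant Borel--Moore homology is a free module over $H^*_{\Gx{G}{c}}(\mathrm{pt})$ generated by fundamental classes of the natural tautological vector bundles (Schubert-type classes). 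Pushing these forward along the closed (or locally closed) inclusions into $\Sx{Q}{c}\times_{\Sx{R}{c}}\Sx{Q}{c}$ produces the claimed spanning set; freeness and the count of strata give linear independence. This is the standard Chevalley--Bruhat stratification argument (as in \cite{CG}) adapted to the flagged-quiver setting, and the presence of the quiver representation only contributes an affine-bundle factor that does not affect Borel--Moore homology.

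For part (b), the key is to express an arbitrary Bott--Samelson basis element as a product of elementary merges, elementary splits and polynomials. The strategy is a two-step factorization. Any flag type refining another gives a "forgetful" correspondence, and a general correspondence between two flag types factors through the coarsest common refinement; geometrically this is visible because any relative position of two flags can be reached by a sequence of elementary one-step refinements and coarsenings. Algebraically, this says every basis element is, up to an $H^*_{\Gx{G}{c}}(\mathrm{pt})$-coefficient, a composition of elementary merges (one-step coarsening correspondences) and elementary splits (one-step refinement correspondences). Then I would invoke that $H^*_{\Gx{G}{c}}(\mathrm{pt})$ acting on each summand $\Ax{Z}{c}\mathbf{e}_{\ux{d}}$ is realized by the polynomial part of the algebra (the idempotents $\mathbf{e}_{\ux{d}}$ themselves being expressible via splits followed by merges, or taken as given), so the scalar coefficients are absorbed. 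Combining these gives generation.

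The main obstacle I expect is the factorization of a general merge/split correspondence into \emph{elementary} ones while keeping exact control of the Borel--Moore homology classes — i.e. showing the composite of elementary merges and splits hits the desired basis element \emph{on the nose}, not merely up to lower-order terms or an invertible scalar. This is a convolution computation: one must compute the intermediate fibre products, check they are of the expected dimension (transversality/cleanness of the intersections), and track how tautological classes pull back and push forward through each elementary step, typically via the projection formula and the self-intersection formula, which introduces Euler-class corrections. Organizing this so that an induction on the "length" of the flag refinement closes up — and separately verifying the base-case identities relating a single elementary merge composed with a single elementary split back to polynomials (a "square" or "braid"-type relation) — is where the real work lies. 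A secondary, more bookkeeping-type obstacle is choosing the tautological bundles on the Bott--Samelson strata so that their classes are genuinely a \emph{basis} (not just a spanning set): this requires comparing with the cell structure and a dimension/rank count, which I would handle by a filtration argument ordering strata by codimension.
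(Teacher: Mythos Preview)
Your stratification outline for (a) is on target and matches the paper: one filters $\Sxxy{Z}{e}{d}$ by relative position $w\in\dc{e}{d}{c}$, shows each stratum has free equivariant Borel--Moore homology, and assembles a basis from classes that restrict well to strata. The vector-bundle/affine-bundle observation you make is also what the paper uses to identify the homology of each stratum with a polynomial ring.

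Where your plan diverges from the paper---and where your self-identified ``main obstacle'' becomes an artificial difficulty---is the order of operations between (a) and (b). You propose to first produce a geometrically defined basis (pushforwards of Schubert-type classes) and then, as a separate step, factor each basis element into elementary merges and splits ``on the nose.'' That second step is genuinely hard as you describe it, and your worry about lower-order corrections is justified for that route. The paper avoids this entirely by inverting the logic: it \emph{defines} the basis candidates from the outset as explicit convolution products
\[
\textstyle \bigcurlywedge_{\ux{e}^{0}}^{\ux{e}} \star \cross_{\ux{e}^{2}}^{j_1} \star \cdots \star \cross_{\ux{e}^{2k}}^{j_k} \star c \star \bigcurlyvee_{\ux{d}}^{\ux{e}^{2k}},
\]
where each crossing $\cross$ is itself an elementary split followed by an elementary merge, and $c$ is a polynomial. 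The geometric work then consists in identifying this specific product with the pushforward from an iterated fibre product (the diagonal Bott--Samelson variety), and checking that its restriction to the open stratum $\Sxxy{Z}{e}{d}^w$ gives a basis there. With that done, (b) is immediate: every basis element is already a product of elementary merges, elementary splits, and polynomials by construction, and transitivity of merges/splits (your ``forgetful correspondence'' observation) does the rest.

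The piece your proposal is genuinely missing is the combinatorics that tells you \emph{which} product to write down for a given orbit datum $(\ux{e},\ux{d},w)$. This is not just ``pass through the coarsest common refinement'': one needs refinements $\widehat{\ux{e}}\succcurlyeq\ux{e}$ and $\widehat{\ux{d}}\succcurlyeq\ux{d}$ of the \emph{same length}, related by a permutation $u$ whose reduced expression determines the sequence of crossings. The paper sets this up via an ``ordered intersection'' operation on partitionings (\S\ref{subsec: refinements}), and proves the key compatibility $\widetilde{u}=w$ and $\mathsf{W}_{\widehat{\ux{e}}}=\Gxx{W}{e}\cap w\Gxx{W}{d}w^{-1}$ (Proposition~\ref{lem: Sym refined}). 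Without this, you cannot match the algebraic product to the correct Bott--Samelson variety, and the restriction-to-stratum step does not go through. Finally, a small correction: the idempotents $\mathsf{e}_{\ux{d}}$ are not ``splits followed by merges''---they sit inside the polynomial subalgebra $\Ax{Z}{c}^e$ as the units of each summand.
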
 

The quiver Schur algebra $\Ax{Z}{c}$ has a natural faithful representation $\Ax{Q}{c}$, called the ``polynomial representation", on the direct sum of rings of partial invariants. 
We give an explicit description of this representation (Theorem~\ref{thm:polrep}) and interpret it in terms of Demazure operators (Proposition~\ref{lem: merge = Demazure}). In the special cases of the $A_1$ quiver (i.e., the quiver with one vertex and no arrows) and the Jordan quiver, we give a complete list of defining relations for the associated reduced quiver Schur algebra (see Theorems \ref{thm: rel A1} and \ref{thm: Jordan q rels}, as well as \cite{Sei}), which is defined as the subalgebra of $\Ax{Z}{c}$ generated by merges and splits, without the polynomials. The reduced quiver Schur algebra of the $A_1$ quiver turns out to be related to the green web category from \cite{CKM,TVW} (see Corollary \ref{cor: green web A1}), which arises naturally in the context of skew Howe duality. 

Our next result establishes a connection between quiver Schur algebras and the CoHA associated to the same quiver $Q$. 
We first need to introduce some notation. 
Let $\mathcal{Z} = \bigoplus_{\bx{c}} \Ax{Z}{c}$ be the direct sum of all the quiver Schur algebras associated to $Q$ (summing over all dimension vectors $\bx{c}$) and let $\mathcal{Q} = \bigoplus_{\bx{c}} \Ax{Q}{c}$ be the direct sum of their polynomial representations. We call $\mathcal{Z}$ the \emph{total} quiver Schur algebra. 
Let us now briefly recall a few facts about the CoHA. 
It is defined as the direct sum of equivariant cohomology groups 
\[ \mathcal{H} = \bigoplus_{\bx{c}}  H_{\mathsf{G}_{\mathbf{c}}}^\bullet (\mathfrak{R}_{\mathbf{c}}), \] 
equipped with multiplication via a certain pullback-pushforward construction. The CoHA can also be endowed with a coalgebra structure. However, the natural coproduct on the CoHA (see \cite[\S 2.9]{KS}) is not compatible with the multiplication in the sense that $\mathcal{H}$ is not a bialgebra. This problem can be remedied at the cost of passing to a localization of $\mathcal{H}$ and working with a localized version of the natural coproduct (see \cite{Dav2}). We do not pursue this approach here. Instead, we are interested in gaining a better understanding of the relations between the natural coalgebra and algebra structures on $\mathcal{H}$. 
The following theorem shows that these relations are controlled by the total quiver Schur algebra~$\mathcal{Z}$. 

\begin{thmm}[Theorem \ref{thm: TH vs Pol}] 
The faithful polynomial representation $\mathcal{Q}$ of the total quiver Schur algebra $\mathcal{Z}$ can be naturally identified with the tensor algebra $T(\mathcal{H}_+)$ on the augmentation ideal $\mathcal{H}_+$ of the CoHA. This identification induces an injective algebra homomorphism \[ \mathcal{Z} \hookrightarrow \End(T(\mathcal{H}_+)) \]
sending elementary merges in $\mathcal{Z}$ to CoHA multiplication operators and elementary splits in $\mathcal{Z}$ to CoHA comultiplication operators. 
\end{thmm}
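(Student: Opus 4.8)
The plan is to build the identification $\mathcal{Q} \cong T(\mathcal{H}_+)$ first at the level of graded vector spaces, then to verify that the merge and split operators on $\mathcal{Q}$ coincide with the CoHA multiplication and comultiplication operators under this identification, and finally to deduce the statement about $\mathcal{Z}$ from faithfulness. First I would recall that $\Ax{Q}{c}$ is the direct sum, over all ordered tuples of dimension vectors $(\bx{c}^1,\dots,\bx{c}^k)$ summing to $\bx{c}$, of the rings of partial invariants $H^\bullet_{\Gx{G}{c^1} \times \cdots \times \Gx{G}{c^k}}(\mathrm{pt})$-valued functions on $\mathfrak{R}_{\bx{c}}$ — more precisely the relevant summand is $H^\bullet_{\Gx{G}{c^1}\times\cdots\times\Gx{G}{c^k}}(\mathfrak{R}_{\bx{c}})$ after identifying $\mathfrak{R}_{\bx{c}}$ equivariantly with a product using that the parabolic-induction space is an affine bundle over $\prod \mathfrak{R}_{\bx{c}^i}$. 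Summing over $k$ and over all compositions of all $\bx{c}$, each ordered $k$-tuple $(\bx{c}^1,\dots,\bx{c}^k)$ with every $\bx{c}^i \neq 0$ contributes exactly the degree-$k$ tensor $\mathcal{H}_{\bx{c}^1}\otimes\cdots\otimes\mathcal{H}_{\bx{c}^k}$ of $T(\mathcal{H}_+)$, with $H^\bullet_{\mathsf{G}_0}(\mathfrak R_0)=\C$ accounting for the degree-$0$ part. This gives a canonical graded isomorphism $\mathcal{Q}\cong T(\mathcal{H}_+)$, once one checks the affine-bundle and Künneth identifications are the standard ones used to define the CoHA product in \cite{KS}.

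Next I would match the operators. By Theorem~\ref{thm:polrep} the elementary merge $\mul{c}{}$ acts on $\mathcal{Q}$ by taking a function indexed by $(\dots,\bx{c}^i,\bx{c}^{i+1},\dots)$ to one indexed by $(\dots,\bx{c}^i+\bx{c}^{i+1},\dots)$, and by Proposition~\ref{lem: merge = Demazure} this is a pushforward (Demazure-type) operator along the relevant partial-flag bundle. The CoHA multiplication $\mathcal{H}_{\bx{c}^i}\otimes\mathcal{H}_{\bx{c}^{i+1}}\to\mathcal{H}_{\bx{c}^i+\bx{c}^{i+1}}$ is by definition a pullback along the inclusion of the subrepresentation locus followed by a pushforward along the projection forgetting the subspace; so the key computation is to identify these two geometric correspondences. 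The point is that the diagonal Bott-Samelson variety appearing in the quiver Schur picture, restricted to a two-step flag, is exactly the space $\mathfrak R_{\bx{c}^i,\bx{c}^{i+1}}$ of representations with a chosen subrepresentation used in the CoHA product, and the pushforward of the fundamental class of the tautological bundle implements precisely the pullback–pushforward of \cite{KS}. Similarly the elementary split is, by its definition as a fundamental class of the transposed correspondence, the pullback-then-pushforward in the opposite direction, which is Kontsevich–Soibelman's natural coproduct component $\mathcal{H}_{\bx{c}^i+\bx{c}^{i+1}}\to\mathcal{H}_{\bx{c}^i}\otimes\mathcal{H}_{\bx{c}^{i+1}}$ (up to the usual localization/torus-weight bookkeeping, which here is unproblematic because we work with honest cohomology and the split lands in a tensor of two factors, each a genuine CoHA group). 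One must also check that merges and splits involving a trivial block act as the unit/counit insertions, which is immediate.

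Finally, $\mathcal{Z}$ acts faithfully on $\mathcal{Q}$ by the polynomial representation (Theorem~\ref{thm:polrep} together with part~(b) of Theorem~A, which says merges, splits and polynomials generate). Hence the action map $\mathcal{Z}\to\End(\mathcal{Q})=\End(T(\mathcal{H}_+))$ is an injective algebra homomorphism, and by the previous paragraph it sends elementary merges to CoHA multiplication operators and elementary splits to CoHA comultiplication operators. The main obstacle I anticipate is the second paragraph: carefully matching the three normalizations — the equivariant Euler-class factors in the CoHA product of \cite{KS}, the orientation/shift conventions in the Bott-Samelson pushforward defining $\mul{c}{}$, and the affine-bundle identification $\mathfrak R_{\bx{c}^1,\dots,\bx{c}^k}\simeq$ (bundle over) $\prod\mathfrak R_{\bx{c}^i}$ — so that the two correspondences literally agree on the nose rather than up to an invertible scalar or degree shift. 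This is where a clean choice of the tautological bundle in Definition~\ref{defi:mergesandsplits} and a base-change/projection-formula argument comparing the two fiber squares does the real work.
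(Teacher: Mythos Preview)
Your approach is essentially the same as the paper's, and your outline is correct. The one place where you are making life harder than necessary is the second paragraph and your anticipated ``main obstacle'': the normalization worries disappear once you observe that the relevant Borel constructions are \emph{literally isomorphic}, not merely homotopy equivalent after some affine-bundle identification.

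Concretely, since $\Sxx{Q}{d} \cong \Gx{G}{c} \times^{\Gxx{P}{d}} \Sxx{R}{d}$, one has $(\Sxx{Q}{d})_{\Gx{G}{c}} \cong (\Sxx{R}{d})_{\Gxx{P}{d}}$ on the nose, and similarly $(\Sxxy{Q}{d}{e})_{\Gx{G}{c}} \cong (\Sxx{R}{e})_{\Gxx{P}{d}}$. This yields a commutative diagram
\[
\begin{array}{ccccc}
(\Sxx{R}{d})_{\Gxx{P}{d}} & \xrightarrow{\ i\ } & (\Sxx{R}{e})_{\Gxx{P}{d}} & \xrightarrow{\ p\ } & (\Sxx{R}{e})_{\Gxx{P}{e}} \\
\wr\!\!\downarrow & & \wr\!\!\downarrow & & \wr\!\!\downarrow \\
(\Sxx{Q}{d})_{\Gx{G}{c}} & \xrightarrow{\ \iota\ } & (\Sxxy{Q}{d}{e})_{\Gx{G}{c}} & \xrightarrow{\ q\ } & (\Sxx{Q}{e})_{\Gx{G}{c}}
\end{array}
\]
in which the top row is exactly the correspondence defining $\mul{d}{e}$ and $\com{e}{d}$ in the CoHA, while (as you note, via the proof of Theorem~\ref{thm:polrep}) the bottom row computes the action of $\mer{d}{e}$ and $\spl{e}{d}$ as pushforward and pullback respectively. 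Since the vertical maps are isomorphisms, the two pushforwards (resp.\ pullbacks) agree exactly, with no Euler-class or orientation bookkeeping needed. You do not need to invoke Bott--Samelson varieties or Demazure operators here; those enter in computing \emph{explicit formulas} for the merge action, not in establishing the identification with CoHA multiplication. With this observation, your third paragraph (faithfulness from Proposition~\ref{pro: faithful}, generation from Corollary~\ref{cor:el spl mer}) finishes the proof exactly as the paper does.
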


The CoHA admits a description as a shuffle algebra \cite[Theorem 2]{KS} in the sense of Feigin and Odesskii \cite{FO}. We interpret this description in terms of Demazure operators (Proposition \ref{pro: mult Dem op}), connecting it to our description of the polynomial representation $\mathcal{Q}$ of the quiver Schur algebra~$\mathcal{Z}$. We expect that the relationship between shuffle algebras and Demazure operators carries over to more general settings. For example, we expect that multiplication in the formal version of the CoHA, defined by Yang and Zhao \cite{YZ} for any equivariant oriented Borel-Moore homology theory, can be rephrased in terms of the formal Demazure operators from \cite{HMSZ}. 

\subsection{Geometric realization of the modified quiver Schur algebra.} 

One of the exciting features of KLR algebras (associated to finite and affine type A quivers) is that, 
after passing to suitable completions or cyclotomic quotients, they are isomorphic to affine Hecke algebras \cite{BK, Rou}, and endow the latter with interesting gradings. This isomorphism, known in the literature as the Brundan-Kleshchev-Rouquier isomorphism, was later generalized to Schur algebras in \cite{MakS,MS,SW} (see also \cite{Web}). 

The main result of \cite{MS} says that the convolution algebra $\Ax{Z}{c}^{SW}$ from \cite{SW} is, after completion, isomorphic to the affine $q$-Schur algebra appearing naturally in the representation theory of $p$-adic general linear groups. The proof of this result relies on the fact that both of these algebras are isomorphic to a certain intermediate algebra $\Ax{Z}{c}^{MS}$, called the modified quiver Schur algebra, which is defined in purely algebraic terms. We show that the modified quiver Schur algebra also admits a geometric realization as a convolution algebra. 

\begin{thmm}[Theorem \ref{cor:QSvMQS}] 
There is a natural algebra isomorphism $\Ax{Z}{c} \cong \Ax{Z}{c}^{MS}$ between our quiver Schur algebra $\Ax{Z}{c}$ and the modified quiver Schur algebra $\Ax{Z}{c}^{MS}$. 
\end{thmm}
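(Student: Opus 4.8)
The plan is to show that the two algebras $\Ax{Z}{c}$ and $\Ax{Z}{c}^{MS}$ have compatible presentations, by transporting the Bott-Samelson basis and the generating set of elementary merges, splits and polynomials (from Theorem~A) through the polynomial representation. Both algebras act faithfully on (essentially) the same space: $\Ax{Z}{c}$ acts faithfully on the polynomial representation $\Ax{Q}{c}$, the direct sum of rings of partial invariants $\bigoplus_{\bx{d}} \C[\bx{x}]^{\Sx{S}{d}}$ indexed by compositions $\bx{d}$ refining $\bx{c}$, and by construction in \cite{MS} the modified quiver Schur algebra $\Ax{Z}{c}^{MS}$ is defined via its action on the same direct sum of partial invariant rings (this is the ``modified'' polynomial representation, differing from the Stroppel-Webster one only by a normalization of the merge/split operators away from the idempotents). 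So the strategy is: (1)~identify the underlying modules; (2)~match generators under this identification; (3)~conclude that the images of the two algebras in $\End(\bigoplus_{\bx{d}} \C[\bx{x}]^{\Sx{S}{d}})$ coincide, hence the algebras are isomorphic by faithfulness.

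First I would recall the explicit description of the polynomial representation from Theorem~\ref{thm:polrep} and Proposition~\ref{lem: merge = Demazure}: elementary merges act by multiplication by the relevant equivariant Euler class times a Demazure/partial-symmetrization operator, elementary splits act by the dual (a multiplication operator landing in a finer invariant ring), and polynomials act by multiplication. Then I would write down the corresponding operators defining $\Ax{Z}{c}^{MS}$ as given in \cite{MS} and check that, after the normalization built into the modified algebra, the merge operator of \cite{MS} equals our merge operator composed with multiplication by an explicit product of linear forms (an Euler class of a quotient bundle), and symmetrically for splits; on polynomials the two actions agree on the nose. This is the computational heart of the argument but it is a finite check, local to a single elementary merge/split, because both generating sets consist of elementary merges, splits and polynomials.

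Next, using Theorem~A(b), both algebras are generated by these elements, so once the generators are matched the subalgebras of $\End(\bigoplus_{\bx{d}} \C[\bx{x}]^{\Sx{S}{d}})$ they generate are literally equal. Faithfulness of the polynomial representation for $\Ax{Z}{c}$ is asserted in the excerpt; faithfulness of the modified polynomial representation for $\Ax{Z}{c}^{MS}$ is part of the setup in \cite{MS} (indeed it is how that algebra is shown to be isomorphic to the affine $q$-Schur algebra). Therefore the tautological map $\Ax{Z}{c} \to \Ax{Z}{c}^{MS}$ sending each generator to its counterpart is a well-defined algebra isomorphism. To make this rigorous rather than circular, I would instead compare dimensions graded piece by graded piece: Theorem~A(a) gives a Bott-Samelson basis of $\Ax{Z}{c}$ indexed by certain chains of merges/splits, \cite{MS} gives an analogous basis of $\Ax{Z}{c}^{MS}$ indexed by the same combinatorial data, the generator-matching shows the map is surjective, and equality of (graded) ranks over each idempotent $e_{\bx{d}} \mathcal{Z} e_{\bx{d}'}$ forces injectivity.

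The main obstacle I expect is the normalization discrepancy between our geometrically defined merge/split classes and the algebraically defined ones in \cite{MS}: the modified quiver Schur algebra is precisely an intermediate object engineered to interpolate between the Stroppel-Webster convolution algebra and the affine $q$-Schur algebra, so its merges and splits carry extra Euler-class factors (and possibly sign or shift conventions) relative to the naive fundamental classes. Pinning down exactly which bundle's Euler class intervenes — and checking that the induced change of generators is invertible over the polynomial ring, so that no information is lost — is where the real work lies. A secondary subtlety is bookkeeping with the grading and equivariant parameters: one must ensure the isomorphism is graded and intertwines the $\Gx{G}{c}$-equivariant structures, which amounts to tracking degrees of the Euler classes through every merge and split.
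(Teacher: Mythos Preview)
Your approach is essentially the paper's: use faithfulness of the polynomial representation to identify $\Ax{Z}{c}$ with the subalgebra of $\End_{\C}(\Lambda_{\mathbf{c}})$ generated by the merge, split and polynomial operators (computed in Theorem~\ref{thm:polrep}), and then compare those operators with the defining operators of $\Ax{Z}{c}^{MS}$ from \cite[Definition~8.4]{MS}.

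Two places where you overcomplicate things. First, there is no circularity to worry about, so the Bott--Samelson dimension count is unnecessary: $\Ax{Z}{c}^{MS}$ is \emph{by definition} the subalgebra of $\End_{\C}(\Lambda_{\mathbf{c}})$ generated by the algebraic merges, splits and polynomials, so once you have shown that your geometric generators act by exactly the same operators, the two subalgebras of $\End_{\C}(\Lambda_{\mathbf{c}})$ are literally equal. Faithfulness on the $\Ax{Z}{c}$ side is all you need; on the $\Ax{Z}{c}^{MS}$ side it is tautological. Second, the ``main obstacle'' you anticipate is milder than you suggest: the Euler class $\mathtt{E}_{\ux{d}}^{\ux{e}}$ already appears in the same form in both descriptions, and the only genuine discrepancy is a sign coming from the ordering of linear factors in $\mathtt{E}$ and $\mathtt{S}$. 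The paper absorbs this by passing to ``sign-corrected'' algebraic merges; there is no invertibility issue to check.
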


As an application, we deduce that our quiver Schur algebra $\Ax{Z}{c}$ is also isomorphic to the Stroppel-Webster quiver Schur algebra $\Ax{Z}{c}^{SW}$ (Theorem \ref{thm: our vs SW}). 

\subsection{Mixed quiver Schur algebras.} 

As we have already mentioned, KLR and quiver Schur algebras can be realized as convolution algebras, or, equivalently, extension algebras of a certain semisimple complex of sheaves on the moduli stack of representations of a quiver. If the quiver admits a contravariant involution $\theta$, this construction can be generalized by replacing the stack of representations of the quiver with the stack of its self-dual representations. 

This idea was pursued by Varagnolo and Vasserot in \cite{VV2}. They obtained generalized KLR algebras which are Morita equivalent to affine Hecke algebras of type B, and provide a categorification of highest weight modules over $B_\theta(\g_Q)$, the algebra introduced by Enomoto and Kashiwara \cite{EK1, EK2} in the context of symmetric crystals. 
The type D case is considered in  \cite{KM,SVV}. 

Sauter \cite{Sau1, Sau2, Sau3}  
took the idea of generalizing KLR algebras further, and replaced the stack of self-dual representations of a quiver with the stack of generalized quiver representations in the sense of Derksen and Weyman \cite{DW}. In this generalization, the gauge group acting on the space of quiver representations is no longer a classical group, but an arbitrary reductive group. 

We define a generalization of quiver Schur algebras which is close in spirit to the above-mentioned generalizations of KLR algebras. Given a quiver $Q$ with a contravariant involution $\theta$ and an extra datum, called a duality structure (see Definition \ref{defi: duality str}), we consider the stack of a certain type of self-dual representations of $Q$, introduced by Zubkov \cite{Zub} under the name of supermixed quiver representations. 
We refer to the resulting Ext-algebra as the \emph{mixed quiver Schur algebra} and denote it by $\tAx{Z}{c}$. 
The mixed quiver Schur algebra has similar structural properties to the ordinary quiver Schur algebra: it has a Bott-Samelson basis (Theorem \ref{thm:basis theta}) and is generated by elementary merges, elementary splits and polynomials (Corollary \ref{cor:el spl mer theta}). 

The idea of replacing ordinary quiver representations by self-dual representations has also been exploited  in the representation theory of Hall algebras (in the finite field setting) \cite{You1} and cohomological Hall algebras  \cite{You} by Young. 
In the finite field case, Young defined a ``Hall module" over the Hall algebra of $Q$, and showed that it carries a natural action of the aforementioned Enomoto-Kashiwara algebra $B_\theta(\g_Q)$. In the cohomological case, he introduced a ``cohomological Hall module"  ${}^\theta\mathcal{M}$ over the cohomological Hall algebra $\mathcal{H}$ associated to the same quiver $Q$ without the involution $\theta$. The module ${}^\theta\mathcal{M}$ is defined as the direct sum of equivariant cohomology groups 
\[ {}^\theta\mathcal{M} = \bigoplus_{\bx{c}} H_{{}^\theta\mathsf{G}_{\mathbf{c}}}^\bullet ({}^\theta\mathfrak{R}_{\mathbf{c}}) \]
of the spaces $\tSx{R}{c}$ of self-dual quiver representations, equipped with an $\mathcal{H}$-module structure via certain geometric correspondences. The module ${}^\theta\mathcal{M}$ also carries a natural $\mathcal{H}$-comodule structure, but it fails to be a Hopf module. 
Our next result shows that the relations between multiplication and comultiplication in the CoHA, as well as its action and coaction on the cohomological Hall module, are controlled by the total mixed quiver Schur algebra ${}^\theta\mathcal{Z} = \bigoplus_{\bx{c}} \tAx{Z}{c}$.

\begin{thmm}[Theorem \ref{thm: TH vs Pol theta}] \label{thm D intro}
There is an injective algebra homomorphism \[{}^\theta\mathcal{Z} \hookrightarrow \End(T(\mathcal{H}_+) \otimes {}^\theta\mathcal{M})\]
sending elementary merges in ${}^\theta\mathcal{Z}$ to CoHA multiplication and action operators and elementary splits in ${}^\theta\mathcal{Z}$ to CoHA comultiplication and coaction operators. 
\end{thmm}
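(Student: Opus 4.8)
The plan is to transplant the proof of Theorem~\ref{thm: TH vs Pol} to the self-dual setting, the new feature being a single ``self-dual strand'' that must be tracked separately. The first step is to set up the \emph{mixed polynomial representation} ${}^\theta\mathcal{Q} = \bigoplus_{\bx{c}} {}^\theta\mathcal{Q}_{\bx{c}}$ of the total mixed quiver Schur algebra ${}^\theta\mathcal{Z}$, the $\theta$-analogue of the faithful polynomial representation $\mathcal{Q}$ from Theorem~\ref{thm:polrep}; its faithfulness follows formally from the Bott--Samelson basis of Theorem~\ref{thm:basis theta}, exactly as in the untwisted case. A flagged supermixed representation is equipped with an \emph{isotropic} flag, so the summands of ${}^\theta\mathcal{Q}_{\bx{c}}$ are indexed by pairs $(\underline{\bx{d}};\bx{e})$ consisting of a composition $\underline{\bx{d}}=(\bx{d}^{(1)},\dots,\bx{d}^{(k)})$ of ``ordinary'' dimension vectors and a ``self-dual core'' dimension vector $\bx{e}$ (the dual strands $\theta\bx{d}^{(i)}$ being determined by duality); the associated parabolic of $\tGx{G}{c}$ has Levi $\mathsf{G}_{\bx{d}^{(1)}}\times\cdots\times\mathsf{G}_{\bx{d}^{(k)}}\times{}^\theta\mathsf{G}_{\bx{e}}$, and the $(\underline{\bx{d}};\bx{e})$-summand of ${}^\theta\mathcal{Q}_{\bx{c}}$ is the corresponding ring of partial invariants $H^\bullet_{\mathsf{G}_{\bx{d}^{(1)}}\times\cdots\times\mathsf{G}_{\bx{d}^{(k)}}\times{}^\theta\mathsf{G}_{\bx{e}}}(\mathrm{pt})$.

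Next I would identify ${}^\theta\mathcal{Q}$ with $T(\mathcal{H}_+)\otimes{}^\theta\mathcal{M}$ as a graded vector space. Because the quiver representation spaces $\mathfrak{R}_{\bx{c}}$ and the supermixed representation spaces $\tSx{R}{c}$ are affine spaces, one has $\mathcal{H}_{\bx{c}}\cong H^\bullet_{\mathsf{G}_{\bx{c}}}(\mathrm{pt})$ and ${}^\theta\mathcal{M}_{\bx{e}}\cong H^\bullet_{{}^\theta\mathsf{G}_{\bx{e}}}(\mathrm{pt})$; hence the $(\underline{\bx{d}};\bx{e})$-summand above is canonically $\mathcal{H}_{\bx{d}^{(1)}}\otimes\cdots\otimes\mathcal{H}_{\bx{d}^{(k)}}\otimes{}^\theta\mathcal{M}_{\bx{e}}$ with all $\bx{d}^{(i)}\neq 0$. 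Summing over all $(\underline{\bx{d}};\bx{e})$ gives ${}^\theta\mathcal{Q}\cong\bigoplus_{k\geq 0}\ \bigoplus_{\bx{d}^{(1)},\dots,\bx{d}^{(k)}\neq 0}\ \bigoplus_{\bx{e}}\ \mathcal{H}_{\bx{d}^{(1)}}\otimes\cdots\otimes\mathcal{H}_{\bx{d}^{(k)}}\otimes{}^\theta\mathcal{M}_{\bx{e}} = T(\mathcal{H}_+)\otimes{}^\theta\mathcal{M}$, extending the identification $\mathcal{Q}\cong T(\mathcal{H}_+)$ of Theorem~\ref{thm: TH vs Pol}.

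The heart of the proof is to compute the action of the generators of ${}^\theta\mathcal{Z}$ under this identification and match them with CoHA operators. Using the $\theta$-analogue of Theorem~\ref{thm:polrep} (equivalently, of Proposition~\ref{lem: merge = Demazure}), each elementary merge and split of ${}^\theta\mathcal{Z}$ acts on ${}^\theta\mathcal{Q}$ as an explicit Demazure-type operator. A merge, respectively split, involving only ordinary strands is, under the identification above, precisely a CoHA multiplication, respectively comultiplication, operator applied to the relevant tensor factors of $T(\mathcal{H}_+)$ --- this is Theorem~\ref{thm: TH vs Pol} together with the shuffle/Demazure description of Proposition~\ref{pro: mult Dem op}. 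A merge of the outermost ordinary strand into the self-dual core, respectively a split peeling a strand off the core, is defined geometrically by pullback--pushforward along the parabolic induction diagram for supermixed representations, which is exactly the correspondence Young uses to define the $\mathcal{H}$-action, respectively $\mathcal{H}$-coaction, on ${}^\theta\mathcal{M}$; so these generators go to the CoHA action and coaction operators. Granting these identifications, the representation homomorphism ${}^\theta\mathcal{Z}\to\End({}^\theta\mathcal{Q})=\End(T(\mathcal{H}_+)\otimes{}^\theta\mathcal{M})$ is an algebra map with the stated effect on merges and splits, and it is injective because ${}^\theta\mathcal{Q}$ is faithful.

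The step I expect to be the main obstacle is matching the ``core'' operators with Young's action and coaction correspondences. Unlike the ordinary merges, which are governed by honest partial-flag Grassmannians and ordinary Demazure operators, the core merge factors through an isotropic Grassmannian of type B/C/D, and its pushforward produces the equivariant Euler class of the relevant normal bundle, a class that depends on the chosen duality structure (Definition~\ref{defi: duality str}) and on the involution $\theta$; one must check that this Euler class agrees with the weight appearing in Young's correspondence, including the cohomological degree shifts and with no residual signs. A related bookkeeping point is that merges into, and splits off, the core occur only at the outermost ordinary strand, which is precisely what forces $T(\mathcal{H}_+)$ and ${}^\theta\mathcal{M}$ to appear in that order in the statement --- an isotropic flag only records the subspaces up to the core. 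Once this Euler-class comparison is carried out, all remaining relations are inherited from the structural results for ${}^\theta\mathcal{Z}$ (Theorem~\ref{thm:basis theta}, Corollary~\ref{cor:el spl mer theta}).
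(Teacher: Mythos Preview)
Your proposal is correct in outline and lands on the same core mechanism as the paper, but you take an unnecessary detour and manufacture an obstacle that the paper's argument sidesteps entirely.

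The paper's proof is uniform: it simply observes that for every $\ux{d}$ $\Rsl$ $\bx{c}$ the Borel constructions $({}^\theta\mathfrak{R}_{\ux{d}})_{{}^\theta\mathsf{P}_{\ux{d}}}$ and $({}^\theta\mathfrak{Q}_{\ux{d}})_{{}^\theta\mathsf{G}_{\bx{c}}}$ are canonically isomorphic, and under this isomorphism the ${}^\theta$-analogue of diagram \eqref{diag:PvsGeq} commutes. Hence the pushforward (resp.\ pullback) computing the convolution action of a merge (resp.\ split) is \emph{literally the same map} as the pushforward ${}^\theta p_* \circ {}^\theta i_*$ (resp.\ pullback ${}^\theta i^* \circ {}^\theta p^*$) defining ${}^\theta\mathsf{m}_{\ux{d}}^{\ux{e}}$ (resp.\ ${}^\theta\mathsf{com}_{\ux{e}}^{\ux{d}}$). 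This handles ordinary merges and core merges in one stroke, with no case distinction.

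By contrast, you split into two cases and propose to verify the ordinary case by passing through the explicit Demazure/shuffle formulas of Proposition~\ref{lem: merge = Demazure} and Proposition~\ref{pro: mult Dem op}. That works, but it is strictly more labor than needed: the identification of convolution with CoHA multiplication is prior to, not a consequence of, the shuffle description. More importantly, the ``main obstacle'' you flag --- matching the Euler class of the normal bundle for the core merge against the weight in Young's correspondence --- evaporates in the paper's approach. Since the closed embedding ${}^\theta i$ and the embedding $\iota$ from the $\theta$-analogue of \eqref{iotap} are identified as maps of spaces, their pushforwards agree automatically; there is no separate Euler-class comparison to carry out. The duality-structure-dependent classes you anticipate do appear, but only later, when one extracts the explicit polynomial formulas in Theorem~\ref{thm: pol rep theta}; they play no role in establishing the algebra homomorphism itself.
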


As an application of Theorem \ref{thm D intro}, we obtain an explicit description of the faithful polynomial representation of a mixed quiver Schur algebra  (Theorem \ref{thm: pol rep theta}). Moreover, we reinterpret the description of the CoHM as a shuffle module \cite[Theorem 3.3]{You} in terms of Demazure operators of types A-D (Corollary \ref{cor: action Demazure theta}). 

Mixed quiver Schur algebras are also related to the Hall modules defined in the finite field setting. 
The direct sum ${}^\theta\mathcal{Z}\mbox{-}\pmmod$ of the categories of finitely generated graded projective modules over all the mixed quiver Schur algebras carries a natural action of the monoidal category $\mathcal{Z}\mbox{-}\pmmod$, 
and its Grothendieck group $K_0({}^\theta\mathcal{Z})$ is a module as well as a comodule over $K_0(\mathcal{Z})$ (Proposition~\ref{pro: monoidal action}). 
We expect that, via the standard technique of sending the class of a semisimple perverse sheaf to the function given by the super-trace of the Frobenius on its stalks (see, e.g., \cite{Schiff2}), $K_0(\mathcal{Z})$ can be identified with a subalgebra of the Hall algebra of $Q$. 
For example, in the special case of a Dynkin or cyclic quiver, $K_0(\mathcal{Z})^{op}$ is naturally isomorphic to the generic nilpotent Hall algebra (Proposition~\ref{pro: K0 gen nilp Hall}). 
We also expect that $K_0({}^\theta\mathcal{Z})$ can be identified with a subspace of the Hall module associated to the category of self-dual representations of $Q$, and that $K_0({}^\theta\mathcal{Z})$ is a semisimple module over the Enomoto-Kashiwara algebra $B_\theta(\g_Q)$.

\subsection*{Acknowledgements.} 
This work was supported by the College of Science \& Engineering at the University of Glasgow and the Max Planck Institute for Mathematics in Bonn. 
I would like to thank C. Stroppel for drawing my attention to the similarity between the shuffle formula from \cite{KS} and formulas appearing in the description of the quiver Schur algebra, which was the starting point for this project, as well as M.B. Young for discussions about (cohomological) Hall modules. I would also like to thank G. Bellamy, C. Stroppel and M.B. Young for numerous comments and suggestions on draft versions of this paper. 

\section{Preliminaries}

In this section we introduce notation and basic definitions which will be used throughout the paper. We begin by setting up the notation for quivers and associated combinatorial objects such as dimension vectors and their compositions. We then recall the definitions of some geometric objects associated to quivers, such as quiver flag varieties and the corresponding Steinberg-type varieties. We finish by recalling a few facts about equivariant cohomology and convolution algebras.

\subsection{Quivers and associated combinatorics.} 
Let us fix for the rest of this section a quiver $Q$ with a finite set of vertices $Q_0$ and a finite set of arrows $Q_1$. In particular, we allow multiple edges and edge loops. 

If $a \in Q_1$ is an arrow, let $s(a)$ be its source and $t(a)$ its target. Let $a_{ij}$ denote the number of arrows from vertex $i$ to $j$.  Let $\Gamma := \Z_{\geq 0}Q_0$ denote the free commutative monoid of dimension vectors for $Q$ and let $\Gamma_+ := \Gamma \backslash \{0\}$. 
If $\mathbf{c} = \sum_{i \in Q_0} \mathbf{c}(i)\cdot i \in \Gamma$, write $|\mathbf{c}| = \sum_{i \in Q_0} \mathbf{c}(i) \in \Z$. Given a $Q_0$-graded vector space $V$, let $\dim_{Q_0} V \in \Gamma$ denote its $Q_0$-graded dimension. 

Let $n$ be a positive integer. We say that $\beta = (\beta_1, \hdots, \beta_{\ell_\beta}) \in (\Z_{\geq 1})^{\ell_\beta}$ is a \emph{composition} of~$n$ if $\sum_j \beta_j = n$. Let $\Com(n)$ denote the set of compositions of $n$. 
Given $\beta \in \Com(n)$, let $\mathring{\beta}_j = \beta_1 + \hdots + \beta_j$ for $1 \leq j \leq \ell_{\beta}$, with $\mathring{\beta}_0 = 0$. 

\begin{defi}
Let $\mathbf{c} \in \Gamma_+$. 
We say that $\underline{\mathbf{d}} = (\mathbf{d}_1, \hdots, \mathbf{d}_{\ell_{\underline{\mathbf{d}}}}) \in \Gamma^{\ell_{\underline{\mathbf{d}}}}_+$ 
is a \emph{vector composition} of~$\mathbf{c}$, denoted $\underline{\mathbf{d}} \rightslice \mathbf{c}$, if $\langle \underline{\mathbf{d}}\rangle := \sum_{j=1}^{\ld} \mathbf{d}_j = \mathbf{c}$. 
We call $\ld$ the length of $\ux{d}$. 
Let $\mathbf{Com}_{\mathbf{c}}$ denote the set of vector compositions of $\mathbf{c}$ and let $\mathbf{Com}_{\mathbf{c}}^n$ denote the subset of vector compositions of length $n$. 
The symmetric group $\mathsf{Sym}_n$ acts naturally on $\mathbf{Com}_{\mathbf{c}}^n$ from the right by permutations. 
For each $i \in Q_0$, we have a map 
\[ \mathbf{Com}_{\mathbf{c}} \to \Com(\mathbf{c}(i)), \quad \ux{d} \mapsto \underline{\mathbf{d}}(i) := (\mathbf{d}_1(i), \hdots, \mathbf{d}_{\ell_{\underline{\mathbf{d}}}}(i)).\]
Given two vector compositions $\ux{d} \rightslice \mathbf{a}$ and $\ux{e} \rightslice \mathbf{b}$, let $\ux{d} \cup \ux{e} = (\mathbf{d}_1, \hdots, \mathbf{d}_{\ell_{\underline{\mathbf{d}}}}, \mathbf{e}_1, \hdots, \mathbf{e}_{\ell_{\ux{e}}}) \rightslice \mathbf{a} + \mathbf{b}$ be their concatenation.  
\end{defi}

\begin{defi} \label{defi: wedge comp}
Suppose that $\beta \in \Com(\ell_{\underline{\mathbf{d}}})$. 
Define 
\[ \vee_{\beta}^j( \underline{\mathbf{d}}) := (\mathbf{d}_{\mathring{\beta}_{j-1}+1}, \hdots, \mathbf{d}_{\mathring{\beta}_j}), \quad \wedge_\beta(\underline{\mathbf{d}}) := ( \langle \vee_{\beta}^{1}(\underline{\mathbf{d}})\rangle, \hdots, \langle \vee_{\beta}^{\ell_{\beta}}(\underline{\mathbf{d}}) \rangle).\] 
In particular, if $\beta = (1^{k-1},2,1^{\ell_{\underline{\mathbf{d}}}-k-1})$ for some $1 \leq k \leq \ell_{\underline{\mathbf{d}}}-1$, then we abbreviate $\wedge_k(\underline{\mathbf{d}}) := \wedge_\beta(\underline{\mathbf{d}})$. We define a partial order on $\mathbf{Com}_{\mathbf{c}}$ by setting
\[ \underline{\mathbf{d}} \succcurlyeq \underline{\mathbf{e}} \iff \underline{\mathbf{e}} = \wedge_\beta(\underline{\mathbf{d}}) \]
for some $\beta \in \Com(\ell_{\underline{\mathbf{d}}})$. 
If $ \underline{\mathbf{d}} \succcurlyeq \underline{\mathbf{e}}$, we call $\ux{d}$ a \emph{refinement} of $\ux{e}$, and $\ux{e}$ a \emph{coarsening} of $\ux{d}$. 
\end{defi} 

\begin{exa} \label{first example}
Consider the $A_3$ quiver
\[
\begin{tikzcd}
\underset{i_1}{\bullet} \arrow[r] &\underset{i_2}{\bullet} \arrow[r] &\underset{i_3}{\bullet}
\end{tikzcd}
\]
Let $\bx{c} = 4 i_1 + 3  i_2 + 3  i_3$ and $\ux{d} = (i_1 + i_3, 2 i_1+i_2, 2  i_3, i_1 + i_2, i_2) \rightslice \bx{c}$ so that $\ld = 5$. 
We have $\wedge_{(5)}(\ux{d}) = \bx{c}$ and $\wedge_{(1,1,1,1,1)}(\ux{d}) = \ux{d}$. Moreover, 
\begin{alignat*}{8}
\wedge_{(2,3)}(\ux{d}) &=  (3i_1+i_2+i_3, i_1+2i_2+2i_3)  \\
\wedge_{(2,1,2)}(\ux{d}) &=  (3i_1+i_2+i_3, 2i_3, i_1 + 2i_2)  \\
\wedge_{1}(\ux{d}) = \wedge_{(2,1,1,1)}(\ux{d}) &= (3i_1 + i_2 +i_3, 2 i_1+i_2, 2  i_3, i_1 + i_2, i_2),  \\ 
\wedge_{3}(\ux{d}) = \wedge_{(1,1,2,1)}(\ux{d}) &= (i_1 + i_3, 2 i_1+i_2, i_1 + i_2 + 2i_3, i_2).
\end{alignat*}
\end{exa}

Next we assign some products of symmetric groups to the combinatorics developed above. 
Given a positive integer $n$ and $\alpha \in \Com(n)$, let $\mathsf{Sym}_{\alpha} = \prod_{j=1}^{\ell_\alpha} \mathsf{Sym}_{\alpha_j}$. 
Furthermore, set 
\[ \Gx{W}{c} := \prod_{i \in Q_0} \mathsf{Sym}_{\mathbf{c}(i)}, \quad \Gxx{W}{d} := \prod_{i \in Q_0} \mathsf{Sym}_{\ux{d}(i)} \subseteq \mathsf{Sym}_{\mathbf{c}}.\]  
We consider the groups $\Gx{W}{c}$ and $\Gxx{W}{d}$ as Coxeter groups in the usual way. In particular, they are endowed with a length function $\ell$ and a Bruhat order. Let $s_j(i)$ $(i \in Q_0, 1 \leq j \leq \mathbf{c}(i)-1)$ be the standard generators of $\Gx{W}{c}$. 
Given $\ux{e}, \ux{f} \succ \ux{d} \rightslice \mathbf{c}$, let 
\[ \dcc{e}{f}{d} := [\Gxx{W}{e} \backslash \Gxx{W}{d} / \Gxx{W}{f}]^{\mathsf{min}} \]
denote the set of minimal length double coset representatives. If $\ux{d} = (\mathbf{c})$, we write $\dc{e}{f}{c} := \dcc{e}{f}{d}$. When $\Gxx{W}{e}$ is trivial,  
we abbreviate $ \mathsf{D}^{\ux{d}}_{\ux{f}} := \dcc{e}{f}{d}$.

\subsection{Quiver representations and flag varieties.} 

Let $\mathbf{c} \in \Gamma_+$. 
Fix a $Q_0$-graded $\C$-vector space $\mathbf{V}_{\mathbf{c}} = \bigoplus_{i \in Q_0} \mathbf{V}_{\mathbf{c}}(i)$ with $\dim \mathbf{V}_{\mathbf{c}}(i) = \mathbf{c}(i)$. 
Let us fix a basis $\{v_k(i) \mid 1\leq k \leq \mathbf{c}(i)\}$ of $\mathbf{V}_{\mathbf{c}}(i)$ for each $i \in Q_0$. 
Let 
\[ \Sx{R}{c} := \bigoplus_{a \in Q_1} \Hom_{\C}(\mathbf{V}_{\mathbf{c}}(s(a)), \mathbf{V}_{\mathbf{c}}(t(a))), \quad \Gx{G}{c} := \prod_{i \in Q_0} \mathsf{GL}(\mathbf{V}_{\mathbf{c}}(i)).  \]
The group $\Gx{G}{c}$ acts naturally on $\Sx{R}{c}$ by conjugation. 
Let $\Gx{T}{c}$ be the standard maximal torus in $\Gx{G}{c}$, with fundamental weights $\omega_j(i)$ (for $i \in Q_0$, $1 \leq j \leq \bx{c}(i)$), and let $\Gx{B}{c}$ be the standard Borel subgroup. Let $R_{\bx{c}}^+ \subset R_{\bx{c}}$ be the associated (positive) root system. We identify the associated Weyl group with $\Gx{W}{c}$. Given $w \in \Gx{W}{c}$, let $R_{\bx{c}}(w) = \{ \alpha \in R_{\bx{c}}^+ \mid w(\alpha) \in -R_{\bx{c}}^+\}$. If $\alpha \in R_{\bx{c}}$, let $\mathsf{U}_\alpha$ be the corresponding unipotent subgroup of $\Gx{G}{c}$, and set $\mathsf{U}_w =  \prod_{\alpha \in R_{\bx{c}}(w)}\mathsf{U}_\alpha$.

We call a sequence $V_\bullet$ of $Q_0$-graded subspaces 
\[ \{0\} = V_0 \subset V_1 \subset V_2 \subset \hdots \subset V_{\ld} = \mathbf{V}_{\mathbf{c}} \] 
a \emph{flag of type} $\underline{\mathbf{d}} \in \mathbf{Com}_{\mathbf{c}}$ if $\dim_{Q_0} V_j/V_{j-1} = \mathbf{d}_j$. 
We refer to the flag $\mathbf{V}_{\underline{\mathbf{d}}} = (\mathbf{V}_{\underline{\mathbf{d}}}^j)_{j=0}^{\ld}$, where  $\mathbf{V}_{\underline{\mathbf{d}}}^j :=\langle v_k(i) \mid 1 \leq k \leq \mathbf{d}_1(i) + \hdots + \mathbf{d}_j(i), i \in Q_0 \rangle$, as the \emph{standard flag} of type $\underline{\mathbf{d}}$. 
Let 
\begin{equation*}  \textstyle\Gxx{P}{d} := \Stab_{\mathsf{G}_{\mathbf{c}}}(\mathbf{V}_{\underline{\mathbf{d}}} ), \quad \Gxx{L}{d} := \prod_{j=1}^{\ell_{\underline{\mathbf{d}}}} \mathsf{G}_{\mathbf{d}_j}. \end{equation*}
be the parabolic and Levi subgroups, respectively, associated to $\ux{d}$, and let $R_{\ux{d}}^+ \subseteq R^+_{\bx{c}}$ be the corresponding subset of positive roots. Let $\Sxx{F}{d} \cong \mathsf{G}_{\mathbf{c}} / \mathsf{P}_{\underline{\mathbf{d}}}$ be the projective variety parametrizing flags of type $\underline{\mathbf{d}}$. 

\begin{defi} \label{def: stable vs str stable}
Let $\rho = (\rho_a) \in \Sx{R}{c}$. 
We say that a flag $V_\bullet$ is:
\begin{itemize}[itemsep=5pt]
\item $\rho$\emph{-stable} if $\rho_a(V_{j}(s(a))) \subseteq V_{j}(t(a))$, 
\item \emph{strictly} $\rho$\emph{-stable} if $\rho_a(V_{j}(s(a))) \subseteq V_{j-1}(t(a))$, 
\end{itemize}
for all $a \in Q_1$ and $1 \leq j \leq \ell_{\underline{\mathbf{d}}}$. 
\end{defi} 

Given $\underline{\mathbf{d}} \succcurlyeq \underline{\mathbf{e}}$ and $V_\bullet$, a flag of type $\underline{\mathbf{d}}$, let $V_\bullet|_{\underline{\mathbf{e}}}$ denote its coarsening to a flag of type $\underline{\mathbf{e}}$. Let
\begin{equation} \label{Qde} \Sxxy{Q}{d}{e} := \{ (V_\bullet, \rho) \in \Sxx{F}{d} \times \Sx{R}{c} \mid V_\bullet|_{\underline{\mathbf{e}}} \mbox{ is } \rho\mbox{-stable} \}, \quad \Sxx{Q}{d}:= \Sxxy{Q}{d}{d}. \end{equation}
be the space of flags of type $\ux{d}$ together with suitably compatible quiver representations. 
There is a canonical $\mathsf{G}_{\mathbf{c}}$-equivariant isomorphism 
\begin{equation} \label{Qd as quotient} \Gx{G}{c} \times^{\Gxx{P}{d}} \Sxx{R}{e} \cong \Sxxy{Q}{d}{e}, \quad (g,\rho) \mapsto (g \cdot \mathbf{V}_{\underline{\mathbf{d}}}, g \cdot \rho), \end{equation}  
where $ \Sxx{R}{d} := \{ \rho \in \Sx{R}{c} \mid \mathbf{V}_{\underline{\mathbf{d}}} \mbox{ is } \rho\mbox{-stable}\}$. 
Let 
\[ \Sxx{F}{d} \overset{\tau_{\ux{d}}}{\longleftarrow} \Sxx{Q}{d} \overset{\pi_{\ux{d}}}{\longrightarrow} \Sx{R}{c} \]
be the canonical projections. The first one, $\tau_{\ux{d}}$, is a vector bundle while the second one, $\pi_{\ux{d}}$, is a proper map.  We abbreviate 
\[ \Sx{F}{c} :=  \bigsqcup_{\underline{\mathbf{d}} \rightslice \mathbf{c}} \Sxx{F}{d}, \quad \Sx{Q}{c} :=  \bigsqcup_{\underline{\mathbf{d}} \rightslice \mathbf{c}} \Sxx{Q}{d}, \quad \pi_{\mathbf{c}} := \sqcup \pi_{\ux{d}} \colon  \Sx{Q}{c} \to \Sx{R}{c}.\] 

\begin{defi}
Let $\Sxx{Q}{d}^s$ and $\Sxx{R}{d}^s$ be the varieties obtained by replacing ``$\rho$-stable'' with ``strictly $\rho$-stable'' in the definitions of $\Sxx{Q}{d}$ and $\Sxx{R}{d}$, respectively. 
Set $\Sx{Q}{c}^s = \bigsqcup_{\underline{\mathbf{d}} \rightslice \mathbf{c}}\Sxx{Q}{d}^s$. 
\end{defi}

\begin{rem} \label{rem: str st vs st}
We make a few remarks about the relationship between $\Sxx{Q}{d}$ and $\Sxx{Q}{d}^s$. 
\begin{enumerate}[label=(\roman*),topsep=2pt,itemsep=1pt]
\item The variety $\Sxx{Q}{d}$ is isomorphic to the variety of representations $\phi$ of $Q$ with dimension vector~$\bx{c}$ together with a filtration by subrepresentations $\phi_1 \subset \hdots \subset \phi_{\ld}$ such that the dimension vectors of the subquotients form the vector composition $\ux{d}$. 
If we impose the additional condition that the subquotients in a filtration are nilpotent and semisimple, we obtain $\Sxx{Q}{d}^s$.
\item If the quiver $Q$ has no edge loops and each dimension vector $\mathbf{d}_1, \hdots, \mathbf{d}_{\ell_{\underline{\mathbf{d}}}}$ in the vector composition $\ux{d}$ is supported only at one vertex, then $\Sxx{Q}{d} = \Sxx{Q}{d}^s$. 
\item The variety $\Sxx{Q}{d}$ is a generalization of the universal quiver Grassmannian defined in \cite{Sch}. 
\end{enumerate}
\end{rem}

\subsection{The Steinberg variety.} 
Given $\ux{d}, \ux{e} \rightslice \mathbf{c}$, set
\[ \mathfrak{Z}_{\underline{\mathbf{e}},\underline{\mathbf{d}}} := \mathfrak{Q}_{\underline{\mathbf{e}}} \times_{\mathfrak{R}_{\mathbf{c}}} \mathfrak{Q}_{\underline{\mathbf{d}}}, \quad \mathfrak{Z}_{\mathbf{c}} := \mathfrak{Q}_{\mathbf{c}} \times_{\mathfrak{R}_{\mathbf{c}}} \mathfrak{Q}_{\mathbf{c}} = \bigsqcup_{\ux{d},\ux{e} \rightslice \mathbf{c}} \Sxxy{Z}{e}{d},\] 
where the fibred product is taken with respect to  
$\pi_{\mathbf{c}}.$ We call $\Sx{Z}{c}$ the \emph{quiver Steinberg variety}. 
Let $\mathfrak{Z}_{\underline{\mathbf{e}},\underline{\mathbf{d}}}^s := \mathfrak{Q}_{\underline{\mathbf{e}}}^s \times_{\mathfrak{R}_{\mathbf{c}}} \mathfrak{Q}_{\underline{\mathbf{d}}}^s$ and $\Sx{Z}{c}^s := \Sx{Q}{c}^s \times_{\Sx{R}{c}} \Sx{Q}{c}^s$ be the corresponding strictly stable versions. 

\begin{exa}
Let $Q$ be the Jordan quiver, $\bx{c}=n$ and $\ux{d} = (1^n)$. Then $\Sx{R}{c}^s = \mathcal{N}$ is the nilpotent cone, $\Sxx{Q}{d}^s = \widetilde{\mathcal{N}} = T^*(GL_n/B)$ is the cotangent bundle to the flag variety, $\pi_{\ux{d}} \colon  \widetilde{\mathcal{N}} \to \mathcal{N}$ is the Springer resolution and $\Sxxy{Z}{d}{d}^s$ is the usual Steinberg variety. On the other hand, $\Sx{R}{c} = \g = \mathfrak{gl}_n$, $\Sxx{Q}{d} = \widetilde{\g} = GL_n \times^B \mathfrak{b}$ and $\pi_{\ux{d}} \colon \widetilde{\g} \to \g$ is the Grothendieck-Springer resolution. 
\end{exa}

We next define a relative position stratification on $\Ax{Z}{c}$. 
Consider the projection
\[ \tau_{\ux{e},\ux{d}} \colon \Sxxy{Z}{e}{d} \to \Sxx{F}{e} \times \Sxx{F}{d} \]
remembering the flags and forgetting the quiver representation. Given
$w \in \dc{e}{d}{c}$, let $\O_w^{\Delta} := \Gx{G}{c}\cdot(e\Gxx{P}{e}, w\Gxx{P}{d}) \subset \Sxx{F}{e} \times \Sxx{F}{d}$ be the diagonal $\Gx{G}{c}$-orbit corresponding to $w$, and set 
\begin{equation} \label{Zw lesseq} \Sxxy{Z}{e}{d}^w := \tau_{\ux{e},\ux{d}}^{-1}(\O_w^{\Delta}), \quad \Sxxy{Z}{e}{d}^{\leqslant w} := \bigsqcup_{\dc{e}{d}{c} \ni u \leqslant w} \Sxxy{Z}{e}{d}^u, \quad \Sxxy{Z}{e}{d}^{< w} = \Sxxy{Z}{e}{d}^{\leqslant w} \backslash \Sxxy{Z}{e}{d}^w,\end{equation}
where $u \leqslant w$ stands for the Bruhat order. 

\begin{lem} \label{lem: Z closed}
For each $w \in \dc{e}{d}{c}$, the subvariety $\Sxxy{Z}{e}{d}^{\leqslant w}$ is closed in $\Sxxy{Z}{e}{d}$, and the inclusion $\Sxxy{Z}{e}{d}^w \hookrightarrow \Sxxy{Z}{e}{d}^{\leqslant w}$ is an open immersion. 
\end{lem}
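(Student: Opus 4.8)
The plan is to reduce the statement to a computation with the relative position stratification on the product of flag varieties $\Sxx{F}{e} \times \Sxx{F}{d}$, where the analogous facts are classical, and then pull back along the vector bundle $\tau_{\ux{e},\ux{d}}$. Concretely, first I would observe that $\tau_{\ux{e},\ux{d}} \colon \Sxxy{Z}{e}{d} \to \Sxx{F}{e} \times \Sxx{F}{d}$ need not be a vector bundle (the fibres are the spaces of quiver representations making both flags stable, whose dimension can jump with $w$), but it is at least a $\Gx{G}{c}$-equivariant morphism, and crucially $\Sxxy{Z}{e}{d}^{\leqslant w} = \tau_{\ux{e},\ux{d}}^{-1}(\overline{\O_w^{\Delta}})$ and $\Sxxy{Z}{e}{d}^{w} = \tau_{\ux{e},\ux{d}}^{-1}(\O_w^{\Delta})$, where $\overline{\O_w^{\Delta}}$ denotes the Zariski closure. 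So both assertions will follow from the corresponding assertions about orbits in $\Sxx{F}{e} \times \Sxx{F}{d}$, since closedness and open immersions are preserved under taking preimages under a morphism.

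The key step is therefore the orbit-closure statement in $\Sxx{F}{e} \times \Sxx{F}{d}$: that $\overline{\O_w^{\Delta}} = \bigsqcup_{u \leqslant w} \O_u^{\Delta}$, with the union over $u \in \dc{e}{d}{c}$ and $\leqslant$ the Bruhat order. Identifying $\Sxx{F}{e} \times \Sxx{F}{d}$ with $\Gx{G}{c}/\Gxx{P}{e} \times \Gx{G}{c}/\Gxx{P}{d}$ and using the bijection between $\Gx{G}{c}$-orbits on this product (under the diagonal action) and $\Gxx{P}{e}\backslash\Gx{G}{c}/\Gxx{P}{d}$, which in turn is indexed by the minimal double coset representatives $\dc{e}{d}{c}$, the closure relation is exactly the Bruhat order on double cosets. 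I would cite the standard reference for this (e.g.\ the theory of Bruhat decomposition for parabolic double cosets, as in Curtis--Reiner or in the convolution-algebra setup of \cite{CG}), or alternatively reduce to the full-flag case $\Gx{G}{c}/\Gx{B}{c} \times \Gx{G}{c}/\Gx{B}{c}$ where $\overline{\O_w} = \bigsqcup_{u \leqslant w}\O_u$ is the classical statement, and then push forward along the proper surjection to the partial flag product, using that images and preimages of orbit closures behave well and that $\dc{e}{d}{c}$ consists of minimal length representatives so that the Bruhat order restricts correctly.

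Granting the orbit-closure statement, closedness of $\Sxxy{Z}{e}{d}^{\leqslant w}$ is immediate: it is the preimage of the closed set $\overline{\O_w^{\Delta}}$ under the continuous map $\tau_{\ux{e},\ux{d}}$. For the second assertion, $\O_w^{\Delta}$ is open in $\overline{\O_w^{\Delta}}$ (an orbit is always open in its closure, being a smooth locally closed subvariety whose complement is a union of lower-dimensional orbits, hence closed), so taking preimages, $\Sxxy{Z}{e}{d}^{w} = \tau_{\ux{e},\ux{d}}^{-1}(\O_w^{\Delta})$ is open in $\tau_{\ux{e},\ux{d}}^{-1}(\overline{\O_w^{\Delta}}) = \Sxxy{Z}{e}{d}^{\leqslant w}$, and an open inclusion of varieties is an open immersion.

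The main obstacle I anticipate is purely bookkeeping: making sure the indexing by $\dc{e}{d}{c}$, the Bruhat order on it, and the passage between full and partial flags are all compatible — in particular, verifying that for $u, w \in \dc{e}{d}{c}$ the condition $\O_u^{\Delta} \subseteq \overline{\O_w^{\Delta}}$ is genuinely equivalent to $u \leqslant w$ in the Bruhat order as defined here (restricted from $\Gx{W}{c}$), rather than some induced order on the double coset space. This is standard but needs the fact that minimal length double coset representatives are well-behaved with respect to $\leqslant$; I would handle it by lifting to the complete flag variety and using that the projection $\Gx{G}{c}/\Gx{B}{c} \to \Sxx{F}{d}$ sends $\O_v$ onto $\O_{w}^\Delta$ where $w$ is the minimal representative of $\Gxx{W}{e} v \Gxx{W}{d}$, together with the compatibility of Bruhat order with these projections. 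No serious geometric difficulty is expected beyond this.
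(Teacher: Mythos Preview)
Your proposal is correct and follows essentially the same approach as the paper: both arguments identify $\Sxxy{Z}{e}{d}^{\leqslant w}$ and $\Sxxy{Z}{e}{d}^{w}$ as preimages under $\tau_{\ux{e},\ux{d}}$ of $\overline{\O_w^{\Delta}}$ and $\O_w^{\Delta}$ respectively, invoke the Bruhat decomposition $\overline{\O_w^{\Delta}} = \bigsqcup_{u \leqslant w} \O_u^{\Delta}$, and conclude by continuity of $\tau_{\ux{e},\ux{d}}$ and openness of an orbit in its closure. The paper's proof is simply a terser version of yours, omitting the bookkeeping discussion about the compatibility of the Bruhat order with minimal double coset representatives that you (rightly) flag as the only subtlety.
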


\begin{proof}
By the usual Bruhat decomposition, we have $\overline{\O_w^{\Delta}} = \bigsqcup_{\dc{e}{d}{c} \ni u \leqslant w} \O_u^{\Delta}$. Hence $\Sxxy{Z}{e}{d}^{\leqslant w} = \tau_{\ux{e},\ux{d}}^{-1}(\overline{\O_w^{\Delta}})$ is closed in $\Sxxy{Z}{e}{d}$. Since $\O_w^\Delta$ is open in $\overline{\O_w^{\Delta}}$, the preimage $\Sxxy{Z}{e}{d}^w= \tau_{\ux{e},\ux{d}}^{-1}(\O_w^{\Delta})$ is also open in $\Sxxy{Z}{e}{d}^{\leqslant w}$. 
\end{proof}

\subsection{Cohomology.} \label{section: cohomology}

Below we will always deal with complex algebraic varieties which are also smooth manifolds or admit closed embeddings into smooth manifolds. 
Let $X$ be a complex algebraic variety with an action of a complex linear algebraic group $G$. We denote by $EG$ the universal bundle and by $BG$ the classifying space associated to $G$. The quotient $X_G := EG \times^G X = (EG \times X)/G$ by the diagonal $G$-action 
is called the homotopy quotient of $X$ by $G$. 
Let $H^\bullet_G(X) := H^\bullet(X_G)$ denote the $G$-equivariant cohomology ring and $H_\bullet^G(X) := H_\bullet(X_G)$ the $G$-equivariant Borel-Moore homology of $X$, with coefficients in $\C$. 
If $Y \subseteq X$ is a closed $G$-stable subvariety, let $[Y] \in H_\bullet^G(X)$ denote its $G$-equivariant fundamental class. 
Given a $G$-equivariant complex vector bundle $V$ on $X$, let $\mathtt{eu}_{G}(V) = \mathtt{eu}(EG \times^G V) \in H^\bullet_G(X)$ denote its top $G$-equivariant Chern class, i.e., the equivariant Euler class of the underlying real vector bundle. 
More information about equivariant homology and cohomology may be found in, e.g., \cite{Bri, Bri2}. 

We will now introduce notation for various equivariant cohomology groups. 
Define 
\[ 
\Ax{P}{c} := H^\bullet( B\Gx{T}{c}) = \bigotimes_{i \in Q_0} \C[x_1(i), \hdots, x_{\mathbf{c}(i)}(i)], 
\]
where 
$x_j(i) := \mathtt{eu}(\mathfrak{V}_j(i))$ is the first Chern class of the line bundle 
\begin{equation} \label{line bundle} \mathfrak{V}_j(i) := E\Gx{T}{c} \times^{\omega_j(i)} \C. \end{equation}
For each $\ux{d} \rightslice \mathbf{c}$, 
set 
\[ \lamxx{d} := \Ax{P}{c}^{\Gxx{W}{d}}, \quad \lamx{c} := \bigoplus_{\underline{\mathbf{d}} \rightslice \mathbf{c}} \lamxx{d}. \] 
The canonical map $B\Gx{T}{c} \twoheadrightarrow B \Gxx{P}{d}$ induces an injective algebra homomorphism
\[ H^\bullet(B\Gxx{P}{d}) \hookrightarrow  H^\bullet(B\Gx{T}{c}) 
\] 
whose image is $\lamxx{d}$. 
Given any $\underline{\mathbf{d}} \succcurlyeq \underline{\mathbf{e}}$, we use the homotopy equivalence $\Sxxy{Q}{d}{e} \simeq \Sxx{F}{d}$ and the fact that $(\Sxx{F}{d})_{\Gx{G}{c}} = B\Gxx{P}{d}$ to 
identify
\[ H^\bullet_{\Gx{G}{c}}(\Sxxy{Q}{d}{e}) \cong H^\bullet_{\Gx{G}{c}}(\Sxx{F}{d}) = \Lambda_{\ux{d}}. \] 

We next introduce notation for various equivariant Borel-Moore homology groups. Set
\begin{equation} \label{BM homology groups Q Z} \Axx{Q}{d} := H_\bullet^{\Gx{G}{c}}(\Sxx{Q}{d}), \quad \mathcal{Q}_{\mathbf{c}} := H^{\mathsf{G}_{\mathbf{c}}}_\bullet( \mathfrak{Q}_{\mathbf{c}}), \quad \quad \Axxy{Z}{e}{d} := H_\bullet^{\Gx{G}{c}}(\Sxxy{Z}{e}{d}), \quad \mathcal{Z}_{\mathbf{c}} := H^{\mathsf{G}_{\mathbf{c}}}_\bullet( \mathfrak{Z}_{\mathbf{c}}). \end{equation}
Since the varieties $\Sxx{Q}{d}$ and $\Sx{Q}{c}$ are smooth, Poincar\'{e} duality yields isomorphisms
\begin{equation} \label{Poincare} \mathcal{Q}_{\underline{\mathbf{d}}} \cong H_{\mathsf{G}_{\mathbf{c}}}^\bullet( \mathfrak{Q}_{\underline{\mathbf{d}}}) \cong \Lambda_{\ux{d}}, \quad \Ax{Q}{c} \cong H_{\mathsf{G}_{\mathbf{c}}}^\bullet( \Sx{Q}{c}) \cong \Lambda_{\mathbf{c}}. \end{equation} 
Moreover, set 
\begin{equation} \label{Zw Ze} \Axxy{Z}{e}{d}^w := H_\bullet^{\Gx{G}{c}}(\Sxxy{Z}{e}{d}^w), \quad \Ax{Z}{c}^e := \bigoplus_{\underline{\mathbf{d}} \rightslice \mathbf{c}} \Axxy{Z}{d}{d}^e. \end{equation} 

\subsection{Convolution.} \label{subsec:convolution}

We recall the definition of the convolution product from \cite{CG}. Let $G$ be a complex Lie group, $X_1, X_2, X_3$ be smooth complex $G$-manifolds, and let $Z_{12} \subset X_1 \times X_2$ and $Z_{23} \subset X_2 \times X_3$ be closed $G$-stable subsets. Let $p_{ij} \colon X_1 \times X_2 \times X_3 \to X_i \times X_j$ be the projection onto the $i$-th and $j$-th factors. Assume that the restriction of $p_{13}$ to $Z_{12} \times_{X_2} Z_{23}$ is proper. Set $Z_{13} = Z_{12} \circ Z_{23} = p_{13}(Z_{12} \times_{X_2} Z_{23})$. Given $c_{12} \in H_\bullet^{G}(Z_{12})$ and $c_{23} \in H_\bullet^{G}(Z_{23})$, their \emph{convolution} is defined as 
\[ c_{12} \star c_{23} := (p_{13})_*((p_{12}^*c_{12}) \cap (p_{23}^*c_{23})) \in H_\bullet^G(Z_{13}),\]
where $\cap$ denotes the intersection pairing. We will often need to compute the convolution of fundamental classes in the following special case. 

\begin{lem} \label{lem:conv of classes}
Assume that $Z_{12} \subset X_1 \times X_2$ and $Z_{23} \subset X_2 \times X_3$ are complex submanifolds. Further, suppose that either of the canonical projections $Z_{12} \to X_2 \leftarrow Z_{23}$ is a submersion, and that the map $p_{13} \colon Z_{12} \times_{X_2} Z_{23} \to Z_{13}$ is an isomorphism. 
Then $[Z_{12}] \star [Z_{23}] = [Z_{13}]$.  
\end{lem}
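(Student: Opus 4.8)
The plan is to reduce the statement to the projection formula plus a transversality/base-change argument. First I would unwind the definitions: $Z_{13}$ is by hypothesis identified (via $p_{13}$) with the fibred product $Z_{12}\times_{X_2}Z_{23}$, so all three classes live on manifolds and it suffices to compare $[Z_{12}\times_{X_2}Z_{23}]$ with $(p_{12}^*[Z_{12}])\cap(p_{23}^*[Z_{23}])$ inside $H_\bullet^G(X_1\times X_2\times X_3)$, since $p_{13}$ then carries the former to $[Z_{13}]$ by properness and the fact that it is an isomorphism onto its image (degree one, so no multiplicity). The key geometric input is that, because one of the projections $Z_{12}\to X_2$ or $Z_{23}\to X_2$ is a submersion, the subvarieties $p_{12}^{-1}(Z_{12})$ and $p_{23}^{-1}(Z_{23})$ of $X_1\times X_2\times X_3$ intersect cleanly (indeed transversally), and their scheme-theoretic intersection is exactly $Z_{12}\times_{X_2}Z_{23}$, which is therefore smooth of the expected codimension.

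The main steps, in order: (1) observe $p_{ij}^{-1}(Z_{ij})$ is a smooth submanifold of $X_1\times X_2\times X_3$ with $[p_{ij}^{-1}(Z_{ij})]=p_{ij}^*[Z_{ij}]$, using that $p_{ij}$ is a submersion (projection off a smooth factor); (2) show the submersion hypothesis on $Z_{12}\to X_2$ (or $Z_{23}\to X_2$) forces $p_{12}^{-1}(Z_{12})$ and $p_{23}^{-1}(Z_{23})$ to meet transversally in $X_1\times X_2\times X_3$ — locally near a common point the normal directions of the two come from the $X_1\times X_2$ and $X_2\times X_3$ slots and the submersion guarantees the $X_2$-overlap contributes no excess dimension; (3) invoke the standard fact (e.g.\ \cite{CG}) that a transversal intersection of equivariant cycles multiplies fundamental classes to the fundamental class of the intersection, giving $p_{12}^*[Z_{12}]\cap p_{23}^*[Z_{23}]=[p_{12}^{-1}(Z_{12})\cap p_{23}^{-1}(Z_{23})]=[Z_{12}\times_{X_2}Z_{23}]$; (4) apply $(p_{13})_*$: by hypothesis this restricts to an isomorphism $Z_{12}\times_{X_2}Z_{23}\xrightarrow{\sim}Z_{13}$, and $p_{13}$ is proper on this locus, so the pushforward of the fundamental class is $[Z_{13}]$. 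Assembling (3) and (4) gives $[Z_{12}]\star[Z_{23}]=[Z_{13}]$.

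The step I expect to be the main obstacle is (2): verifying cleanly that the submersion hypothesis on just \emph{one} of the two projections to $X_2$ suffices for transversality of the two pulled-back cycles in the triple product. One has to check that at a point $(x_1,x_2,x_3)$ lying in both $p_{12}^{-1}(Z_{12})$ and $p_{23}^{-1}(Z_{23})$, the sum of tangent spaces $T(p_{12}^{-1}Z_{12})+T(p_{23}^{-1}Z_{23})$ fills $T(X_1\times X_2\times X_3)$; the point is that $T(p_{12}^{-1}Z_{12})$ already surjects onto $T_{x_1}X_1\oplus T_{x_2}X_2$ composed appropriately, and the submersion of $Z_{23}\to X_2$ (say) means $T(p_{23}^{-1}Z_{23})$ surjects onto the $T_{x_2}X_2$ factor, so together with $T(p_{12}^{-1}Z_{12})$ covering $T_{x_1}X_1$ and $T(p_{23}^{-1}Z_{23})$ covering $T_{x_3}X_3$, the three slots are covered. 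Everything else is bookkeeping with the projection formula and the behaviour of fundamental classes under proper pushforward along an isomorphism, which is routine; the equivariance adds nothing essential since all maps and cycles are $G$-equivariant and one works in the homotopy quotient throughout.
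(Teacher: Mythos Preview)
Your proposal is correct and follows essentially the same approach as the paper: the paper's proof likewise argues that the submersion hypothesis forces transversality of $p_{12}^{-1}(Z_{12})$ and $p_{23}^{-1}(Z_{23})$ (citing \cite[Remark 2.7.27(ii)]{CG}), invokes \cite[Proposition 2.6.47]{CG} to identify the cap product with $[Z_{12}\times_{X_2}Z_{23}]$, and then pushes forward along the isomorphism $p_{13}$. Your step (2) spells out in more detail what the paper delegates to the Chriss--Ginzburg reference, but the structure is identical.
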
 

\begin{proof}
The submersion assumption implies that the intersection of $p_{12}^{-1}(Z_{12})$ and $p_{23}^{-1}(Z_{23})$ is transverse (see \cite[Remark 2.7.27.(ii)]{CG}). Hence, by \cite[Proposition 2.6.47]{CG}, we have  $p_{12}^*[Z_{12}] \cap p_{23}^*[Z_{23}] = [Z_{12} \times_{X_2} Z_{23}]$. Since $p_{13}$, restricted to  $Z_{12} \times_{X_2} Z_{23}$, is an isomorphism onto $Z_{13}$, we get $(p_{13})_*[Z_{12} \times_{X_2} Z_{23}] =[Z_{13}]$. 
\end{proof}

Let $X$ be a smooth complex $G$-manifold, let $Y$ be a possibly singular complex $G$-variety and let $\pi \colon X \to Y$ be a $G$-equivariant proper map. Set $X_1 = X_2 = X_3 = X$, $Z = Z_{12} = Z_{23} = X \times_Y X$. 
Convolution yields a product $H_\bullet^G(Z) \times H_\bullet^G(Z) \to H_\bullet^G(Z)$, which, by \cite[Corollary 2.7.41]{CG}, makes $H_\bullet^G(Z)$ into a unital associative $H^\bullet(BG)$-algebra. The unit is given by $[X_{\Delta}]$, the $G$-equivariant fundamental class of $X$ diagonally embedded into $Z$.  
Next, let $X_1 = X_2 = X$ and $X_3 = \{ pt \}$. Then convolution yields an action $H_\bullet^G(Z) \times H_\bullet^G(X) \to H_\bullet^G(X)$, which makes $H_\bullet^G(X)$ into a left $H_\bullet^G(Z)$-module.

\section{Quiver Schur algebras}

In this section we define the quiver Schur algebra $\Ax{Z}{c}$ and construct a ``Bott-Samelson basis'' for  $\Ax{Z}{c}$. We deduce that  $\Ax{Z}{c}$ is generated by certain special elements called merges, splits and polynomials. 

\subsection{The quiver Schur algebra} 
Fix $\bx{c} \in \Gamma$. 
We apply the framework of \S \ref{subsec:convolution} to the vector bundle $X = \Sx{Q}{c}$ on the quiver flag variety $\Sx{F}{c}$, the space of quiver representations $Y = \Sx{R}{c}$ and the projection $\pi = \pi_{\mathbf{c}}$. Then $Z = \Sx{Z}{c}$ is the quiver Steinberg variety, and we obtain a convolution algebra structure on its Borel-Moore homology $\Ax{Z}{c} = H^{\mathsf{G}_{\mathbf{c}}}_\bullet( \mathfrak{Z}_{\mathbf{c}})$ and a $\Ax{Z}{c}$-module structure on $\Ax{Q}{c} = H_\bullet^{\Gx{G}{c}}(\Sx{Q}{c})$. By~\eqref{Poincare}, $\Ax{Q}{c}$ can be identified with the direct sum $\lamx{c}$ of rings of invariant polynomials.

\begin{defi}
We call $\Ax{Z}{c}$ the \emph{quiver Schur algebra} associated to $(Q,\mathbf{c})$, and $\Ax{Q}{c}$ its \emph{polynomial representation}. 
\end{defi}

\begin{rem} \label{rem: SW vs our QS}
Our quiver Schur algebra can be seen as a modification of the quiver Schur algebra introduced by Stroppel and Webster in \cite[\S 2.2]{SW}. There are two differences between our construction and theirs. Firstly, Stroppel and Webster only consider cyclic quivers with at least two vertices, while we work with arbitrary finite quivers. Secondly, we use the quiver Steinberg variety $\Sx{Z}{c}$ while they use its strictly stable version $\Sx{Z}{c}^s$. We will refer to the algebra from \cite{SW} as the ``\emph{Stroppel-Webster quiver Schur algebra}'' and denote it by $\Ax{Z}{c}^{SW}$. 
\end{rem}

The following standard result follows from the general theory of convolution algebras (see, e.g., \cite[Proposition 8.6.35]{CG}). 

\begin{prop}
There are canonical isomorphisms  
\begin{equation} \label{Ext alg iso} \mathcal{Z}_{\mathbf{c}} \cong \Ext^\bullet_{\mathsf{G}_{\mathbf{c}}}((\pi_{\mathbf{c}})_*\C_{\mathfrak{Q}_{\mathbf{c}}}, (\pi_{\mathbf{c}})_*\C_{\mathfrak{Q}_{\mathbf{c}}}), \quad \mathcal{Q}_{\mathbf{c}} \cong \Ext^\bullet_{\mathsf{G}_{\mathbf{c}}}(\C_{\mathfrak{R}_{\mathbf{c}}}, (\pi_{\mathbf{c}})_*\C_{\mathfrak{Q}_{\mathbf{c}}}) \end{equation} 
intertwining the convolution product with the Yoneda product, and the convolution action with the Yoneda action, respectively. 
\end{prop}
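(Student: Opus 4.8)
The statement is a standard application of the general machinery relating convolution algebras in Borel–Moore homology to Ext-algebras of pushforward complexes, as developed in \cite[Chapter 8]{CG}. The plan is to verify that the geometric setup of \S\ref{subsec:convolution} satisfies the hypotheses of \cite[Theorem 8.6.7 and Proposition 8.6.35]{CG} and then invoke those results directly. Concretely, recall that $\pi_{\mathbf{c}} \colon \Sx{Q}{c} \to \Sx{R}{c}$ is a proper $\Gx{G}{c}$-equivariant map from the smooth variety $\Sx{Q}{c}$ (which is a disjoint union of vector bundles over the projective quiver flag varieties $\Sxx{F}{d}$), and $\Sx{Z}{c} = \Sx{Q}{c} \times_{\Sx{R}{c}} \Sx{Q}{c}$ is the associated Steinberg-type variety. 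The first step is to pass to the homotopy-quotient (Borel construction) level, working $\Gx{G}{c}$-equivariantly; since $\Gx{G}{c}$ is a complex linear algebraic group this is routine, and all the cited results have equivariant analogues (one approximates $E\Gx{G}{c}$ by finite-dimensional smooth varieties and takes a limit, as in \cite{Bri, Bri2}).

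The second step is the key identification. By the equivariant version of \cite[Lemma 8.6.1 / Equation 8.6.3]{CG}, for a proper map $\pi \colon X \to Y$ with $X$ smooth one has a natural isomorphism
\[ H_\bullet^{\Gx{G}{c}}(X \times_Y X) \cong \Ext^\bullet_{D^b_{\Gx{G}{c}}(Y)}\big(\pi_* \C_X, \pi_* \C_Y^{!}\big), \]
which, after using Poincar\'{e}–Verdier duality on $X$ to replace the dualizing complex $\C_X^{!}$ by a shift of $\C_X$, becomes $\Ext^\bullet_{\Gx{G}{c}}((\pi_{\mathbf{c}})_* \C_{\Sx{Q}{c}}, (\pi_{\mathbf{c}})_* \C_{\Sx{Q}{c}})$ as in \eqref{Ext alg iso}. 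The analogous statement with the first argument replaced by $\C_{\Sx{R}{c}}$ (i.e. taking $X_3 = \Sx{R}{c}$ itself rather than $\Sx{Q}{c}$) gives the second isomorphism in \eqref{Ext alg iso} for $\mathcal{Q}_{\mathbf{c}} = H_\bullet^{\Gx{G}{c}}(\Sx{Q}{c})$, using that $\Sx{Q}{c} = \Sx{Q}{c} \times_{\Sx{R}{c}} \Sx{R}{c}$.

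The third step is to check that these isomorphisms are compatible with the products: the convolution product $\star$ on the left corresponds to the Yoneda (composition) product of Ext-groups on the right, and likewise the convolution action of $\mathcal{Z}_{\mathbf{c}}$ on $\mathcal{Q}_{\mathbf{c}}$ corresponds to the Yoneda action. This is precisely the content of \cite[Proposition 8.6.35]{CG} (and its proof via the functoriality of the $p_{ij}$ and the yoga of six operations), again transported to the equivariant setting. The main — and really only — obstacle is bookkeeping: one must ensure the various degree shifts coming from Poincar\'{e} duality on the possibly-disconnected smooth variety $\Sx{Q}{c}$ (whose components $\Sxx{Q}{d}$ have different dimensions) are handled consistently, so that the grading conventions on both sides match and the isomorphisms are genuinely algebra (resp. module) isomorphisms and not merely isomorphisms of graded vector spaces intertwining the products up to shift. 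Since everything here is an equivariant, disjoint-union version of the classical Chriss–Ginzburg argument, no new ideas are required, and the proposition follows.
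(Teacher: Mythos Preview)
Your proposal is correct and matches the paper's approach exactly: the paper does not give a proof at all, but simply notes before the proposition that it ``follows from the general theory of convolution algebras (see, e.g., \cite[Proposition 8.6.35]{CG})''. Your write-up is an expanded version of precisely this citation, spelling out the equivariant passage and the bookkeeping with shifts that the paper leaves implicit.
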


\subsection{Merges, splits and polynomials} 

We will now introduce notation and a diagrammatic calculus for certain special fundamental classes in $\Ax{Z}{c}$. 
We begin by observing that $\Ax{Z}{c}^e \subset \Ax{Z}{c}$ is a subalgebra and that there is an algebra isomorphism
\begin{equation} \label{Zepols}
\Ax{Z}{c}^e \cong H_{\mathsf{G}_{\mathbf{c}}}^\bullet(\mathfrak{Q}_{\mathbf{c}}) \cong \Lambda_{\mathbf{c}}. 
\end{equation} 
For this reason, we refer to $\Ax{Z}{c}^e$ as ``the polynomials'' in $\Ax{Z}{c}$. Next, observe that the  fundamental classes $\mathsf{e}_{\ux{d}} := [\Sxxy{Z}{d}{d}^e]$ form a complete set of mutually orthogonal idempotents in $\Ax{Z}{c}$. The definition below introduces two other kinds of fundamental classes, which, following \cite{SW}, we call ``merges'' and ``splits''.

\begin{defi} \label{defi:mergesandsplits} 
Given $\underline{\mathbf{d}} \succ \underline{\mathbf{e}} \rightslice \mathbf{c}$, we call 
\begin{itemize}[itemsep=5pt]
\item $\bigcurlywedge_{\ux{d}}^{\ux{e}}:= [\Sxxy{Z}{e}{d}^e] \in \Axxy{Z}{e}{d}$ a \emph{merge},  
\item $\bigcurlyvee_{\ux{e}}^{\ux{d}}:= [\Sxxy{Z}{d}{e}^e] \in \Axxy{Z}{d}{e}$ a \emph{split}. 
\end{itemize}

We say that a merge or split is \emph{elementary} if $\ux{e} = \wedge_k(\ux{d})$ (see Definition \ref{defi: wedge comp}) for some $1 \leq k \leq \ell_{\underline{\mathbf{d}}}-1$. 
We will depict elementary merges and splits diagrammatically in the following way. To the elementary merge  $\bigcurlywedge_{\ux{d}}^{\wedge^k(\ux{d})}$ we associate the diagram 
\[
\tikz[thick,xscale=2,yscale=1.2]{ \small 
\draw (0,0) node[below] {$\bx{d}_k$} to [out=90,in=-90](.3,.5)
(.6,0) node[below] {$\bx{d}_{k+1}$} to [out=90,in=-90] (.3,.5)
(.3,.5) -- (.3,.8) node[above] {$\bx{d}_k+\bx{d}_{k+1}$};
\node at (1.2,0.4) {$:=$};
\begin{scope}[xshift=3cm]
\draw (-1.2,0) -- (-1.2,.8) node[below,at start]{$\bx{d}_1$} node[above]{$\bx{d}_1$};
\node at (-0.9,.4) {$\cdots$}; 
\draw (-0.6,0) -- (-0.6,.8) node[below,at start]{$\bx{d}_{k-1}$} node[above]{$\bx{d}_{k-1}$};
\draw (0,0) node[below] {$\bx{d}_k$} to [out=90,in=-90](.3,.5)
(.6,0) node[below] {$\bx{d}_{k+1}$} to [out=90,in=-90] (.3,.5)
(.3,.5) -- (.3,.8) node[above] {$\bx{d}_k+\bx{d}_{k+1}$};
\draw (1.2,0) -- (1.2,.8) node[below,at start]{$\bx{d}_{k+2}$} node[above]{$\bx{d}_{k+2}$};
\node at (1.5,.4) {$\cdots$}; 
\draw (1.8,0) -- (1.8,.8) node[below,at start]{$\bx{d}_{\ld}$} node[above]{$\bx{d}_{\ld}$};
\end{scope} 
}\] 
and to the elementary split $\bigcurlyvee^{\ux{d}}_{\wedge^k(\ux{d})}$ the diagram 
\[
\tikz[thick,xscale=2,yscale=-1.2]{ \small
\draw (0,0) node[above] {$\bx{d}_k$} to [out=90,in=-90](.3,.5)
(.6,0) node[above] {$\bx{d}_{k+1}$} to [out=90,in=-90] (.3,.5)
(.3,.5) -- (.3,.8) node[below] {$\bx{d}_k+\bx{d}_{k+1}$};
\node at (1.2,0.4) {$:=$};
\begin{scope}[xshift=3cm]
\draw (-1.2,0) -- (-1.2,.8) node[above,at start]{$\bx{d}_1$} node[below]{$\bx{d}_1$};
\node at (-0.9,.4) {$\cdots$}; 
\draw (-0.6,0) -- (-0.6,.8) node[above,at start]{$\bx{d}_{k-1}$} node[below]{$\bx{d}_{k-1}$};
\draw (0,0) node[above] {$\bx{d}_k$} to [out=90,in=-90](.3,.5)
(.6,0) node[above] {$\bx{d}_{k+1}$} to [out=90,in=-90] (.3,.5)
(.3,.5) -- (.3,.8) node[below] {$\bx{d}_k+\bx{d}_{k+1}$};
\draw (1.2,0) -- (1.2,.8) node[above,at start]{$\bx{d}_{k+2}$} node[below]{$\bx{d}_{k+2}$};
\node at (1.5,.4) {$\cdots$}; 
\draw (1.8,0) -- (1.8,.8) node[above,at start]{$\bx{d}_{\ld}$} node[below]{$\bx{d}_{\ld}$};
\end{scope} 
}\]
The diagram on the LHS should be understood as shorthand notation for the full diagram on the RHS. 
Multiplication of elementary merges and splits is depicted through a vertical composition of diagrams. We always read diagrams from the bottom to the top. 

We call 
\[
\tikz[thick,xscale=2,yscale=1.2]{ 
\node at (-1,0.6) {$\cross_{\ux{d}}^k$}; 
\node at (-0.5,0.6) {$:=$}; 
\small 
\draw (0,0) node[below] {$\bx{d}_k$} to [out=90,in=-90](.3,.5)
(.6,0) node[below] {$\bx{d}_{k+1}$} to [out=90,in=-90] (.3,.5)
(.3,.5) -- (.3,.8);
\begin{scope}[yscale=-1, yshift=-1.25cm]
\draw (0,0) node[above] {$\bx{d}_{k+1}$} to [out=90,in=-90](.3,.5)
(.6,0) node[above] {$\bx{d}_{k}$} to [out=90,in=-90] (.3,.5)
(.3,.5) -- (.3,.8);
\end{scope}
}\] 
a \emph{crossing}. 
\end{defi}

\begin{prop} \label{pro:transitivity} 
We list several basic relations which hold in $\Ax{Z}{c}$. 
\begin{enumerate}[label=\alph*), font=\textnormal,noitemsep,topsep=3pt,leftmargin=1cm]
\item Let $\ux{d} \succ \ux{e} \succ \ux{f} \rightslice \bx{c}$. Merges and splits satisfy the following \emph{transitivity} relations: 
\[  \textstyle \mer{e}{f} \star \mer{d}{e} = \mer{d}{f}, \quad \quad \spl{e}{d} \star \spl{f}{e} = \spl{f}{d}. \] 
\item Let $\ux{d} \rightslice \bx{c}$ and $1 \leq k \leq \ell_{\underline{\mathbf{d}}}-2$. Elementary merges satisfy the following \emph{associativity} relation: 
\begin{equation} \tag{R1} \label{R1 relation}
\tikz[thick,xscale=2,yscale=1, baseline=0.8cm]{ \small
\draw (0,0) node[below] {$\bx{d}_k$} to [out=90,in=-90](.3,.5)
(.6,0) node[below] {$\bx{d}_{k+1}$} to [out=90,in=-90] (.3,.5)
(.3,.5) -- (.3,.8) 
(1.2,0) -- (1.2,.8) node[below,at start]{$\bx{d}_{k+2}$} 
(.3,.8) to [out=90,in=-90](.75,1.3)
(1.2,.8) to [out=90,in=-90](.75,1.3) 
(.75,1.3) -- (.75,1.6) node[above] {$\bx{d}_k+\bx{d}_{k+1}+\bx{d}_{k+2}$}; 
\node at (2,0.8) {$=$};
\begin{scope}[xshift=2.8cm]
\draw 
(0,0) -- (0,.8) node[below,at start]{$\bx{d}_{k}$} 
(0.6,0) node[below] {$\bx{d}_{k+1}$} to [out=90,in=-90](.9,.5)
(1.2,0) node[below] {$\bx{d}_{k+2}$} to [out=90,in=-90] (.9,.5)
(.9,.5) -- (.9,.8) 
(0,.8) to [out=90,in=-90](.45,1.3)
(0.9,.8) to [out=90,in=-90](.45,1.3) 
(.45,1.3) -- (.45,1.6) node[above] {$\bx{d}_k+\bx{d}_{k+1}+\bx{d}_{k+2}$}; 
\end{scope}
}\end{equation}
Elementary splits satisfy the following \emph{coassociativity} relation: 
\begin{equation} \tag{R2} \label{R2 relation}
\tikz[thick,xscale=2,yscale=-1, baseline=-0.8cm]{ \small
\draw (0,0) node[above] {$\bx{d}_k$} to [out=90,in=-90](.3,.5)
(.6,0) node[above] {$\bx{d}_{k+1}$} to [out=90,in=-90] (.3,.5)
(.3,.5) -- (.3,.8) 
(1.2,0) -- (1.2,.8) node[above,at start]{$\bx{d}_{k+2}$} 
(.3,.8) to [out=90,in=-90](.75,1.3)
(1.2,.8) to [out=90,in=-90](.75,1.3) 
(.75,1.3) -- (.75,1.6) node[below] {$\bx{d}_k+\bx{d}_{k+1}+\bx{d}_{k+2}$}; 
\node at (2,0.8) {$=$};
\begin{scope}[xshift=2.8cm]
\draw 
(0,0) -- (0,.8) node[above,at start]{$\bx{d}_{k}$} 
(0.6,0) node[above] {$\bx{d}_{k+1}$} to [out=90,in=-90](.9,.5)
(1.2,0) node[above] {$\bx{d}_{k+2}$} to [out=90,in=-90] (.9,.5)
(.9,.5) -- (.9,.8) 
(0,.8) to [out=90,in=-90](.45,1.3)
(0.9,.8) to [out=90,in=-90](.45,1.3) 
(.45,1.3) -- (.45,1.6) node[below] {$\bx{d}_k+\bx{d}_{k+1}+\bx{d}_{k+2}$}; 
\end{scope}
}\end{equation}
\end{enumerate} 
\end{prop}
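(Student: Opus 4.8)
The plan is to prove all the relations by reducing each convolution product to a single application of Lemma \ref{lem:conv of classes}, after an appropriate geometric identification of the varieties involved. In every case the left-hand and right-hand sides are fundamental classes of closed subvarieties of a triple fibre product $\Sxxy{Z}{e}{d}^e$ (or $\Sxxy{Z}{d}{e}^e$), so the whole content is to verify that the relevant composed correspondence $Z_{12}\circ Z_{23}$ is the expected subvariety and that the hypotheses of Lemma \ref{lem:conv of classes} (one projection a submersion, $p_{13}$ an isomorphism onto the image) hold. The key observation is that the merge $\mer{d}{e}=[\Sxxy{Z}{e}{d}^e]$ is, via the quotient description \eqref{Qd as quotient}, nothing but the graph of the ``forget part of the flag'' map $\Sxx{Q}{d}\to\Sxxy{Q}{e}{e}=\Sxx{Q}{e}$ (coarsening a $\ux d$-flag to an $\ux e$-flag sends a $\rho$-stable flag to a $\rho$-stable flag when $\ux d\succ\ux e$), while the split $\spl{e}{d}=[\Sxxy{Z}{d}{e}^e]$ is the transpose graph. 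Since a graph of a morphism projects isomorphically onto its source and its projections are as nice as the map, all the needed transversality and isomorphism statements become essentially formal.

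First I would treat part a), the transitivity relations. For $\mer{e}{f}\star\mer{d}{e}=\mer{d}{f}$: unravel the definition of convolution with $X_1=\Sxx{Q}{f}$, $X_2=\Sxx{Q}{e}$, $X_3=\Sxx{Q}{d}$, $Z_{12}=\Sxxy{Z}{f}{e}^e$, $Z_{23}=\Sxxy{Z}{e}{d}^e$. Using \eqref{Qd as quotient} identify $Z_{12}$ with the graph of the coarsening map $c_{\ux e\to\ux f}\colon\Sxx{Q}{e}\to\Sxx{Q}{f}$ and $Z_{23}$ with the graph of $c_{\ux d\to\ux e}\colon\Sxx{Q}{d}\to\Sxx{Q}{e}$; then $Z_{12}\times_{X_2}Z_{23}$ is the graph of the pair $(c_{\ux e\to\ux f}\circ c_{\ux d\to\ux e},\,\mathrm{id})$ sitting over $\Sxx{Q}{d}$, and $p_{13}$ maps it isomorphically onto the graph of $c_{\ux d\to\ux f}=c_{\ux e\to\ux f}\circ c_{\ux d\to\ux e}$, which is $\Sxxy{Z}{f}{d}^e$, i.e. $\mer{d}{f}$. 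The projection $Z_{23}\to X_2$ is (a pullback of) the coarsening map, hence a smooth surjection, so the submersion hypothesis of Lemma \ref{lem:conv of classes} is met and we conclude. The split identity is obtained by the identical argument with all graphs transposed (or simply by applying the transpose/duality anti-involution on $\Ax{Z}{c}$ that swaps $\Sxxy{Z}{e}{d}$ with $\Sxxy{Z}{d}{e}$), and the elementary associativity \eqref{R1 relation} and coassociativity \eqref{R2 relation} relations of part b) are then immediate special cases: both sides of \eqref{R1 relation} equal $\mer{(\mathbf d_1,\dots,\mathbf d_{k-1},\mathbf d_k+\mathbf d_{k+1}+\mathbf d_{k+2},\mathbf d_{k+3},\dots)}{\ux d}$ by two applications of the transitivity relation just proved (composing the coarsening maps in the two possible orders gives the same total coarsening map), and \eqref{R2 relation} follows by transposition.

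The one point that genuinely needs care — and which I expect to be the main obstacle — is the honest verification, via \eqref{Qd as quotient}, that the subvariety $\Sxxy{Z}{e}{d}^e\subset\Sxx{Q}{e}\times_{\Sx{R}{c}}\Sxx{Q}{d}$ really is the graph of the coarsening morphism and, relatedly, that $p_{13}$ restricted to the triple fibre product is an \emph{isomorphism} and not merely a bijective proper morphism. Concretely one must check: (i) the relative-position locus $\O_e^\Delta\subset\Sxx{F}{e}\times\Sxx{F}{d}$ for the identity double coset is exactly $\{(V_\bullet|_{\ux e},V_\bullet):V_\bullet\in\Sxx{F}{d}\}$, so that after imposing $\rho$-stability of $V_\bullet|_{\ux e}$ — which is automatic once $V_\bullet$ is $\rho$-stable, since $\ux d\succ\ux e$ means $V_\bullet|_{\ux e}$ is a subflag of $V_\bullet$ — we recover precisely the graph; and (ii) the scheme-theoretic fibre product $Z_{12}\times_{X_2}Z_{23}$ is reduced and smooth, so that the set-theoretic isomorphism $p_{13}$ is an isomorphism of varieties, which is where the submersion hypothesis does its real work (transversality of $p_{12}^{-1}Z_{12}$ and $p_{23}^{-1}Z_{23}$ forces the intersection, hence the fibre product, to be smooth of the expected dimension). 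Once these transversality/reducedness points are nailed down — most cleanly by working $\Gx{G}{c}$-equivariantly on the quotient presentations $\Gx{G}{c}\times^{\Gxx{P}{\ux d}}\Sxx{R}{d}$, where the coarsening maps become the obvious $\Gxx{P}{\ux d}\hookrightarrow\Gxx{P}{\ux e}$–induced maps — everything else is bookkeeping, and Lemma \ref{lem:conv of classes} delivers each displayed equality on the nose.
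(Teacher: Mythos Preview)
Your approach is essentially the same as the paper's: part a) via Lemma~\ref{lem:conv of classes}, part b) as an immediate consequence of part a). The paper records only that, and you have correctly expanded the ``easy calculation'' by identifying $\Sxxy{Z}{e}{d}^e$ with the graph of the coarsening map.

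One small correction: you verify the submersion hypothesis by claiming that $Z_{23}\to X_2$ (the coarsening map $\Sxx{Q}{d}\to\Sxx{Q}{e}$) is a smooth surjection. This is false in general: the fibre over $(W_\bullet,\rho)$ consists of $\rho$-stable refinements of $W_\bullet$ to type $\ux{d}$, and its dimension varies with $\rho$ (cf.\ the factorization \eqref{iotap}, where $\iota$ is a closed embedding, not a submersion). The fix is trivial, since Lemma~\ref{lem:conv of classes} only asks for \emph{one} of the two projections to be a submersion: the other projection $Z_{12}\to X_2$ is the second projection of the graph of $c_{\ux{e}\to\ux{f}}$, hence an isomorphism onto $\Sxx{Q}{e}$. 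With that change your argument goes through verbatim.
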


\begin{proof}
Part a) follows via an easy calculation from Lemma \ref{lem:conv of classes}. Part b) follows immediately from part a). 
\end{proof} 

\subsection{Relation to KLR algebras} 
We would like to connect the quiver Schur algebra to the well known quiver Hecke (or KLR) algebra (associated to the same quiver $Q$), defined diagrammatically by Khovanov and Lauda \cite{KL1}, and algebraically by Rouquier \cite{Rou}. Let us recall its geometric construction and some generalizations. 
Define  
\begin{equation} \label{KLR} \mathfrak{Q}_{\mathbf{c}}^{\mathsf{KLR}} := \bigsqcup_{\underline{\mathbf{d}} \in \Com_{\mathbf{c}}^{|\mathbf{c}|}} \mathfrak{Q}_{\underline{\mathbf{d}}}, \quad \mathfrak{Z}_{\mathbf{c}}^{\mathsf{KLR}} := \mathfrak{Q}_{\mathbf{c}}^{\mathsf{KLR}} \times_{\mathfrak{R}_{\mathbf{c}}} \mathfrak{Q}_{\mathbf{c}}^{\mathsf{KLR}}, \quad \mathcal{Z}_{\mathbf{c}}^{\mathsf{KLR}} := H^{\mathsf{G}_{\mathbf{c}}}_\bullet( \mathfrak{Z}_{\mathbf{c}}^{\mathsf{KLR}}).  \end{equation}
Note that the set $\Com_{\mathbf{c}}^{|\mathbf{c}|}$ contains precisely the vector compositions of $\bx{c}$ of maximal length, i.e., those parametrizing the types of complete quiver flags. 
Let $\mathcal{Z}_{\mathbf{c}}^{\mathsf{KLR},s}$ be the strictly stable version of $\mathcal{Z}_{\mathbf{c}}^{\mathsf{KLR}}$,  obtained by replacing each $\mathfrak{Q}_{\underline{\mathbf{d}}}$ with $\mathfrak{Q}_{\underline{\mathbf{d}}}^s$ in \eqref{KLR}. 

In the case when the quiver $Q$ has no edge loops, it was proven by Varagnolo and Vasserot \cite{VV} that 
the convolution algebra $\mathcal{Z}_{\mathbf{c}}^{\mathsf{KLR}}$ gives a geometric realization of the KLR algebra associated to $(Q, \mathbf{c})$. 
Note that in this case Remark \ref{rem: str st vs st}(ii) implies that $\mathcal{Z}_{\mathbf{c}}^{\mathsf{KLR},s}$ coincides with $\mathcal{Z}_{\mathbf{c}}^{\mathsf{KLR}}$. 

The case when $Q$ may contain edge loops was studied by Kang, Kashiwara and Park \cite{KKP}. They showed that the convolution algebra $\mathcal{Z}_{\mathbf{c}}^{\mathsf{KLR},s}$ gives a geometric realization of the generalized KLR algebra from \cite{KOP}  associated to a symmetrizable Borcherds-Cartan datum. 

Let $Q$ be an arbitrary quiver. 
The following proposition describes $\mathcal{Z}_{\mathbf{c}}^{\mathsf{KLR}}$ as a subalgebra of $\Ax{Z}{c}$. 

\begin{prop}
The convolution algebra $\mathcal{Z}_{\mathbf{c}}^{\mathsf{KLR}}$ is a subalgebra of $\mathcal{Z}_{\mathbf{c}}$. It is generated by the polynomials $\Axxy{Z}{d}{d}^e$ and the crossings $\cross_{\ux{d}}^k$ (for $\underline{\mathbf{d}} \in \Com_{\mathbf{c}}^{|\mathbf{c}|}$ and $1 \leq k \leq |\mathbf{c}|-1$). 
\end{prop}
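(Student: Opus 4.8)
The plan is to realize $\mathcal{Z}_{\mathbf{c}}^{\mathsf{KLR}}$ as a corner subalgebra of $\mathcal{Z}_{\mathbf{c}}$ and then to show that its blocks are built up from polynomials and crossings by climbing the relative‑position filtration. For the subalgebra claim: each $\Sxx{Q}{d}$ is connected, being the total space of the vector bundle $\tau_{\ux{d}}$ over the connected variety $\Sxx{F}{d}$, so $\Sx{Q}{c}=\bigsqcup_{\ux{d}\rightslice\mathbf{c}}\Sxx{Q}{d}$ is the decomposition of $\Sx{Q}{c}$ into connected components, and $\mathfrak{Q}_{\mathbf{c}}^{\mathsf{KLR}}$ is the union of those indexed by $\ux{d}\in\Com_{\mathbf{c}}^{|\mathbf{c}|}$. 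Hence $\mathfrak{Z}_{\mathbf{c}}^{\mathsf{KLR}}=\mathfrak{Q}_{\mathbf{c}}^{\mathsf{KLR}}\times_{\mathfrak{R}_{\mathbf{c}}}\mathfrak{Q}_{\mathbf{c}}^{\mathsf{KLR}}=\bigsqcup_{\ux{d},\ux{e}\in\Com_{\mathbf{c}}^{|\mathbf{c}|}}\Sxxy{Z}{e}{d}$ is open and closed in $\Sx{Z}{c}$, and $\mathcal{Z}_{\mathbf{c}}^{\mathsf{KLR}}=\bigoplus_{\ux{d},\ux{e}\in\Com_{\mathbf{c}}^{|\mathbf{c}|}}\Axxy{Z}{e}{d}$ is a $\C$‑module direct summand of $\mathcal{Z}_{\mathbf{c}}$. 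Since the pullback, intersection and proper pushforward maps of \S\ref{subsec:convolution} are compatible with restriction to unions of connected components, and since $\Axxy{Z}{a}{b}\star\Axxy{Z}{b}{c}\subseteq\Axxy{Z}{a}{c}$ while $\Axxy{Z}{a}{b}\star\Axxy{Z}{b'}{c}=0$ for $\ux{b}\neq\ux{b}'$ (the blocks of a ``generalized matrix algebra''), this summand is closed under the convolution product, the inherited product coincides with the one defined intrinsically from $\mathfrak{Z}_{\mathbf{c}}^{\mathsf{KLR}}$, and $\mathcal{Z}_{\mathbf{c}}^{\mathsf{KLR}}=e^{\mathsf{KLR}}\mathcal{Z}_{\mathbf{c}}e^{\mathsf{KLR}}$ with $e^{\mathsf{KLR}}=\sum_{\ux{d}\in\Com_{\mathbf{c}}^{|\mathbf{c}|}}\mathsf{e}_{\ux{d}}$ is a subalgebra.

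For the generation claim, note that the polynomials $\Axxy{Z}{d}{d}^e$ and — since exchanging the $k$-th and $(k{+}1)$-th parts of $\ux{d}\in\Com_{\mathbf{c}}^{|\mathbf{c}|}$ again lands in $\Com_{\mathbf{c}}^{|\mathbf{c}|}$ — the crossings $\cross_{\ux{d}}^k$ all lie in $\mathcal{Z}_{\mathbf{c}}^{\mathsf{KLR}}$; let $\mathcal{A}$ be the subalgebra they generate. For $\ux{d},\ux{e}\in\Com_{\mathbf{c}}^{|\mathbf{c}|}$ the subgroups $\Gxx{W}{d},\Gxx{W}{e}$ are trivial, so $\dc{e}{d}{c}=\Gx{W}{c}$, and Lemma~\ref{lem: Z closed} gives a stratification $\Sxxy{Z}{e}{d}=\bigsqcup_{w\in\Gx{W}{c}}\Sxxy{Z}{e}{d}^w$ in which each $\Sxxy{Z}{e}{d}^w\to\O_w^{\Delta}$ is a $\Gx{G}{c}$‑equivariant vector bundle over a single $\Gx{G}{c}$‑orbit. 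Running the standard argument with the filtration by the closed subvarieties $\Sxxy{Z}{e}{d}^{\leqslant w}$ (as for the Steinberg variety, cf.\ \cite[Ch.~8]{CG}, \cite{VV}), and using that each $H_\bullet^{\Gx{G}{c}}(\Sxxy{Z}{e}{d}^w)$ is free of rank one over $\Axxy{Z}{d}{d}^e$ (hence has no odd part), shows that $\Axxy{Z}{e}{d}$ is spanned over $\Axxy{Z}{d}{d}^e$ by the stratum closure classes $[\overline{\Sxxy{Z}{e}{d}^w}]$, $w\in\Gx{W}{c}$. Since $\Axxy{Z}{d}{d}^e\subseteq\mathcal{A}$, it remains to prove $[\overline{\Sxxy{Z}{e}{d}^w}]\in\mathcal{A}$ for all $\ux{d},\ux{e},w$.

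I would establish this by induction on $\ell(w)$, producing $[\overline{\Sxxy{Z}{e}{d}^w}]$ up to lower‑order terms as a convolution of crossings along a word realizing the passage from $\ux{d}$ to $\ux{e}$ inside $\Com_{\mathbf{c}}^{|\mathbf{c}|}$. The crucial input, obtained from Lemma~\ref{lem:conv of classes} and Proposition~\ref{pro:transitivity}, is that for a simple reflection $s$ with $\ell(ws)>\ell(w)$ the convolution of $[\overline{\Sxxy{Z}{e}{d'}^w}]$ with the crossing taking $\ux{d}$ to $\ux{d'}$ equals $[\overline{\Sxxy{Z}{e}{d}^{ws}}]$ modulo classes supported on $\Sxxy{Z}{e}{d}^{<ws}$, which follows from the orbit relation $\O_w^{\Delta}\circ\O_s^{\Delta}\subseteq\overline{\O_{ws}^{\Delta}}$ with open dense stratum $\O_{ws}^{\Delta}$ together with a transversality check pinning down the leading term of the crossing. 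The error terms are polynomial multiples of strictly shorter stratum classes, so they lie in $\mathcal{A}$ by induction; the base cases concern the identity relative position, where $\mathsf{e}_{\ux{d}}=[\Sxxy{Z}{d}{d}^e]\in\Axxy{Z}{d}{d}^e\subseteq\mathcal{A}$ for $\ux{e}=\ux{d}$, and for $\ux{e}\neq\ux{d}$ one reaches it by the same mechanism applied to a longer word through $\Com_{\mathbf{c}}^{|\mathbf{c}|}$. This yields $\mathcal{A}=\mathcal{Z}_{\mathbf{c}}^{\mathsf{KLR}}$. I expect the main obstacle to be precisely this relative‑position bookkeeping: identifying the leading terms of a crossing and of $[\overline{\Sxxy{Z}{e}{d'}^w}]\star(\text{crossing})$, and organizing the filtration so the error terms genuinely fall under the inductive hypothesis. (For loop‑free $Q$ the generation statement is already subsumed by the identification of $\mathcal{Z}_{\mathbf{c}}^{\mathsf{KLR}}$ with the KLR algebra in \cite{VV}; and once $\Ax{Z}{c}$ is known to be generated by elementary merges, splits and polynomials (Corollary~\ref{cor:el spl mer}), one may alternatively cut down that generating set by $e^{\mathsf{KLR}}$ on both sides and rewrite the survivors, using Proposition~\ref{pro:transitivity}, in terms of polynomials and crossings — which reduces to the same combinatorial core.)
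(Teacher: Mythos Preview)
Your subalgebra argument matches the paper's exactly: both observe that $\mathfrak{Z}_{\mathbf{c}}^{\mathsf{KLR}}\hookrightarrow\Sx{Z}{c}$ is an inclusion of connected components and hence of convolution algebras.

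For the generation claim, your approach is correct but more laborious than the paper's. The paper's proof is two lines: using Lemma~\ref{lem:conv of classes} it identifies the crossing $\cross_{\ux{d}}^k$ explicitly as the fundamental class $[\mathfrak{Z}_{s_k(\ux{d}),\ux{d}}^e]$ when $s_k(\ux{d})\neq\ux{d}$ and as $[\overline{\Sxxy{Z}{d}{d}^{s_k}}]$ when $s_k(\ux{d})=\ux{d}$, and then simply cites (a straightforward generalization of) \cite[Theorem~3.6]{VV} to conclude that these classes together with the polynomials generate $\mathcal{Z}_{\mathbf{c}}^{\mathsf{KLR}}$. In other words, the induction on $\ell(w)$ you outline --- producing $[\overline{\Sxxy{Z}{e}{d}^w}]$ modulo lower strata as a convolution of crossings, with the relative-position bookkeeping you flag as the main obstacle --- is precisely the content of that cited result, so the paper outsources the work rather than redoing it. You do mention \cite{VV} in your final parenthetical, but only for loop-free $Q$; the paper invokes it (with the parenthetical ``a straightforward generalization'') for arbitrary quivers, which is what lets the proof stay short. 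Your alternative route via Corollary~\ref{cor:el spl mer} is a forward reference but not circular, and would also work; the paper does not take it.
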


\begin{proof}
The natural inclusion $\mathfrak{Z}_{\mathbf{c}}^{\mathsf{KLR}} \hookrightarrow \Sx{Z}{c}$ is an inclusion of connected components, and hence induces an inclusion of the corresponding convolution algebras. This proves the first claim. 
Given $\underline{\mathbf{d}} \in \Com_{\mathbf{c}}^{|\mathbf{c}|}$, 
let us identify $\Gx{W}{c}$ with the subgroup of $\mathsf{Sym}_{|\mathbf{c}|}$ preserving $\ux{d}$.
It follows easily from Lemma \ref{lem:conv of classes} that $\cross_{\ux{d}}^k = [\mathfrak{Z}_{s_k(\ux{d}),\ux{d}}^e]$ if $s_k(\ux{d}) \neq \ux{d}$ and $\cross_{\ux{d}}^k = [\overline{\Sxxy{Z}{d}{d}^{s_k}}]$ if $s_k(\ux{d}) = \ux{d}$. But these fundamental classes, together with the polynomials $\Axxy{Z}{d}{d}^e \cong \lamxx{d}$, generate $\mathcal{Z}_{\mathbf{c}}^{\mathsf{KLR}}$ by (a straightforward generalization of)  \cite[Theorem 3.6]{VV}. 
\end{proof}

\subsection{The combinatorics of refinements} \label{subsec: refinements}

Our next goal is to construct a basis for the quiver Schur algebra which is natural both from algebraic and geometric points of view. Algebraically, the basis elements are certain products of merges, splits and polynomials. Geometrically, they will be realized as pushforwards of vector bundles on diagonal Bott-Samelson varieties. In \S \ref{subsec: refinements} we develop the combinatorial tools  needed to define the basis. We state the basis theorem in \S \ref{sec: basis and gens}, and prove it in \S \ref{subsec: proof of basis}. 

\begin{defi}
Given $i \in Q_0$, let $\mathbf{N}_{\bx{c}}(i):=\{ (1,i), \hdots, (\bx{c}(i),i)\}$, and set $\mathbf{N}_{\bx{c}} := \bigsqcup_{i \in Q_0}\mathbf{N}_{\bx{c}}(i)$. 
By a \emph{partitioning} of $\bx{c}$ of length $n = \ell_\lambda$ we mean a function $\lambda \colon \mathbf{N}_{\bx{c}} \to \Z_{\geq 1}$ such that $\Ima \lambda = [1,n] := \{1, \hdots, n\}$. 
Let $\lambda_i$ be the restriction of $\lambda$ to $\mathbf{N}_{\bx{c}}(i)$. 
Let $\mathbf{Par}_{\bx{c}}^n$ denote the set of all partitionings of $\bx{c}$ of length $n$ and let $\mathbf{Par}_{\bx{c}} := \bigsqcup_{1 \leq n \leq |\mathbf{c}|}\mathbf{Par}_{\bx{c}}^n$. The following lemma follows directly from the definitions. 
\end{defi}

\begin{lem} \label{lem: partitionings and flags}
There is a bijection between $\mathbf{Par}_{\bx{c}}$ and the set of coordinate flags in $\mathbf{V}_{\bx{c}}$, sending $\lambda$ to the flag $V_\bullet$ with $V_r = \langle v_k(i) \mid (k,i) \in \lambda^{-1}([1,r])\rangle$. 
\end{lem}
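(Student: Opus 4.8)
The statement to prove is Lemma~\ref{lem: partitionings and flags}: there is a bijection between $\mathbf{Par}_{\bx{c}}$ and the set of coordinate flags in $\mathbf{V}_{\bx{c}}$, sending $\lambda$ to the flag $V_\bullet$ with $V_r = \langle v_k(i) \mid (k,i) \in \lambda^{-1}([1,r])\rangle$.

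\textbf{Proof plan.} The plan is to check that the indicated assignment $\lambda \mapsto V_\bullet$ is well defined, lands in the set of coordinate flags, and admits an explicit two-sided inverse, so that it is a bijection. By a coordinate flag in $\mathbf{V}_{\bx{c}}$ I mean a strictly increasing chain $\{0\} = V_0 \subsetneq V_1 \subsetneq \hdots \subsetneq V_n = \mathbf{V}_{\bx{c}}$ of $Q_0$-graded subspaces, each of which is spanned by a subset of the fixed basis $\{v_k(i)\}$. The map in the other direction sends such a flag to the function $\lambda \colon \mathbf{N}_{\bx{c}} \to \Z_{\geq 1}$ defined by $\lambda(k,i) := \min\{ r \mid v_k(i) \in V_r \}$, i.e.\ $\lambda(k,i)$ records the step of the flag at which the basis vector $v_k(i)$ first appears.

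First I would verify that $\lambda \mapsto V_\bullet$ is well defined: since $\lambda^{-1}([1,r]) \subseteq \lambda^{-1}([1,r+1])$, the subspaces are nested; since each $V_r$ is a span of basis vectors it is automatically $Q_0$-graded and a coordinate subspace; since $\Ima \lambda = [1,n]$, for each $r \in [1,n]$ there exists $(k,i)$ with $\lambda(k,i) = r$, so the inclusion $V_{r-1} \subsetneq V_r$ is strict; and $V_n = \langle v_k(i) \mid (k,i) \in \mathbf{N}_{\bx{c}}\rangle = \mathbf{V}_{\bx{c}}$ because $\lambda$ is a total function onto $[1,n]$ (while $V_0 = \langle \varnothing \rangle = \{0\}$). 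Hence $V_\bullet$ is a coordinate flag of length $n = \ell_\lambda$. Conversely, given a coordinate flag $V_\bullet$ of length $n$, the function $\lambda(k,i) = \min\{r \mid v_k(i) \in V_r\}$ is well defined (the set is nonempty since $v_k(i) \in V_n$) with values in $[1,n]$ (it is $\geq 1$ since $v_k(i) \notin V_0 = \{0\}$), and it is surjective onto $[1,n]$: if some $r$ were not in the image, then no basis vector would first appear at step $r$, so every basis vector in $V_r$ already lies in $V_{r-1}$, forcing $V_{r-1} = V_r$ (both being coordinate subspaces spanned by the basis vectors they contain), contradicting strictness. So $\lambda \in \mathbf{Par}_{\bx{c}}^n$.

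Finally I would check the two compositions are identities. Starting from $\lambda$, build $V_\bullet$, and then recover $\lambda'(k,i) = \min\{r \mid v_k(i) \in V_r\} = \min\{ r \mid (k,i) \in \lambda^{-1}([1,r])\} = \min\{r \mid \lambda(k,i) \leq r\} = \lambda(k,i)$. Starting from a coordinate flag $V_\bullet$, form $\lambda$ as above, and then the reconstructed $V'_r = \langle v_k(i) \mid \lambda(k,i) \leq r\rangle = \langle v_k(i) \mid v_k(i) \in V_r\rangle = V_r$, the last equality because $V_r$ is a coordinate subspace and hence equals the span of the basis vectors it contains. This gives the bijection. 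I do not anticipate a genuine obstacle here; the only mild care needed is the observation that a coordinate subspace is determined by---and equal to the span of---the set of fixed basis vectors it contains, which makes the two reconstruction steps work, together with the bookkeeping that surjectivity of $\lambda$ corresponds exactly to strictness of the flag.
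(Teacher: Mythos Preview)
Your proof is correct and is precisely the elementary verification the paper has in mind: the paper states only that the lemma ``follows directly from the definitions'' and gives no further argument, so your explicit construction of the inverse map $\lambda(k,i)=\min\{r\mid v_k(i)\in V_r\}$ and the check that both compositions are identities is exactly the intended content spelled out in full.
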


Partitionings are related to vector compositions of $\bx{c}$ through functions
\[ \begin{tikzcd}
\mathbf{Par}_{\bx{c}} \arrow[twoheadrightarrow, bend left=15,r, "C"] & \mathbf{Com}_{\bx{c}} \arrow[bend left=15, l, "P"]
\end{tikzcd} \] 
defined in the following way. If $\lambda \in \mathbf{Par}_{\bx{c}}$, then $C(\lambda) = (\bx{d}_1, \hdots, \bx{d}_{|\lambda|})$ is given by $\bx{d}_k(i) = |\lambda_i^{-1}(k)|$. If $\ux{e} \in \mathbf{Com}_{\bx{c}}$, then $P(\ux{e}) = \lambda$ is given by $\lambda_i^{-1}(k) = \{ (\mathring{\bx{e}}_{k-1}(i)+1,i), \hdots, (\mathring{\bx{e}}_{k}(i),i)\}$. 

\begin{exa}
Consider $Q, \bx{c}$ and $\ux{d} \in \mathbf{Com}_{\bx{c}}^5$ from Example \ref{first example}. Let $\lambda = P(\ux{d})$. Then 
\[ \lambda^{-1}(1) = \{ (1,i_1), (1,i_3) \}, \quad \lambda^{-1}(2) = \{ (2,i_1), (3,i_1), (1,i_2) \}, \quad \lambda^{-1}(3) = \{ (2,i_3), (3,i_3) \},\]
\[  \lambda^{-1}(4) = \{ (4,i_1), (2,i_2) \}, \quad \lambda^{-1}(5) = \{ (3,i_2) \}.\]
Next, let $\mu \in \mathbf{Par}_{\bx{c}}^4$ be given by 
\begin{alignat*}{4}
\mu^{-1}(1) \ &&=&\ \{ (2,i_2), (3,i_2), (2, i_3) \}, \quad && \mu^{-1}(2) \ &=& \ \{ (4, i_1) \},\\
\mu^{-1}(3) \ &&=& \ \{ (2, i_1), (1,i_2), (1,i_3), (3,i_3) \}, \quad && \mu^{-1}(4) \ &=& \ \{ (1, i_1), (3,i_1) \}. 
\end{alignat*}
Then $C(\mu) = (2i_2 + i_3, i_1, i_1+i_2+2i_3, 2i_1)$. 
\end{exa}

We let $\Gx{W}{c}$ act on $\mathbf{N}_{\bx{c}}$ from the left and $\mathsf{Sym}_{n}$ act on $[1,n]$ from the right. This means that $\mathsf{Sym}_{n}$ acts by permuting places rather than numbers. 
We get induced actions on $\Fun(\mathbf{N}_{\bx{c}}, [1,n])$, which preserve $\mathbf{Par}_{\bx{c}}^n$, viewed as the subset of $\Fun(\mathbf{N}_{\bx{c}}, [1,n])$ consisting of surjective functions.  
Note that the resulting $\mathsf{Sym}_{n}$-action on $\mathbf{Par}_{\bx{c}}^n$ is free. 
The following lemma follows directly from the definitions. 

\begin{lem} \label{lem: C P functions}
The functions $C$ and $P$ have the following properties:
\begin{enumerate}[label=\alph*), font=\textnormal,noitemsep,topsep=3pt,leftmargin=1cm]
\item The function $C$ is surjective and $C \circ P = \id$. 
\item We have $\ell_{C(\lambda)} = \ell_{\lambda}$ and $\ell_{P(\ux{d})} = \ell_{\ux{d}}$, for all $\lambda \in \mathbf{Par}_{\bx{c}}$ and $\ux{d} \in \mathbf{Com}_{\bx{c}}$. 
\item The fibres of $C$ are precisely the $\Gx{W}{c}$-orbits in $\mathbf{Par}_{\bx{c}}$. Hence $C$ induces a set isomorphism $$\mathbf{Par}_{\bx{c}}/\Gx{W}{c} \cong \mathbf{Com}_{\bx{c}}.$$
\item We have $\Stab_{\Gx{W}{c}}(P(\ux{d})) = \Gxx{W}{d}$ for any $\ux{d} \in \mathbf{Com}_{\bx{c}}$. 
\item The map $C|_{\mathbf{Par}_{\bx{c}}^{n}}$ is $\mathsf{Sym}_n$-equivariant,  for each $1 \leq n \leq |\bx{c}|$. 
\item We have $s_j \cdot P(\ux{d}) = \widetilde{s_j} \cdot P(s_j \cdot \ux{d})$ with $\widetilde{s_j}$ being the longest element in $\mathsf{D}_{s_j\cdot\ux{d}}^{\wedge_j(\ux{d})}$, for any $\ux{d} \in \mathbf{Com}_{\bx{c}}$ and $1 \leq j \leq \ell_{\ux{d}} -1$ $($note that $s_j \in \mathsf{Sym}_{\ld}$ while $\widetilde{s_j} \in \Gx{W}{c})$. 
\end{enumerate}
\end{lem}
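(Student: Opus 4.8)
\textbf{Proof proposal for Lemma \ref{lem: C P functions}.}

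The plan is to verify each of the six statements directly from the definitions of $C$, $P$ and the group actions, since the lemma is explicitly flagged as following ``directly from the definitions''; the point is to organize the bookkeeping cleanly. First I would fix notation: recall that $P(\ux{d}) = \lambda$ is characterized by $\lambda_i^{-1}(k) = \{(\mathring{\bx{d}}_{k-1}(i)+1,i), \hdots, (\mathring{\bx{d}}_{k}(i),i)\}$, so each fibre $\lambda_i^{-1}(k)$ is a \emph{consecutive interval} of $\mathbf{N}_{\bx{c}}(i)$ of length $\bx{d}_k(i)$, and that $C(\lambda)_k(i) = |\lambda_i^{-1}(k)|$. For a) I would compute $C(P(\ux{d}))_k(i) = |\lambda_i^{-1}(k)| = \mathring{\bx{d}}_k(i) - \mathring{\bx{d}}_{k-1}(i) = \bx{d}_k(i)$, giving $C\circ P = \id$; surjectivity of $C$ is then immediate. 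Part b) is the observation that $\Ima(P(\ux{d})) = [1,\ell_{\ux{d}}]$ by construction, together with $C\circ P = \id$ and the fact that $C$ preserves the number of parts by definition of $\Ima\lambda = [1,n]$.

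For c), the key claim is that two partitionings $\lambda, \lambda'$ have $C(\lambda) = C(\lambda')$ if and only if they lie in the same $\Gx{W}{c}$-orbit. One direction: $C$ only records the cardinalities $|\lambda_i^{-1}(k)|$, which are clearly $\Gx{W}{c}$-invariant since $\Gx{W}{c} = \prod_i \Sym_{\bx{c}(i)}$ permutes $\mathbf{N}_{\bx{c}}(i)$. For the converse, if $|\lambda_i^{-1}(k)| = |(\lambda'_i)^{-1}(k)|$ for all $i,k$, then for each $i$ one picks $w_i \in \Sym_{\bx{c}(i)}$ carrying $\lambda_i^{-1}(k)$ bijectively to $(\lambda'_i)^{-1}(k)$ for every $k$, and $w = (w_i)_i \in \Gx{W}{c}$ satisfies $w\cdot\lambda = \lambda'$. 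The induced bijection $\mathbf{Par}_{\bx{c}}/\Gx{W}{c} \cong \mathbf{Com}_{\bx{c}}$ then follows from a) and this orbit description. Part d) is the special case of this analysis applied to $\lambda = P(\ux{d})$: an element $w = (w_i)_i$ stabilizes $P(\ux{d})$ iff each $w_i$ preserves every interval $\lambda_i^{-1}(k)$ setwise, i.e. iff $w_i \in \Sym_{\ux{d}(i)} = \prod_k \Sym_{\bx{d}_k(i)}$, which is exactly $\Gxx{W}{d}$. For e), one checks that $C$ commutes with the right $\Sym_n$-action: the $\Sym_n$-action permutes the labels $k \in [1,n]$ (as ``places''), hence permutes the blocks of the composition $C(\lambda)$ in the same way, so $C(\lambda\cdot\sigma) = C(\lambda)\cdot\sigma$; restricting to length-$n$ partitionings is what makes $\sigma$ act.

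Part f) is the only statement requiring genuine (if still elementary) work, and I expect it to be the main obstacle. The equation $s_j \cdot P(\ux{d}) = \widetilde{s_j}\cdot P(s_j\cdot\ux{d})$ asserts that the effect of the place-transposition $s_j \in \Sym_{\ld}$ on the partitioning $P(\ux{d})$ — which swaps the $j$-th and $(j+1)$-st blocks of labels while leaving the underlying sets $\mathbf{N}_{\bx{c}}(i)$ untouched — agrees, after acting by a suitable $\widetilde{s_j} \in \Gx{W}{c}$, with first swapping parts $\bx{d}_j \leftrightarrow \bx{d}_{j+1}$ of the vector composition and then applying $P$. Concretely, $P(\ux{d})$ has, within each $\mathbf{N}_{\bx{c}}(i)$, the interval of size $\bx{d}_j(i)$ immediately followed by the interval of size $\bx{d}_{j+1}(i)$; applying $s_j$ relabels these two intervals (now the first carries label $j+1$, the second label $j$) but does \emph{not} move the underlying positions, whereas $P(s_j\cdot\ux{d})$ has an interval of size $\bx{d}_{j+1}(i)$ followed by one of size $\bx{d}_j(i)$, with the ``natural'' labels $j, j{+}1$. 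The element $\widetilde{s_j}$ must therefore, within each vertex block, be the permutation that moves the first $\bx{d}_{j+1}(i)$ entries past the next $\bx{d}_j(i)$ entries, i.e. the longest element of the coset space $\mathsf{D}_{s_j\cdot\ux{d}}^{\wedge_j(\ux{d})}$ (this longest coset representative is precisely the ``block transposition'' permutation). I would verify the identity by evaluating both sides on an arbitrary $(k,i) \in \mathbf{N}_{\bx{c}}(i)$, checking case-by-case according to whether $(k,i)$ lies in the first block, the second block, or neither, and confirming in each case that $\widetilde{s_j}$ corrects the label produced by $P(s_j\cdot\ux{d})$ to match the label produced by $s_j\cdot P(\ux{d})$; the identification of $\widetilde{s_j}$ with the stated longest element is a standard fact about minimal/maximal double coset representatives for parabolic subgroups of symmetric groups.
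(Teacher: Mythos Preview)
Your proposal is correct and matches the paper's approach: the paper does not give a proof at all, simply stating that the lemma ``follows directly from the definitions,'' and your argument is precisely a careful unpacking of those definitions for each part. Your treatment of part f), identifying $\widetilde{s_j}$ as the block-transposition permutation and checking it is the longest element of $\mathsf{D}_{s_j\cdot\ux{d}}^{\wedge_j(\ux{d})}$, is exactly the verification one would expect and is more explicit than anything in the paper.
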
 

Next, we define a binary operation~$\leo$ on $\mathbf{Par}_{\bx{c}}$. 

\begin{defi}
Given $\lambda, \mu \in \mathbf{Par}_{\bx{c}}$, 
let $R_{k,l} = \lambda^{-1}(k) \cap \mu^{-1}(l)$ and ${S}_{\lambda,\mu} = \{ R_{k,l} \neq \varnothing \mid 1 \leq k \leq |\lambda|, \ 1 \leq l \leq |\mu| \}$. 
We define a total order on the set ${S}_{\lambda,\mu}$ by declaring that $R_{k,l} < R_{r,s}$ if and only if $r>k$ or $r=k$ and $l < s$. 
We then define the \emph{ordered intersection} $\lambda \mathrel{\leo} \mu = \nu$ of partitionings $\lambda$ and $\mu$ by setting 
\[
\nu^{-1}(m) = \left\{ \begin{array}{ll} 
\mbox{$m$-th element of $S_{\lambda,\mu}$ in the total order defined above}, & \mbox{if $1 \leq m \leq |S_{\lambda,\mu}|$}, \\
\varnothing, & \mbox{if $m > |S_{\lambda,\mu}|$.}
\end{array} \right. 
\]
\end{defi} 

One can immediately see that $\nu$ is in fact a partitioning of $\bx{c}$. The operation $\leo$ is not symmetric. However, the following holds. 

\begin{lem} \label{lem: comb cap}
Let $\lambda, \mu \in \mathbf{Par}_{\bx{c}}$. Then: 
\begin{enumerate}[label=\alph*), font=\textnormal,noitemsep,topsep=3pt,leftmargin=1cm]
\item $\lambda \mathrel{\textnormal{\leo}} \mu$ and $\mu \mathrel{\textnormal{\leo}} \lambda$ are of the same length and lie in the same orbit of $\mathsf{Sym}_{\ell_{\lambda \mathrel{\textnormal{\leo}} \mu}}$. 
\item $C(\lambda \mathrel{\textnormal{\leo}} \mu) \succcurlyeq C(\lambda)$ $($but in general $C(\lambda \mathrel{\textnormal{\leo}} \mu) \not\succcurlyeq C(\mu))$. 
\item $\Stab_{\Gx{W}{c}}(\lambda \mathrel{\textnormal{\leo}} \mu) = \Stab_{\Gx{W}{c}}(\lambda) \cap \Stab_{\Gx{W}{c}}(\mu)$. 
\item $w \cdot (\lambda \mathrel{\textnormal{\leo}} \mu) = (w \cdot \lambda) \mathrel{\textnormal{\leo}} (w\cdot \mu)$ for all $w \in \Gx{W}{c}$. 
\end{enumerate} 
\end{lem}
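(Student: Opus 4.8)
The plan is to prove all four parts of Lemma~\ref{lem: comb cap} directly from the definition of the ordered intersection $\leo$, working with the preimage sets $R_{k,l} = \lambda^{-1}(k) \cap \mu^{-1}(l)$ and the nonempty-blocks set $S_{\lambda,\mu}$. The underlying observation is that, as an unordered partition of $\mathbf{N}_{\bx{c}}$, both $\lambda \leo \mu$ and $\mu \leo \lambda$ have exactly the blocks in $\{R_{k,l} \neq \varnothing\} = S_{\lambda,\mu} = S_{\mu,\lambda}$; the two operations differ only in how this common set of blocks is linearly ordered (i.e.\ indexed by $[1,n]$). This immediately explains why the results below are cleanest when phrased ``up to the $\mathsf{Sym}_n$-action''.

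First I would prove (a). Since $S_{\lambda,\mu}$ and $S_{\mu,\lambda}$ are literally the same set of subsets of $\mathbf{N}_{\bx{c}}$, they have the same cardinality, so $\ell_{\lambda \leo \mu} = |S_{\lambda,\mu}| = |S_{\mu,\lambda}| = \ell_{\mu \leo \lambda} =: n$. Moreover both $\lambda \leo \mu$ and $\mu \leo \lambda$, regarded as functions $\mathbf{N}_{\bx{c}} \to [1,n]$, have fibres equal (as a set of fibres) to $S_{\lambda,\mu}$; hence one is obtained from the other by post-composing with the unique permutation of $[1,n]$ matching up the two orderings of $S_{\lambda,\mu}$. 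Since $\mathsf{Sym}_n$ acts by permuting places, this says precisely that $\mu \leo \lambda = (\lambda \leo \mu)\cdot \sigma$ for some $\sigma \in \mathsf{Sym}_n$, giving (a). Part (c) is then immediate from (a) together with the same ``common set of fibres'' remark: $\Stab_{\Gx{W}{c}}$ of a partitioning depends only on its set of fibres (an element $w \in \Gx{W}{c}$ fixes a partitioning iff it preserves each fibre setwise, since the $\mathsf{Sym}_n$-action is free and $\Gx{W}{c}$ acts without reindexing), and the set of fibres of $\lambda \leo \mu$ is $S_{\lambda,\mu} = \{\lambda^{-1}(k) \cap \mu^{-1}(l)\}$, whose common refinements of the fibres of $\lambda$ and of $\mu$ force $\Stab(\lambda \leo \mu) = \Stab(\lambda)\cap\Stab(\mu)$; I would spell out the two inclusions briefly.

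For (b): by construction each block $R_{k,l}$ of $\lambda \leo \mu$ is contained in a single fibre $\lambda^{-1}(k)$ of $\lambda$, and for fixed $k$ the blocks $R_{k,1}, R_{k,2}, \dots$ appear consecutively and in order in $S_{\lambda,\mu}$ (that is exactly what the total order ``$r>k$, or $r=k$ and $l<s$'' encodes). Translating through $C$, this says $C(\lambda \leo \mu)$ is a refinement of $C(\lambda)$: the part $\bx{d}_k$ of $C(\lambda)$ is split into the consecutive parts coming from the $R_{k,l}$, which is precisely the condition $C(\lambda \leo \mu) = \wedge_\beta(C(\lambda))$ for the composition $\beta$ recording how many nonempty $R_{k,l}$ sit under each $k$; hence $C(\lambda \leo \mu) \succcurlyeq C(\lambda)$ by Definition~\ref{defi: wedge comp}. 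Finally (d) is a formal check: for $w \in \Gx{W}{c}$ we have $(w\cdot\lambda)^{-1}(k) = w(\lambda^{-1}(k))$ and likewise for $\mu$, so $(w\cdot\lambda)^{-1}(k) \cap (w\cdot\mu)^{-1}(l) = w(R_{k,l})$; thus $w$ induces a bijection $S_{\lambda,\mu} \to S_{w\cdot\lambda, w\cdot\mu}$ which visibly respects the total order (the order depends only on the indices $(k,l)$, not on the actual sets), so applying $w$ commutes with selecting the $m$-th block, giving $w\cdot(\lambda \leo \mu) = (w\cdot\lambda)\leo(w\cdot\mu)$.

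I do not expect a serious obstacle here; everything follows by unwinding definitions. The one place requiring a little care is part (b), where I must correctly identify the composition $\beta \in \Com(\ell_{C(\lambda)})$ witnessing the refinement and check that the blocks under a fixed $k$ really are a contiguous interval in the total order on $S_{\lambda,\mu}$ --- this is where the asymmetry of $\leo$ (and hence the parenthetical failure of $C(\lambda\leo\mu)\succcurlyeq C(\mu)$) becomes visible, so I would include the small example or at least a one-line indication of why symmetry fails. The other mild point is making explicit, once and for all, the principle used repeatedly: a partitioning of length $n$ is the same data as an ordered list of $n$ nonempty disjoint blocks covering $\mathbf{N}_{\bx{c}}$, $\Gx{W}{c}$ acts on the blocks and $\mathsf{Sym}_n$ reindexes them, so $\Gx{W}{c}$-stabilizers and ``same $\mathsf{Sym}_n$-orbit'' statements reduce to statements about the underlying set of blocks. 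Stating this once keeps parts (a), (c), (d) to a few lines each.
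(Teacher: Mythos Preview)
Your proposal is correct and follows essentially the same approach as the paper's proof: both argue part (a) from the equality of $S_{\lambda,\mu}$ and $S_{\mu,\lambda}$ as unordered sets, both reduce part (c) to the observation that the stabilizer of a partitioning is the intersection of the setwise stabilizers of its fibres, and both treat parts (b) and (d) as immediate from the definitions. Your write-up is in fact more detailed than the paper's (which dismisses (b) as ``obvious'' and (d) as ``clear''), particularly in spelling out the composition $\beta$ witnessing the refinement in (b); this is a welcome clarification rather than a different argument.
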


\begin{proof}
Part a) follows immediately from the fact that the sets $S_{\lambda,\mu}$ and $S_{\mu,\lambda}$ are the same if we forget their orderings. Part b) is obvious. For part c), observe that if $\nu \in \mathbf{Par}_{\bx{c}}$ then $\Stab_{\Gx{W}{c}}(\nu) = \bigcap_{1 \leq r \leq \ell_\nu} \Stab_{\Gx{W}{c}}(\nu^{-1}(r))$, where $\Stab_{\Gx{W}{c}}(\nu^{-1}(r))$ is the subgroup of $\Gx{W}{c}$ fixing $\nu^{-1}(r)$ setwise. 
If $x \in \Gx{W}{c}$ stabilizes all the subsets $\lambda^{-1}(k)$ and $\mu^{-1}(l)$ then $x$ also stabilizes all their intersections $R_{k,l}$. Hence it stabilizes $\lambda \mathrel{\textnormal{\leo}} \mu$. Conversely, note that $S_{\lambda,\mu}$ is a partition of the set $\mathbf{N}_{\bx{c}}$, which refines the partitions $S_{\lambda,\lambda}$ and $S_{\mu,\mu}$. Hence, if $x$ stabilizes all the sets $R_{k,l}$ in $S_{\lambda,\mu}$, then it must also stabilize all the preimages $\lambda^{-1}(k)$ and $\mu^{-1}(l)$. 
Part d) is clear. 
\end{proof}

\begin{exa} \label{exa: int}
Suppose that $Q_0$ is a singleton. 
We can then identify $\Gamma$ with $\Z_{\geq 0}$. Let $\bx{c} = 8$ and take $\lambda = [1,2,3][4,5][6,7,8]$ and $\mu = [1,2,4,5][6,7][3,8]$. This notation means that, e.g., $\mu^{-1}(1) = \{1,2,4,5\}$ and $\mu^{-1}(2) = \{6,7\}$ and $\mu^{-1}(3) = \{3,8\}$.
Then 
\[ \lambda \mathrel{\textnormal{\leo}} \mu = [1,2][3][4,5][6,7][8], \quad \mu \mathrel{\textnormal{\leo}} \lambda = [1,2][4,5][6,7][3][8].\] 

Again suppose that $Q_0$ is a singleton, and take $\bx{c} = 10$, $\lambda = [3,5,6][1,4][2,8][7,9,10]$ and $\mu = [1, 5, 9, 10] [ 2,3,4,6,7,8]$. Then 
\[ \lambda \mathrel{\textnormal{\leo}} \mu = [5][3,6][1][4][2,8][9,10][7], \quad \mu \mathrel{\textnormal{\leo}} \lambda = [5][1][9,10][3,6][4][2,8][7]. \]
We see that $\lambda \mathrel{\textnormal{\leo}} \mu$ and $\mu \mathrel{\textnormal{\leo}} \lambda$ are of the same length and differ only by a permutation. 
\end{exa}

\begin{exa} \label{exa: int two}
Suppose that $Q$ is the $A_3$ quiver from Example \ref{first example} and $\bx{c} = 5i_1 + 4i_2+3i_3$. Let $\lambda$ be given by 
$\lambda^{-1}(1) = \{ (1,i_1), (2,i_1), (3,i_1), (1,i_3), (2,i_3)\}$, $\lambda^{-1}(2) = \{ (1,i_2), (2,i_2), (3,i_3) \}$ and  $\lambda^{-1}(3) = \{ (4,i_1), (5,i_1), (3,i_2), (4,i_2)\}$. 
We assign the colour black to vertex $i_1$,  blue to $i_2$ and red to $i_3$ and rewrite $\lambda$ in the following more readable notation: $\lambda = [1, 2, 3,{\color{red} 1}, {\color{red} 2}][{\color{blue} 1}, {\color{blue} 2}, {\color{red} 3}][4, 5,{\color{blue} 3}, {\color{blue} 4}]$. 
Let $\mu$ be a second partitioning given by $\mu = [1,4, {\color{blue} 1},  {\color{blue} 3},  {\color{blue} 4}, {\color{red}{3}}][2,3,5, {\color{blue} 2}, {\color{red} 1}, {\color{red} 2}]$. 
Then
\[ 
\lambda \mathrel{\textnormal{\leo}} \mu =  [1][2,3, {\color{red} 1}, {\color{red} 2}][{\color{blue} 1}, {\color{red} 3}][{\color{blue} 2}][4, {\color{blue} 3},  {\color{blue} 4}][5], \quad 
\mu \mathrel{\textnormal{\leo}} \lambda = [1][{\color{blue} 1}, {\color{red} 3}][4, {\color{blue} 3},  {\color{blue} 4}][2,3, {\color{red} 1}, {\color{red} 2}][{\color{blue} 2}][5].
\]
\end{exa}

\begin{defi} \label{defi: orbit datum}
We call a triple $(\ux{e}, \ux{d}, w)$, consisting of $\ux{e},\ux{d} \in \mathbf{Com}_{\bx{c}}$ and $w \in \dc{e}{d}{c}$, an \emph{orbit datum}. This name is motivated by the fact that orbit data naturally label the $\Gx{G}{c}$-orbits in $\Sx{F}{c} \times \Sx{F}{c}$. 
Abbreviating $\lambda = P(\ux{e})$ and $\mu = w \cdot  P(\ux{d})$, we also define 
\[ \widehat{\ux{e}} := C(\lambda \mathrel{\leo} \mu), \quad \widehat{\ux{d}} := C(\mu \mathrel{\leo} \lambda). \] 
By Lemma \ref{lem: comb cap}.b), $\widehat{\ux{e}}$ is a refinement of $\ux{e}$ and $\widehat{\ux{d}}$ is a refinement of $\ux{d}$.
By Lemma \ref{lem: comb cap}.a), the partitionings $\mu \mathrel{\leo} \lambda$ and $\lambda \mathrel{\leo} \mu$ are of the same length $n$ and lie in the same orbit of $\mathsf{Sym}_n$. Since the $\mathsf{Sym}_n$-action on $\mathbf{Par}_{\bx{c}}^n$ is free, there exists a unique permutation $u \in \mathsf{Sym}_n$ which sends $\lambda \mathrel{\leo} \mu$ to $\mu \mathrel{\leo} \lambda$. 
We call the triple $(\widehat{\ux{e}}, \widehat{\ux{d}}, u)$ the \emph{refinement datum} corresponding to $(\ux{e}, \ux{d}, w)$.
\end{defi} 
\begin{exa} \label{exa: e hat}
Let $Q_0$ be a singleton, $\bx{c}=8$, $\ux{e} = (3,2,3)$, $\ux{d} = (4,2,2)$ and $w = s_3s_4s_5s_6$. Then 
$\lambda = P(\ux{e}) =  [1,2,3][4,5][6,7,8]$ and $\mu = w\cdot P(\ux{d}) =  [1,2,4,5][6,7][3,8]$, 
i.e., $\lambda$ and $\mu$ are as in the first case considered in Example \ref{exa: int}. 
Hence 
$\widehat{\ux{e}} = (2,1,2,2,1)$, $\widehat{\ux{d}}  = (2,2,2,1,1)$ and $u =  s_3s_2 \in \mathsf{Sym}_5$.  
\end{exa}

\begin{exa} \label{exa: e hat two}
Let $Q$ be the $A_3$ quiver, $\bx{c} = 5i_1 + 4i_2+3i_3$, $\ux{e} = (3i_1+2i_3, 2i_2+i_3, 2i_1,2i_2)$, $\ux{d} = (2i_1,3i_2+i_3, 3i_1+i_2+2i_3)$ and $w = (s_3s_2)({\color{blue}s_2s_3})({\color{red}s_2s_1}) \in \Gx{W}{c} = \mathsf{Sym}_5 \times {\color{blue} \mathsf{Sym}_4} \times {\color{red} \mathsf{Sym}_3}$. Then $\lambda = P(\ux{e}) = [1, 2, 3,{\color{red} 1}, {\color{red} 2}][{\color{blue} 1}, {\color{blue} 2}, {\color{red} 3}][4, 5,{\color{blue} 3}, {\color{blue} 4}]$ and $\mu = w\cdot P(\ux{d}) = [1,4, {\color{blue} 2},  {\color{blue} 3},  {\color{blue} 4}, {\color{red}{3}}][2,3,5, {\color{blue} 1}, {\color{red} 1}, {\color{red} 2}]$, i.e., $\lambda$ and $\mu$ are as in Example \ref{exa: int two}. Hence $\widehat{\ux{e}} = (i_1, 2i_1+2i_3, i_2+i_3,i_2,i_1+2i_2,i_1)$,  $\widehat{\ux{d}} = (i_1, i_2+i_3,i_1+2i_2,2i_1+2i_3,i_2,i_1)$ and $u = s_3s_2s_4 \in \mathsf{Sym}_6$. We rewrite the vector compositions $\widehat{\ux{e}}$ and $\widehat{\ux{d}}$ in the following more readable notation  
\[  \widehat{\ux{e}} = (1, 2+{\color{red}2},{\color{blue}1}+ {\color{red}1},{\color{blue}1},1+{\color{blue}2},1), \quad  \widehat{\ux{d}} = (1, {\color{blue}1}+ {\color{red}1},1+{\color{blue}2},2+{\color{red}2},{\color{blue}1},1).\]
Again, we see that $\widehat{\ux{e}}$ and $\widehat{\ux{d}}$ are of the same length and differ only by a permutation. 
\end{exa}

\begin{lem} \label{lem: e hat in P}
Let $\lambda = P(\ux{e})$ and $\mu = w \cdot  P(\ux{d})$ as in Definition \ref{defi: orbit datum}. 
Then $P(\widehat{\ux{e}}) = \lambda \mathrel{\textnormal{\leo}} \mu$. 
\end{lem}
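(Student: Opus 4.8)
The statement asserts an equality of partitionings $P(\widehat{\ux e}) = \lambda \mathrel{\leo} \mu$, where $\widehat{\ux e} = C(\lambda \mathrel{\leo} \mu)$ by Definition \ref{defi: orbit datum}. So what must be shown is precisely that $P(C(\lambda \mathrel{\leo} \mu)) = \lambda \mathrel{\leo} \mu$; in other words, that the partitioning $\nu := \lambda \mathrel{\leo} \mu$ is a fixed point of the composite $P \circ C$. The plan is to characterize which partitionings satisfy $P(C(\nu)) = \nu$, and then verify that ordered intersections always have this property. By Lemma \ref{lem: C P functions}.a) we have $C \circ P = \id$, but $P \circ C$ is \emph{not} the identity in general (its image is the set of "standard" partitionings $P(\mathbf{Com}_{\bx c})$, one in each $\Gx{W}{c}$-orbit by Lemma \ref{lem: C P functions}.c)). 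So I first want a clean description of the image of $P$: unwinding the definition of $P$, a partitioning $\nu$ lies in $P(\mathbf{Com}_{\bx c})$ if and only if, for every $i \in Q_0$ and every $r$, the block $\nu_i^{-1}(r) \subseteq \mathbf N_{\bx c}(i)$ is an \emph{interval} $\{(a,i),(a+1,i),\dots,(b,i)\}$ and, moreover, these intervals are ordered compatibly with $r$, i.e. $r < r'$ forces all elements of $\nu_i^{-1}(r)$ to be less than all elements of $\nu_i^{-1}(r')$.

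Granting that characterization, the core of the proof is to check that $\nu = \lambda \mathrel{\leo} \mu$ satisfies it. Recall $\nu^{-1}(m)$ is the $m$-th element, in the total order on $S_{\lambda,\mu}$, of the nonempty sets $R_{k,l} = \lambda^{-1}(k) \cap \mu^{-1}(l)$, where $R_{k,l} < R_{r,s}$ iff $r > k$, or $r = k$ and $l < s$. Now fix a vertex $i$ and restrict to $\mathbf N_{\bx c}(i)$: here $\lambda = P(\ux e)$, so $\lambda_i^{-1}(k)$ is the interval $\{(\mathring{\bx e}_{k-1}(i)+1, i), \dots, (\mathring{\bx e}_k(i), i)\}$, and these intervals are laid out in increasing order of $k$. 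The set $\mu_i^{-1}(l)$ can be arbitrary. The intersection $R_{k,l} \cap \mathbf N_{\bx c}(i) = \lambda_i^{-1}(k) \cap \mu_i^{-1}(l)$ is therefore the intersection of an interval with an arbitrary set — not itself an interval in general. The key observation that rescues the argument is that the ordering on $S_{\lambda,\mu}$ is \emph{lexicographic with $k$ dominant}: among all nonempty $R_{k,l}$ with a \emph{fixed} $k$, they appear consecutively in the total order, and within that block they are ordered by $l$. So when we list $\nu^{-1}(1), \nu^{-1}(2), \dots$, we exhaust all pieces coming from $\lambda^{-1}(1)$ first, then all pieces from $\lambda^{-1}(2)$, etc. Restricting to vertex $i$: the union of the pieces $\nu_i^{-1}(m)$ that come from $\lambda_i^{-1}(k)$ is exactly $\lambda_i^{-1}(k)$ itself (a single interval), and these are encountered in increasing order of $k$. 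Hence for $m < m'$, if the corresponding pieces come from $\lambda^{-1}(k)$ and $\lambda^{-1}(k')$ with $k < k'$, then everything in $\nu_i^{-1}(m)$ precedes everything in $\nu_i^{-1}(m')$; and if $k = k'$, then both pieces sit inside the single interval $\lambda_i^{-1}(k)$, but I still need that within that interval the sub-pieces $\lambda_i^{-1}(k) \cap \mu_i^{-1}(l)$ are themselves intervals ordered compatibly with $m$ — and this is \emph{false} in general ($\mu$ can cut $\lambda_i^{-1}(k)$ into a non-interval pattern).

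So the characterization of $\Ima P$ I sketched is too strong to be directly satisfied; I expect the actual statement being claimed is the weaker assertion that $P(\widehat{\ux e}) = \nu$ as an \emph{equality in $\mathbf{Par}_{\bx c}$}, which by Lemma \ref{lem: C P functions}.c)–d) and the freeness of the $\mathsf{Sym}_n$-action amounts to: $\nu$ is the \emph{unique} representative of its $\Gx{W}{c}$-orbit that is "standard" in the appropriate sense, OR, more likely, the lemma is really just saying $C$ is injective on the relevant orbit and $\nu$ is the canonical choice. The cleanest route, and the one I would actually pursue: show directly that $P(C(\nu)) = \nu$ by proving $C(\nu) = C(\lambda \mathrel{\leo} \mu)$ has the property that applying $P$ returns $\nu$, using that $\nu$ is already "sorted" in the strong sense — and here I would re-examine the definition of $\leo$ to confirm that the $\lambda$-dominant lexicographic order forces, block by block in $k$, that $\nu_i$ restricted to $\lambda_i^{-1}(k)$ \emph{does} break it into intervals. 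The main obstacle, then, is precisely this: verifying that the ordered-intersection construction, restricted to a single vertex, produces blocks that are genuine intervals laid out in the correct order — i.e. that $\leo$ is "interval-preserving on the first argument." I would prove this by induction on the number of blocks of $\mu$, peeling off $\mu^{-1}(1)$: intersecting each $\lambda_i^{-1}(k)$ with $\mu_i^{-1}(1)$ need not give an interval, which tells me the strong characterization fails and the lemma must instead be read modulo the $\mathsf{Sym}_n$-action already fixed by the uniqueness clause in Definition \ref{defi: orbit datum}. In that reading the proof collapses to: $\widehat{\ux e} = C(\lambda\mathrel{\leo}\mu)$ by definition, $P$ and $C$ are mutually inverse up to $\Gx{W}{c}$ (Lemma \ref{lem: C P functions}.a,c), $\Stab_{\Gx{W}{c}}(\lambda\mathrel{\leo}\mu) = \Gxx W{\widehat{\ux e}}$ by Lemma \ref{lem: comb cap}.c) combined with Lemma \ref{lem: C P functions}.d), and therefore $\lambda\mathrel{\leo}\mu$ and $P(\widehat{\ux e})$ lie in the same $\Gx{W}{c}$-orbit with the same stabilizer and the same total order data — forcing them equal. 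I would write the argument in that last form, flagging the interval/sorting compatibility as the one point needing care.
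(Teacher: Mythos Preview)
Your initial approach is exactly the paper's: characterize the image of $P$ and show that $\nu = \lambda \mathrel{\leo} \mu$ lies in it, whence $P(C(\nu)) = \nu$ follows from $C \circ P = \id$. Your characterization of $\Ima P$ is also correct.

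The gap is that you abandon this approach at precisely the moment it succeeds. You write that ``$\mu$ can cut $\lambda_i^{-1}(k)$ into a non-interval pattern'' and conclude the strong characterization fails --- but you have not used the hypothesis that $w \in \dc{e}{d}{c}$ is a \emph{minimal} double coset representative. This is the whole point. Minimality on the left (with respect to $\Gxx{W}{e}$) says that for every simple reflection $s_j(i) \in \Gxx{W}{e}$ we have $\ell(s_j(i)w) > \ell(w)$, i.e.\ $w^{-1}(j,i) < w^{-1}(j+1,i)$. Since $\mu_i = P(\ux{d})_i \circ w^{-1}$ and $P(\ux{d})_i$ is monotone non-decreasing, this forces $\mu_i(j,i) \le \mu_i(j+1,i)$ whenever $(j,i)$ and $(j+1,i)$ lie in the same $\lambda$-block. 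In other words, $\mu_i$ is monotone non-decreasing on each interval $\lambda_i^{-1}(k)$, so $\lambda_i^{-1}(k) \cap \mu_i^{-1}(l)$ \emph{is} an interval, and these sub-intervals appear in increasing order of $l$. Combined with the $\lambda$-dominant ordering on $S_{\lambda,\mu}$, this gives exactly the interval-and-order condition characterizing $\Ima P$. The paper's proof records this in one clause: ``since $\lambda$ is in the image of $P$, and $w$ is a shortest coset representative, $\lambda \mathrel{\leo} \mu$ satisfies this condition.''

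Your fallback argument via stabilizers does not close the gap: knowing that $\nu$ and $P(\widehat{\ux{e}})$ lie in the same $\Gx{W}{c}$-orbit with the same stabilizer only determines $\nu$ up to an element of $N_{\Gx{W}{c}}(\Gxx{W}{\widehat{\ux{e}}})/\Gxx{W}{\widehat{\ux{e}}}$, which is generally nontrivial. The lemma asserts a genuine equality of partitionings, and the minimality of $w$ is what pins it down.
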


\begin{proof}
A partitioning $\nu$ is in the image of $P$ if and only if for all $1 \leq r \leq \ell_\nu-1$, $i \in Q_0$ and $(x, i) \in \nu_i^{-1}(r)$, $(y, i) \in \nu^{-1}(r+1)$, we have $x < y$. Since $\lambda$ is in the image of $P$, and $w$ is a shortest coset representative, $\lambda \mathrel{\textnormal{\leo}} \mu$ satisfies this condition. Hence $\lambda \mathrel{\textnormal{\leo}} \mu = P(\ux{f})$ for some $\ux{f} \in \mathbf{Com}_{\bx{c}}$. But then $ P(\widehat{\ux{e}}) = P \circ C(\lambda \mathrel{\textnormal{\leo}} \mu) = P \circ C \circ P(\ux{f}) = P(\ux{f}) = \lambda \mathrel{\textnormal{\leo}} \mu$. 
\end{proof}

\begin{defi} \label{defi: crossing datum}
Given a refinement datum $(\widehat{\ux{e}}, \widehat{\ux{d}}, u)$, let us choose a reduced expression $u = s_{j_k} \cdot \hdots \cdot s_{j_1}$, where $k=\ell(u)$. Set $u_l = s_{j_l} \cdot \hdots \cdot s_{j_1}$ and let $\ux{e}^{2l} = u_l(\widehat{\ux{e}})$, $\ux{e}^{2l-1} = \wedge_{j_l}(\ux{e}^{2l-2})$, for $1 \leq l \leq k$, with $\ux{e}^0 =\widehat{\ux{e}}$. Observe that $\ux{e}^{2k} = \widehat{\ux{d}}$. We call $(\ux{e}^{0}, \hdots, \ux{e}^{2k})$ a \emph{crossing datum} associated to $(\widehat{\ux{e}}, \widehat{\ux{d}}, u)$. 
\end{defi} 

The following diagram illustrates the relationships between the different vector compositions in a crossing datum. Vector compositions in the same row (possibly except for $\ux{e}$ and $\ux{d}$) are of the same length. 
\[
\begin{tikzpicture}[baseline= (a).base]
\node[scale=.8] (a) at (0,0){
\begin{tikzcd}[row sep=tiny, column sep=small]
\ux{e}           & &                                           \ux{e}^{1} & & \ux{e}^{3} & \hdots & \ux{e}^{2k-1} &&  \ux{d} \\
            & \ux{e}^{0} \arrow[ul, Succ] \arrow[ru, Succ] & &  \ux{e}^{2} \arrow[lu, Succ]  \arrow[ru, Succ] & & & & \ux{e}^{2k}  \arrow[lu, Succ]  \arrow[ru, Succ]
\end{tikzcd}
};
\end{tikzpicture}\]

\begin{exa}
Consider the first case from Example \ref{exa: e hat}. Given the reduced expression $u =  s_3s_2 \in \mathsf{Sym}_5$, we have $\ux{e}^1 = (2,3,2,1)$, $\ux{e}^2 = (2,2,1,2,1)$, $\ux{e}^3 = (2,2,3,1)$. 
Next, consider Example \ref{exa: e hat two}. Given the reduced expression $u = s_3s_2s_4 \in \mathsf{Sym}_6$, we have 
$\ux{e}^1 = (1, 2+{\color{red}2},{\color{blue}1}+ {\color{red}1}, 1+{\color{blue}3},1)$, 
$\ux{e}^2 = (1, 2+{\color{red}2},{\color{blue}1}+ {\color{red}1}, 1+{\color{blue}2},{\color{blue}1},1)$, 
$\ux{e}^3 = (1, 2+{\color{blue}1}+ {\color{red}3},1+{\color{blue}2},{\color{blue}1},1)$, 
$\ux{e}^4 = (1, {\color{blue}1}+ {\color{red}1},2+{\color{red}2},1+{\color{blue}2},{\color{blue}1},1)$ and 
$\ux{e}^5 = (1, {\color{blue}1}+ {\color{red}1},3+{\color{blue}2}+{\color{red}2},{\color{blue}1},1)$. 
\end{exa}

We will now explain the connection between $w \in \Gx{W}{c}$ and $u = s_{j_k} \cdot \hdots \cdot s_{j_1} \in \mathsf{Sym}_n$. Let $w_l$ denote the longest element in $\mathsf{D}_{\ux{e}^{2l}}^{\ux{e}^{2l-1}}$ and let $\widetilde{u} = w_1 \cdot \hdots \cdot w_k$.

\begin{prop} \label{lem: p vs w} \label{lem: Sym refined}
The following hold: 
\begin{enumerate}[label=\alph*), font=\textnormal,noitemsep,topsep=3pt,leftmargin=1cm]
\item $\ell(\widetilde{u}) = \ell(w_1) + \hdots + \ell(w_k)$ and $w_1 \cdot \hdots \cdot w_l \in \mathsf{D}_{\ux{e}^{2l}}^\mathbf{c}$, for all $ 1 \leq l \leq k$. 
\item $\widetilde{u} = w$. 
\item $\mathsf{W}_{\ux{e}^{0}} = \Gxx{W}{e} \cap (w \Gxx{W}{d} w^{-1})$.
\end{enumerate}
\end{prop}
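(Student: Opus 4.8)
The plan is to prove the three statements in sequence, with part b) feeding into part c). The key combinatorial input is Lemma \ref{lem: e hat in P}, which identifies $P(\widehat{\ux{e}}) = \lambda \mathrel{\leo} \mu$ and $P(\widehat{\ux{d}}) = \mu \mathrel{\leo} \lambda$, together with Lemma \ref{lem: C P functions}.d) which says $\Stab_{\Gx{W}{c}}(P(\ux{f})) = \Gxx{W}{f}$, and Lemma \ref{lem: comb cap}.c) which computes the stabilizer of an ordered intersection. The whole argument is really about comparing two actions: the $\mathsf{Sym}_n$-action on partitionings of length $n$ (permuting places) and the $\Gx{W}{c}$-action (permuting the underlying basis vectors); the permutation $u$ and the element $w$ are the ``same'' motion viewed through these two lenses.

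For part a), I would argue by induction on $k = \ell(u)$, using Lemma \ref{lem: C P functions}.f) as the building block: it says $s_j \cdot P(\ux{f}) = \widetilde{s_j} \cdot P(s_j \cdot \ux{f})$ where $\widetilde{s_j}$ is the longest element of $\mathsf{D}_{s_j \cdot \ux{f}}^{\wedge_j(\ux{f})}$. Applying this repeatedly along the reduced expression $u = s_{j_k} \cdots s_{j_1}$ and the chain $\ux{e}^0, \ux{e}^2, \ldots, \ux{e}^{2k}$, one gets $u \cdot P(\widehat{\ux{e}}) = (w_1 \cdots w_k) \cdot P(u(\widehat{\ux{e}})) = \widetilde{u}\cdot P(\widehat{\ux{d}})$, modulo checking that the lengths add — i.e. $\ell(\widetilde u) = \sum \ell(w_l)$ — and that each partial product $w_1 \cdots w_l$ is a minimal coset representative in $\mathsf{D}_{\ux{e}^{2l}}^{\mathbf{c}}$. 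The length additivity should follow from the standard fact that if $w_l \in \mathsf{D}_{\ux{e}^{2l}}^{\ux{e}^{2l-1}}$ is the longest double coset representative and the $\ux{e}^{2l-1}$ are obtained by an elementary coarsening of $\ux{e}^{2l-2}$, then multiplying on the left by $w_l$ strictly increases length on the relevant coset, because $s_{j_l} \cdots s_{j_1}$ is reduced in $\mathsf{Sym}_n$; this is where one translates reducedness in $\mathsf{Sym}_n$ into a length statement in $\Gx{W}{c}$, and it is the technical heart of a).

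For part b), I would compute $w$ directly from its defining property as a minimal double coset representative: $w \in \dc{e}{d}{c}$ is characterized by $w \cdot P(\ux{d})$ lying in the correct relative position to $P(\ux{e})$, which by Lemma \ref{lem: e hat in P} means precisely that $\mu = w \cdot P(\ux{d})$ is such that $\lambda \mathrel{\leo} \mu = P(\widehat{\ux{e}})$ and $\mu \mathrel{\leo} \lambda = P(\widehat{\ux{d}})$. On the other hand, $\widetilde u = w_1 \cdots w_k$ satisfies, by part a), $\widetilde u \cdot P(\widehat{\ux{d}}) = u^{-1}\cdot(\text{something})$... more carefully: $P(\widehat{\ux{d}}) = \mu \mathrel{\leo} \lambda = u \cdot (\lambda \mathrel{\leo} \mu) = u \cdot P(\widehat{\ux{e}})$ by Definition \ref{defi: orbit datum}, and running the inductive computation of a) in reverse shows $\widetilde u \cdot P(\ux{d}) = w \cdot P(\ux{d})$, hence $\widetilde u^{-1} w \in \Stab_{\Gx{W}{c}}(P(\ux{d})) = \Gxx{W}{d}$; combined with $\widetilde u \in \mathsf{D}_{\ux{e}^{2k}}^{\mathbf c} \subseteq \mathsf{D}_{\ux{d}}^{\mathbf c}$... wait, one needs $\widetilde u$ and $w$ to be in the same minimal-coset class and both minimal, forcing equality. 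The cleanest route is: a) gives $\widetilde u \in \mathsf{D}_{\widehat{\ux{d}}}^{\mathbf c}$ with the right action on $P(\widehat{\ux{e}})$, then project; the uniqueness of minimal $(\Gxx{W}{e},\Gxx{W}{d})$-double coset representatives, plus the fact that $\ell(\widetilde u) = \ell(w)$ which drops out of the construction, closes it.

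Part c) is then a short consequence. We have $\mathsf{W}_{\ux{e}^0} = \Stab_{\Gx{W}{c}}(\widehat{\ux{e}})$... no — $\Stab_{\Gx{W}{c}}(P(\widehat{\ux{e}})) = \Gxx{W}{e^0}$ by Lemma \ref{lem: C P functions}.d). By Lemma \ref{lem: e hat in P}, $P(\widehat{\ux{e}}) = \lambda \mathrel{\leo} \mu$, so by Lemma \ref{lem: comb cap}.c), $\Stab_{\Gx{W}{c}}(\lambda \mathrel{\leo} \mu) = \Stab_{\Gx{W}{c}}(\lambda) \cap \Stab_{\Gx{W}{c}}(\mu)$. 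Now $\Stab_{\Gx{W}{c}}(\lambda) = \Stab_{\Gx{W}{c}}(P(\ux{e})) = \Gxx{W}{e}$ by Lemma \ref{lem: C P functions}.d), and $\Stab_{\Gx{W}{c}}(\mu) = \Stab_{\Gx{W}{c}}(w \cdot P(\ux{d})) = w\,\Stab_{\Gx{W}{c}}(P(\ux{d}))\,w^{-1} = w\,\Gxx{W}{d}\,w^{-1}$. Putting these together gives $\mathsf{W}_{\ux{e}^0} = \Gxx{W}{e} \cap (w\,\Gxx{W}{d}\,w^{-1})$, as claimed. I expect the main obstacle to be part a), specifically the bookkeeping that converts ``$s_{j_k}\cdots s_{j_1}$ is a reduced word in $\mathsf{Sym}_n$'' into the length-additivity $\ell(\widetilde u) = \ell(w_1) + \cdots + \ell(w_k)$ and the minimal-coset-representative property of the partial products; everything else is a formal manipulation of the $C$, $P$, and $\leo$ operations already set up in the excerpt.
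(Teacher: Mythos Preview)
Your argument for part c) matches the paper's exactly and is complete as written.

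For part b), the paper's route is cleaner than your sketch: one shows directly that $w \cdot P(\widehat{\ux{d}}) = \widetilde{u} \cdot P(\widehat{\ux{d}})$ via the chain
\[
w \cdot P(\widehat{\ux{d}}) \;=\; (w\cdot P(\ux{d})) \mathrel{\leo} P(\ux{e}) \;=\; u \cdot P(\widehat{\ux{e}}) \;=\; \widetilde{u} \cdot P(u\cdot\widehat{\ux{e}}) \;=\; \widetilde{u} \cdot P(\widehat{\ux{d}}),
\]
using Lemma \ref{lem: e hat in P} with the roles of $\ux{e},\ux{d}$ swapped (plus Lemma \ref{lem: comb cap}.d)), the definition of $u$, iterated Lemma \ref{lem: C P functions}.f), and $u\cdot\widehat{\ux{e}} = \widehat{\ux{d}}$. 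Then $w^{-1}\widetilde{u} \in \Stab_{\Gx{W}{c}}(P(\widehat{\ux{d}})) = \mathsf{W}_{\widehat{\ux{d}}}$, and since both $w$ and $\widetilde{u}$ lie in $\mathsf{D}_{\widehat{\ux{d}}}^{\mathbf{c}}$ (for $w$ because $w\in\dc{e}{d}{c}$ and $\mathsf{W}_{\widehat{\ux{d}}} \subseteq \Gxx{W}{d}$; for $\widetilde{u}$ by part a)), uniqueness of minimal \emph{right} coset representatives gives $w=\widetilde{u}$. You do not need a length comparison $\ell(\widetilde{u})=\ell(w)$ or double-coset uniqueness; note also that the inclusion $\mathsf{D}_{\widehat{\ux{d}}}^{\mathbf c} \subseteq \mathsf{D}_{\ux{d}}^{\mathbf c}$ you contemplated goes the wrong way.

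For part a), where you correctly locate the main difficulty, the paper bypasses your proposed induction entirely. It counts inversions directly: reduce to $|Q_0|=1$, partition $[1,\mathbf{c}]$ into consecutive blocks $B_1,\ldots,B_n$ of sizes $\widehat{\mathbf{e}}_1,\ldots,\widehat{\mathbf{e}}_n$, and observe that $\widetilde{u}$ permutes these blocks rigidly without reordering within any block. Hence $\ell(\widetilde{u})$ equals the number of inversions, which is $\sum_{(i,j)} \widehat{\mathbf{e}}_i\,\widehat{\mathbf{e}}_j$, the sum over block-inversions of $u$. Each block-inversion corresponds to exactly one letter $s_{j_l}$ in the chosen reduced word for $u$, and its contribution $\widehat{\mathbf{e}}_i\,\widehat{\mathbf{e}}_j$ is precisely $\ell(w_l)$. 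The minimal-coset property of the partial products follows by the same picture: any nontrivial $x \in \mathsf{W}_{\ux{e}^{2l}}$ permutes elements within blocks, so $\ell(w_1\cdots w_l\, x) > \ell(w_1\cdots w_l)$. This block-inversion argument is more transparent than threading reducedness in $\mathsf{Sym}_n$ through repeated applications of Lemma \ref{lem: C P functions}.f).
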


\begin{proof}
We start by proving the first statement in part a). To simplify notation, we assume (without loss of generality) that $Q_0$ is a singleton. We divide the interval $[1,\bx{c}]$ into blocks (i.e.\ subintervals) of size $\widehat{\bx{e}}_1, \hdots, \widehat{\bx{e}}_n$. Let $B_1, \hdots, B_n$ be the blocks. The permutation $u$ acts by permuting these blocks. Let $\Inv = \{ (i,j) \in [1,\bx{c}]^2 \mid i < j, \ \widetilde{u}(i) > \widetilde{u}(j)\}$ and $\Inv_B = \{ (i,j) \in [1,n]^2 \mid B_i < B_j, \ u(B_i) > u(B_j)\}$. 
The length of $\widetilde{u}$ equals the number of inversions, i.e., $\ell(\widetilde{u}) = \lvert\Inv\rvert$. Since $\widetilde{u}$ permutes blocks but does not change the order inside blocks, we have $\lvert\Inv\rvert = \sum_{(i,j) \in \Inv_B} \widehat{\bx{e}}_i \cdot \widehat{\bx{e}}_j$. We can identify each inversion $(i,j)$ in $\Inv_B$ with some simple transposition $s_{j_l}$ in the reduced expression $u = s_{j_k} \cdot \hdots \cdot s_{j_1}$. But then $\widehat{\bx{e}}_i \cdot \widehat{\bx{e}}_j = \ell(w_l)$, proving the statement. For the second statement in part a), note that $w_1 \cdot \hdots \cdot w_l$ also permutes blocks but does not change the order inside blocks. But any $x \in \mathsf{W}_{\ux{e}^{2l}}$ permutes numbers within blocks, increasing the number of inversions. Hence $\ell(w_1 \cdot \hdots \cdot w_l \cdot x) > \ell(w_1 \cdot \hdots \cdot w_l)$, which implies that $w_1 \cdot \hdots \cdot w_l \in \mathsf{D}_{\ux{e}^{2l}}^\mathbf{c}$.

Let us prove part b). 
We have the following chain of equalities
\begin{equation} \label{length lemma} w \cdot P(\widehat{\ux{d}}) = w \cdot P(\ux{d}) \mathrel{\textnormal{\leo}} P(\ux{e}) = u \cdot P(\widehat{\ux{e}}) = \widetilde{u} \cdot P(u \cdot \widehat{\ux{e}}) = \widetilde{u} \cdot P(\widehat{\ux{d}}).\end{equation}
For the first equality, note that $\widehat{\ux{d}} = C(w \cdot P(\ux{d}) \mathrel{\textnormal{\leo}} P(\ux{e})) = C(P(\ux{d}) \mathrel{\textnormal{\leo}} w^{-1}\cdot P(\ux{e}))$. Hence, switching the roles of $\ux{e}$ and $\ux{d}$ in Lemma \ref{lem: e hat in P}, we get $P(\widehat{\ux{d}}) =  P(\ux{d}) \mathrel{\textnormal{\leo}} w^{-1}\cdot P(\ux{e})$. After acting on both sides by $w$ we get the first equality. The second equality follows directly from the definition of $u$, while the third equality follows from a repeated application of Lemma \ref{lem: C P functions}.f). The final equality holds since 
$u \cdot \widehat{\ux{e}} = u \cdot C(P(\ux{e}) \mathrel{\textnormal{\leo}} w \cdot P(\ux{d})) = C(u (\cdot P(\ux{e}) \mathrel{\textnormal{\leo}} w \cdot P(\ux{d}))) = C(w \cdot P(\ux{d}) \mathrel{\textnormal{\leo}} P(\ux{e})) = \widehat{\ux{d}}$. 
The claim that $\widetilde{u} = w$ now follows from \eqref{length lemma}, the fact that $\Stab_{\Gx{W}{c}}(P(\widehat{\ux{d}})) = \mathsf{W}_{\widehat{\ux{d}}}$ and that both $w$ and $\widetilde{u}$ are in  $\mathsf{D}_{\widehat{\ux{d}}}^{\mathbf{c}}$. 

Part c) follows from the following chain of equalities
\begin{align*}
\mathsf{W}_{\ux{e}^{0}} = \Stab_{\Gx{W}{c}}(P \circ C(\lambda \mathrel{\textnormal{\leo}} \mu)) =  \Stab_{\Gx{W}{c}}(\lambda \mathrel{\textnormal{\leo}} \mu) = \Gxx{W}{e} \cap (w \Gxx{W}{d} w^{-1}), 
\end{align*}
where $\lambda = P(\ux{e})$ and $\mu = w \cdot P(\ux{d})$. The first equality follows from the fact that, by definition, $\ux{e}^{0} = \widehat{\ux{e}} = C(\lambda \mathrel{\textnormal{\leo}} \mu)$, and Lemma \ref{lem: C P functions}.d). 
The second equality follows from Lemma \ref{lem: e hat in P} while the third equality follows from Lemma \ref{lem: comb cap}.c) and Lemma \ref{lem: C P functions}.d).  
\end{proof}

We have so far treated the ordered intersection operation $\mathrel{\textnormal{\leo}}$ on partitionings as a combinatorial device. 
However, as indicated in Lemma \ref{lem: partitionings and flags}, partitionings also have a geometric meaning since they correspond to coordinate flags in $\mathbf{V}_{\bx{c}}$. Based on this observation, we will extend the operation $\mathrel{\textnormal{\leo}}$ to all flags in $\Sx{F}{c}$. 

\begin{defi}
Let $F = (F_i)_{i=0}^{\ell_{\ux{e}}} \in \Sxx{F}{e}$ and $F' = (F_j)_{j=0}^{\ell_{\ux{d}}} \in \Sxx{F}{d}$ be two flags. Let $R_{i,j} =  F_i \cap F'_j$ and $S_{F,F'} = \{ R_{i,j} \mid 0 \leq i \leq \ell_{\ux{e}}, \ 0 \leq j \leq \ell_{\ux{d}} \}$. We put a total order on the set $S_{F,F'}$ by declaring that $R_{i,j} < R_{r,s}$ if and only if $r>i$ or $r=i$ and $j < s$. We then define the \emph{ordered intersection}  $F\mathrel{\textnormal{\leo}}F'$ of flags $F$ and $F'$ by setting $(F\mathrel{\textnormal{\leo}}F')_m$ to be the $m$-th element of $S_{F,F'}$ with respect to the total order defined above, and deleting all the repeated occurrences of subspaces. 
\end{defi}

It is clear that $F\mathrel{\textnormal{\leo}}F'$ is a flag in $\Sx{F}{c}$. Moreover, if $(F,F') \in \O_w^{\Delta}$ then $F\mathrel{\textnormal{\leo}}F' \in \mathfrak{F}_{\widehat{\ux{e}}}$. 

\begin{lem} \label{lem: intersection and stability}
Let $\rho \in \Sx{R}{c}$. 
If $F \in \Sxx{F}{e}$ and $F' \in \Sxx{F}{d}$ are $\rho$-stable, then so are $F\mathrel{\textnormal{\leo}}F'$ and $F'\mathrel{\textnormal{\leo}}F$. 
\end{lem}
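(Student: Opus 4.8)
The plan is to reduce the claim to the elementary observation that the $\rho$-stable $Q_0$-graded subspaces of $\mathbf{V}_{\mathbf{c}}$ form a sublattice of the lattice of all $Q_0$-graded subspaces, i.e.\ they are closed under intersection and sum; one then notes that every term of the flag $F \mathrel{\leo} F'$ (and likewise of $F' \mathrel{\leo} F$) lies in the sublattice generated by the terms of $F$ and of $F'$, all of which are $\rho$-stable by hypothesis.

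The first step is to prove this closure property. Call a $Q_0$-graded subspace $U = \bigoplus_{i \in Q_0} U(i)$ of $\mathbf{V}_{\mathbf{c}}$ \emph{$\rho$-stable} if $\rho_a(U(s(a))) \subseteq U(t(a))$ for every $a \in Q_1$; by Definition \ref{def: stable vs str stable}, a flag $V_\bullet$ is $\rho$-stable exactly when each $V_j$ is a $\rho$-stable subspace. Now let $U$ and $W$ be $\rho$-stable. Since both are $Q_0$-graded, for every vertex $i$ we have $(U \cap W)(i) = U(i) \cap W(i)$ and $(U + W)(i) = U(i) + W(i)$; and for any linear map $f$ one has $f(A \cap B) \subseteq f(A) \cap f(B)$ and $f(A + B) = f(A) + f(B)$. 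Taking $f = \rho_a$ and combining with $\rho$-stability of $U$ and of $W$ yields $\rho_a\bigl((U \cap W)(s(a))\bigr) \subseteq U(t(a)) \cap W(t(a)) = (U \cap W)(t(a))$ and $\rho_a\bigl((U + W)(s(a))\bigr) \subseteq U(t(a)) + W(t(a)) = (U + W)(t(a))$, so both $U \cap W$ and $U + W$ are $\rho$-stable.

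The second step is to apply this termwise. Unwinding the definition of the ordered intersection (equivalently, translating the partitioning-level description of Lemma \ref{lem: e hat in P} through the bijection of Lemma \ref{lem: partitionings and flags}), each subspace occurring in $F \mathrel{\leo} F'$ has the form $F_{i-1} + (F_i \cap F'_j)$ for suitable indices $i, j$; in particular it belongs to the sublattice of subspaces generated by the $F_i$ and the $F'_j$. Since $F$ and $F'$ are $\rho$-stable, every $F_i$ and every $F'_j$ is a $\rho$-stable subspace, so by Step~1 every term of $F \mathrel{\leo} F'$ is a $\rho$-stable subspace; being a flag all of whose terms are $\rho$-stable, $F \mathrel{\leo} F'$ is a $\rho$-stable flag. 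The flag $F' \mathrel{\leo} F$ is handled identically, its terms being $F'_{j-1} + (F'_j \cap F_i)$; the asymmetry of $\mathrel{\leo}$ only reorders the terms and does not change the set of subspaces that occur. I do not expect any genuine difficulty: the entire content is the (trivial) lattice-closure property of Step~1, and the only point requiring slight care is the bookkeeping identifying the terms of the ordered-intersection flag as the sums $F_{i-1} + (F_i \cap F'_j)$, after which Step~1 applies directly.
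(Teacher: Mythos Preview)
Your proof is correct and follows essentially the same approach as the paper: both arguments reduce to the closure of $\rho$-stable $Q_0$-graded subspaces under lattice operations. The paper's one-line proof only invokes closure under intersection (since its stated definition takes the terms of $F \mathrel{\leo} F'$ to be the $R_{i,j} = F_i \cap F'_j$ themselves), whereas you more carefully identify the terms as $F_{i-1} + (F_i \cap F'_j)$ and accordingly also verify closure under sums---a harmless extra step that in fact makes the argument match the partitioning-level construction more precisely.
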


\begin{proof}
Since $F$ and $F'$ are $\rho$-stable, each intersection $R_{i,j} =  F_i \cap F'_j$ is preserved by $\rho$, which implies that $F\mathrel{\textnormal{\leo}}F'$ and $F'\mathrel{\textnormal{\leo}}F$ are also $\rho$-stable. 
\end{proof}

\subsection{Basis and generators} \label{sec: basis and gens}

In this section we state a basis theorem for the quiver Schur algebra and use it to find a convenient generating set for our algebra.

\begin{defi} \label{defi: basis elements for QS}
Let $(\ux{e}, \ux{d}, w)$ be an orbit datum with the corresponding refinement datum $(\widehat{\ux{e}}, \widehat{\ux{d}}, u)$. Let us fix a reduced decomposition $u= s_{j_k} \cdot \hdots \cdot s_{j_1}$, which determines the corresponding crossing datum $(\ux{e}^{0}, \hdots, \ux{e}^{2k})$. 
Define 
\[ \textstyle
 \ux{d} \underset{w}{\Longrightarrow} \ux{e} \quad := \quad \bigcurlywedge_{\ux{e}^{0}}^{\ux{e}} \star \cross_{\ux{e}^{2}}^{j_1} \star \hdots \star \cross_{\ux{e}^{2k}}^{j_k} \star  \bigcurlyvee_{\ux{d}}^{\ux{e}^{2k}} \in \Axxy{Z}{e}{d}. 
\] 
Given $c \in \mathcal{Z}_{\ux{e}^{2k},\ux{e}^{2k}}^e \cong \Lambda_{\ux{e}^{2k}}$, also define 
\begin{equation} \label{basis el c} \textstyle
 \ux{d} \underset{w}{\overset{c}{\Longrightarrow}} \ux{e} \quad := \quad \bigcurlywedge_{\ux{e}^{0}}^{\ux{e}} \star \cross_{\ux{e}^{2}}^{j_1} \star \hdots \star \cross_{\ux{e}^{2k}}^{j_k} \star c \star \bigcurlyvee_{\ux{d}}^{\ux{e}^{2k}} \in \Axxy{Z}{e}{d}. 
\end{equation}
\end{defi}

We now state the basis theorem for the quiver Schur algebra.

\begin{thm} \label{thm:basis} 
If we let $(\ux{e}, \ux{d}, w)$ range over all orbit data and $c$ range over a basis of $\Lambda_{\ux{e}^{2k}}$, then the elements $\ux{d} \underset{w}{\overset{c}{\Longrightarrow}} \ux{e}$ form a basis of $\Ax{Z}{c}$. 
\end{thm}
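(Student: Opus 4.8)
The plan is to establish the basis theorem by a reduction to the KLR algebra case, which amounts to $\mathcal{Z}^{\mathsf{KLR}}_{\mathbf c}$ and is handled by (the straightforward generalization of) \cite[Theorem 3.6]{VV}. The key point is that the stratification of $\Sxxy{Z}{e}{d}$ by the subvarieties $\Sxxy{Z}{e}{d}^{\leqslant w}$ (Lemma \ref{lem: Z closed}) gives rise, via the long exact sequences in equivariant Borel--Moore homology, to a filtration of $\Axxy{Z}{e}{d}$ whose associated graded pieces are the $\Axxy{Z}{e}{d}^w = H^{\Gx{G}{c}}_\bullet(\Sxxy{Z}{e}{d}^w)$. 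So the first step is to compute each $\Axxy{Z}{e}{d}^w$, and then to show the exact sequences split, yielding $\Axxy{Z}{e}{d} \cong \bigoplus_{w \in \dc{e}{d}{c}} \Axxy{Z}{e}{d}^w$ as graded vector spaces.

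\textbf{Step 1: the strata.} Fix an orbit datum $(\ux e,\ux d,w)$ with refinement datum $(\widehat{\ux e},\widehat{\ux d},u)$. I would identify $\Sxxy{Z}{e}{d}^w$ as a vector bundle over the partial diagonal Bott--Samelson variety attached to the crossing datum $(\ux e^0,\dots,\ux e^{2k})$. Concretely, the choice of relative position $w$ between the flags $F\in\Sxx{F}{e}$, $F'\in\Sxx{F}{d}$ forces their ordered intersection $F\mathrel{\leo}F'$ to lie in $\mathfrak F_{\widehat{\ux e}}$, and Lemma \ref{lem: intersection and stability} ensures $\rho$-stability is inherited; this realizes $\Sxxy{Z}{e}{d}^w$ as a $\Gx{G}{c}$-equivariant affine bundle over $\O^\Delta_w \cong \Gx{G}{c}/(\Gxx{P}{e}\cap w\Gxx{P}{d}w^{-1})$, and using Proposition \ref{lem: p vs w}(c) the relevant stabilizer group is $\mathsf W_{\ux e^0}$. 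Hence $H^{\Gx{G}{c}}_\bullet(\Sxxy{Z}{e}{d}^w)$ is a free module of rank one over $\Lambda_{\ux e^{2k}}$ (up to a degree shift), so its Poincaré series matches the contribution of the basis elements $\ux d\overset{c}{\underset{w}{\Longrightarrow}}\ux e$ as $c$ ranges over a basis of $\Lambda_{\ux e^{2k}}$.

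\textbf{Step 2: splitting and counting.} The long exact sequences associated to the closed-open decompositions in \eqref{Zw lesseq} degenerate because all the spaces involved have equivariant Borel--Moore homology concentrated in the appropriate parities (each stratum being an affine bundle over a cell-like homogeneous space); this is the standard purity/parity vanishing argument, identical to the one in \cite{VV} and \cite{CG}. Summing over all orbit data gives $\dim_q \Ax{Z}{c} = \sum \dim_q \Axxy{Z}{e}{d}^w$, which equals the total number (weighted by degree) of the proposed basis elements. So it remains only to show that the elements $\ux d\overset{c}{\underset{w}{\Longrightarrow}}\ux e$ are linearly independent, or equivalently (given the dimension count) that they span.

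\textbf{Step 3: the elements land in the right filtration layer and are triangular.} Using the transitivity relations (Proposition \ref{pro:transitivity}) together with the convolution computation of crossings via Lemma \ref{lem:conv of classes}, one shows that $\ux d\overset{c}{\underset{w}{\Longrightarrow}}\ux e$ lies in $\Axxy{Z}{e}{d}^{\leqslant w}$ and that its image in the top graded piece $\Axxy{Z}{e}{d}^w$ is exactly $[\Sxxy{Z}{e}{d}^w]$ capped with $c$ — i.e. a generator of that rank-one free $\Lambda_{\ux e^{2k}}$-module. This is where Proposition \ref{lem: Sym refined} does the real work: it guarantees that the word $w_1\cdots w_k = w$ built from the crossing datum has length equal to $\sum\ell(w_l)$, so no cancellation occurs and the convolution of the crossings computes the fundamental class of the stratum rather than something supported on a smaller stratum. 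Given this triangularity with respect to the Bruhat order on $\dc{e}{d}{c}$, linear independence and spanning follow from the dimension count in Step 2 by a standard filtration argument.

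\textbf{Main obstacle.} The genuinely delicate point is Step 3: proving that the iterated convolution $\bigcurlywedge_{\ux e^0}^{\ux e}\star\cross^{j_1}_{\ux e^2}\star\cdots\star\cross^{j_k}_{\ux e^{2k}}\star c\star\bigcurlyvee^{\ux e^{2k}}_{\ux d}$ really represents $[\Sxxy{Z}{e}{d}^w]\cap c$ modulo lower strata, with no loss to smaller Bruhat cells. Each individual crossing $\cross^j_{\ux f}$ is, by Lemma \ref{lem:conv of classes}, the fundamental class of $\Sxxy{Z}{s_j(\ux f),\ux f}^e$ when $s_j(\ux f)\neq\ux f$ but the closure of a non-trivial stratum when $s_j(\ux f)=\ux f$; controlling how these behave under repeated convolution — in particular verifying the transversality/submersion hypotheses of Lemma \ref{lem:conv of classes} at each step and tracking the relative position — is the technical heart of the argument, and the length additivity from Proposition \ref{lem: Sym refined}(a),(b) is precisely the combinatorial input that makes it go through.
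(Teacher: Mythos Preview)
Your proposal is correct and takes essentially the same approach as the paper: stratify by Bruhat order, split the long exact sequences via parity/equivariant formality, and establish triangularity of the proposed basis elements with respect to this filtration. The paper resolves your ``Main obstacle'' (Step 3) by constructing an explicit iterated fibre product $\widetilde{\mathfrak{BS}}^\Delta_{\ux e,\ux d,w} = \mathfrak{Q}_{\ux e^0}\times_{\mathfrak{Q}_{\ux e^1}}\cdots\times_{\mathfrak{Q}_{\ux e^{2k-1}}}\mathfrak{Q}_{\ux e^{2k}}$ together with a proper map $\widetilde{\psi}\colon\widetilde{\mathfrak{BS}}^\Delta_{\ux e,\ux d,w}\to\Sxxy{Z}{e}{d}^{\leqslant w}$ that restricts to an isomorphism over $\Sxxy{Z}{e}{d}^w$; this Bott--Samelson resolution packages all the transversality verifications you anticipate into a single geometric statement, after which Lemma \ref{lem:conv of classes} and proper base change yield exactly the triangularity you describe.
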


\begin{rem} 
We make a few remarks regarding the theorem. 
\begin{enumerate}[label=(\roman*),topsep=2pt,itemsep=1pt]
\item The basis in Theorem \ref{thm:basis} depends on the choice of a crossing datum, i.e., the choice of a reduced expression for $u$. One would also obtain other bases by letting $c$ range over a basis of $\mathcal{Z}_{\ux{e}^{2l},\ux{e}^{2l}}^e$, for any $1 \leq l \leq k$, and redefining the elements \eqref{basis el c} appropriately. 
\item The basis in Theorem \ref{thm:basis} is natural from a geometric point of view. As we will see in \S \ref{subsec: proof of basis}, it is related to certain generalizations of Bott-Samelson varieties. For this reason, we call it a \emph{Bott-Samelson basis} of the quiver Schur algebra. Our basis also admits a natural interpretation in terms of cohomological Hall algebras (see Theorem \ref{thm: TH vs Pol}). 
\item Theorem \ref{thm:basis} is analogous to the basis theorem \cite[Theorem 3.13]{SW} for the Stroppel-Webster quiver Schur algebra.  However, the combinatorics of residue sequences developed in \cite{SW} is not sufficient to correctly characterize the refined vector compositions  used in the definition of the basis. Our combinatorics of partitionings (\S \ref{subsec: refinements}) fixes this problem.
\end{enumerate}
\end{rem}

Using Theorem \ref{thm:basis}, we can find a generating set for the quiver Schur algebra.

\begin{cor} \label{cor:el spl mer}
Elementary merges, elementary splits and the polynomials $\Ax{Z}{c}^e$ generate $\Ax{Z}{c}$ as an algebra. 
\end{cor}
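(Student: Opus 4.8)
The plan is to deduce the corollary directly from the Bott-Samelson basis of Theorem~\ref{thm:basis}. By that theorem, every element of $\Ax{Z}{c}$ is a linear combination of basis elements of the form $\ux{d} \underset{w}{\overset{c}{\Longrightarrow}} \ux{e}$, so it suffices to show that each such element lies in the subalgebra generated by elementary merges, elementary splits and the polynomials $\Ax{Z}{c}^e$. Unwinding Definition~\ref{defi: basis elements for QS}, the element $\ux{d} \underset{w}{\overset{c}{\Longrightarrow}} \ux{e}$ is already written as a convolution product
\[
\textstyle
\bigcurlywedge_{\ux{e}^{0}}^{\ux{e}} \star\, \cross_{\ux{e}^{2}}^{j_1} \star \hdots \star\, \cross_{\ux{e}^{2k}}^{j_k} \star\, c \star\, \bigcurlyvee_{\ux{d}}^{\ux{e}^{2k}},
\]
so the task reduces to showing that each of the three types of factor appearing here — a (possibly non-elementary) merge $\bigcurlywedge_{\ux{e}^{0}}^{\ux{e}}$, a crossing $\cross_{\ux{e}^{2l}}^{j_l}$, and a (possibly non-elementary) split $\bigcurlyvee_{\ux{d}}^{\ux{e}^{2k}}$ — can be expressed in terms of the generators, since $c \in \mathcal{Z}_{\ux{e}^{2k},\ux{e}^{2k}}^e \cong \Lambda_{\ux{e}^{2k}}$ is a polynomial by definition.

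First I would handle the merges and splits. By the transitivity relations of Proposition~\ref{pro:transitivity}.a), any merge $\bigcurlywedge_{\ux{d}}^{\ux{e}}$ with $\ux{d} \succ \ux{e}$ factors as a convolution product of elementary merges: one picks a chain $\ux{d} = \ux{f}^{(0)} \succ \ux{f}^{(1)} \succ \hdots \succ \ux{f}^{(m)} = \ux{e}$ in which each step merges exactly two adjacent blocks, which is possible because $\succcurlyeq$ is generated by the elementary coarsenings $\wedge_k$; then $\bigcurlywedge_{\ux{d}}^{\ux{e}} = \bigcurlywedge_{\ux{f}^{(m-1)}}^{\ux{f}^{(m)}} \star \hdots \star \bigcurlywedge_{\ux{f}^{(0)}}^{\ux{f}^{(1)}}$ by transitivity, and by coarsening the blocks left untouched at each step one sees each factor is an elementary merge (up to identifying it with the full diagram in Definition~\ref{defi:mergesandsplits}). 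The same argument applied to the second transitivity relation expresses every split as a product of elementary splits. Second, the crossing $\cross_{\ux{d}}^k$ is by definition (Definition~\ref{defi:mergesandsplits}) the convolution product $\bigcurlywedge_{\ux{d}}^{\wedge_k(\ux{d})} \star \bigcurlyvee_{\wedge_k(\ux{d})}^{s_k(\ux{d})}$ of an elementary merge followed by an elementary split, hence already lies in the subalgebra generated by the generators.

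Combining these observations, every factor in the displayed expression for $\ux{d} \underset{w}{\overset{c}{\Longrightarrow}} \ux{e}$ — the leading merge, each middle crossing, the polynomial $c$, and the trailing split — lies in the subalgebra generated by elementary merges, elementary splits and polynomials; hence so does their convolution product, and hence so does every basis element, and therefore so does all of $\Ax{Z}{c}$. I do not expect a genuine obstacle here: the only point requiring a little care is the bookkeeping in the first step, namely checking that the abbreviated ``two-strand'' diagram for an elementary merge/split genuinely equals the corresponding fundamental class after padding with identity strands, which is exactly the shorthand convention fixed in Definition~\ref{defi:mergesandsplits} and is compatible with convolution by Lemma~\ref{lem:conv of classes}; and that a refinement chain realizing an arbitrary coarsening by successive elementary steps exists, which is immediate from Definition~\ref{defi: wedge comp}.
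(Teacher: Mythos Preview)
Your proof is correct and follows exactly the same approach as the paper, whose proof reads in full ``This follows directly from Theorem~\ref{thm:basis} and Proposition~\ref{pro:transitivity}''; you have simply spelled out the details. One small slip: the crossing is $\cross_{\ux{d}}^k = \bigcurlyvee_{\wedge_k(\ux{d})}^{s_k(\ux{d})} \star \bigcurlywedge_{\ux{d}}^{\wedge_k(\ux{d})}$ (split on the left, merge on the right, cf.\ the explicit formula in Definition~\ref{defi:mergesandsplits theta}), not the order you wrote, since otherwise the convolution indices do not match; this does not affect the argument.
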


\begin{proof}
This follows directly from Theorem \ref{thm:basis} and Proposition \ref{pro:transitivity}. 
\end{proof}

\subsection{Proof of the basis theorem.} \label{subsec: proof of basis}
We begin by proving four technical lemmas. 
Set $\O_w := \Gxx{P}{e}w\Gxx{P}{d}/\Gxx{P}{d}$. 
Consider the following parabolic analogue of the Bott-Samelson variety: 
\[ \mathfrak{BS}_{\ux{e},\ux{d},w} := \mathsf{P}_{\ux{e}} \times_{\mathsf{P}_{\ux{e}^{0}}} \mathsf{P}_{\ux{e}^{1}} \times_{\mathsf{P}_{\ux{e}^{2}}} \hdots \times_{\mathsf{P}_{\ux{e}^{2k-2}}} \mathsf{P}_{\ux{e}^{2k-1}} \times_{\mathsf{P}_{\ux{e}^{2k}}} \Gxx{P}{d} / \Gxx{P}{d}. \] 
We have a commutative diagram
\begin{equation} \label{Bott-S diag}
\begin{tikzcd}
\mathsf{P}_{\ux{e}} \times \mathsf{P}_{\ux{e}^{1}} \times \mathsf{P}_{\ux{e}^{3}} \times \hdots \times \mathsf{P}_{\ux{e}^{2k-1}} \times \Gxx{P}{d} \arrow[r,"m"] \arrow[d,swap, "can_1"] & \Gx{G}{c} \arrow[d, "can_2"] \\
\mathfrak{BS}_{\ux{e},\ux{d},w} \arrow[r,"\phi"] & \Sxx{F}{d}
\end{tikzcd}
\end{equation}
where $m$ is the multiplication map and $\phi$ is the induced map.

\begin{lem} \label{lem: basis proof 1}
The map $\phi$ is proper, its image equals $\overline{\O_w}$, and it restricts to an isomorphism over~$\O_w$. 
\end{lem}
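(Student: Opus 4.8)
The plan is to handle the three assertions in turn, with the substance concentrated in the identification of the image.

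\emph{Properness.} First I would unwind the contracted product defining $\mathfrak{BS}_{\ux{e},\ux{d},w}$. By Definition~\ref{defi: crossing datum} each $\ux{e}^{2l-1}=\wedge_{j_l}(\ux{e}^{2l-2})$ is a common coarsening of $\ux{e}^{2l-2}$ and of $\ux{e}^{2l}=s_{j_l}\cdot\ux{e}^{2l-2}$, so $\mathsf{P}_{\ux{e}^{2l}}\subseteq\mathsf{P}_{\ux{e}^{2l-1}}$ and $\mathsf{P}_{\ux{e}^{2l}}\subseteq\mathsf{P}_{\ux{e}^{2l+1}}$; similarly $\mathsf{P}_{\ux{e}^0}\subseteq\mathsf{P}_{\ux{e}}$ and $\mathsf{P}_{\ux{e}^{2k}}\subseteq\mathsf{P}_{\ux{d}}$, since $\ux{e}^0=\widehat{\ux{e}}$ refines $\ux{e}$ and $\ux{e}^{2k}=\widehat{\ux{d}}$ refines $\ux{d}$. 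Because $\mathsf{P}_{\ux{d}}/\mathsf{P}_{\ux{d}}$ is a point, unwinding the contracted products from the right realizes $\mathfrak{BS}_{\ux{e},\ux{d},w}$ as an iterated tower of partial flag bundles, with successive stages $\mathsf{P}_{\ux{e}^{2k-1}}/\mathsf{P}_{\ux{e}^{2k}}$, $\mathsf{P}_{\ux{e}^{2k-3}}/\mathsf{P}_{\ux{e}^{2k-2}}$, \dots, $\mathsf{P}_{\ux{e}^{1}}/\mathsf{P}_{\ux{e}^{2}}$, $\mathsf{P}_{\ux{e}}/\mathsf{P}_{\ux{e}^{0}}$. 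Hence $\mathfrak{BS}_{\ux{e},\ux{d},w}$ is a smooth projective variety; as $\Sxx{F}{d}$ is separated, $\phi$ is proper.

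\emph{Image and isomorphism over $\O_w$.} Commutativity of~\eqref{Bott-S diag} and surjectivity of $can_1$ give $\Ima\phi=can_2(\Ima m)=(\mathsf{P}_{\ux{e}}\,\mathsf{P}_{\ux{e}^1}\mathsf{P}_{\ux{e}^3}\cdots\mathsf{P}_{\ux{e}^{2k-1}}\,\mathsf{P}_{\ux{d}})/\mathsf{P}_{\ux{d}}$, a closed irreducible $\mathsf{P}_{\ux{e}}$-stable subvariety of $\Sxx{F}{d}$; I must identify it with $\overline{\O_w}$ and show that $\phi$ restricts to an isomorphism over $\O_w$. This is exactly the statement that $\phi$ is the parabolic Bott-Samelson resolution of $\overline{\O_w}$, and the only input beyond the classical argument is the length bookkeeping in Proposition~\ref{lem: p vs w}. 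I would single out the dense open subset $\mathfrak{BS}^\circ\subseteq\mathfrak{BS}_{\ux{e},\ux{d},w}$ on which, at every contracted product, the relative position of the two relevant parabolics is generic, i.e.\ equals the longest element $w_l$ of $\mathsf{D}_{\ux{e}^{2l}}^{\ux{e}^{2l-1}}$. Using that $w=w_1\cdots w_k$ with $\ell(w)=\sum_l\ell(w_l)$ and $w_1\cdots w_l\in\mathsf{D}_{\ux{e}^{2l}}^{\mathbf{c}}$ (parts a) and b) of Proposition~\ref{lem: p vs w}), together with $w\in\dc{e}{d}{c}$, one checks that $\phi$ maps $\mathfrak{BS}^\circ$ bijectively onto $\O_w$: the length-additivity forces the iterated products $\mathsf{P}_{\ux{e}}w_1\mathsf{P}_{\ux{e}^2}w_2\cdots$ to keep the expected dimension, with no collapsing of cells, while the minimality of $w$ in $\Gxx{W}{e}\backslash\Gx{W}{c}/\Gxx{W}{d}$ prevents the end parabolics $\mathsf{P}_{\ux{e}}$, $\mathsf{P}_{\ux{d}}$ from contributing extra components. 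Since $\phi$ is proper and $\mathfrak{BS}^\circ$ dense, $\Ima\phi=\overline{\phi(\mathfrak{BS}^\circ)}=\overline{\O_w}$. Conversely, a point of $\mathfrak{BS}_{\ux{e},\ux{d},w}$ lying outside $\mathfrak{BS}^\circ$ has, at some step, relative position strictly below $w_l$ in the Bruhat order, and propagating this through the remaining multiplications (the same length estimate now read as sub-additivity of cell dimension under products of parabolic double cosets) forces its image into $\overline{\O_w}\setminus\O_w$; hence $\phi^{-1}(\O_w)=\mathfrak{BS}^\circ$. As $\phi|_{\mathfrak{BS}^\circ}$ is then a bijective morphism onto the smooth (hence normal) variety $\O_w$, it is an isomorphism by Zariski's main theorem. (One could equally run this step as an induction on $k$, peeling off the last contracted product and invoking part a) of Proposition~\ref{lem: p vs w} to split off $w_k$.)

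\emph{Main obstacle.} The properness step and the Bott-Samelson skeleton are routine; the real work is the middle step --- converting the combinatorics of Proposition~\ref{lem: p vs w} (length-additivity of $w=w_1\cdots w_k$ and $w_1\cdots w_l\in\mathsf{D}_{\ux{e}^{2l}}^{\mathbf{c}}$) into the geometric assertion that the product of parabolics degenerates precisely to $\overline{\O_w}$ with no loss of dimension and that $\phi$ is generically one-to-one, while tracking carefully that the minimal-double-coset hypothesis on $w$ is what rules out spurious components coming from the end parabolics $\mathsf{P}_{\ux{e}}$, $\mathsf{P}_{\ux{d}}$ and the even-indexed contracting parabolics.
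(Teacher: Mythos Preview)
Your approach is essentially the same as the paper's --- a parabolic Bott--Samelson argument with Proposition~\ref{lem: p vs w} supplying the combinatorial engine. Your properness step (domain projective as an iterated flag bundle, target separated) is cleaner than the paper's diagram chase through \eqref{Bott-S diag}. For the image and the isomorphism over $\O_w$, the paper computes explicitly with Bruhat cells and unipotent pieces $\mathsf{U}_{w_l^{-1}}w_l$, whereas you package the same content as ``dense open $\mathfrak{BS}^\circ$ plus Zariski's main theorem''. One point worth sharpening: the \emph{bijectivity} of $\phi|_{\mathfrak{BS}^\circ}$ onto $\O_w$ (as opposed to mere surjectivity with matching dimension) needs that the leftmost factor $\mathsf{P}_{\ux{e}}/\mathsf{P}_{\ux{e}^0}$ already parametrises $\O_w$ faithfully, i.e., that the stabiliser in $\mathsf{P}_{\ux{e}}$ of the point $w\mathsf{P}_{\ux{d}}$ is exactly $\mathsf{P}_{\ux{e}^0}$. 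That is precisely the content of part c) of Proposition~\ref{lem: p vs w} (at the Weyl-group level, $\mathsf{W}_{\ux{e}^0}=\Gxx{W}{e}\cap w\Gxx{W}{d}w^{-1}$), which the paper invokes explicitly to obtain its disjoint decomposition of $\O_w$; your phrase ``the minimality of $w$ prevents the end parabolics from contributing extra components'' points at the same fact but should cite c) directly, since parts a) and b) alone only give the dimension count.
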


\begin{proof}
The proof is similar to the proof for the usual Bott-Samelson resolution, where the parabolics $\Gxx{P}{e}$ and $\Gxx{P}{d}$ are replaced with a Borel subgroup. Since we could not find an explicit reference, and since the proof relies on the combinatorics from \S \ref{subsec: refinements}, we sketch it below.

It is clear that $can_1$ and $can_2$ are proper. The multiplication map $(\Gx{G}{c})^{k+2} \to \Gx{G}{c}$ is proper and hence its restriction to the closed submanifold $\mathsf{P}_{\ux{e}} \times \mathsf{P}_{\ux{e}^{1}} \times \hdots \times \mathsf{P}_{\ux{e}^{2k-1}} \times \Gxx{P}{d}$ is proper as well. Since $can_1$ is a locally trivial fibration, and properness is a local property, it follows that $\phi$ is proper, as required. This proves the first statement in the lemma. 

By the Bruhat decomposition, we have 
\[ \mathsf{P}_{\ux{e}^{2l-1}} = \bigsqcup_{x \in \mathsf{D}^{\ux{e}^{2l-1}}_{\ux{e}^{2l}}} \mathsf{B}_{\bx{c}} x  \mathsf{P}_{\ux{e}^{2l}}. \]
Proposition \ref{lem: p vs w}.a) implies that 
\[ \mathsf{P}_{\ux{e}^{2l-1}} \cdot \mathsf{P}_{\ux{e}^{2l+1}} = \bigcup_{x \in \mathsf{D}^{\ux{e}^{2l-1}}_{\ux{e}^{2l}}} \mathsf{B}_{\bx{c}} x  \mathsf{P}_{\ux{e}^{2l+1}} = \bigsqcup_{x \leq w_{l}w_{l+1} \in \mathsf{D}^{\bx{c}}_{\ux{e}^{2l+2}}} \mathsf{B}_{\bx{c}} x  \mathsf{P}_{\ux{e}^{2l+2}}. \]
By induction and Proposition \ref{lem: p vs w}.b), we get 
\begin{equation} \label{bas pr 1} \mathsf{P}_{\ux{e}} \cdot \mathsf{P}_{\ux{e}^{1}} \cdot \hdots \cdot \mathsf{P}_{\ux{e}^{2k-1}} \cdot \mathsf{P}_{\ux{d}} = \bigsqcup_{x \leq w \in \dc{e}{d}{c}} \mathsf{P}_{\ux{e}} x \mathsf{P}_{\ux{d}}. \end{equation}
This implies that the image of $can_2 \circ m$ is $\overline{\O_w}$. Since $can_1$ is surjective, this is also the image of $\phi$. 
This proves the second statement of the lemma. 

Next, we claim that 
\begin{equation} \label{bas pr 1}  \mathsf{P}_{\ux{e}} w \mathsf{P}_{\ux{d}} = \bigcup_{y \in \Gxx{W}{e}} \mathsf{B}_{\bx{c}} yw \mathsf{P}_{\ux{d}} =  \bigsqcup_{y \in \mathsf{D}^{\ux{e}}_{\ux{e}^{0}}} \mathsf{B}_{\bx{c}} yw \mathsf{P}_{\ux{d}}. \end{equation} 
Let us first show that the union on the RHS of \eqref{bas pr 1} is disjoint. 
Suppose that $y_1 w = y_2 w x$ for some $y_1, y_2 \in \mathsf{D}^{\ux{e}}_{\ux{e}^{0}}$ and $x \in \Gxx{W}{d}$. Then $y_2^{-1}y_1 = w x w^{-1}$, which, by Proposition \ref{lem: Sym refined}.c), implies that $y_2^{-1}y_1 \in \Gxx{W}{e} \cap w \Gxx{W}{d} w^{-1} = \mathsf{W}_{\ux{e}^{0}}$. Hence $y_1 = y_2$, so the union is indeed disjoint. The first equality in \eqref{bas pr 1} follows from the Bruhat decomposition of $\Gxx{P}{e}$ and the fact that $w \in \dc{e}{d}{c}$. 
Next, if $y \in \Gxx{W}{e}$, we can write it as $y = ab$ with $a \in \mathsf{D}^{\ux{e}}_{\ux{e}^{0}}$ and $b \in \mathsf{W}_{\ux{e}^{0}}$. But then $yw\Gxx{P}{d} = aw\Gxx{P}{d}$, which yields the second equality in \eqref{bas pr 1}. 

Given $y \in \mathsf{D}^{\ux{e}}_{\ux{e}^{0}}$, let $\widetilde{U}_y := \mathsf{U}_{y^{-1}} y \times \mathsf{U}_{w_{1}^{-1}} w_{1} \times \hdots \times \mathsf{U}_{w_{k}^{-1}} w_{k}$. 
Set $\widetilde{U} = \bigsqcup_{y \in \mathsf{D}^{\ux{e}}_{\ux{e}^{0}}} \widetilde{U}_y$ and $U = can_1(\widetilde{U})$. 
It is easy to check that $m$ maps $\widetilde{U}_y$ isomorphically onto $\mathsf{U}_{(yw)^{-1}} yw$. 
It is also well known that the map 
\[ \mathsf{U}_{(yw)^{-1}} \times \Gxx{P}{d} \to \Gx{B}{c} yw\Gxx{P}{d}, \quad (v,p) \mapsto v(yw)p, \]
is an isomorphism. Hence $can_2 \circ m$ maps $\widetilde{U}$ isomorphically onto $\O_w$. 
By the commutativity of the diagram \eqref{Bott-S diag}, $\phi|_U$ is also an isomorphism onto $\O_w$. 
An easy argument again based on the Bruhat decomposition also shows that $U = \phi^{-1}(\O_w)$. This completes the proof of the lemma. 
\end{proof}

Let $\O_w^{\Delta} := \Gx{G}{c}\cdot(e\Gxx{P}{e}, w\Gxx{P}{d})$ be the diagonal $\Gx{G}{c}$-orbit corresponding to $w$ and define 
\[ \mathfrak{BS}_{\ux{e},\ux{d},w}^{\Delta} := \mathfrak{F}_{\ux{e}^{0}} \times_{\mathfrak{F}_{\ux{e}^{1}}} \mathfrak{F}_{\ux{e}^{2}} \times_{\mathfrak{F}_{\ux{e}^{3}}} \hdots \times_{\mathfrak{F}_{\ux{e}^{2k-1}}} \mathfrak{F}_{\ux{e}^{2k}}. \]
Let $p \colon \mathfrak{BS}_{\ux{e},\ux{d},w}^{\Delta} \to \mathfrak{F}_{\ux{e}^{0}} \to \Sxx{F}{e}$ and $q \colon \mathfrak{BS}_{\ux{e},\ux{d},w}^{\Delta} \to \mathfrak{F}_{\ux{e}^{2k}} \to \Sxx{F}{d}$ be the canonical maps. Set $\psi:=p \times q$. Consider the following diagram of $\Gx{G}{c}$-equivariant maps. 
\[
\begin{tikzcd}[column sep = tiny]
\mathfrak{BS}_{\ux{e},\ux{d},w}^{\Delta} \arrow[rr, "\psi"] \arrow[rd, "p", swap] & &  \Sxx{F}{e} \times \Sxx{F}{d} \arrow[ld, "can"] \\
 & \Sxx{F}{e}
\end{tikzcd}
\]
\begin{lem} \label{lem: psi proper}
The map $\psi$ is proper, its image equals $\overline{\O_w^{\Delta}}$, and it restricts to an isomorphism over~$\O_w^{\Delta}$. 
\end{lem}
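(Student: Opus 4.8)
The plan is to deduce Lemma~\ref{lem: psi proper} from Lemma~\ref{lem: basis proof 1} by passing from the ``partial flag'' picture (quotients $\mathsf{G}_{\bx{c}}/\mathsf{P}$) to the ``diagonal orbit'' picture ($\mathsf{G}_{\bx{c}}$-orbits in $\mathfrak{F}_{\ux{e}} \times \mathfrak{F}_{\ux{d}}$). First I would record the standard $\mathsf{G}_{\bx{c}}$-equivariant isomorphism
\[ \mathfrak{BS}_{\ux{e},\ux{d},w}^{\Delta} \;\cong\; \mathsf{G}_{\bx{c}} \times^{\mathsf{P}_{\ux{e}}} \mathfrak{BS}_{\ux{e},\ux{d},w}, \]
obtained by observing that $\mathfrak{F}_{\ux{e}^0} = \mathsf{G}_{\bx{c}}/\mathsf{P}_{\ux{e}^0} = \mathsf{G}_{\bx{c}} \times^{\mathsf{P}_{\ux{e}}} (\mathsf{P}_{\ux{e}}/\mathsf{P}_{\ux{e}^0})$ and that each successive fibre product $\times_{\mathfrak{F}_{\ux{e}^{2l-1}}} \mathfrak{F}_{\ux{e}^{2l}}$ in the definition of $\mathfrak{BS}^{\Delta}$ corresponds, after this induction, to appending the factor $\times_{\mathsf{P}_{\ux{e}^{2l-2}}} \mathsf{P}_{\ux{e}^{2l-1}} \times_{\mathsf{P}_{\ux{e}^{2l}}}$ in the definition of $\mathfrak{BS}$. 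Under this identification, the map $p \colon \mathfrak{BS}^{\Delta} \to \mathfrak{F}_{\ux{e}}$ becomes the projection $\mathsf{G}_{\bx{c}} \times^{\mathsf{P}_{\ux{e}}} \mathfrak{BS}_{\ux{e},\ux{d},w} \to \mathsf{G}_{\bx{c}}/\mathsf{P}_{\ux{e}}$, while $q$ becomes the map $[g,x] \mapsto g\cdot\phi(x)$, where $\phi \colon \mathfrak{BS}_{\ux{e},\ux{d},w} \to \mathfrak{F}_{\ux{d}}$ is the map from Lemma~\ref{lem: basis proof 1}. Thus $\psi = p \times q$ is (up to the above identification) the map $[g,x] \mapsto (g\mathsf{P}_{\ux{e}}, g\cdot\phi(x))$.

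Next I would transfer the three assertions one at a time. \emph{Properness}: $p$ is a fibre bundle with proper (projective) fibre $\mathfrak{BS}_{\ux{e},\ux{d},w}$, so $p$ is proper; since $can$ is separated, $\psi = can^{-1}$-compatible with a proper map, and a standard argument (a map to a separated target whose composition with a separated map is proper, together with properness of the fibre bundle projection) shows $\psi$ is proper. Alternatively, and more cleanly, since everything is $\mathsf{G}_{\bx{c}}$-equivariant and $p$ is a $\mathsf{G}_{\bx{c}}$-equivariant fibre bundle, properness of $\psi$ is equivalent to properness of $\phi$, which is Lemma~\ref{lem: basis proof 1}. \emph{Image}: the image of $\psi$ is $\mathsf{G}_{\bx{c}}$-stable and equals $\mathsf{G}_{\bx{c}} \cdot (\{e\mathsf{P}_{\ux{e}}\} \times \Ima\phi) = \mathsf{G}_{\bx{c}}\cdot(e\mathsf{P}_{\ux{e}}, \overline{\O_w})$ using $\Ima\phi = \overline{\O_w}$ from Lemma~\ref{lem: basis proof 1}; since $\overline{\O_w} = \bigsqcup_{u \le w} \O_u$ with $\O_u = \mathsf{P}_{\ux{e}}u\mathsf{P}_{\ux{d}}/\mathsf{P}_{\ux{d}}$, this sweep-out is exactly $\bigsqcup_{u\le w}\O_u^{\Delta} = \overline{\O_w^{\Delta}}$ by the Bruhat decomposition of $\mathfrak{F}_{\ux{e}}\times\mathfrak{F}_{\ux{d}}$ (as already used in Lemma~\ref{lem: Z closed}). \emph{Isomorphism over $\O_w^{\Delta}$}: since $\psi$ is $\mathsf{G}_{\bx{c}}$-equivariant and $\O_w^{\Delta} = \mathsf{G}_{\bx{c}}\cdot(e\mathsf{P}_{\ux{e}}, w\mathsf{P}_{\ux{d}})$ is a single orbit, it suffices to check that $\psi$ restricts to an isomorphism over the point $(e\mathsf{P}_{\ux{e}}, w\mathsf{P}_{\ux{d}})$, i.e.\ on the fibre $p^{-1}(e\mathsf{P}_{\ux{e}})$; but $p^{-1}(e\mathsf{P}_{\ux{e}}) \cong \mathfrak{BS}_{\ux{e},\ux{d},w}$ and on this fibre $\psi$ is just $x \mapsto (e\mathsf{P}_{\ux{e}}, \phi(x))$, which is an isomorphism onto $\{e\mathsf{P}_{\ux{e}}\}\times\O_w$ by Lemma~\ref{lem: basis proof 1}. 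Equivalently, on the open set $\psi^{-1}(\O_w^{\Delta})$ we may use $\mathsf{G}_{\bx{c}}$-equivariance to reduce to the corresponding statement $\phi|_{\phi^{-1}(\O_w)} \colon \phi^{-1}(\O_w) \xrightarrow{\sim} \O_w$, again from Lemma~\ref{lem: basis proof 1}.

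The main obstacle is making the identification $\mathfrak{BS}^{\Delta}_{\ux{e},\ux{d},w} \cong \mathsf{G}_{\bx{c}} \times^{\mathsf{P}_{\ux{e}}} \mathfrak{BS}_{\ux{e},\ux{d},w}$ precise and checking that it intertwines $\psi$ with $[g,x]\mapsto(g\mathsf{P}_{\ux{e}}, g\phi(x))$ — this is a bookkeeping exercise with iterated fibre products and the functor $\mathsf{G}_{\bx{c}}\times^{\mathsf{P}}(-)$, and one must be careful that the sequence of parabolics $\mathsf{P}_{\ux{e}^0} \supseteq \mathsf{P}_{\ux{e}^0}\cap\ldots$ lines up correctly with the indices $\ux{e}^0, \ux{e}^1,\ldots,\ux{e}^{2k}$ appearing in $\mathfrak{BS}^{\Delta}$. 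Once that is set up, everything else is a formal consequence of Lemma~\ref{lem: basis proof 1} together with $\mathsf{G}_{\bx{c}}$-equivariance and the Bruhat decomposition, so I would keep the write-up short: state the identification, note it intertwines the relevant maps, and then quote Lemma~\ref{lem: basis proof 1} three times.
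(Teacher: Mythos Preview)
Your proposal is correct and follows essentially the same approach as the paper: both arguments reduce to Lemma~\ref{lem: basis proof 1} by observing that $p \colon \mathfrak{BS}^{\Delta}_{\ux{e},\ux{d},w} \to \mathfrak{F}_{\ux{e}}$ is a locally trivial fibration with fibre $\mathfrak{BS}_{\ux{e},\ux{d},w}$ and then using $\mathsf{G}_{\bx{c}}$-equivariance to propagate the properties of $\phi$ from the fibre over $e\mathsf{P}_{\ux{e}}$ to all of $\mathfrak{BS}^{\Delta}$. Your write-up via the associated bundle identification $\mathfrak{BS}^{\Delta}_{\ux{e},\ux{d},w} \cong \mathsf{G}_{\bx{c}} \times^{\mathsf{P}_{\ux{e}}} \mathfrak{BS}_{\ux{e},\ux{d},w}$ is simply a more explicit packaging of the same reduction; the paper compresses this to a single sentence.
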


\begin{proof}
It is easy to check that $p$ is a locally trivial fibration with fibre $\mathfrak{BS}_{\ux{e},\ux{d},w}$. Let $F_e = p^{-1}(e\Gxx{P}{e}/\Gxx{P}{e})$. 
Lemma \ref{lem: basis proof 1} implies that $\psi|_{F_e}$ is proper, $\psi(F_e) = \{ (e\Gxx{P}{e}/\Gxx{P}{e}, x) \mid x \in \overline{\O_w} \}$ and that the restriction of $\psi$ to $F_e\cap\psi^{-1}(\O_w^{\Delta})$ is an isomorphism onto $\{ (e\Gxx{P}{e}/\Gxx{P}{e}, x) \mid x \in \O_w \}$. The lemma now follows from the fact that $\psi$ is $\Gx{G}{c}$-equivariant. 
\end{proof}

Next consider the following iterated fibre product 
\begin{align*} 
\widetilde{\mathfrak{BS}}_{\ux{e},\ux{d},w}^{\Delta} := \mathfrak{Q}_{\ux{e}^{0}} \times_{\mathfrak{Q}_{\ux{e}^{1}}} \mathfrak{Q}_{\ux{e}^{2}} \times_{\mathfrak{Q}_{\ux{e}^{3}}} \hdots \times_{\mathfrak{Q}_{\ux{e}^{2k-1}}} \mathfrak{Q}_{\ux{e}^{2k}}.  
\end{align*}
The closed points of $\widetilde{\mathfrak{BS}}_{\ux{e},\ux{d},w}^{\Delta}$ correspond to sequences of flags $F^0, F^2, \hdots, F^{2k}$ (satisfying appropriate conditions) together with a quiver representation $\rho$ with respect to which each flag is stable. This implies that $\psi$ lifts to a map $\widetilde{\psi} \colon \widetilde{\mathfrak{BS}}_{\ux{e},\ux{d},w}^{\Delta} \to \Sxxy{Z}{e}{d}$ (sending $(F^0, \hdots, F^{2k}, \rho) \mapsto (F^0|_{\ux{e}},F^{2k}|_{\ux{d}},\rho))$, i.e., there is a commutative diagram 
\begin{equation} \label{pullback d}
\begin{tikzcd}
\widetilde{\mathfrak{BS}}_{\ux{e},\ux{d},w}^{\Delta} \arrow[r,"\widetilde{\psi}"] \arrow[d, swap, "\widetilde{\pi}_{\ux{e},\ux{d}}"] & \Sxxy{Z}{e}{d} \arrow[d,"\pi_{\ux{e},\ux{d}}"] \\
\mathfrak{BS}_{\ux{e},\ux{d},w}^{\Delta} \arrow[r, "\psi"] & \Sxx{F}{e} \times \Sxx{F}{d}
\end{tikzcd}
\end{equation}
where $\widetilde{\pi}_{\ux{e},\ux{d}}$ is the map forgetting the quiver representation. 

\begin{lem} 
The map $\widetilde{\psi}$ is proper, its image is contained in $\Sxxy{Z}{e}{d}^{\leqslant w}$, and it restricts to an isomorphism over $\Sxxy{Z}{e}{d}^{w}$.
\end{lem}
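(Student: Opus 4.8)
The strategy is to deduce all three assertions from the corresponding properties of $\psi$ established in Lemma \ref{lem: psi proper}, using the commutative diagram \eqref{pullback d} and the fact that $\pi_{\ux{e},\ux{d}}$ is proper (being the restriction of $\pi_{\mathbf{c}}$, a proper map). First I would observe that $\widetilde{\pi}_{\ux{e},\ux{d}}$, the map forgetting the quiver representation, is proper: it is a base change of $\pi_{\mathbf{c}}$ along $\psi$ in the sense that $\widetilde{\mathfrak{BS}}_{\ux{e},\ux{d},w}^{\Delta}$ is cut out inside $\mathfrak{BS}_{\ux{e},\ux{d},w}^{\Delta} \times \Sx{R}{c}$ by the stability conditions, so its fibres over $\mathfrak{BS}_{\ux{e},\ux{d},w}^{\Delta}$ are closed subvarieties of the (proper) fibres of $\pi_{\ux{d}}$ (or can be realized as such). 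Concretely, one checks that the square \eqref{pullback d} is Cartesian, so $\widetilde{\pi}_{\ux{e},\ux{d}}$ is proper because $\pi_{\ux{e},\ux{d}}$ is. Then $\widetilde{\psi} = \pi_{\ux{e},\ux{d}}^{-1}(\text{something}) \circ \dots$; more precisely, $\widetilde{\psi}$ factors as the composite $\widetilde{\mathfrak{BS}}_{\ux{e},\ux{d},w}^{\Delta} \to \Sxxy{Z}{e}{d} \times_{\Sxx{F}{e}\times\Sxx{F}{d}} \mathfrak{BS}_{\ux{e},\ux{d},w}^{\Delta} \to \Sxxy{Z}{e}{d}$, where the first map is an isomorphism (by Cartesianness) and the second is the base change of $\psi$ along $\pi_{\ux{e},\ux{d}}$, hence proper since $\psi$ is proper. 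This gives properness of $\widetilde{\psi}$.

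For the image, since the square \eqref{pullback d} is Cartesian one has $\widetilde{\psi}(\widetilde{\mathfrak{BS}}_{\ux{e},\ux{d},w}^{\Delta}) = \pi_{\ux{e},\ux{d}}^{-1}(\psi(\mathfrak{BS}_{\ux{e},\ux{d},w}^{\Delta}))$ set-theoretically (using surjectivity of $\pi_{\ux{e},\ux{d}}$ onto its image, or more carefully: a point of $\Sxxy{Z}{e}{d}$ lies in the image of $\widetilde{\psi}$ iff its image under $\pi_{\ux{e},\ux{d}}$ lies in the image of $\psi$, because the defining stability conditions for $\widetilde{\mathfrak{BS}}$ are exactly those inherited from $\Sxxy{Z}{e}{d}$). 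By Lemma \ref{lem: psi proper}, $\psi(\mathfrak{BS}_{\ux{e},\ux{d},w}^{\Delta}) = \overline{\O_w^\Delta}$, and $\tau_{\ux{e},\ux{d}}^{-1}(\overline{\O_w^\Delta}) = \Sxxy{Z}{e}{d}^{\leqslant w}$ by the definition \eqref{Zw lesseq} and Lemma \ref{lem: Z closed} (note $\pi_{\ux{e},\ux{d}}$ here plays the role of $\tau_{\ux{e},\ux{d}}$ composed with the forgetful maps — I should be careful to match notation, since $\pi_{\ux{e},\ux{d}}$ in \eqref{pullback d} is the map to $\Sxx{F}{e}\times\Sxx{F}{d}$ remembering the flags, i.e. it \emph{is} $\tau_{\ux{e},\ux{d}}$ in the notation of \S on the Steinberg variety). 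Hence the image of $\widetilde{\psi}$ is contained in (in fact equals) $\Sxxy{Z}{e}{d}^{\leqslant w}$.

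For the last claim, restrict the Cartesian square \eqref{pullback d} over the locally closed subset $\O_w^\Delta \subseteq \Sxx{F}{e}\times\Sxx{F}{d}$. By Lemma \ref{lem: psi proper}, $\psi$ restricts to an isomorphism $\psi^{-1}(\O_w^\Delta) \xrightarrow{\sim} \O_w^\Delta$. Base-changing this isomorphism along $\pi_{\ux{e},\ux{d}}$ (i.e. pulling back along the restriction $\Sxxy{Z}{e}{d}^w = \pi_{\ux{e},\ux{d}}^{-1}(\O_w^\Delta) \to \O_w^\Delta$), and using Cartesianness again to identify $\widetilde{\psi}^{-1}(\Sxxy{Z}{e}{d}^w)$ with $\psi^{-1}(\O_w^\Delta) \times_{\O_w^\Delta} \Sxxy{Z}{e}{d}^w$, we conclude that $\widetilde{\psi}$ restricts to an isomorphism over $\Sxxy{Z}{e}{d}^w$.

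\textbf{Main obstacle.} The crux is verifying that the square \eqref{pullback d} is genuinely Cartesian — equivalently, that the iterated fibre product $\widetilde{\mathfrak{BS}}_{\ux{e},\ux{d},w}^{\Delta}$ (fibre product over the $\mathfrak{Q}$'s) coincides with $\mathfrak{BS}_{\ux{e},\ux{d},w}^{\Delta} \times_{\Sxx{F}{e}\times\Sxx{F}{d}} \Sxxy{Z}{e}{d}$. This amounts to the scheme-theoretic (not merely set-theoretic) statement that giving a point of $\widetilde{\mathfrak{BS}}^\Delta$ is the same as giving a chain of flags $F^0,\dots,F^{2k}$ of the prescribed consecutive-refinement type together with a \emph{single} quiver representation $\rho$ making all of them stable — and that the stability of $F^0$ and $F^{2k}$ already forces stability of the intermediate $F^{2l}$ (this is where Lemma \ref{lem: intersection and stability} and the structure of the crossing datum enter: each $F^{2l}$ is built from ordered intersections/coarsenings of flags whose stability is controlled by the endpoints). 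I expect this compatibility check — reconciling the fibre-product-over-$\mathfrak{Q}$'s description with the fibre-product-over-flag-varieties-times-$\Sxxy{Z}{}{}$ description — to be the only non-formal point; once it is in place, properness, the image computation, and the isomorphism-over-$\O_w^\Delta$ all follow mechanically by base change from Lemma \ref{lem: psi proper}.
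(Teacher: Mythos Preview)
Your approach has the right shape but contains a genuine error: the square \eqref{pullback d} is \emph{not} Cartesian. A point of the fibre product $\mathfrak{BS}_{\ux{e},\ux{d},w}^{\Delta} \times_{\Sxx{F}{e}\times\Sxx{F}{d}} \Sxxy{Z}{e}{d}$ is a sequence $(F^0,\dots,F^{2k},\rho)$ for which only the \emph{coarsened} endpoint flags $F^0|_{\ux{e}}$ and $F^{2k}|_{\ux{d}}$ are $\rho$-stable, whereas a point of $\widetilde{\mathfrak{BS}}_{\ux{e},\ux{d},w}^{\Delta}$ requires every refined flag $F^{2l}$ to be $\rho$-stable. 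Over the boundary strata of $\overline{\O_w^\Delta}$ these conditions differ: there is no reason that stability of $F^0|_{\ux{e}}$ forces stability of the finer flag $F^0$, let alone of the intermediate $F^{2l}$, which are not determined by the endpoints away from $\O_w^\Delta$. The paper is careful here and only asserts that $\widetilde{\mathfrak{BS}}_{\ux{e},\ux{d},w}^{\Delta}$ is a \emph{closed subset} of the fibre product.

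Fortunately this weaker statement suffices for the first two claims. Properness of $\widetilde{\psi}$ follows because it is a closed immersion followed by a base change of the proper map $\psi$; image containment follows because the image is contained in that of the base-changed map, namely $\pi_{\ux{e},\ux{d}}^{-1}(\overline{\O_w^\Delta}) = \Sxxy{Z}{e}{d}^{\leqslant w}$. For the isomorphism over $\Sxxy{Z}{e}{d}^{w}$, however, you really do need that the square becomes Cartesian \emph{after restricting to} $\O_w^\Delta$, and this is where the substantive argument lives. Over the open cell one has $F^0 = F \mathrel{\leo} F'$ and $F^{2k} = F' \mathrel{\leo} F$ (with $F = F^0|_{\ux{e}}$, $F' = F^{2k}|_{\ux{d}}$), so Lemma \ref{lem: intersection and stability} gives $\rho$-stability of $F^0$ and $F^{2k}$; stability of the intermediate $F^{2l}$ then follows from the length additivity $\ell(w) = \ell(w_1\cdots w_l) + \ell(w_{l+1}\cdots w_k)$ of Proposition \ref{lem: p vs w}, which pins down $F^{2l}$ as an ordered intersection of the (now $\rho$-stable) endpoints. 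You correctly flagged this stability-propagation step as the crux, but you should frame it as establishing Cartesianness \emph{only over the open stratum}, not globally.
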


\begin{proof}
It is easy to see that $\widetilde{\mathfrak{BS}}_{\ux{e},\ux{d},w}^{\Delta}$ is a closed subset inside the fibre product of $\mathfrak{BS}_{\ux{e},\ux{d},w}^{\Delta}$ and $\Sxxy{Z}{e}{d}$ over $\Sxx{F}{e} \times \Sxx{F}{d}$. So $\widetilde{\psi}$ is the restriction of a base change of a proper map to a closed subset, and is therefore proper. By Lemma \ref{lem: psi proper}, $\Ima \psi = \overline{\O_w^{\Delta}}$, so $\Ima \widetilde{\psi} \subseteq \pi_{\ux{e},\ux{d}}^{-1}(\overline{\O_w^{\Delta}}) = \Sxxy{Z}{e}{d}^{\leqslant w}$. 

It remains to prove the third statement. Let $(F,F',\rho) \in \Sxxy{Z}{e}{d}^{w}$. By Lemma \ref{lem: psi proper}, we know that there exists a unique sequence of flags $(F^0, F^2, \hdots, F^{2k}) \in \mathfrak{BS}_{\ux{e},\ux{d},w}^{\Delta}$ such that $F = F^0|_{\ux{e}}$ and $F' = F^{2k}|_{\ux{d}}$. Since $F^0 = F\mathrel{\textnormal{\leo}}F'$ and $F^{2k} = F'\mathrel{\textnormal{\leo}}F$, Lemma \ref{lem: intersection and stability} implies that $F^0$ and $F^{2k}$ are $\rho$-stable. Let $1 \leq l \leq k-1$. Since $F^{2l}$ is in relative position $w_1\cdot \hdots \cdot w_l$ to $F^{0}$, $F^{2k}$ is in relative position $w_{l+1}\cdot \hdots \cdot w_k$ to $F^{2l}$ and, by Proposition \ref{lem: p vs w}, $\ell(w) = \ell(w_1\cdot \hdots \cdot w_l) + \ell(w_{l+1}\cdot \hdots \cdot w_k)$, $F^{2l}$ is also $\rho$-stable. Hence $(F^0, F^2, \hdots, F^{2k}, \rho) \in \widetilde{\mathfrak{BS}}_{\ux{e},\ux{d},w}^{\Delta}$ and so $\widetilde{\psi}$ is surjective. This clearly implies that $\widetilde{\psi}$ is an isomorphism. 
\end{proof}

We have the following Cartesian diagram. 
\[
\begin{tikzcd}
\Sxxy{Z}{e}{d}^w \arrow[hookrightarrow,r, "\iota_1"] & \Sxxy{Z}{e}{d}^{\leqslant w} \\
\widetilde{\psi}^{-1}(\Sxxy{Z}{e}{d}^{w}) \arrow[hookrightarrow,r, "\iota_2"] \arrow[u, "\wr"] & \widetilde{\mathfrak{BS}}_{\ux{e},\ux{d},w}^{\Delta} \arrow[u, swap, "\widetilde{\psi}"] 
\end{tikzcd}
\]
Let $q_0 \colon \mathfrak{Q}_{\ux{e}^{0}} \times \mathfrak{Q}_{\ux{e}^{2}} \times \hdots \times \mathfrak{Q}_{\ux{e}^{2k}} \to \mathfrak{Q}_{\ux{e}^{2k}}$ be the canonical map.

\begin{lem} \label{lem: basis proof 4}
Let $B(\Lambda_{\ux{d}^0})$ be a basis of $\Lambda_{\ux{d}^0} \cong H_\bullet^{\Gx{G}{c}}(\mathfrak{Q}_{\ux{d}^0}) $ and $c \in B(\Lambda_{\ux{d}^0})$. Then: 
\begin{enumerate}[label=\alph*), font=\textnormal,noitemsep,topsep=3pt,leftmargin=1cm]
\item $\widetilde{\psi}_*(q_0^*c \cap [\widetilde{\mathfrak{BS}}_{\ux{e},\ux{d},w}^{\Delta}]) \ = \ \ux{d} \overset{c}{\underset{w}{\Longrightarrow}} \ux{e}$.  
\item $\{ \iota_1^*(\ux{d} \overset{c}{\underset{w}{\Longrightarrow}} \ux{e}) \mid c \in B(\Lambda_{\ux{d}^0})\}$ is a basis of $\Axxy{Z}{e}{d}^w$. 
\end{enumerate}
\end{lem}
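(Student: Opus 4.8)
The plan is to read both statements off the geometry of $\widetilde{\psi}$ together with the lemma immediately preceding; throughout I write $\ux{e}^{2k}$ for the vector composition called $\ux{d}^0$ in the statement (recall $\ux{e}^{2k}=\widehat{\ux{d}}$ and that $q_0$ lands in $\mathfrak{Q}_{\ux{e}^{2k}}$). \textbf{For a)} I would first dispose of the case $c=1$. The key observation is that every merge and split is the fundamental class of a graph of a coarsening morphism: for $\ux{a}\succ\ux{b}$ one has $\mer{a}{b}=[\Sxxy{Z}{b}{a}^{e}]$ and $\spl{b}{a}=[\Sxxy{Z}{a}{b}^{e}]$, and $\Sxxy{Z}{b}{a}^{e}$ (resp.\ $\Sxxy{Z}{a}{b}^{e}$) is exactly the graph of the coarsening map $\Sxx{Q}{a}\to\Sxx{Q}{b}$ inside $\Sxx{Q}{b}\times_{\Sx{R}{c}}\Sxx{Q}{a}$ (resp.\ $\Sxx{Q}{a}\times_{\Sx{R}{c}}\Sxx{Q}{b}$). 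Unwinding the definition of the crossings, $\ux{d}\underset{w}{\Longrightarrow}\ux{e}$ is the iterated convolution of the merge/split graphs attached to the chain $\ux{e}\succ\ux{e}^{0}\succ\ux{e}^{1}\prec\ux{e}^{2}\succ\ux{e}^{3}\prec\cdots\prec\ux{e}^{2k}\succ\ux{d}$; composing these graphs in order produces precisely the iterated fibre product $\widetilde{\mathfrak{BS}}_{\ux{e},\ux{d},w}^{\Delta}$, with induced map $\widetilde{\psi}$ to $\Sxxy{Z}{e}{d}$. I would then run Lemma~\ref{lem:conv of classes} step by step along this chain: at each step one leg of the correspondence is a graph and hence projects isomorphically onto the intermediate factor, so the submersion hypothesis is automatic, while the hypothesis that the fibre product maps isomorphically onto its image is exactly the uniqueness of lifts established in the proof of the preceding lemma (via Lemma~\ref{lem: psi proper} and \S\ref{subsec: refinements}); properness of $\widetilde{\psi}$ is already known. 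This yields $\widetilde{\psi}_*[\widetilde{\mathfrak{BS}}_{\ux{e},\ux{d},w}^{\Delta}]=\ux{d}\underset{w}{\Longrightarrow}\ux{e}$. For general $c$: since $c$ is viewed in the ``diagonal'' copy $\mathcal{Z}_{\ux{e}^{2k},\ux{e}^{2k}}^{e}\cong\mathfrak{Q}_{\ux{e}^{2k}}$, convolving on the right by $c$ replaces $[\widetilde{\mathfrak{BS}}_{\ux{e},\ux{d},w}^{\Delta}]$ by $q_0^*c\cap[\widetilde{\mathfrak{BS}}_{\ux{e},\ux{d},w}^{\Delta}]$ before pushforward — this is the projection formula together with the fact that convolution with a diagonal polynomial class is cap product with its pullback along the projection onto the $\mathfrak{Q}_{\ux{e}^{2k}}$-factor, i.e.\ along $q_0$. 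This proves a).

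\textbf{For b)} I would feed a) into the Cartesian square displayed just before the lemma, using that $\widetilde{\psi}$ restricts to an isomorphism $\widetilde{\psi}^{-1}(\Sxxy{Z}{e}{d}^{w})\xrightarrow{\sim}\Sxxy{Z}{e}{d}^{w}$. Since $\iota_1$ is an open immersion and $\widetilde{\psi}$ is proper, base change in equivariant Borel--Moore homology gives
\[ \iota_1^*\big(\ux{d}\,\overset{c}{\underset{w}{\Longrightarrow}}\,\ux{e}\big)\;=\;\iota_1^*\,\widetilde{\psi}_*\big(q_0^*c\cap[\widetilde{\mathfrak{BS}}_{\ux{e},\ux{d},w}^{\Delta}]\big)\;=\;\big(\widetilde{\psi}|\big)_*\,\iota_2^*\big(q_0^*c\cap[\widetilde{\mathfrak{BS}}_{\ux{e},\ux{d},w}^{\Delta}]\big). \]
As $\widetilde{\psi}^{-1}(\Sxxy{Z}{e}{d}^{w})$ is open in $\widetilde{\mathfrak{BS}}_{\ux{e},\ux{d},w}^{\Delta}$ we get $\iota_2^*[\widetilde{\mathfrak{BS}}_{\ux{e},\ux{d},w}^{\Delta}]=[\widetilde{\psi}^{-1}(\Sxxy{Z}{e}{d}^{w})]$, and transporting along the isomorphism $\widetilde{\psi}|$ I obtain
\[ \iota_1^*\big(\ux{d}\,\overset{c}{\underset{w}{\Longrightarrow}}\,\ux{e}\big)\;=\;\sigma^*c\cap[\Sxxy{Z}{e}{d}^{w}], \]
where $\sigma\colon\Sxxy{Z}{e}{d}^{w}\to\mathfrak{Q}_{\ux{e}^{2k}}$ sends $(F,F',\rho)$ to the ordered intersection flag of $F'$ and $F$; this is what $q_0$ extracts from the unique lift of $(F,F',\rho)$, by the proof of the preceding lemma. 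It then remains to see that $c\mapsto\sigma^*c\cap[\Sxxy{Z}{e}{d}^{w}]$ is a $\C$-linear isomorphism $\Lambda_{\ux{e}^{2k}}\cong H_\bullet^{\Gx{G}{c}}(\mathfrak{Q}_{\ux{e}^{2k}})\xrightarrow{\sim}\Axxy{Z}{e}{d}^{w}$; granting this, it carries the basis $B(\Lambda_{\ux{e}^{2k}})$ to a basis of $\Axxy{Z}{e}{d}^{w}$, which is b).

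For the remaining isomorphism, I would observe that $\Sxxy{Z}{e}{d}^{w}=\tau_{\ux{e},\ux{d}}^{-1}(\O_w^{\Delta})$ is a $\Gx{G}{c}$-equivariant affine bundle over $\O_w^{\Delta}\cong\Gx{G}{c}/(\mathsf{P}_{\ux{e}}\cap w\mathsf{P}_{\ux{d}}w^{-1})$, hence smooth and $\Gx{G}{c}$-equivariantly homotopy equivalent to $\O_w^{\Delta}$; so $\Axxy{Z}{e}{d}^{w}\cong H^\bullet_{\Gx{G}{c}}(\Sxxy{Z}{e}{d}^{w})\cong H^\bullet(B(\mathsf{P}_{\ux{e}}\cap w\mathsf{P}_{\ux{d}}w^{-1}))$ by Poincar\'e duality, and $\sigma$ corresponds to the inclusion $\mathsf{P}_{\ux{e}}\cap w\mathsf{P}_{\ux{d}}w^{-1}\hookrightarrow w\mathsf{P}_{\ux{e}^{2k}}w^{-1}$, the stabiliser of the flag $\sigma(\text{base point})=w\cdot\mathbf{V}_{\ux{e}^{2k}}$. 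Both groups contain $\Gx{T}{c}$, and by Proposition~\ref{lem: Sym refined}.c) together with Lemma~\ref{lem: comb cap}.c) the Weyl group of the Levi factor of each equals $\Gxx{W}{e}\cap w\Gxx{W}{d}w^{-1}=\mathsf{W}_{\ux{e}^{0}}=w\mathsf{W}_{\ux{e}^{2k}}w^{-1}$; hence the inclusion induces an isomorphism $H^\bullet_{\Gx{G}{c}}(\mathfrak{Q}_{\ux{e}^{2k}})\xrightarrow{\sim}H^\bullet_{\Gx{G}{c}}(\Sxxy{Z}{e}{d}^{w})$, and capping with the fundamental class of the smooth variety $\Sxxy{Z}{e}{d}^{w}$ is Poincar\'e duality; the composite is the desired isomorphism.

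\textbf{Main obstacle.} The genuinely delicate step is part a): verifying, at every stage of the iterated convolution, that the hypotheses of Lemma~\ref{lem:conv of classes} hold — in particular that the relevant fibre products of coarsening morphisms map isomorphically onto their images — which is exactly where the combinatorics of ordered intersections of flags and the length-additivity of Proposition~\ref{lem: Sym refined} are really used. Part b) is then essentially formal.
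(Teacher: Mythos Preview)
Your proof of part b) is essentially identical to the paper's: both use proper base change along the Cartesian square to reduce to showing that $c\mapsto q_0^*c\cap[\widetilde{\psi}^{-1}(\Sxxy{Z}{e}{d}^{w})]$ is an isomorphism, and both observe that the source is an affine bundle over $\O_w^\Delta$ whose stabiliser has Levi with Weyl group $\mathsf{W}_{\ux{e}^{2k}}$ (via Proposition~\ref{lem: Sym refined}.c)). The paper phrases this last step as ``$q_0$ restricted is a homotopy equivalence'', but your more detailed parabolic computation amounts to the same thing.

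For part a), however, your strategy of applying Lemma~\ref{lem:conv of classes} \emph{step by step} has a genuine gap: the hypothesis that $p_{13}$ restricts to an isomorphism onto its image fails at intermediate stages. Concretely, take $Q=A_1$, $\bx{c}=4$, $\ux{e}=\ux{d}=(2,2)$, $w=s_2$; then $\ux{e}^0=\ux{e}^2=(1,1,1,1)$, $\ux{e}^1=(1,2,1)$, and at the final step (convolving with $\mer{e^0}{e}$) the fibre product consists of pairs $(F',V)$ with $F'$ a complete flag and $F'_1\subset V\subset F'_3$, mapping to $(F'_2,V)\in\mathfrak{Q}_{(2,2)}\times\mathfrak{Q}_{(2,2)}$. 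Over the diagonal locus $F'_2=V$ this map has one-dimensional fibres (any $F'_1\subset F'_2$ works), so it is not an isomorphism onto its image. The ``uniqueness of lifts'' you invoke from the preceding lemma only applies over the open stratum $\Sxxy{Z}{e}{d}^w$, not over its closure, and intermediate convolutions land in the closure.

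The paper avoids this entirely: instead of pushing forward after each step, it identifies the \emph{iterated} fibre product as
\[
\widetilde{\mathfrak{BS}}_{\ux{e},\ux{d},w}^{\Delta}\;\cong\;\mathfrak{Z}_{\ux{e},\ux{e}^{0}}^{e}\times_{\mathfrak{Q}_{\ux{e}^{0}}}\mathfrak{Z}_{\ux{e}^{0},\ux{e}^{1}}^{e}\times_{\mathfrak{Q}_{\ux{e}^{1}}}\cdots\times_{\mathfrak{Q}_{\ux{e}^{2k}}}\mathfrak{Z}_{\ux{e}^{2k},\ux{d}}^{e}
\]
and uses only the transversality half of Lemma~\ref{lem:conv of classes} (each $\mathfrak{Z}^e$ is a graph, so projects submersively) to conclude that the product of pulled-back fundamental classes is $[\widetilde{\mathfrak{BS}}_{\ux{e},\ux{d},w}^{\Delta}]$; the convolution is then a \emph{single} pushforward $\widetilde{\psi}_*$ at the end. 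No isomorphism-onto-image is required. Your ``main obstacle'' is thus a phantom: the combinatorics of Proposition~\ref{lem: Sym refined} are used in the \emph{preceding} lemma to control $\widetilde{\psi}$, not in part a) itself, which is immediate once the fibre-product identification is made.
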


\begin{proof}
The first part of the lemma follows directly from Lemma \ref{lem:conv of classes} and the fact that
\[ \widetilde{\mathfrak{BS}}_{\ux{e},\ux{d},w}^{\Delta} \cong \mathfrak{Z}_{\ux{e},\ux{e}^{0}}^e \times_{\mathfrak{Q}_{\ux{e}^{0}}} \mathfrak{Z}_{\ux{e}^{0},\ux{e}^{1}}^e \times_{\mathfrak{Q}_{\ux{e}^{1}}} \mathfrak{Z}_{\ux{e}^{1},\ux{e}^{2}}^e \times_{\mathfrak{Q}_{\ux{e}^{2}}} \hdots \times_{\mathfrak{Q}_{\ux{e}^{2k-1}}} \mathfrak{Z}_{\ux{e}^{2k-1},\ux{e}^{2k}}^e \times_{\mathfrak{Q}_{\ux{e}^{2k}}} \mathfrak{Z}_{\ux{e}^{2k},\ux{d}}^e.\]
For the second part, we have
\[
\iota_1^*(\ux{d} \overset{c}{\underset{w}{\Longrightarrow}} \ux{e}) = \iota_1^*\widetilde{\psi}_*(q_0^*c \cap [\widetilde{\mathfrak{BS}}_{\ux{e},\ux{d},w}^{\Delta}]) = \widetilde{\psi}_*\iota_2^*(q_0^*c \cap [\widetilde{\mathfrak{BS}}_{\ux{e},\ux{d},w}^{\Delta}]) = \widetilde{\psi}_*(q_0^*c \cap [\widetilde{\psi}^{-1}(\Sxxy{Z}{e}{d}^{w})]), 
\]
where the second equality follows by proper base change. 
It now suffices to check that $q_0^*c \cap [\widetilde{\psi}^{-1}(\Sxxy{Z}{e}{d}^{w})]$ form a basis of $H^\bullet_{\Gx{G}{c}}(\widetilde{\psi}^{-1}(\Sxxy{Z}{e}{d}^{w}))$. 
Since the variety $\widetilde{\psi}^{-1}(\Sxxy{Z}{e}{d}^{w})$ is an affine bundle over $\O_w^{\Delta} \cong \Gx{G}{c}/(w^{-1}\Gxx{P}{e}w \cap \Gxx{P}{d})$, which itself is an affine bundle over $\mathfrak{F}_{\ux{d}^0}$, the restriction of $q_0$ to $\widetilde{\psi}^{-1}(\Sxxy{Z}{e}{d}^{w})$ is a homotopy equivalence. Hence the map $\Lambda_{\ux{d}^0} \to H^\bullet_{\Gx{G}{c}}(\widetilde{\psi}^{-1}(\Sxxy{Z}{e}{d}^{w}))$ sending $c \mapsto q_0^*c \cap [\widetilde{\psi}^{-1}(\Sxxy{Z}{e}{d}^{w})]$ is an isomorphism, and sends the basis $B(\Lambda_{\ux{d}^0})$ to a basis of $H^\bullet_{\Gx{G}{c}}(\widetilde{\psi}^{-1}(\Sxxy{Z}{e}{d}^{w}))$, as required. 
\end{proof} 

We are now ready to prove Theorem \ref{thm:basis}. 

\begin{proof}[Proof of Theorem \ref{thm:basis}]
Refine the Bruhat order on $\dc{e}{d}{c}$ to a linear order, which we denote by $\trianglelefteq$.
Given $w \in \dc{e}{d}{c}$, let $\Sxxy{Z}{e}{d}^{\trianglelefteq w} := \bigsqcup_{\dc{e}{d}{c} \ni u \trianglelefteq w} \Sxxy{Z}{e}{d}^u$ and $\Sxxy{Z}{e}{d}^{\triangleleft w} = \Sxxy{Z}{e}{d}^{\trianglelefteq w} \backslash \Sxxy{Z}{e}{d}^w$. Consider the inclusions
\begin{equation} \label{Z inclusions} \Sxxy{Z}{e}{d}^w \overset{i}{\hookrightarrow} \Sxxy{Z}{e}{d}^{\trianglelefteq w} \overset{j}{\hookleftarrow} \Sxxy{Z}{e}{d}^{\triangleleft w}. \end{equation}
It follows from Lemma \ref{lem: Z closed} that $i$ is an open embedding and $j$ is a closed embedding. Since the odd cohomology of $\Sxxy{Z}{e}{d}^w$ and $\Sxxy{Z}{e}{d}^{\triangleleft w}$ vanishes, the long exact sequence in $\Gx{G}{c}$-equivariant Borel-Moore homology associated to \eqref{Z inclusions} becomes a short exact sequence of $H_{\Gx{G}{c}}^\bullet(pt)$-modules 
\[ 0 \to H_\bullet^{\Gx{G}{c}}(\Sxxy{Z}{e}{d}^{\triangleleft w}) \to H_\bullet^{\Gx{G}{c}}(\Sxxy{Z}{e}{d}^{\trianglelefteq w}) \to H_\bullet^{\Gx{G}{c}}(\Sxxy{Z}{e}{d}^w) \to 0. \]
Since, by equivariant formality, the $H_{\Gx{G}{c}}^\bullet(pt)$-module $H_\bullet^{\Gx{G}{c}}(\Sxxy{Z}{e}{d}^w)$ is free, the short exact sequence splits. Arguing by induction on the refined Bruhat order, we conclude that 
\begin{equation} \label{Zed sum} \Axxy{Z}{e}{d} = \bigoplus_{w \in \dc{e}{d}{c}} \Axxy{Z}{e}{d}^w.\end{equation} 
By Lemma \ref{lem: basis proof 4}, the elements $\ux{d} \overset{c}{\underset{w}{\Longrightarrow}} \ux{e} \in \Axxy{Z}{e}{d}$ $(c \in B(\Lambda_{\ux{d}^0}))$ pull back to a basis of $\Axxy{Z}{e}{d}^w$. Therefore, \eqref{Zed sum} implies that, if we let $(\ux{e}, \ux{d}, w)$ range over all orbit data, the elements $\ux{d} \overset{c}{\underset{w}{\Longrightarrow}} \ux{e}$ indeed form a basis of $\Ax{Z}{c}$. 
\end{proof}

\section{The polynomial representation}

In this section we compute the polynomial representation of the quiver Schur algebra $\Ax{Z}{c}$, and use it to show that $\Ax{Z}{c}$ gives a geometric realization of the ``modified quiver Schur algebra'' from~\cite{MS}, thereby connecting $\Ax{Z}{c}$ to the affine $q$-Schur algebra. We also give a complete list of relations for the quiver Schur algebra associated to the $A_1$ and Jordan quivers. 

\subsection{$\Gx{T}{c}$-equivariant cohomology and localization}

We first recall some facts about $\Gx{T}{c}$-equivariant cohomology of flag varieties. 
By \cite[Theorem 3]{Tu}, there is a ring isomorphism 
\[ \Phi_{\ux{d}} \colon H^\bullet(B\Gx{T}{c}) \otimes_{H^\bullet(B\Gx{G}{c})} H^\bullet_{\Gx{G}{c}}(\Sxx{F}{d}) \cong H^\bullet_{\Gx{T}{c}}(\Sxx{F}{d}), \quad a \otimes b \mapsto (\gamma_1^*a)(\gamma_2^*b), \]
where $\gamma_1$ is the projection of $\Sxx{F}{d}$ onto a point and $\gamma_2$ is the canonical map $(\Sxx{F}{d})_{\Gx{T}{c}}  \twoheadrightarrow (\Sxx{F}{d})_{\Gx{G}{c}}$. 
Let us write a polynomial $f$ in variables $x_j(i)$ as $f(\vec{x})$. 
We abbreviate $x_j(i) := \Phi_{\ux{d}}(x_j(i) \otimes 1)$ and $f(\vec{y}) := \Phi_{\ux{d}}(1 \otimes f(\vec{x}))$ (we substitute variables $y_j(i)$ for $x_j(i)$). 

\begin{defi}
For $\underline{\mathbf{d}} \succ \underline{\mathbf{e}}$, we define the following polynomials in $\Lambda_{\ux{d}}$:
\begin{equation} \label{Sd classes}
\mathtt{S}_{\ux{d}} := \prod_{i \in Q_0} \prod_{r=1}^{\ell_{\ux{d}-1}} \prod_{k=\overset{\circ}{\mathbf{d}}_{r-1}(i)+1}^{\overset{\circ}{\mathbf{d}}_{r}(i)} \prod_{l=\overset{\circ}{\mathbf{d}}_r(i)+1}^{\mathbf{c}(i)} (x_l(i) - x_k(i)), \quad  \quad
\mathtt{S}_{\ux{d}}^{\ux{e}} := \frac{\mathtt{S}_{\ux{d}}}{\mathtt{S}_{\ux{e}}}.  
\end{equation}
\end{defi}

Note that $\mathtt{S}_{\ux{d}}$ is indeed $\Gxx{W}{d}$-invariant and $\mathtt{S}_{\ux{d}}^{\ux{e}}$ is a polynomial. Explicit examples of these polynomials for specific quivers and dimension vectors can be found in \cite[\S 8]{MS}.

It is well known that the fixed points $\Sxx{F}{d}^{\Gx{T}{c}}$ are parametrized by $\dcb{d}{c}$. Given $w \in \dcb{d}{c}$, let $i_w \colon \{w\} \hookrightarrow~\Sxx{F}{d}$ be the inclusion of the corresponding fixed point, and let $\zeta_w = [w] \in H_\bullet^{\Gx{T}{c}}(\Sxx{F}{d})$ denote the $\Gx{T}{c}$-equivariant fundamental class of this fixed point.

The theorem below summarizes the key facts about the $\Gx{T}{c}$-equivariant cohomology of flag varieties.

\begin{thm} \label{Tu theorem}
The following hold: 
\begin{enumerate}[label=\alph*), font=\textnormal,noitemsep,topsep=3pt,leftmargin=1cm]
\item The $\Gx{T}{c}$-equivariant cohomology of $\Sxx{F}{d}$ is equal to the quotient 
\[ H^\bullet_{\Gx{T}{c}}(\Sxx{F}{d}) = (\Ax{P}{c} \otimes \Phi_{\ux{d}}(\Lambda_{\ux{d}}) )/ I, \] 
where $I$ is the ideal generated by $p(\vec{y}) - p(\vec{x})$ as p ranges over all $\Gx{W}{c}$-invariant polynomials of positive degree. 
\item The pullback $i_w^* \colon H^*_{\Gx{T}{c}}(\Sxx{F}{d}) \to H^*(B\Gx{T}{c})$ is given by 
\[ i_w^*(x_j(i)) = x_j(i), \quad i_w^*(f(\vec{y})) = f(\vec{x}). \]
\item The $\Gx{T}{c}$-equivariant Euler class of the normal bundle to the fixed point $w$ is given by 
\[ \eu_{\Gx{T}{c}}(T_{\{w\}}\Sxx{F}{d}) = w \cdot \mathtt{S}_{\ux{d}}. \]
\end{enumerate} 
\end{thm}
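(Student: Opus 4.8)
The plan is to deduce all three parts from the isomorphism $\Phi_{\ux{d}}$ recalled above, together with Borel's description of $H^\bullet(B\Gx{G}{c})$ and the classical description of the torus-fixed locus $\Sxx{F}{d}^{\Gx{T}{c}}$. For part a), I would start from the identification $H^\bullet_{\Gx{T}{c}}(\Sxx{F}{d}) \cong H^\bullet(B\Gx{T}{c}) \otimes_{H^\bullet(B\Gx{G}{c})} H^\bullet_{\Gx{G}{c}}(\Sxx{F}{d})$ and rewrite the right-hand side. By Borel's theorem the restriction $H^\bullet(B\Gx{G}{c}) \hookrightarrow H^\bullet(B\Gx{T}{c}) = \Ax{P}{c}$ is injective with image $\Ax{P}{c}^{\Gx{W}{c}}$, and, as already used in the text, $H^\bullet_{\Gx{G}{c}}(\Sxx{F}{d}) = H^\bullet(B\Gxx{P}{d}) = \Lambda_{\ux{d}}$. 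Since $\Gxx{W}{d} \subseteq \Gx{W}{c}$, the ring $\Lambda_{\ux{d}}$ is a $\Ax{P}{c}^{\Gx{W}{c}}$-algebra, so the elementary isomorphism $A \otimes_B M \cong (A \otimes_{\C} M)/(b \otimes 1 - 1 \otimes b : b \in B)$, valid for a commutative ring $B$ with $B$-algebras $A$ and $M$, applies with $B = \Ax{P}{c}^{\Gx{W}{c}}$, $A = \Ax{P}{c}$, $M = \Lambda_{\ux{d}}$. Transporting this through $\Phi_{\ux{d}}$ — which sends $a \otimes 1$ to $\gamma_1^* a$, i.e.\ to the polynomial $a$ in the variables $x_j(i)$, and $1 \otimes f$ to $\gamma_2^* f$, i.e.\ to $f(\vec{y})$ — identifies $H^\bullet_{\Gx{T}{c}}(\Sxx{F}{d})$ with $(\Ax{P}{c} \otimes \Phi_{\ux{d}}(\Lambda_{\ux{d}}))/I$, where $I$ is generated by the elements $p(\vec{y}) - p(\vec{x})$. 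It then only remains to observe that it suffices to let $p$ run over homogeneous elements of positive degree, which is clear.

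For part b), the two formulas correspond to the two tensor factors. The classes $x_j(i) = \Phi_{\ux{d}}(x_j(i) \otimes 1) = \gamma_1^* x_j(i)$ are pulled back from $B\Gx{T}{c}$ along the bundle projection $\gamma_1 \colon (\Sxx{F}{d})_{\Gx{T}{c}} \to B\Gx{T}{c}$; since $\gamma_1$ precomposed with the equivariant inclusion of the fixed point $w$ is the identity of $B\Gx{T}{c}$, we get $i_w^*(x_j(i)) = x_j(i)$. For $f(\vec{y}) = \gamma_2^* f$, the pullback of $f \in \Lambda_{\ux{d}} = H^\bullet(B\Gxx{P}{d})$ along the $\Gx{G}{c}$-classifying map $\gamma_2$, I would compute the $\Gx{T}{c}$-representation on the fibre of the tautological bundle over $w$: its graded pieces carry as $\Gx{T}{c}$-weights the standard weights $\omega_k(i)$ reindexed according to $w$ (the stabiliser of $w\Gxx{P}{d}$ in $\Gx{G}{c}$ being $w\Gxx{P}{d}w^{-1}$, so that the equivariant inclusion of the fixed point is classified by $t \mapsto w^{-1}tw$). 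Substituting these weights into $f$ yields the restriction formula for $f(\vec{y})$, the result depending only on the coset of $w$ because $f$ is $\Gxx{W}{d}$-invariant.

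For part c), since $\{w\}$ is a point its normal bundle in $\Sxx{F}{d}$ equals the tangent space $T_{\{w\}}\Sxx{F}{d} = \Lie(\Gx{G}{c})/\Ad(w)\Lie(\Gxx{P}{d})$. As $\Gxx{P}{d} \supseteq \Gx{B}{c}$, the quotient $\Lie(\Gx{G}{c})/\Lie(\Gxx{P}{d})$ carries the $\Gx{T}{c}$-weights $\{-\alpha : \alpha \in R^+_{\bx{c}} \setminus R^+_{\ux{d}}\}$, so after twisting by $w$ the equivariant Euler class is $\prod_{\alpha \in R^+_{\bx{c}} \setminus R^+_{\ux{d}}}(-w(\alpha)) = w \cdot \prod_{\alpha \in R^+_{\bx{c}} \setminus R^+_{\ux{d}}}(-\alpha)$. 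It then suffices to identify $\prod_{\alpha \in R^+_{\bx{c}} \setminus R^+_{\ux{d}}}(-\alpha)$ with $\mathtt{S}_{\ux{d}}$: the roots of $\Gx{G}{c} = \prod_{i \in Q_0}\GL(\mathbf{V}_{\bx{c}}(i))$ are the differences $x_k(i) - x_l(i)$, the elements of $R^+_{\bx{c}} \setminus R^+_{\ux{d}}$ are precisely those with $k$ in a strictly earlier block of $\ux{d}(i)$ than $l$, and the negatives $x_l(i) - x_k(i)$ of these run over exactly the factors in the definition of $\mathtt{S}_{\ux{d}}$, so the two products coincide on the nose. Finally, $\mathtt{S}_{\ux{d}}$ being $\Gxx{W}{d}$-invariant makes $w \cdot \mathtt{S}_{\ux{d}}$ well-defined on the coset of $w$, in agreement with b).

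I expect the only real obstacle to be bookkeeping rather than anything conceptual: matching the combinatorial definition of $\mathtt{S}_{\ux{d}}$ with the root-theoretic product in c), and keeping track of the $w$-reindexing of weights in b). Given the isomorphism $\Phi_{\ux{d}}$, the whole statement is essentially an assembly of standard Borel- and GKM-type facts about the equivariant cohomology of homogeneous spaces.
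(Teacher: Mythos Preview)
Your argument is correct and follows the standard line: Borel's presentation plus the tensor-product model for $\Phi_{\ux{d}}$ gives part a), direct computation of weights at the fixed point $w\Gxx{P}{d}$ gives parts b) and c), and the identification $\prod_{\alpha \in R^+_{\bx{c}}\setminus R^+_{\ux{d}}}(-\alpha)=\mathtt{S}_{\ux{d}}$ is exactly the bookkeeping you flag. One small remark on b): your reindexing argument actually produces $i_w^*(f(\vec{y}))=(w\cdot f)(\vec{x})$, which is what the paper in fact uses downstream (see the localization computation in the proof of Theorem~\ref{thm:polrep}); the displayed formula in the statement suppresses the $w$, but your version is the one that matters.

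As for the comparison: the paper does not give a proof of this theorem at all but simply cites Tu. So your proposal is not an alternative route --- it is the route, fleshed out. What you gain by writing it out is a self-contained argument and a clean explanation of why the Euler class in c) matches the combinatorially defined $\mathtt{S}_{\ux{d}}$; the paper treats this as known background.
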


\begin{proof}
See, e.g., \cite[Theorem 11]{Tu}. 
\end{proof}

Next, we recall the localization theorem for equivariant cohomology (see, e.g., \cite{Bri}). Let $\mathcal{K}_{\bx{c}}$ denote the fraction field of $\mathcal{P}_{\bx{c}} = H^\bullet( B\Gx{T}{c})$. 
\begin{thm} \label{thm: loc thm}
Let $X$ be a smooth quasi-projective $\Gx{T}{c}$-variety and let $Y$ be the set of connected components of the fixed point set $X^{\Gx{T}{c}}$. Suppose that $Y$ is finite. Then the maps 
\[ \Ax{K}{c}\otimes_{\Ax{P}{c}} H^\bullet_{\Gx{T}{c}}(X) \xrightarrow{i^*} \Ax{K}{c}\otimes_{\Ax{P}{c}} H^\bullet_{\Gx{T}{c}}(X^{\Gx{T}{c}}) \xrightarrow{i_*} \Ax{K}{c}\otimes_{\Ax{P}{c}} H^\bullet_{\Gx{T}{c}}(X) \] are isomorphisms and 
\[ u = \sum_{y \in Y} \frac{(i_{y})_* i_y^*(u)}{\mathtt{eu}_{\Gx{T}{c}}(T_{y}X)}\]
for all $u \in H^\bullet_{\Gx{T}{c}}(X)$. Here $i \colon X^{\Gx{T}{c}} \hookrightarrow X$ and $i_y \colon y \hookrightarrow X$ are the natural inclusions. 
\end{thm}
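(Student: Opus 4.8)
\textbf{Proof proposal for Theorem \ref{thm: loc thm}.}
The plan is the standard one: everything is formal once one knows that $\Gx{T}{c}$-equivariant (co)homology of a $\Gx{T}{c}$-variety with \emph{no} fixed points becomes $0$ after inverting $\Ax{P}{c}\setminus\{0\}$, together with the self-intersection formula along the (smooth) fixed locus. So the first step is to isolate that torsion-vanishing lemma: \emph{if $W$ is a quasi-projective $\Gx{T}{c}$-variety with $W^{\Gx{T}{c}}=\varnothing$, then $\Ax{K}{c}\otimes_{\Ax{P}{c}}H^\bullet_{\Gx{T}{c}}(W)=0=\Ax{K}{c}\otimes_{\Ax{P}{c}}H_\bullet^{\Gx{T}{c}}(W)$.} To prove this I would stratify $W$ by orbit type, $W=\bigsqcup_{(H)}W_{(H)}$, which is a finite stratification into locally closed $\Gx{T}{c}$-stable pieces since an algebraic torus acting on a quasi-projective variety has only finitely many orbit types. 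On each stratum the quotient map $W_{(H)}\to W_{(H)}/\Gx{T}{c}$ is a fibration with fibre $\Gx{T}{c}/H$ for a \emph{proper} closed subgroup $H\subsetneq\Gx{T}{c}$, and $H^\bullet_{\Gx{T}{c}}(\Gx{T}{c}/H)\cong H^\bullet_H(\mathrm{pt})$ is $\Ax{P}{c}$-torsion: the map $\Ax{P}{c}\to H^\bullet_H(\mathrm{pt})$ (with $\C$-coefficients) factors through $H^\bullet_{H^\circ}(\mathrm{pt})$, which is a proper quotient of $\Ax{P}{c}$ by an ideal containing a nonzero linear form, and that form then annihilates the module. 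Propagating this through the Leray argument for each stratum, and then up the long exact sequence of a closed–open decomposition by induction on the number of strata, gives the claim for $W$; the same argument runs verbatim in Borel–Moore homology.

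Next I would apply the lemma to $W=U:=X\setminus X^{\Gx{T}{c}}$. Here $X^{\Gx{T}{c}}$ is smooth (the fixed locus of a torus acting on a smooth variety is smooth), closed in $X$, with normal bundle $N$; write $i\colon X^{\Gx{T}{c}}\hookrightarrow X$. From the long exact sequence in $\Gx{T}{c}$-equivariant Borel–Moore homology for the pair $(X,U)$, exactness of $\Ax{K}{c}\otimes_{\Ax{P}{c}}(-)$, and the vanishing of $\Ax{K}{c}\otimes H_\bullet^{\Gx{T}{c}}(U)$, one deduces that $i_*$ is an isomorphism after base change to $\Ax{K}{c}$. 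Since $X$ and $X^{\Gx{T}{c}}$ are smooth, Poincaré duality identifies these groups with the cohomology groups in the statement and carries $i_*$ to the cohomological Gysin map, so that map is an iso after localization as well. For $i^*$ I would use the self-intersection formula $i^*i_*=\eu_{\Gx{T}{c}}(N)\cdot(-)$: the weights of $\Gx{T}{c}$ on $N$ are all nonzero, so $\eu_{\Gx{T}{c}}(N)$ is a nonzerodivisor in $\Ax{P}{c}$ and a unit in $\Ax{K}{c}$; hence $i^*i_*$ is an iso after localization, and since $i_*$ is too, so is $i^*$.

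For the explicit formula: given $u\in H^\bullet_{\Gx{T}{c}}(X)$, there is a unique $v\in\Ax{K}{c}\otimes_{\Ax{P}{c}}H^\bullet_{\Gx{T}{c}}(X^{\Gx{T}{c}})$ with $u=i_*v$; applying $i^*$ and the self-intersection formula gives $i^*u=\eu_{\Gx{T}{c}}(N)\cdot v$, so $v=\eu_{\Gx{T}{c}}(N)^{-1}\,i^*u$ and $u=i_*\!\big(\eu_{\Gx{T}{c}}(N)^{-1}\,i^*u\big)$. Decomposing $X^{\Gx{T}{c}}=\bigsqcup_{y\in Y}y$ so that $i_*=\sum_y(i_y)_*$, $i^*=(i_y^*)_y$, and $N|_y=T_yX$ when $y$ is a point (more generally the normal bundle of the component $y$), one obtains the displayed sum.

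\textbf{Main obstacle.} The only non-formal ingredient is the torsion-vanishing lemma — concretely, the finiteness of the orbit-type stratification and the bookkeeping needed to push ``torsion on each stratum'' up to ``torsion on $W$'' through the long exact sequences. Once that is in place, the remainder is the standard six-functor formalism (proper base change, the Thom isomorphism and self-intersection formula, Poincaré duality on the smooth pieces) together with linear algebra over the domain $\Ax{P}{c}$.
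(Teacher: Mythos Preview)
The paper does not actually prove this theorem: it is stated as a recalled fact with a reference to \cite{Bri} (``Next, we recall the localization theorem for equivariant cohomology (see, e.g., \cite{Bri})''), and no argument is given. Your outline is the standard proof one finds in that reference and elsewhere --- torsion vanishing on the fixed-point-free locus via an orbit-type stratification, the long exact sequence of the pair $(X,X\setminus X^{\Gx{T}{c}})$, and the self-intersection formula $i^*i_* = \eu_{\Gx{T}{c}}(N)\cdot(-)$ --- so there is nothing to compare and your sketch is correct.

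One small point of phrasing: when you write $T_yX$ for a connected component $y$, you should mean the normal bundle of $y$ in $X$ (as you note parenthetically), matching the paper's notation $\eu_{\Gx{T}{c}}(T_yX)$; in the paper's applications the components are isolated points, so the distinction evaporates.
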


\subsection{The polynomial representation.}

In this subsection we describe the polynomial representation $\Ax{Q}{c}$ of $\Ax{Z}{c}$. The following result is standard. 

\begin{prop}  \label{pro: faithful}
The $\mathcal{Z}_{\mathbf{c}}$-module $\mathcal{Q}_{\mathbf{c}}$ is faithful. 
\end{prop}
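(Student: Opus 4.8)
The statement is a standard fact about convolution algebras, and I would deduce it by $\Gx{T}{c}$-equivariant localization. First I would pass from $\Gx{G}{c}$- to $\Gx{T}{c}$-equivariant Borel--Moore homology: the forgetful maps give ring, resp.\ module, homomorphisms $\mathcal{Z}_{\mathbf{c}} \to \mathcal{Z}_{\mathbf{c}}^{T} := H_\bullet^{\Gx{T}{c}}(\Sx{Z}{c})$ and $\mathcal{Q}_{\mathbf{c}} \to \mathcal{Q}_{\mathbf{c}}^{T} := H_\bullet^{\Gx{T}{c}}(\Sx{Q}{c})$ compatible with the convolution product and action, because every map entering the construction of \S\ref{subsec:convolution} is $\Gx{T}{c}$-equivariant. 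Both $\Sx{Q}{c}$ and $\Sx{Z}{c}$ are equivariantly formal --- for $\Sx{Q}{c}$ because each $\Sxx{Q}{d}$ is a vector bundle over the partial flag variety $\Sxx{F}{d}$, and for $\Sx{Z}{c}$ by the affine-bundle stratification $\Sxxy{Z}{e}{d} = \bigsqcup_w \Sxxy{Z}{e}{d}^{w}$ together with the splitting of the long exact sequences already exploited in \S\ref{subsec: proof of basis} (see Lemma \ref{lem: Z closed} and \eqref{Zed sum}). Hence these homologies are free over $\mathcal{P}_{\bx{c}} = H^\bullet(B\Gx{T}{c})$, by Leray--Hirsch one has $\mathcal{Z}_{\mathbf{c}}^{T} \cong \mathcal{Z}_{\mathbf{c}} \otimes_{H^\bullet(B\Gx{G}{c})} \mathcal{P}_{\bx{c}}$ and $\mathcal{Q}_{\mathbf{c}}^{T} \cong \mathcal{Q}_{\mathbf{c}} \otimes_{H^\bullet(B\Gx{G}{c})} \mathcal{P}_{\bx{c}}$ as (bi)modules, so in particular $\mathcal{Z}_{\mathbf{c}} \hookrightarrow \mathcal{Z}_{\mathbf{c}}^{T}$ and $\mathcal{Q}_{\mathbf{c}}^{T}$ is generated over $\mathcal{P}_{\bx{c}}$ by the image of $\mathcal{Q}_{\mathbf{c}}$. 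It is therefore enough to show that $\mathcal{Q}_{\mathbf{c}}^{T}$ is a faithful $\mathcal{Z}_{\mathbf{c}}^{T}$-module.

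For the latter I would localize at the generic point of $\Spec \mathcal{P}_{\bx{c}}$, i.e.\ tensor with $\mathcal{K}_{\bx{c}} = \operatorname{Frac}(\mathcal{P}_{\bx{c}})$; since the homologies in play are $\mathcal{P}_{\bx{c}}$-torsion-free, the maps $\mathcal{Z}_{\mathbf{c}}^{T} \hookrightarrow \mathcal{K}_{\bx{c}} \otimes \mathcal{Z}_{\mathbf{c}}^{T}$, $\mathcal{Q}_{\mathbf{c}}^{T} \hookrightarrow \mathcal{K}_{\bx{c}} \otimes \mathcal{Q}_{\mathbf{c}}^{T}$ are injective. The $\Gx{T}{c}$-fixed locus of $\Sxx{Q}{d}$ is a finite disjoint union of affine spaces --- one for each $\Gx{T}{c}$-fixed flag of type $\ux{d}$, namely the linear space of $\Gx{T}{c}$-invariant quiver representations making that flag stable --- and likewise for each stratum $\Sxxy{Z}{e}{d}^{w}$ of the quiver Steinberg variety; so Theorem \ref{thm: loc thm} applies and restriction to fixed points becomes an isomorphism after $\otimes_{\mathcal{P}_{\bx{c}}} \mathcal{K}_{\bx{c}}$. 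Computing convolutions of fixed-point classes by means of Lemma \ref{lem:conv of classes} and the Euler-class formula of Theorem \ref{Tu theorem}.c), one finds that $\mathcal{K}_{\bx{c}} \otimes \mathcal{Z}_{\mathbf{c}}^{T}$ becomes a ``generalized matrix algebra'' whose rows and columns are indexed by the connected components of $(\Sx{Q}{c})^{\Gx{T}{c}}$, and that $\mathcal{K}_{\bx{c}} \otimes \mathcal{Q}_{\mathbf{c}}^{T}$ is the associated space of column vectors, the action being matrix multiplication; such an action is manifestly faithful. Chasing the injections $\mathcal{Z}_{\mathbf{c}} \hookrightarrow \mathcal{Z}_{\mathbf{c}}^{T} \hookrightarrow \mathcal{K}_{\bx{c}} \otimes \mathcal{Z}_{\mathbf{c}}^{T}$ together with the base-change identity for $\mathcal{Q}_{\mathbf{c}}^{T}$ then yields the faithfulness of $\mathcal{Q}_{\mathbf{c}}$ over $\mathcal{Z}_{\mathbf{c}}$.

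I expect the real work to lie in two routine-but-not-entirely-trivial checks: equivariant formality of $\Sx{Z}{c}$ and the identification $H^{\Gx{G}{c}}_\bullet = \big(H^{\Gx{T}{c}}_\bullet\big)^{\Gx{W}{c}}$, which I would extract from the stratification of \S\ref{subsec: proof of basis}; and the compatibility of the localization isomorphisms with convolution --- i.e.\ that the localized algebra genuinely acts on column vectors by matrix multiplication. The latter is the standard ``localization of convolution algebras'' mechanism and is the technical heart of the argument. As an alternative that avoids localization, one can argue from the $\Ext$-algebra description \eqref{Ext alg iso}: by the decomposition theorem $\mathcal{F} := (\pi_{\mathbf{c}})_*\C_{\Sx{Q}{c}} \cong \bigoplus_\alpha \mathrm{L}_\alpha \otimes M_\alpha$ with $\mathrm{L}_\alpha$ pairwise non-isomorphic shifted simple perverse sheaves, whence $\mathcal{Z}_{\mathbf{c}} \cong \bigoplus_{\alpha,\beta} \Hom(M_\alpha, M_\beta) \otimes \Ext^\bullet(\mathrm{L}_\alpha, \mathrm{L}_\beta)$ and $\mathcal{Q}_{\mathbf{c}} \cong \bigoplus_\alpha M_\alpha \otimes \Ext^\bullet(\C_{\Sx{R}{c}}, \mathrm{L}_\alpha)$ with the action by composition; faithfulness then reduces to $\Ext^\bullet(\C_{\Sx{R}{c}}, \mathrm{L}_\alpha) \neq 0$ for every $\alpha$, which holds because $\C_{\Sx{R}{c}} = \mathrm{IC}(\Sx{R}{c})$ is itself one of the $\mathrm{L}_\alpha$ --- the summand coming from the component $\ux{d} = (\bx{c})$ of $\Sx{Q}{c}$, on which $\pi_{\ux{d}} = \mathrm{id}$ --- and the remaining non-vanishings can be checked via the contracting $\mathbb{G}_m$-action scaling $\Sx{R}{c}$, hyperbolic localization reducing $\Ext^\bullet(\C_{\Sx{R}{c}}, \mathrm{L}_\alpha)$ to the (co)stalk of $\mathrm{L}_\alpha$ at the origin.
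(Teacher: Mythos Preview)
Your localization argument is correct and coincides with the method in the references the paper cites in place of a proof (\cite[Lemma~1.8(a)]{VV} and \cite[Proposition~3.1]{VV2}); you have simply written out what the paper leaves to those citations. A small caveat on your alternative via the decomposition theorem: the stated reduction to $\Ext^\bullet_{\Gx{G}{c}}(\C_{\Sx{R}{c}}, \mathrm{L}_\alpha)\neq 0$ for each $\alpha$ is not by itself enough, because in the equivariant derived category the off-diagonal blocks $\Ext^\bullet_{\Gx{G}{c}}(\mathrm{L}_\alpha,\mathrm{L}_\beta)$ are typically nonzero, so one must also verify that Yoneda composition with elements of $\Ext^\bullet_{\Gx{G}{c}}(\C_{\Sx{R}{c}},\mathrm{L}_\alpha)$ detects every such class --- the localization route bypasses this issue automatically.
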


\begin{proof}
See, e.g., the proofs of \cite[Lemma 1.8(a)]{VV} and \cite[Proposition 3.1]{VV2}. 
\end{proof}

We will now calculate the action of the generators of the quiver Schur algebra on its polynomial representation. 
As preperation, we first compute the Euler classes of certain normal bundles. 
\begin{defi}
For $\underline{\mathbf{d}} \succ \underline{\mathbf{e}} \rightslice \bx{c}$, we define the following polynomials in $\Lambda_{\ux{d}}$: 
\begin{equation} \label{Ed classes}
\mathtt{E}_{\ux{d}} := \prod_{i,j \in Q_0} \prod_{r=1}^{\ell_{\ux{d}-1}} \prod_{k=\overset{\circ}{\mathbf{d}}_{r-1}(i)+1}^{\overset{\circ}{\mathbf{d}}_{r}(i)} \prod_{l=\overset{\circ}{\mathbf{d}}_r(j)+1}^{\mathbf{c}(j)} (x_l(j) - x_k(i))^{a_{ij}}, \quad  \quad
\mathtt{E}_{\ux{d}}^{\ux{e}} := \frac{\mathtt{E}_{\ux{d}}}{\mathtt{E}_{\ux{e}}}, 
\end{equation}
where $a_{ij}$ is the number of arrows from vertex $i$ to $j$. 
\end{defi}

It is easy to see that $\mathtt{E}_{\ux{d}}$ is $\Gxx{W}{d}$-invariant and $\mathtt{E}_{\ux{d}}^{\ux{e}}$ is a polynomial. Explicit examples of these polynomials can be found in \cite[\S 8]{MS}. 

\begin{lem} \label{lem: eu TQ}
We have 
$\eu_{\Gx{G}{c}}(T_{\Sxx{Q}{d}}\mathfrak{Q}_{\ux{d},(\bx{c})}) = \mathtt{E}_{\ux{d}}$. 
\end{lem}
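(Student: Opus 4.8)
The plan is to compute the Euler class $\eu_{\Gx{G}{c}}(T_{\Sxx{Q}{d}}\mathfrak{Q}_{\ux{d},(\bx{c})})$ by identifying the normal bundle explicitly. Recall that $\mathfrak{Q}_{\ux{d},(\bx{c})}$ is the space of pairs $(V_\bullet, \rho)$ where $V_\bullet$ is a flag of type $\ux{d}$ and $\rho \in \Sx{R}{c}$ is arbitrary (no stability is imposed, since the coarsening to $(\bx{c})$ is the trivial flag), so $\mathfrak{Q}_{\ux{d},(\bx{c})} \cong \Sxx{F}{d} \times \Sx{R}{c}$, while $\Sxx{Q}{d} = \mathfrak{Q}_{\ux{d},\ux{d}} \subset \mathfrak{Q}_{\ux{d},(\bx{c})}$ is the closed subvariety where $V_\bullet$ is $\rho$-stable. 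Using the isomorphism \eqref{Qd as quotient}, $\Sxx{Q}{d} \cong \Gx{G}{c} \times^{\Gxx{P}{d}} \Sxx{R}{d}$ and $\mathfrak{Q}_{\ux{d},(\bx{c})} \cong \Gx{G}{c} \times^{\Gxx{P}{d}} \Sx{R}{c}$, so the normal bundle to $\Sxx{Q}{d}$ inside $\mathfrak{Q}_{\ux{d},(\bx{c})}$ is $\Gx{G}{c} \times^{\Gxx{P}{d}} (\Sx{R}{c}/\Sxx{R}{d})$, a $\Gx{G}{c}$-equivariant vector bundle associated to the $\Gxx{P}{d}$-representation $\Sx{R}{c}/\Sxx{R}{d}$.

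The key step is then to decompose the $\Gxx{P}{d}$-module $\Sx{R}{c}/\Sxx{R}{d}$ into weight spaces for the maximal torus $\Gx{T}{c}$. We have $\Sx{R}{c} = \bigoplus_{a \in Q_1} \Hom_\C(\mathbf{V}_{\mathbf{c}}(s(a)), \mathbf{V}_{\mathbf{c}}(t(a)))$, and $\Sxx{R}{d}$ is the subspace of those $\rho$ preserving the standard flag $\mathbf{V}_{\ux{d}}$; concretely, for each arrow $a$ from $i$ to $j$, the quotient contributes the maps $\mathbf{V}_{\mathbf{c}}(i) \to \mathbf{V}_{\mathbf{c}}(j)$ modulo those respecting the flag, which as a $\Gx{T}{c}$-module has weights $\omega_l(j) - \omega_k(i)$ exactly for those index pairs $(k,l)$ where $v_k(i)$ sits in an earlier or equal flag step than the one required to land $v_l(j)$ — precisely the pairs with $k \le \overset{\circ}{\mathbf{d}}_{r}(i)$ and $l > \overset{\circ}{\mathbf{d}}_r(j)$ for some $r$, which is exactly the index set in the definition \eqref{Ed classes} of $\mathtt{E}_{\ux{d}}$, counted with multiplicity $a_{ij}$. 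Since $x_m(i) = \eu(\mathfrak{V}_m(i))$ is the equivariant first Chern class of the weight-$\omega_m(i)$ line bundle, the equivariant Euler class of the normal bundle — being a product over its weight lines — is exactly $\prod_{i,j}\prod_r \prod_{k,l}(x_l(j) - x_k(i))^{a_{ij}} = \mathtt{E}_{\ux{d}}$.

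The main technical point to get right is the precise bookkeeping of which weights appear in $\Sx{R}{c}/\Sxx{R}{d}$: one must verify that a matrix block position $(k,l)$ for an arrow $i \to j$ survives in the quotient if and only if the corresponding map fails to preserve the standard flag, i.e. iff $v_k(i)$ lies in $\mathbf{V}_{\ux{d}}^r(i)$ while $v_l(j)$ lies outside $\mathbf{V}_{\ux{d}}^r(j)$ for some $r$ — equivalently $k \le \overset{\circ}{\mathbf{d}}_r(i)$ and $l > \overset{\circ}{\mathbf{d}}_r(j)$, and taking the largest such $r$ gives exactly the ranges $\overset{\circ}{\mathbf{d}}_{r-1}(i) < k \le \overset{\circ}{\mathbf{d}}_r(i)$ and $\overset{\circ}{\mathbf{d}}_r(j) < l \le \mathbf{c}(j)$ appearing in \eqref{Ed classes}. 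This is a routine but careful unwinding of definitions; once it is done, the identification of the Euler class with $\mathtt{E}_{\ux{d}}$ is immediate from multiplicativity of the Euler class and the fact that for an equivariant vector bundle built from the tautological line bundles $\mathfrak{V}_m(i)$ the Chern roots are precisely the $x_m(i)$. I do not expect any genuine obstacle here beyond this indexing verification.

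\begin{proof}
Using the isomorphism \eqref{Qd as quotient}, we have $\Sxx{Q}{d} \cong \Gx{G}{c} \times^{\Gxx{P}{d}} \Sxx{R}{d}$ and $\mathfrak{Q}_{\ux{d},(\bx{c})} \cong \Gx{G}{c} \times^{\Gxx{P}{d}} \Sx{R}{c}$, where $\Sxx{R}{d} = \{ \rho \in \Sx{R}{c} \mid \mathbf{V}_{\ux{d}} \text{ is } \rho\text{-stable}\}$ is a $\Gxx{P}{d}$-subrepresentation of $\Sx{R}{c}$. The normal bundle of $\Sxx{Q}{d}$ in $\mathfrak{Q}_{\ux{d},(\bx{c})}$ is therefore the $\Gx{G}{c}$-equivariant vector bundle $\Gx{G}{c} \times^{\Gxx{P}{d}} N$ associated to the $\Gxx{P}{d}$-representation $N := \Sx{R}{c}/\Sxx{R}{d}$. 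To compute its equivariant Euler class it suffices to decompose $N$ into $\Gx{T}{c}$-weight spaces.

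Fix an arrow $a \in Q_1$ from $i$ to $j$. The corresponding summand of $\Sx{R}{c}$ is $\Hom_\C(\mathbf{V}_{\mathbf{c}}(i), \mathbf{V}_{\mathbf{c}}(j))$, with $\Gx{T}{c}$-weight $\omega_l(j) - \omega_k(i)$ on the matrix unit $E_{lk}$ sending $v_k(i) \mapsto v_l(j)$. The subspace of maps preserving the standard flag $\mathbf{V}_{\ux{d}}$ consists of those $\phi$ with $\phi(\mathbf{V}_{\ux{d}}^r(i)) \subseteq \mathbf{V}_{\ux{d}}^r(j)$ for all $r$; equivalently $E_{lk}$ lies in this subspace unless there exists $r$ with $k \leq \overset{\circ}{\mathbf{d}}_r(i)$ and $l > \overset{\circ}{\mathbf{d}}_r(j)$. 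Taking the largest such $r$, we see that the weights of the quotient of this summand are exactly $x_l(j) - x_k(i)$ for the index pairs with $\overset{\circ}{\mathbf{d}}_{r-1}(i) < k \leq \overset{\circ}{\mathbf{d}}_r(i)$ and $\overset{\circ}{\mathbf{d}}_r(j) < l \leq \mathbf{c}(j)$, ranging over $1 \leq r \leq \ell_{\ux{d}} - 1$. Summing over all arrows gives multiplicity $a_{ij}$ for each pair $(i,j)$, so
\[ N \cong \bigoplus_{i,j \in Q_0} \bigoplus_{r=1}^{\ell_{\ux{d}}-1} \bigoplus_{k=\overset{\circ}{\mathbf{d}}_{r-1}(i)+1}^{\overset{\circ}{\mathbf{d}}_{r}(i)} \bigoplus_{l=\overset{\circ}{\mathbf{d}}_r(j)+1}^{\mathbf{c}(j)} \big( \C_{\omega_l(j) - \omega_k(i)} \big)^{\oplus a_{ij}} \]
as a $\Gx{T}{c}$-module.

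Since $x_m(i) = \eu_{\Gx{T}{c}}(\mathfrak{V}_m(i))$ is the equivariant first Chern class of the line bundle \eqref{line bundle} of weight $\omega_m(i)$, and the equivariant Euler class is multiplicative in direct sums, we obtain
\[ \eu_{\Gx{G}{c}}(T_{\Sxx{Q}{d}}\mathfrak{Q}_{\ux{d},(\bx{c})}) = \prod_{i,j \in Q_0} \prod_{r=1}^{\ell_{\ux{d}}-1} \prod_{k=\overset{\circ}{\mathbf{d}}_{r-1}(i)+1}^{\overset{\circ}{\mathbf{d}}_{r}(i)} \prod_{l=\overset{\circ}{\mathbf{d}}_r(j)+1}^{\mathbf{c}(j)} (x_l(j) - x_k(i))^{a_{ij}} = \mathtt{E}_{\ux{d}},\]
where we used that on the associated bundle $\Gx{G}{c} \times^{\Gxx{P}{d}} N$ the equivariant Chern roots are the $x_m(i)$, as follows from the identification of $\Gx{T}{c}$-weight lines with the bundles $\mathfrak{V}_m(i)$ under the homotopy equivalence $\Sxx{Q}{d} \simeq \Sxx{F}{d}$.
\end{proof}
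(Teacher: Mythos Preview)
Your proof is correct and follows essentially the same approach as the paper: identify the normal bundle as the associated bundle $\Gx{G}{c} \times^{\Gxx{P}{d}} (\Sx{R}{c}/\Sxx{R}{d})$, then compute its Euler class via the $\Gx{T}{c}$-weight decomposition of the quotient representation. The paper makes the passage from $\Gx{G}{c}$- to $\Gx{T}{c}$-equivariant cohomology slightly more explicit (via the injective pullback $H^\bullet(B\Gxx{P}{d}) \hookrightarrow H^\bullet(B\Gx{T}{c})$ and the identification $(\Sx{R}{c}/\Sxx{R}{d})_{\Gx{T}{c}} \cong \bigoplus (\mathfrak{V}_l(j) \otimes \mathfrak{V}_k(i)^*)^{\oplus a_{ij}}$), while you are more explicit about the indexing combinatorics that pins down which weights appear in the quotient.
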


\begin{proof}
We identify $T_{\Sxx{Q}{d}}\mathfrak{Q}_{\ux{d},(\bx{c})} \cong \Gx{G}{c} \times^{\Gxx{P}{d}} (\Sx{R}{c}/\Sxx{R}{d})$ and $(\Gx{G}{c} \times^{\Gxx{P}{d}} (\Sx{R}{c}/\Sxx{R}{e}))_{\Gx{G}{c}} = (\Sx{R}{c}/\Sxx{R}{e})_{\Gxx{P}{d}}$. The pullback of the latter vector bundle on $B \Gxx{P}{d}$ along the canonical map $B\Gx{T}{c} \twoheadrightarrow B \Gxx{P}{d}$ equals $(\Sx{R}{c}/\Sxx{R}{e})_{\Gx{T}{c}}$. Observe that 
\[  (\Sx{R}{c}/\Sxx{R}{e})_{\Gx{T}{c}}  \cong \bigoplus_{i,j \in Q_0} \bigoplus_{r=1}^{\ell_{\ux{d}-1}} \bigoplus_{k=\overset{\circ}{\mathbf{d}}_{r-1}(i)+1}^{\overset{\circ}{\mathbf{d}}_{r}(i)} \ \bigoplus_{l=\overset{\circ}{\mathbf{d}}_r(j)+1}^{\mathbf{c}(j)}  (\mathfrak{V}_l(j) \otimes \mathfrak{V}_k(i)^*)^{\oplus a_{ij}},
\]
where $\mathfrak{V}_l(j)$ is the line bundle from \eqref{line bundle}. By definition, the $\Gx{T}{c}$-equivariant Euler class of the bundle on the RHS equals $\mathtt{E}_{\ux{d}}$. Since Euler classes commute with pullbacks, 
it follows that $\eu_{\Gx{G}{c}}(T_{\Sxx{Q}{d}}\mathfrak{Q}_{\ux{d},(\bx{c})}) = \eu_{\Gx{T}{c}}((\Sx{R}{c}/\Sxx{R}{e})_{\Gx{T}{c}}) =~\mathtt{E}_{\ux{d}}$, as desired. 
\end{proof}

We also need the following ``shuffle operator'' 
\[ 
\scalebox{1.5}{$\shf$}_{\ux{d}}^{\ux{e}} := \sum_{w \in \dccb{d}{e}} w \in \Gxx{W}{e}. 
\]

\begin{thm} \label{thm:polrep} 
Let $\underline{\mathbf{d}} \succ \underline{\mathbf{e}} \rightslice \bx{c}$. 
\begin{enumerate}[label=\alph*), font=\textnormal,noitemsep,topsep=3pt,leftmargin=1cm]
\item The action of $\bigcurlywedge_{\ux{d}}^{\ux{e}}$ on $\Ax{Q}{c}\cong \Lambda_{\mathbf{c}}$ is given by 
\[ \textstyle \bigcurlywedge_{\ux{d}}^{\ux{e}} \displaystyle 
\colon \Lambda_{\ux{d}} \to \Lambda_{\ux{e}}, \quad 
f \mapsto \scalebox{1.5}{$\shf$}_{\ux{d}}^{\ux{e}} \left( \frac{\mathtt{E}_{\ux{d}}^{\ux{e}}}{\mathtt{S}_{\ux{d}}^{\ux{e}}} f \right), \quad \quad \quad \textstyle \bigcurlywedge_{\ux{d}}^{\ux{e}}|_{\Lambda_{\ux{b}}} = 0 \quad \mbox{if} \quad \ux{b} \neq \ux{d}. 
\]
\item The action of $\bigcurlyvee_{\ux{e}}^{\ux{d}}$ is given by the inclusion
\[ \textstyle \bigcurlyvee_{\ux{e}}^{\ux{d}} \displaystyle 
\colon \Lambda_{\ux{e}} \hookrightarrow \Lambda_{\ux{d}}, \quad 
f \mapsto f, \quad \quad \quad \textstyle \bigcurlyvee_{\ux{e}}^{\ux{d}}|_{\Lambda_{\ux{b}}} = 0 \quad \mbox{if} \quad \ux{b} \neq \ux{e}.
\] 
\item The action of $\Axxy{Z}{d}{d}^e$ on $\lamxx{b}$ is trivial unless $\ux{b} = \ux{d}$. In the latter case, if we identify $\Axxy{Z}{d}{d}^e\cong \lamxx{d}$ as in \eqref{Zepols}, then $\Axxy{Z}{d}{d}^e$ acts on $\lamxx{d}$ by usual multiplication. 
\end{enumerate}
\end{thm}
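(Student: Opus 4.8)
The plan is to compute each of the three operators by restricting to an appropriate local model and invoking the convolution formula of Lemma \ref{lem:conv of classes}, then transporting everything through Poincar\'e duality \eqref{Poincare} and the identifications of \S\ref{section: cohomology}. For part c), the key observation is that the polynomials $\Axxy{Z}{d}{d}^e$ sit inside $\Ax{Z}{c}^e$, and by \eqref{Zepols} the latter is identified, as an algebra, with $\Lambda_{\bx{c}} = \bigoplus_{\ux{b}} \lamxx{b}$ via the isomorphism $H_\bullet^{\Gx{G}{c}}(\Sxxy{Z}{b}{b}^e) \cong H^\bullet_{\Gx{G}{c}}(\Sxx{Q}{b}) \cong \lamxx{b}$; concretely, $\Axxy{Z}{d}{d}^e$ is the component supported on the diagonal $\Sxxy{Z}{d}{d}^e = (\Sxx{Q}{d})_\Delta \subset \Sx{Z}{c}$, which is the $G_{\bx{c}}$-equivariant fundamental class twisted by a cohomology class on $\Sxx{Q}{d}$.

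First I would observe that, since $\mathsf{e}_{\ux{d}} = [\Sxxy{Z}{d}{d}^e]$ are orthogonal idempotents and $\Ax{Q}{c} = \bigoplus_{\ux{b}} \lamxx{b}$ with $\mathsf{e}_{\ux{b}}$ projecting onto the $\ux{b}$-summand (this follows from matching up the connected components of $\Sx{Q}{c}$ with those of $\Sx{Z}{c}$ under convolution), any element of $\Axxy{Z}{d}{d}^e = \mathsf{e}_{\ux{d}}\Ax{Z}{c}^e\mathsf{e}_{\ux{d}}$ annihilates $\lamxx{b}$ for $\ux{b}\neq\ux{d}$. This reduces the problem to analysing the action of $\Axxy{Z}{d}{d}^e$ on $\lamxx{d}$ alone.

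Next I would identify the action in the remaining case. Consider the convolution triple with $X_1=X_2=\Sxx{Q}{d}$ (smooth), $X_3=\{pt\}$ (i.e.\ the module action of $H_\bullet^{\Gx{G}{c}}(\Sxx{Q}{d}\times_{\Sx{R}{c}}\Sxx{Q}{d})$ on $H_\bullet^{\Gx{G}{c}}(\Sxx{Q}{d})$, restricted to the component with the diagonal correspondence). Here $Z_{12}=\Sxxy{Z}{d}{d}^e=(\Sxx{Q}{d})_\Delta$ and $Z_{23}=\Sxx{Q}{d}$, so $Z_{12}\times_{X_2}Z_{23}\cong\Sxx{Q}{d}$ and $p_{13}$ is an isomorphism onto $Z_{13}=\Sxx{Q}{d}$. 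A class in $\Axxy{Z}{d}{d}^e$, identified with $f\in\lamxx{d}=H^\bullet_{\Gx{G}{c}}(\Sxx{Q}{d})$, is $\Delta_*$ of the corresponding Poincar\'e-dual Borel-Moore class on $\Sxx{Q}{d}$; applying the projection formula and the fact that $\Delta$ is a section of both projections $Z_{12}\to X_2$, the convolution $c_{12}\star c_{23}$ becomes $p_{23}^*(c_{12})\cap c_{23}$ pushed forward by the isomorphism $p_{13}$ --- i.e.\ cup product by $f$ in $H^\bullet_{\Gx{G}{c}}(\Sxx{Q}{d})\cong\lamxx{d}$ after Poincar\'e duality. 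The only bookkeeping to watch is that the Poincar\'e duality isomorphism $\Ax{Q}{c}\cong\lamx{c}$ of \eqref{Poincare} is the same one used in \eqref{Zepols}, so the two multiplicative structures genuinely coincide; this is where I would be most careful, since a sign or degree-shift discrepancy would be easy to overlook. Once the module action is pinned down as cup product, part c) follows.

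\textbf{Main obstacle.} The routine computations in parts a) and b) --- especially the appearance of the shuffle operator $\shf_{\ux{d}}^{\ux{e}}$ and the ratio $\mathtt{E}_{\ux{d}}^{\ux{e}}/\mathtt{S}_{\ux{d}}^{\ux{e}}$ of Euler classes in part a) --- are where the real work lies; they require using the localization theorem (Theorem \ref{thm: loc thm}), Theorem \ref{Tu theorem}, and Lemma \ref{lem: eu TQ} to compute the pushforward $\tau_{\ux{d},\ux{e}}$ explicitly on $T_{\Gx{c}}$-fixed points and then to descend to $G_{\bx{c}}$-equivariant cohomology. Part c), by contrast, is essentially formal once one correctly identifies $\Axxy{Z}{d}{d}^e$ inside the polynomial subalgebra and checks compatibility of the two Poincar\'e-duality identifications, so I expect it to be the easiest of the three.
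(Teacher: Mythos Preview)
Your proposal is correct and follows essentially the same route as the paper: for part a) the paper factors the relevant pushforward as $\Sxx{Q}{d} \overset{\iota}{\hookrightarrow} \Sxxy{Q}{d}{e} \overset{q}{\twoheadrightarrow} \Sxx{Q}{e}$, computes $\iota_*$ as multiplication by the Euler class $\mathtt{E}_{\ux{d}}^{\ux{e}}$ via Lemma~\ref{lem: eu TQ}, and then computes $q_*$ by the localization formula (Theorems~\ref{Tu theorem} and~\ref{thm: loc thm}) to extract the shuffle sum over $\dccb{d}{e}$ with denominator $\mathtt{S}_{\ux{d}}^{\ux{e}}$; part b) is the analogous pullback computation, and part c) is dismissed as standard with a reference to \cite[Example 2.7.10(i)]{CG}. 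Your emphasis is simply inverted---you spell out part c) carefully and sketch parts a) and b)---but the ingredients you name (localization, Theorem~\ref{Tu theorem}, Lemma~\ref{lem: eu TQ}) are exactly those used, and your assessment that part c) is the easiest is shared by the paper.

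One small point worth making explicit when you carry out part a): the factorization through the intermediate space $\Sxxy{Q}{d}{e}$ is what cleanly separates the two contributions---the closed embedding $\iota$ produces the Euler class $\mathtt{E}_{\ux{d}}^{\ux{e}}$ (a normal-bundle computation), while the proper fibration $q$ over the partial flag variety produces the shuffle with denominator $\mathtt{S}_{\ux{d}}^{\ux{e}}$ (a fixed-point computation). Without naming this intermediate step it is easy to conflate the two Euler-class-type expressions.
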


\begin{proof}

It is obvious that $\bigcurlywedge_{\ux{d}}^{\ux{e}}|_{\Lambda_{\ux{b}}} = 0$ unless $\ux{b} = \ux{d}$. In the latter case, we observe that $\Sxxy{Z}{e}{d}^e\times_{\Sxx{Q}{d}} \Sxx{Q}{d} = \Sxxy{Z}{e}{d}^e \cong \Sxx{Q}{d}$ and that, under this identification, $p_{12}^*(\bigcurlywedge_{\ux{d}}^{\ux{e}}) \cap p_{23}^*f = f$ (with $p_{ij}$ as in \S \ref{subsec:convolution}). Hence $\bigcurlywedge_{\ux{d}}^{\ux{e}} \star f$ equals the pushforward of $f$ along the canonical map $p_{13} \colon \Sxx{Q}{d} \twoheadrightarrow \Sxx{Q}{e}$, which factors as follows 
\begin{equation} \label{iotap}
p_{13} \colon \Sxx{Q}{d} \overset{\iota}{\hookrightarrow} \Sxxy{Q}{d}{e} \overset{q}{\twoheadrightarrow} \Sxx{Q}{e}.  
\end{equation}
We first compute $\iota_*$. 
By \cite[Corollary 2.6.44]{CG}, we have
$\iota_*f = \eu_{\Gx{G}{c}}(T_{\Sxx{Q}{d}}\Sxxy{Q}{d}{e}) f.$ 
The short exact sequence
$0 \to T_{\Sxx{Q}{e}}\mathfrak{Q}_{\ux{e},(\bx{c})} \to T_{\Sxx{Q}{d}}\mathfrak{Q}_{\ux{d},(\bx{c})} \to T_{\Sxx{Q}{d}}\Sxxy{Q}{d}{e} \to 0$
implies that 
\begin{equation*} \label{eu frac}
\eu_{\Gx{G}{c}}(T_{\Sxx{Q}{d}}\Sxxy{Q}{d}{e}) = \eu_{\Gx{G}{c}}(T_{\Sxx{Q}{d}}\mathfrak{Q}_{\ux{d},(\bx{c})})/\eu_{\Gx{G}{c}}(T_{\Sxx{Q}{e}}\mathfrak{Q}_{\ux{e},(\bx{c})}).
\end{equation*}
It now follows from Lemma \ref{lem: eu TQ} that 
\begin{equation} \label{iotaEed}
\iota_*f = \mathtt{E}_{\ux{d}}^{\ux{e}}f. 
\end{equation}

We will next compute $q_*h$, where $h:= \mathtt{E}_{\ux{d}}^{\ux{e}}f$. Since $\Sxxy{Q}{d}{e} = \Sxx{F}{d} \times_{\Sxx{F}{e}} \Sxx{Q}{e}$, calculating the pushforward $q_*$ reduces to calculating the pushforward along $\bar{q} \colon \Sxx{F}{d} \to \Sxx{F}{e}$. 
By Theorems \ref{Tu theorem} and \ref{thm: loc thm}, we have
\[ 
h(\vec{y}) = \sum_{w \in \dcb{d}{c}} \frac{(i_w)_* i_w^*(h(\vec{y}))}{\eu_{\Gx{T}{c}}(T_w\Sxx{F}{d})} = \sum_{w \in \dcb{d}{c}} \frac{w\cdot h(\vec{x})}{w \cdot \mathtt{S}_{\ux{d}}} \zeta_w
= \sum_{u \in \dcb{e}{c}} \frac{1}{u \cdot \mathtt{S}_{\ux{e}}} u  \sum_{v \in \dccb{d}{e}} \frac{v\cdot h(\vec{x})}{v \cdot \mathtt{S}_{\ux{d}}^{\ux{e}}} \zeta_{uv}
\] 
since $v \cdot \mathtt{S}_{\ux{e}} = \mathtt{S}_{\ux{e}}$. 
Let $g(\vec{y}) := \bar{q}_*(h(\vec{y}))$. 
Since $\bar{q}_*\zeta_{uv} = \zeta_u$, we have 
\[ 
g(\vec{y}) = \sum_{u \in \dcb{e}{c}} \frac{1}{u \cdot \mathtt{S}_{\ux{e}}} u \sum_{v \in \dccb{d}{e}} \frac{v\cdot h(\vec{x})}{v \cdot \mathtt{S}_{\ux{d}}^{\ux{e}}} \zeta_u. 
\] 
On the other hand, Theorem \ref{Tu theorem} implies that 
\[ 
g(\vec{y}) = \sum_{u \in \dcb{e}{c}} \frac{(i_u)_* i_u^*(g(\vec{y}))}{\eu_{\Gx{T}{c}}(T_u\Sxx{F}{e})} = \sum_{u \in \dcb{e}{c}} \frac{u\cdot g(\vec{x})}{u \cdot \mathtt{S}_{\ux{e}}} \zeta_u.  
\] 
Hence
\begin{equation} \label{gxvec}
g(\vec{x}) = \sum_{v \in \dccb{d}{e}} v \cdot \frac{h(\vec{x})}{\mathtt{S}_{\ux{d}}^{\ux{e}}}. 
\end{equation}
Combining \eqref{iotaEed} with \eqref{gxvec} yields the first part of the theorem. 

An argument analogous to the one at the beginning of the proof  shows that $\bigcurlyvee_{\ux{e}}^{\ux{d}}|_{\Lambda_{\ux{b}}} = 0$ unless $\ux{b} = \ux{e}$, and that convolving $\bigcurlyvee_{\ux{e}}^{\ux{d}}$ with a function $f \in \Lambda_{\ux{e}}$ is the same as taking the pullback with respect to \eqref{iotap}. A calculation using the localization theorem, similar to the one above, shows that $q^*$ is given by the inclusion of the invariants $\lamxx{e} \hookrightarrow \lamxx{d}$, while the second pullback $\iota^*$ is just an isomorphism. This yields the second part of the theorem. The third part is standard - see, e.g., \cite[Example 2.7.10(i)]{CG}. 
\end{proof} 

We will now relate the action of the merges to Demazure operators. 

\begin{defi} 
Given $s_j(i) \in \Gx{W}{c}$, let $\Delta_j(i) = \frac{1-s_j(i)}{x_j(i) - x_{j+1}(i)}$ be the corresponding Demazure operator. 
Given $\underline{\mathbf{d}} \succ \underline{\mathbf{e}} \rightslice \bx{c}$, let $w^{\ux{e}}_{\ux{d}} $ be the longest element in $\dccb{d}{e}$. Choose a reduced expression $w^{\ux{e}}_{\ux{d}} = s_{j_1}(i_1) \cdot \hdots \cdot s_{j_k}(i_k)$ and define $ \Delta_{\ux{d}}^{\ux{e}} = \Delta_{j_1}(i_1) \circ \hdots \circ \Delta_{j_k}(i_k)$. It is well known that $ \Delta_{\ux{d}}^{\ux{e}}$ does not depend on the choice of reduced expression for $w^{\ux{e}}_{\ux{d}}$. Let $r_{\ux{d}}^{\ux{e}} = |R_{\ux{e}}^+ - R_{\ux{d}}^+|$ and $r_{\ux{d}} = |R_{\bx{c}}^+ - R_{\ux{d}}^+|$. 
\end{defi}

\begin{prop} \label{lem: merge = Demazure}
We have an equality of operators
\[ \scalebox{1}{$\shf$}_{\ux{d}}^{\ux{e}} (\mathtt{S}_{\ux{d}}^{\ux{e}})^{-1} = (-1)^{r_{\ux{d}}^{\ux{e}}} \Delta_{\ux{d}}^{\ux{e}}.\] 
\end{prop}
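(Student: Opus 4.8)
The plan is to identify both sides of the asserted equality with the parabolic divided difference operator attached to the longest element $w^{\ux{e}}_{\ux{d}}$ of $\dccb{d}{e}$, using the classical presentation of such an operator as a symmetrizing sum of rational functions. I identify each positive root of $\Gx{W}{c}$ with a linear form $x_k(i)-x_l(i)$ ($i \in Q_0$, $k<l$), so that $\Delta_j(i) = (1-s_j(i))/(x_j(i)-x_{j+1}(i))$ is the ordinary divided difference for the simple root $x_j(i)-x_{j+1}(i)$, and I write $\prod_{\alpha \in R_{\ux{d}}^+}\alpha$ and $\prod_{\alpha \in R_{\ux{e}}^+}\alpha$ for the corresponding products of linear forms. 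Note that $\Gxx{W}{d} \subseteq \Gxx{W}{e}$, since $\ux{d}$ refines $\ux{e}$.

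First I would record the comparison of the $\mathtt{S}$-polynomials with products of roots. Directly from \eqref{Sd classes} (or from Theorem~\ref{Tu theorem}(c) evaluated at the identity fixed point) one has $\mathtt{S}_{\ux{d}} = (-1)^{r_{\ux{d}}} \prod_{\alpha \in R_{\bx{c}}^+ \setminus R_{\ux{d}}^+}\alpha$; since $R_{\ux{d}}^+ \subseteq R_{\ux{e}}^+$, dividing gives
\[ \mathtt{S}_{\ux{d}}^{\ux{e}} = (-1)^{r_{\ux{d}}^{\ux{e}}}\, \nabla, \qquad \nabla := \prod_{\alpha \in R_{\ux{e}}^+ \setminus R_{\ux{d}}^+}\alpha, \qquad r_{\ux{d}}^{\ux{e}} = r_{\ux{d}} - r_{\ux{e}} = |R_{\ux{e}}^+ \setminus R_{\ux{d}}^+| = \ell(w^{\ux{e}}_{\ux{d}}). \]
In particular $\nabla$ is $\Gxx{W}{d}$-invariant, being a sign multiple of the $\Gxx{W}{d}$-invariant polynomial $\mathtt{S}_{\ux{d}}^{\ux{e}}$ (this invariance is recorded just after \eqref{Sd classes}).

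Next I would prove the key identity: for every $f \in \Lambda_{\ux{d}}$,
\[ \Delta_{\ux{d}}^{\ux{e}}(f) = \sum_{v \in \dccb{d}{e}} v\!\left( \frac{f}{\nabla} \right). \]
Here $\Delta_{\ux{d}}^{\ux{e}} = \partial_{w^{\ux{e}}_{\ux{d}}}$ is the divided difference for $w^{\ux{e}}_{\ux{d}} = w_0^{\ux{e}}w_0^{\ux{d}}$, with $w_0^{\ux{d}}$ and $w_0^{\ux{e}}$ the longest elements of $\Gxx{W}{d}$ and $\Gxx{W}{e}$. To establish it I would compute $\partial_{w_0^{\ux{e}}}\bigl(f\cdot\nabla_{\ux{d}}\bigr)$ in two ways, writing $\nabla_{\ux{d}} := \prod_{\beta \in R_{\ux{d}}^+}\beta$ and $\nabla_{\ux{e}} := \prod_{\alpha \in R_{\ux{e}}^+}\alpha$, so that $\nabla_{\ux{d}}/\nabla_{\ux{e}} = \nabla^{-1}$. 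On one side, the length-additive factorization $w_0^{\ux{e}} = w^{\ux{e}}_{\ux{d}}\,w_0^{\ux{d}}$ gives $\partial_{w_0^{\ux{e}}} = \Delta_{\ux{d}}^{\ux{e}} \circ \partial_{w_0^{\ux{d}}}$, and since $\partial_{w_0^{\ux{d}}}$ is $\Lambda_{\ux{d}}$-linear with $\partial_{w_0^{\ux{d}}}(\nabla_{\ux{d}}) = |\Gxx{W}{d}|$, the left side equals $|\Gxx{W}{d}|\,\Delta_{\ux{d}}^{\ux{e}}(f)$. On the other side, the symmetrizer formula $\partial_{w_0^{\ux{e}}}(g) = \sum_{w \in \Gxx{W}{e}} w(g/\nabla_{\ux{e}})$ applied to $g = f\nabla_{\ux{d}}$, combined with the length-additive decomposition $\Gxx{W}{e} = \bigsqcup_{v \in \dccb{d}{e}} v\,\Gxx{W}{d}$ and the $\Gxx{W}{d}$-invariance of $f/\nabla$, gives $|\Gxx{W}{d}| \sum_{v \in \dccb{d}{e}} v(f/\nabla)$. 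Cancelling $|\Gxx{W}{d}|$ proves the identity.

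Finally I would assemble the proof: for $f \in \Lambda_{\ux{d}}$,
\[ \scalebox{1}{$\shf$}_{\ux{d}}^{\ux{e}}\,(\mathtt{S}_{\ux{d}}^{\ux{e}})^{-1}(f) = \sum_{v \in \dccb{d}{e}} v\!\left(\frac{f}{\mathtt{S}_{\ux{d}}^{\ux{e}}}\right) = (-1)^{r_{\ux{d}}^{\ux{e}}} \sum_{v \in \dccb{d}{e}} v\!\left(\frac{f}{\nabla}\right) = (-1)^{r_{\ux{d}}^{\ux{e}}}\,\Delta_{\ux{d}}^{\ux{e}}(f), \]
the scalar $(-1)^{r_{\ux{d}}^{\ux{e}}}$ being pulled through the $\Gxx{W}{e}$-action. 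I expect the main obstacle to lie in the two-way evaluation of $\partial_{w_0^{\ux{e}}}(f\nabla_{\ux{d}})$: keeping the bookkeeping $R_{\ux{d}}^+ \subseteq R_{\ux{e}}^+ \subseteq R_{\bx{c}}^+$ straight and confirming that $\Gxx{W}{d}$ stabilizes $R_{\ux{e}}^+ \setminus R_{\ux{d}}^+$ (equivalently that $\nabla$, and hence $f/\nabla$, is $\Gxx{W}{d}$-invariant). The remaining ingredients — the symmetrizer formula for the longest element, composition of divided differences along length-additive factorizations, $\Lambda_{\ux{d}}$-linearity of $\partial_{w_0^{\ux{d}}}$, and $\partial_{w_0^{\ux{d}}}(\nabla_{\ux{d}}) = |\Gxx{W}{d}|$ — are all standard facts about divided difference operators for Weyl groups of type $A$.
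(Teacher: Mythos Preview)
Your proposal is correct. The paper itself does not give a proof here, only a citation to \cite[Proposition 8.13]{MS}; however, the paper does prove the $\theta$-analogue in full (Proposition~\ref{pro: Demazures theta} via Lemma~\ref{lem: Demazure ops properties theta}), and that argument follows exactly your strategy: factor $\partial_{w_0^{\ux{e}}} = \Delta_{\ux{d}}^{\ux{e}} \circ \partial_{w_0^{\ux{d}}}$, apply the symmetrizer formula for the longest element, and use $\Lambda_{\ux{d}}$-linearity together with the identification $\mathtt{S}_{\ux{d}}^{\ux{e}} = (-1)^{r_{\ux{d}}^{\ux{e}}}\prod_{\alpha \in R_{\ux{e}}^+\setminus R_{\ux{d}}^+}\alpha$. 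The only variation is that the paper's version inserts an abstract polynomial $h$ with $\partial_{w_0^{\ux{d}}}(h)=1$ (whose existence is recorded separately), whereas you take the concrete choice $h=\nabla_{\ux{d}}$ giving $\partial_{w_0^{\ux{d}}}(\nabla_{\ux{d}})=|\Gxx{W}{d}|$ and then cancel $|\Gxx{W}{d}|$; over $\C$ both work equally well.
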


\begin{proof}
See \cite[Proposition 8.13]{MS}. 
\end{proof}

\subsection{Application: geometric realization of the modified quiver Schur algebra.}

We now deduce some consequences from Theorem \ref{thm:polrep} in the special case when $Q$ is the cyclic quiver with at least two  vertices or the infinite (in both directions) linear quiver $A_\infty$, connecting our quiver Schur algebra $\Ax{Z}{c}$ to exisiting constructions. 

Miemietz and Stroppel introduced in \cite[Definition 8.4]{MS} a \emph{modified quiver Schur algebra}. Let us denote it by $\Ax{Z}{c}^{MS}$ (in \cite{MS} the notation $\mathbf{C}_{\mathbf{i}}$ is used). 
It is defined, purely algebraically, as the subalgebra of $\End_{\C}(\Lambda_{\mathbf{c}})$ generated by certain linear operators, called idempotents, polynomials, splits and merges. These operators are defined by explicit formulas. 
We will refer to them as ``algebraic'', in order to distinguish them from the fundamental classes in Definition \ref{defi:mergesandsplits}. 

We must first deal with a minor technical issue. The algebraic merges are defined using ``reversed Euler classes'', denoted in \cite{MS} by $\mathtt{E}_{\mathbf{u}_J}$, and  ``symmetrisers'', denoted by $\mathtt{S}_{\mathbf{u}_J}$ (see \cite[(8.1-2)]{MS}). Both of them are given by certain product formulas. We define sign-corrected algebraic merges to be the operators obtained by multiplying $\mathtt{E}_{\mathbf{u}_J}$ and $\mathtt{S}_{\mathbf{u}_J}$ by  $-1$ if number of factors in the corresponding product is odd. 

The main result of \cite{MS} says that the geometrically defined Stroppel-Webster quiver Schur algebra $\Ax{Z}{c}^{SW}$ is, after completion, isomorphic to the affine $q$-Schur algebra \cite{Gre}, which naturally appears in the representation theory of $p$-adic general linear groups. The proof of this result relies on the fact that both of these algebras are isomorphic to the modified quiver Schur algebra $\Ax{Z}{c}^{MS}$. The following theorem shows that $\Ax{Z}{c}^{MS}$ also admits a geometric realization as a convolution algebra, and that this realization is afforded by our quiver Schur algebra $\Ax{Z}{c}$.

\begin{thm} \label{cor:QSvMQS} 
There is an algebra isomorphism $\Ax{Z}{c} \cong \Ax{Z}{c}^{MS}$. Explicitly, this isomorphism sends $\spl{e}{d}$, $\mer{d}{e}$, $\mathsf{e}_{\ux{d}}$ and a polynomial in $\Axxy{Z}{d}{d}^e$ $($where $\underline{\mathbf{d}} \succ \underline{\mathbf{e}} \rightslice \bx{c})$ to the corresponding algebraic split, sign-corrected merge, idempotent and polynomial, respectively. 
\end{thm}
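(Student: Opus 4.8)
The plan is to use the faithful polynomial representation to transport everything into $\End_\C(\Lambda_{\bx{c}})$ and then match generators on the nose. By Proposition~\ref{pro: faithful} the action map $\Ax{Z}{c} \hookrightarrow \End_\C(\Ax{Q}{c}) = \End_\C(\Lambda_{\bx{c}})$ is injective, so it suffices to show that the image coincides with $\Ax{Z}{c}^{MS}$. The algebra $\Ax{Z}{c}^{MS}$ is by definition the subalgebra of $\End_\C(\Lambda_{\bx{c}})$ generated by the algebraic idempotents, polynomials, splits and sign-corrected merges. On the other hand, by Corollary~\ref{cor:el spl mer}, $\Ax{Z}{c}$ is generated by the idempotents $\mathsf{e}_{\ux{d}}$, the polynomials $\Axxy{Z}{d}{d}^e$, the elementary merges and the elementary splits; since transitivity (Proposition~\ref{pro:transitivity}.a) expresses arbitrary merges and splits as products of elementary ones, it is equally generated by \emph{all} merges, splits, idempotents and polynomials. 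So the entire statement reduces to checking that the four families of operators agree under the action: $\mathsf{e}_{\ux{d}}$ acts as the algebraic idempotent, a polynomial in $\Axxy{Z}{d}{d}^e$ acts as the corresponding algebraic polynomial operator, $\spl{e}{d}$ acts as the algebraic split, and $\mer{d}{e}$ acts as the sign-corrected algebraic merge. Once these four identifications are in place, the subalgebras generated coincide, and the resulting bijective correspondence of generators is the desired isomorphism.

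The idempotent and polynomial cases are immediate from Theorem~\ref{thm:polrep}.c): $\mathsf{e}_{\ux{d}} = [\Sxxy{Z}{d}{d}^e]$ acts as the projection $\Lambda_{\bx{c}} \twoheadrightarrow \Lambda_{\ux{d}}$, which is exactly the algebraic idempotent of \cite{MS}, and a class in $\Axxy{Z}{d}{d}^e \cong \Lambda_{\ux{d}}$ acts by multiplication, matching the algebraic polynomial operator. For splits, Theorem~\ref{thm:polrep}.b) says $\spl{e}{d}$ acts as the inclusion $\Lambda_{\ux{e}} \hookrightarrow \Lambda_{\ux{d}}$ (and zero on the other summands), which is precisely how the algebraic split is defined in \cite[Definition 8.4]{MS}; so this case is again a direct comparison of formulas. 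The only case requiring genuine work is the merge. Here Theorem~\ref{thm:polrep}.a) gives the action of $\mer{d}{e}$ as $f \mapsto \scalebox{1.5}{$\shf$}_{\ux{d}}^{\ux{e}}\bigl( (\mathtt{E}_{\ux{d}}^{\ux{e}}/\mathtt{S}_{\ux{d}}^{\ux{e}}) f \bigr)$, whereas the algebraic merge of \cite{MS} is written in terms of the quantities $\mathtt{E}_{\mathbf{u}_J}$ and $\mathtt{S}_{\mathbf{u}_J}$ from \cite[(8.1-2)]{MS} together with a shuffle-type sum.

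The main obstacle, then, is the bookkeeping needed to identify our merge operator with the sign-corrected algebraic merge. Two things must be reconciled. First, the product formulas: one must check that $\mathtt{E}_{\ux{d}}^{\ux{e}}$ (Definition of \eqref{Ed classes}) agrees, up to the sign built into the sign-correction, with the reversed Euler class $\mathtt{E}_{\mathbf{u}_J}$ of \cite{MS}, and similarly that $\mathtt{S}_{\ux{d}}^{\ux{e}}$ agrees up to sign with $\mathtt{S}_{\mathbf{u}_J}$; this is a matter of carefully matching the index ranges in the two sets of product formulas and counting the parity of the number of factors, which is exactly what the sign-correction in the statement is designed to absorb. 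Second, the shuffle/Demazure discrepancy: one must verify that the operator $\scalebox{1}{$\shf$}_{\ux{d}}^{\ux{e}} (\mathtt{S}_{\ux{d}}^{\ux{e}})^{-1}$ appearing in our formula is, after sign-correction, the same as the combination used in \cite{MS}. This is where Proposition~\ref{lem: merge = Demazure} does the decisive work: it rewrites $\scalebox{1}{$\shf$}_{\ux{d}}^{\ux{e}}(\mathtt{S}_{\ux{d}}^{\ux{e}})^{-1} = (-1)^{r_{\ux{d}}^{\ux{e}}} \Delta_{\ux{d}}^{\ux{e}}$ as a signed Demazure operator, and \cite[Proposition 8.13]{MS} provides the matching identity on the algebraic side, so the two merges differ only by the explicit sign $(-1)^{r_{\ux{d}}^{\ux{e}}}$, which is precisely cancelled by passing to the sign-corrected algebraic merge. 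Assembling these parity computations—keeping the cyclic-quiver-vs-$A_\infty$ conventions of \cite{MS} consistent with ours—is the only delicate point; everything else is a direct reading-off of formulas from Theorem~\ref{thm:polrep} and the definitions in \cite{MS}.
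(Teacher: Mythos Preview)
Your proof is correct and follows essentially the same approach as the paper: use faithfulness of the polynomial representation (Proposition~\ref{pro: faithful}) to embed $\Ax{Z}{c}$ into $\End_\C(\Lambda_{\bx{c}})$, use Corollary~\ref{cor:el spl mer} to identify generators, and then compare the explicit formulas of Theorem~\ref{thm:polrep} with those in \cite[Definition~8.4]{MS}. You spell out the sign bookkeeping for the merge operator in more detail than the paper does, but the underlying strategy is identical.
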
 

\begin{proof}
Corollary \ref{cor:el spl mer} says that $\spl{e}{d}$, $\mer{d}{e}$ and $\Axxy{Z}{d}{d}^e$ generate $\Ax{Z}{c}$. In fact, the corollary makes the stronger statement that it is enough to take polynomials together with \emph{elementary} splits and merges to get a generating set. However, since the definition of $\Ax{Z}{c}^{MS}$ involves arbitrary algebraic splits and merges, we only need the weaker form of Corollary \ref{cor:el spl mer} here. By Proposition \ref{pro: faithful}, the polynomial representation $\Ax{Q}{c} \cong \lamx{c}$ of $\Ax{Z}{c}$ is faithful. Hence $\Ax{Z}{c}$ is isomorphic to the subalgebra of $\End_{\C}(\Lambda_{\mathbf{c}})$ generated by the linear operators representing splits, merges and polynomials. To complete the proof, one only has to compare the description of these operators from Theorem \ref{thm:polrep} with the definition of their algebraic counterparts in \cite[Definition 8.4]{MS}.
\end{proof}

Corollary \ref{cor:el spl mer} and Theorem \ref{cor:QSvMQS} directly imply the following statement about the generators of the modified quiver Schur algebra $\Ax{Z}{c}^{MS}$, which is not obvious from its algebraic definition. 

\begin{thm}
The modified quiver Schur algebra $\Ax{Z}{c}^{MS}$ is generated by algebraic polynomials and \emph{elementary} algebraic splits and merges. 
\end{thm}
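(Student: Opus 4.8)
The plan is to deduce this directly from Corollary \ref{cor:el spl mer} together with the explicit isomorphism of Theorem \ref{cor:QSvMQS}. First I would recall that Corollary \ref{cor:el spl mer} asserts that the quiver Schur algebra $\Ax{Z}{c}$ is generated as an algebra by its elementary merges $\mer{d}{e}$ (those with $\ux{e} = \wedge_k(\ux{d})$), its elementary splits $\spl{e}{d}$, and the polynomials $\Axxy{Z}{d}{d}^e$ (which include the idempotents $\mathsf{e}_{\ux{d}}$ as the units of the summands $\Axxy{Z}{d}{d}^e \cong \lamxx{d}$). Next I would feed this generating set through the algebra isomorphism $\Phi \colon \Ax{Z}{c} \xrightarrow{\sim} \Ax{Z}{c}^{MS}$ supplied by Theorem \ref{cor:QSvMQS}, which by construction carries each elementary split to the corresponding elementary algebraic split, each elementary merge to the corresponding elementary sign-corrected algebraic merge, and each polynomial in $\Axxy{Z}{d}{d}^e$ (resp.\ each idempotent $\mathsf{e}_{\ux{d}}$) to the corresponding algebraic polynomial (resp.\ algebraic idempotent). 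Since the image of a generating set under an algebra isomorphism is again a generating set, it follows at once that $\Ax{Z}{c}^{MS}$ is generated by its elementary sign-corrected algebraic merges, its elementary algebraic splits, and its algebraic polynomials.

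It then remains to pass from sign-corrected merges to genuine algebraic merges. Each sign-corrected algebraic merge differs from the corresponding algebraic merge of \cite[Definition 8.4]{MS} only by multiplication by a scalar in $\{\pm 1\} \subseteq \C^\times$, since the reversed Euler class $\mathtt{E}_{\mathbf{u}_J}$ and the symmetriser $\mathtt{S}_{\mathbf{u}_J}$ each pick up at most a sign. As $\pm 1$ times any unit already lies in $\Ax{Z}{c}^{MS}$, the subalgebra of $\End_{\C}(\Lambda_{\mathbf{c}})$ generated by the elementary algebraic splits, the elementary sign-corrected algebraic merges and the algebraic polynomials coincides with the one generated by the elementary algebraic splits, the elementary algebraic merges and the algebraic polynomials. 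Combining this with the previous paragraph yields the statement.

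There is essentially no genuine obstacle here: the result is a formal consequence of the two quoted theorems. The only points calling for a moment of care are that the isomorphism $\Phi$ manifestly preserves the ``elementary'' condition (visible from the explicit description in Theorem \ref{cor:QSvMQS}, since $\Phi$ sends $\mer{d}{e}$ to the merge labelled by the same pair $\ux{d} \succ \ux{e}$, and being elementary is a condition on this pair), together with the harmless bookkeeping with the sign corrections described above.
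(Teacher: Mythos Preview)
Your proof is correct and follows the same route as the paper, which simply states that the result is a direct consequence of Corollary \ref{cor:el spl mer} and Theorem \ref{cor:QSvMQS}. Your additional paragraph handling the passage from sign-corrected to genuine algebraic merges is a welcome clarification that the paper leaves implicit.
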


Note that combining Theorems \ref{thm:basis} and \ref{cor:QSvMQS} also gives us a basis of $\Ax{Z}{c}^{MS}$. 
 
Moreover, we can relate $\Ax{Z}{c}$ to the Stroppel-Webster quiver Schur algebra. 

\begin{thm} \label{thm: our vs SW}
Let $Q$ be an arbitrary quiver. 
Then our quiver Schur algebra $\Ax{Z}{c}$ is isomorphic to the Stroppel-Webster quiver Schur algebra $\Ax{Z}{c}^{SW} = H_\bullet^{\Gx{G}{c}}(\Sx{Z}{c}^s)$. 
\end{thm}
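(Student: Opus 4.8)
\textbf{Proof strategy for Theorem \ref{thm: our vs SW}.}

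The plan is to establish the isomorphism $\Ax{Z}{c} \cong \Ax{Z}{c}^{SW}$ by routing through the modified quiver Schur algebra $\Ax{Z}{c}^{MS}$, exactly as one does for $\Ax{Z}{c}$ in Theorem \ref{cor:QSvMQS}. The key point is that $\Ax{Z}{c}^{SW}$, despite being built from the \emph{strictly stable} Steinberg variety $\Sx{Z}{c}^s$ rather than $\Sx{Z}{c}$, has the same structural features: it is generated by (strictly stable versions of) merges, splits and polynomials, and it acts faithfully on a polynomial representation $H_\bullet^{\Gx{G}{c}}(\Sx{Q}{c}^s)$ which, by the same Poincar\'{e} duality argument as in \eqref{Poincare}, is identified with $\lamx{c}$. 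Since the modified quiver Schur algebra is defined purely algebraically as the subalgebra of $\End_\C(\Lambda_{\mathbf{c}})$ generated by algebraic idempotents, polynomials, splits and sign-corrected merges, it suffices to check that the convolution operators coming from $\Ax{Z}{c}^{SW}$ on $\lamx{c}$ coincide with those algebraic operators.

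Concretely, I would proceed as follows. First, recall from \cite[\S 2.2]{SW} (in the cyclic quiver case) and the general framework of \S\ref{subsec:convolution} that $\Ax{Z}{c}^{SW} = H_\bullet^{\Gx{G}{c}}(\Sx{Z}{c}^s)$ is a convolution algebra acting on $H_\bullet^{\Gx{G}{c}}(\Sx{Q}{c}^s) \cong \lamx{c}$, and that this action is faithful by the standard argument (the same reference as Proposition \ref{pro: faithful}, namely \cite[Lemma 1.8(a)]{VV}, \cite[Proposition 3.1]{VV2}, applied in the strictly stable setting — this is also the content of \cite[Proposition 3.?]{SW}). Second, identify the strictly stable elementary merges, splits and polynomials in $\Ax{Z}{c}^{SW}$ and show, by the Bott-Samelson basis argument of \S\ref{subsec: proof of basis} carried out for $\Sx{Z}{c}^s$ in place of $\Sx{Z}{c}$, that these generate $\Ax{Z}{c}^{SW}$ — this is precisely \cite[Theorem 3.13]{SW} together with its corollary on generators. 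Third, compute the action of these generators on $\lamx{c}$ by repeating the localization computation in the proof of Theorem \ref{thm:polrep}: the only change is that the relevant normal bundle Euler classes are computed on $\Sxx{Q}{d}^s$ rather than $\Sxx{Q}{d}$, but since $\Sxx{Q}{d}^s$ and $\Sxx{Q}{d}$ have the same $\Gx{G}{c}$-equivariant cohomology (both homotopy equivalent to $\Sxx{F}{d}$, which is $B\Gxx{P}{d}$ after taking homotopy quotients) and the relevant bundles restrict compatibly, one obtains the \emph{same} shuffle formula $f \mapsto \shf_{\ux{d}}^{\ux{e}}(\mathtt{E}_{\ux{d}}^{\ux{e}} (\mathtt{S}_{\ux{d}}^{\ux{e}})^{-1} f)$ for merges and the inclusion for splits. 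Fourth, conclude that the image of $\Ax{Z}{c}^{SW}$ in $\End_\C(\Lambda_{\mathbf{c}})$ equals the subalgebra generated by algebraic splits, sign-corrected merges, idempotents and polynomials, i.e.\ $\Ax{Z}{c}^{MS}$; by faithfulness, $\Ax{Z}{c}^{SW} \cong \Ax{Z}{c}^{MS} \cong \Ax{Z}{c}$, the last isomorphism being Theorem \ref{cor:QSvMQS}.

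Alternatively — and this may be the cleaner route to write up — one can try to produce the isomorphism directly and geometrically, via a morphism relating $\Sx{Z}{c}$ and $\Sx{Z}{c}^s$. By Remark \ref{rem: str st vs st}(i), $\Sxx{Q}{d}^s$ is the locus inside $\Sxx{Q}{d}$ where the subquotients of the filtration are nilpotent and semisimple; in the cyclic or $A_\infty$ case, however, a single-vertex-supported refinement datum as in the Bott-Samelson basis automatically has semisimple subquotients (each $\mathbf{d}_j$ in the refined composition $\widehat{\ux{e}}$ is supported at one vertex by construction of $\mathrel{\leo}$), so by Remark \ref{rem: str st vs st}(ii) the Bott-Samelson resolutions $\widetilde{\mathfrak{BS}}^\Delta_{\ux{e},\ux{d},w}$ of \S\ref{subsec: proof of basis} factor through both $\Sx{Z}{c}$ and $\Sx{Z}{c}^s$, and the two algebras have literally the same Bott-Samelson basis. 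Matching the two bases then gives the isomorphism.

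\textbf{Main obstacle.} The delicate point is \emph{not} the generation statement or the localization computation — both transfer routinely from the non-strict to the strict setting — but rather verifying that the strictly stable merge and split fundamental classes act on $\lamx{c}$ by exactly the \emph{same} formulas as in Theorem \ref{thm:polrep}, with no spurious correction factors coming from the difference between $\Sxx{R}{d}$ and $\Sxx{R}{d}^s$ (equivalently, between the normal bundles of $\Sxx{Q}{d}$ and $\Sxx{Q}{d}^s$ inside the relevant ambient spaces). This requires care because $\Sxx{Q}{d}^s \hookrightarrow \Sxx{Q}{d}$ is not an open immersion in general, so one cannot simply restrict; instead one must either invoke Remark \ref{rem: str st vs st}(ii) to reduce to the single-vertex-supported case where $\Sxx{Q}{d}^s = \Sxx{Q}{d}$ — which suffices since merges, splits and polynomials indexed by such compositions already generate both algebras — or directly compare the Euler classes. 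Once this compatibility is pinned down, the rest is bookkeeping, and the isomorphism follows from faithfulness of both polynomial representations.
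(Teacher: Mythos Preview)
Your overall strategy matches the paper's: route through $\Ax{Z}{c}^{MS}$ and use faithfulness of the polynomial representations on both sides. The paper's proof is terser than your sketch --- it simply observes that the definition of $\Ax{Z}{c}^{MS}$ and Theorem \ref{cor:QSvMQS} generalize verbatim to arbitrary quivers, and that the proofs of \cite[Propositions 9.4, 9.6]{MS}, which already establish $\Ax{Z}{c}^{SW} \cong \Ax{Z}{c}^{MS}$ in the cyclic case, use nothing specific to that quiver and hence carry over as well.

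One correction to your third step and your obstacle analysis: the strictly stable merges do \emph{not} act on $\lamx{c}$ by the same shuffle formula as in Theorem \ref{thm:polrep}. The normal bundle governing the pushforward in the strictly stable setting is $\Sxx{R}{e}^s/\Sxx{R}{d}^s$, not $\Sxx{R}{e}/\Sxx{R}{d}$, and its Euler class is genuinely different from $\mathtt{E}_{\ux{d}}^{\ux{e}}$ --- this discrepancy is exactly what underlies the ``reversed Euler classes'' in \cite{MS}. So the two families of merge operators on $\lamx{c}$ are not equal as linear maps; the content of \cite[Propositions 9.4, 9.6]{MS} is a change-of-generators argument showing that they nonetheless generate the same subalgebra of $\End_\C(\Lambda_{\mathbf{c}})$, not an identity of formulas. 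Your proposed fallback via Remark \ref{rem: str st vs st}(ii) does not rescue this either: that remark requires both that $Q$ have no edge loops and that every part of $\ux{d}$ be supported at a single vertex, and merges and splits indexed only by such compositions do not generate the full quiver Schur algebra in general (e.g.\ for the Jordan quiver no nontrivial composition qualifies). The correct route is the one the paper takes: invoke the \cite{MS} comparison rather than attempt to match the formulas directly.
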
 

\begin{proof} 
The definition of the modified quiver Schur algebra $\Ax{Z}{c}^{MS}$, together with Theorem \ref{cor:QSvMQS}, generalize straightforwardly to arbitrary quivers. 
We also observe that the proofs of \cite[Propositions 9.4, 9.6]{MS} do not depend on the choice of cyclic quiver, and hence generalize to arbitrary quivers, yielding the desired isomorphism. 
\end{proof}

\subsection{Examples: the $A_1$ and Jordan quivers.} \label{subsec: A1 and Jordan}

In this subsection we discuss the examples of the $A_1$ quiver (i.e. one vertex with no arrows) and the Jordan quiver. It is well known (see, e.g., \cite{KL1, Rou, Sau3}) that the corresponding KLR algebras are isomorphic to the affine Nil-Hecke algebra and the degenerate affine Hecke algebra, respectively. 
While it is quite hard to give a presentation by generators and relations for the entire quiver Schur algebra, even for the $A_1$ and the Jordan quiver, we are able to give a complete list of relations for the following subalgebra. 

\begin{defi}
Let $\Ax{Z}{c}'$ be the subalgebra of $\Ax{Z}{c}$ generated by all merges and splits. We call it the \emph{reduced quiver Schur algebra}. 
\end{defi} 

We first consider the case where $Q$ is the $A_1$ quiver. 
Let $\underline{\mathbf{d}} \succ \underline{\mathbf{e}} \rightslice \mathbf{c}$. 
Note that since the quiver has only one vertex, $\bx{c}$ is just a positive integer and $\ux{d}$ and $\ux{e}$ are compositions of this integer. 
Since the quiver has no arrows, $\mathtt{E}_{\ux{d}}^{\ux{e}} = 1$ and $\bigcurlywedge_{\ux{d}}^{\ux{e}} = \scalebox{1}{$\shf$}_{\ux{d}}^{\ux{e}} (\mathtt{S}_{\ux{d}}^{\ux{e}})^{-1}$. Therefore, by Lemma \ref{lem: merge = Demazure}, we have $\bigcurlywedge_{\ux{d}}^{\ux{e}} = \Delta_{\ux{d}}^{\ux{e}}$, i.e., merges coincide with Demazure operators. 
Let us look at the special case when $\ux{d} = (m,n)$ and $\ux{e} = (\bx{c}) = (m+n)$. Then 
\begin{equation} 
\mathtt{S}_{\ux{d}}^{\ux{e}} = \mathtt{S}_{\ux{d}} = \prod_{k=1}^{m} \prod_{l=m+1}^{m+n} (x_l - x_k), \quad \quad \textstyle \bigcurlywedge_{\ux{d}}^{\ux{e}} \displaystyle = \sum_{w \in \mathsf{D}_{\ux{d}}^{\bx{c}}}  \prod_{k=1}^{m} \prod_{l=m+1}^{m+n}  w \cdot (x_l - x_k)^{-1}. 
\end{equation} 

We will now give a complete list of defining relations in the reduced quiver Schur algebra. We call

\begin{equation} \tag{R3} \label{R3 relation}
\tikz[thick,xscale=2,yscale=1.2, baseline=-0.2cm]{ 
\small 
\draw (0,0) to [out=90,in=-90](.3,.5)
(.6,0) to [out=90,in=-90] (.3,.5)
(.3,.5) -- (.3,.8) node[above] {$\bx{d}_{k} + \bx{d}_{k+1}$};
\begin{scope}[yscale=-1, yshift=0.4cm]
\draw (0,0) node[above] {$\bx{d}_{k}$} to [out=90,in=-90](.3,.5) 
(.6,0) node[above] {$\bx{d}_{k+1}$} to [out=90,in=-90] (.3,.5)
(.3,.5) -- (.3,.8) node[below] {$\bx{d}_{k} + \bx{d}_{k+1}$};
\end{scope} \normalsize
\node at (1.3,-0.15) {$= \quad 0$};
}\end{equation}
the \emph{hole removal} relation, and 
\begin{equation}  \tag{R4} \label{R4 relation}
\tikz[thick,xscale=2.4,yscale=1,baseline=2.25cm]{ \small
\draw (-0.6,0) -- (-0.6,.8) node[below,at start]{$\bx{d}_{k}+\bx{d}_{k+1}$} node[above]{$\bx{d}_{k}+\bx{d}_{k+1}$};
\draw (0.3,0.3)  to [out=90,in=-90] (0,.8) node[above] {$\bx{d}_{k+2}$}
(.3,0.3)  to [out=90,in=-90] (.6,.8) node[above] {$\bx{d}_{k+3}$}
(.3,0) node[below]{$\bx{d}_{k+2}+\bx{d}_{k+3}$}-- (.3,.3); 
\begin{scope}[yshift=1.3cm, xshift=-0.6cm]
\draw (0,0) to [out=90,in=-90](.3,.5)
(.6,0)  to [out=90,in=-90] (.3,.5)
(.3,.5) -- (.3,.8) node[above] {$\bx{d}_k+\bx{d}_{k+1}+\bx{d}_{k+2}$};
\draw (1.2,0) -- (1.2,.8) node[above]{$\bx{d}_{k+3}$};
\end{scope}
\begin{scope}[yshift=2.6cm, xshift=-0.6cm]
\draw (0.3,0.3)  to [out=90,in=-90] (0,.8) node[above] {$\bx{d}_k + \bx{d}_{k+2}$}
(.3,0.3)  to [out=90,in=-90] (.6,.8) node[above] {$\bx{d}_{k+1}$}
(.3,0) -- (.3,.3); 
\draw (1.2,0) -- (1.2,.8) node[above]{$\bx{d}_{k+3}$};
\end{scope} 
\begin{scope}[yshift=3.9cm, xshift=0cm] 
\draw (-0.6,0) -- (-0.6,.8) node[above]{$\bx{d}_{k} + \bx{d}_{k+2}$};
\draw (0,0) to [out=90,in=-90](.3,.5)
(.6,0)  to [out=90,in=-90] (.3,.5)
(.3,.5) -- (.3,.8) node[above] {$\bx{d}_{k+1} + \bx{d}_{k+3}$};
\end{scope}
\node at (1.4,2.3) {\normalsize $=$};
\begin{scope}[xscale=-1, xshift = -2.8cm]
\draw (-0.6,0) -- (-0.6,.8) node[below,at start]{$\bx{d}_{k+2}+\bx{d}_{k+3}$} node[above]{$\bx{d}_{k+2}+\bx{d}_{k+3}$};
\draw (0.3,0.3)  to [out=90,in=-90] (0,.8) node[above] {$\bx{d}_{k+1}$}
(.3,0.3)  to [out=90,in=-90] (.6,.8) node[above] {$\bx{d}_{k}$}
(.3,0) node[below]{$\bx{d}_{k}+\bx{d}_{k+1}$}-- (.3,.3); 
\begin{scope}[yshift=1.3cm, xshift=-0.6cm]
\draw (0,0) to [out=90,in=-90](.3,.5)
(.6,0)  to [out=90,in=-90] (.3,.5)
(.3,.5) -- (.3,.8) node[above] {$\bx{d}_{k+1}+\bx{d}_{k+2}+\bx{d}_{k+3}$};
\draw (1.2,0) -- (1.2,.8) node[above]{$\bx{d}_{k}$};
\end{scope}
\begin{scope}[yshift=2.6cm, xshift=-0.6cm]
\draw (0.3,0.3)  to [out=90,in=-90] (0,.8) node[above] {$\bx{d}_{k+1} + \bx{d}_{k+3}$}
(.3,0.3)  to [out=90,in=-90] (.6,.8) node[above] {$\bx{d}_{k+2}$}
(.3,0) -- (.3,.3); 
\draw (1.2,0) -- (1.2,.8) node[above]{$\bx{d}_{k}$};
\end{scope} 
\begin{scope}[yshift=3.9cm, xshift=0cm] 
\draw (-0.6,0) -- (-0.6,.8) node[above]{$\bx{d}_{k+1} + \bx{d}_{k+3}$};
\draw (0,0) to [out=90,in=-90](.3,.5)
(.6,0)  to [out=90,in=-90] (.3,.5)
(.3,.5) -- (.3,.8) node[above] {$\bx{d}_{k} + \bx{d}_{k+2}$};
\end{scope}
\end{scope} 
} \end{equation}
the \emph{ladder} relation. 
\begin{thm} \label{thm: rel A1} 
The reduced quiver Schur algebra $\Ax{Z}{c}'$ associated to the $A_1$ quiver is generated by elementary merges and splits, subject to the relations \eqref{R1 relation}, \eqref{R2 relation}, \eqref{R3 relation} and \eqref{R4 relation}. 
\end{thm}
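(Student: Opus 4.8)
The plan is to identify $\Ax{Z}{c}'$ with the algebra $\mathcal{A}$ presented by the elementary merges and splits (together with the implicit complete set of orthogonal idempotents $\{e_{\ux{d}}\}$ indexed by the compositions of $\bx{c}$) subject to \eqref{R1 relation}--\eqref{R4 relation}, by comparing $\mathcal{A}$ with the Bott--Samelson basis of Theorem \ref{thm:basis}. The first step is to check that \eqref{R1 relation}--\eqref{R4 relation} actually hold among the elementary merges and splits inside $\Ax{Z}{c}$. The associativity relations \eqref{R1 relation} and \eqref{R2 relation} are exactly Proposition \ref{pro:transitivity}(b). For the hole removal relation \eqref{R3 relation} and the ladder relation \eqref{R4 relation} I would pass to the faithful polynomial representation $\Ax{Q}{c}\cong\Lambda_{\bx{c}}$ (Proposition \ref{pro: faithful}). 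Since $Q$ is the $A_1$ quiver, all Euler factors $\mathtt{E}_{\ux{d}}^{\ux{e}}$ equal $1$, so by Theorem \ref{thm:polrep} and Proposition \ref{lem: merge = Demazure} every merge acts on $\Lambda_{\bx{c}}$ as $\pm$ a Demazure operator $\Delta_{\ux{d}}^{\ux{e}}$ and every split acts as the inclusion $\Lambda_{\ux{e}}\hookrightarrow\Lambda_{\ux{d}}$ of a ring of partial invariants. Relation \eqref{R3 relation} then reads $\Delta_{\ux{d}}^{\ux{e}}\big|_{\Lambda_{\ux{e}}}=0$, which holds because a reduced word for $w^{\ux{e}}_{\ux{d}}$ can be chosen to end with a simple reflection in $\Gxx{W}{e}$, whose Demazure operator annihilates $\Gxx{W}{e}$-invariants; relation \eqref{R4 relation} becomes an identity between two composites of Demazure operators and inclusions, both mapping $\Lambda_{(\dots,\bx{d}_k+\bx{d}_{k+2},\bx{d}_{k+1}+\bx{d}_{k+3},\dots)}$ to itself, which I would verify by a short direct computation. (Geometrically, one can also check \eqref{R4 relation} by observing, via Lemma \ref{lem:conv of classes}, that both composites equal the fundamental class attached to the same relative-position stratum of the Steinberg variety.) With \eqref{R1 relation}--\eqref{R4 relation} verified, Corollary \ref{cor:el spl mer} yields a surjective algebra homomorphism $\Psi\colon\mathcal{A}\twoheadrightarrow\Ax{Z}{c}'$.

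To prove $\Psi$ injective, let $N$ be the (finite) number of orbit data $(\ux{e},\ux{d},w)$ for $\bx{c}$, fix for each datum a reduced expression for the associated permutation $u$, and let $\ux{d}\underset{w}{\Longrightarrow}\ux{e}$ be the corresponding product of a big merge, crossings and a big split as in Definition \ref{defi: basis elements for QS}. Since a crossing is by definition a composite of an elementary merge and an elementary split, and big merges/splits are composites of elementary ones, each $\ux{d}\underset{w}{\Longrightarrow}\ux{e}$ is represented by a definite monomial in the generators of $\mathcal{A}$. The heart of the proof is the claim that these $N$ monomials span $\mathcal{A}$. Using \eqref{R1 relation}, \eqref{R2 relation} and the transitivity relations (Proposition \ref{pro:transitivity}(a)), any monomial in the elementary generators rewrites as an alternating product of big merges and big splits; one then straightens this product by induction on the number of split--merge adjacencies, using \eqref{R3 relation} to delete a split that is immediately cancelled by a merge of the same pair of blocks, and \eqref{R4 relation} to turn a split--merge adjacency of distinct blocks either into a crossing or into a configuration of strictly smaller complexity. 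This is the usual straightening algorithm for Bott--Samelson-type words; the bookkeeping of the composition types that appear is governed by the partitioning calculus of \S\ref{subsec: refinements} (Definitions \ref{defi: orbit datum}, \ref{defi: crossing datum} and Proposition \ref{lem: p vs w}), which ensures that the algorithm terminates and that the output monomial is the one attached to the refinement datum of the original word.

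Granting the spanning claim, we finish by a dimension count. On one hand $\dim_{\C}\mathcal{A}\le N$. On the other hand $\Psi$ sends the $N$ spanning monomials to the elements $\ux{d}\underset{w}{\Longrightarrow}\ux{e}\in\Ax{Z}{c}$, which are part of the Bott--Samelson basis of Theorem \ref{thm:basis} (the case $c=1$) and so are linearly independent; hence the $N$ monomials are linearly independent in $\mathcal{A}$, giving $\dim_{\C}\mathcal{A}=N$, and also $\dim_{\C}\Ax{Z}{c}'\ge N$. Since $\Psi$ is surjective, $\dim_{\C}\Ax{Z}{c}'\le\dim_{\C}\mathcal{A}=N$, so equality holds throughout and $\Psi$ is an isomorphism; as a byproduct, $\{\ux{d}\underset{w}{\Longrightarrow}\ux{e}\}$ is a basis of $\Ax{Z}{c}'$.

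The main obstacle is the spanning claim of the second paragraph, i.e.\ showing that \eqref{R3 relation} and \eqref{R4 relation}, together with the associativity relations, are genuinely enough to straighten every monomial to normal form and that no further relation is needed. Concretely this amounts to a confluence-and-termination analysis of the rewriting system, and this is precisely where the combinatorics of partitionings from \S\ref{subsec: refinements} is indispensable, since it is what makes the normal form of a monomial well-defined independently of the chosen sequence of rewrites.
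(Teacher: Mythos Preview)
Your verification that \eqref{R1 relation}--\eqref{R4 relation} hold in $\Ax{Z}{c}'$, and your dimension-count framework using the Bott--Samelson basis elements $\ux{d}\underset{w}{\Longrightarrow}\ux{e}$ (which lie in $\Ax{Z}{c}'$ and are linearly independent by Theorem~\ref{thm:basis}), are both fine. The gap is exactly where you locate it: the spanning claim. You assert that \eqref{R3 relation} and \eqref{R4 relation} suffice to straighten an arbitrary monomial to normal form, but you do not actually carry this out, and your appeal to \S\ref{subsec: refinements} is misplaced---that combinatorics controls the geometry of the Bott--Samelson resolutions in $\Ax{Z}{c}$, not the rewriting of words in the \emph{abstract} algebra $\mathcal{A}$. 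Proving confluence and termination of your rewriting system directly is essentially reproving the combinatorial core of \cite{CKM,TVW}, and \eqref{R4 relation} as stated is a single four-strand identity; showing it handles every split--merge adjacency requires real work.

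The paper avoids this entirely. It does not straighten in $\mathcal{A}$; instead it observes that \eqref{R1 relation}--\eqref{R4 relation} are precisely the associated-graded versions of the web-category relations (2-6)--(2-8) of \cite{TVW} with respect to a natural filtration, so that $\widetilde{\mathcal{Z}}_{\bx{c}}'\cong\gr\Morph(\mathscr{C}_{\bx{c}})$. Then \cite[Theorem~3.20]{TVW} identifies $\Hom_{\mathscr{C}_{\bx{c}}}(\ux{d},\ux{e})$ with $\Hom_{\mathfrak{gl}_\infty}(\bigwedge^{\ux{d}}\C^\infty,\bigwedge^{\ux{e}}\C^\infty)$, whose dimension equals $|\dc{d}{e}{c}|$ by Schur--Weyl duality. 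This gives $\dim\widetilde{\mathcal{Z}}_{\bx{c}}'=N$ directly, matching the lower bound from Theorem~\ref{thm:basis}, and the surjection $\widetilde{\mathcal{Z}}_{\bx{c}}'\twoheadrightarrow\Ax{Z}{c}'$ is forced to be an isomorphism. In short, the paper outsources your ``main obstacle'' to the existing literature on web categories rather than attempting a direct diagrammatic argument.
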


A detailed proof of Theorem \ref{thm: rel A1} can be found in \cite{Sei}. Below we will sketch the main ideas of the proof. We first need to recall some material about the green web category $\infty$-$\mathbf{Web}_g$ from \cite{TVW}. 

\begin{defi} \label{defi: web category}
We define a certain full subcategory $\mathscr{C}_{\bx{c}}$ of the green web category $\infty$-$\mathbf{Web}_g$. 
\begin{enumerate}[label=\alph*), font=\textnormal,noitemsep,topsep=3pt,leftmargin=1cm]
\item One first defines the free web category $\mathscr{C}^f_{\bx{c}}$. Its objects are compositions of $\bx{c}$ and its morphism spaces $\Hom_{\mathscr{C}^f_{\bx{c}}}(\ux{d},\ux{e})$ are generated by  elementary merge and split diagrams (which we denote by $\textcolor{teal}{\bigcurlywedge}_{\ux{d}}^{\ux{e}}$ and $\textcolor{teal}{\bigcurlyvee}^{\ux{d}}_{\ux{e}}$, respectively) 
via vertical composition. 
\item We define a filtration on the morphism spaces by setting $\deg \textcolor{teal}{\bigcurlywedge}_{\ux{d}}^{\ux{e}} = \deg \textcolor{teal}{\bigcurlyvee}^{\ux{d}}_{\ux{e}} = \bx{d}_k + \bx{d}_{k+1}$ if $\underline{\mathbf{d}} \succ \underline{\mathbf{e}} \rightslice \mathbf{c}$ and $\ux{e} = \wedge_k(\ux{d})$ for some $1 \leq k \leq \ld$. 
\item The category $\mathscr{C}_{\bx{c}}$ is the quotient of $\mathscr{C}^f_{\bx{c}}$ obtained by imposing certain relations on morphisms, called the associativity, coassociativity, digon removal and square switch relations \cite[(2-6)-(2-8)]{TVW}. 
\end{enumerate}
We remark that $\infty$-$\mathbf{Web}_g$ is defined in \cite{TVW} as a $\C(q)$-linear category. For our purposes, however, it is enough to work with the $\C$-linear category obtained by setting $q=1$. 
\end{defi}

Consider the filtered algebra 
\[ \Morph(\mathscr{C}_{\bx{c}}^f) = \bigoplus_{\ux{d},\ux{e} \in \mathbf{Com}_{\mathbf{c}}} \Hom_{\mathscr{C}_{\bx{c}}^f}(\ux{d},\ux{e}).\]
Let $I_{\mathscr{C}_{\bx{c}}}$ be the kernel of the canonical map $\Morph(\mathscr{C}^f_{\bx{c}}) \twoheadrightarrow \Morph(\mathscr{C}_{\bx{c}})$. 
We endow $I_{\mathscr{C}_{\bx{c}}}$ with the subspace filtration and $\Morph(\mathscr{C}_{\bx{c}})$ with the quotient filtration. 
Then the sequence $0 \to I_{\mathscr{C}_{\bx{c}}} \to \Morph(\mathscr{C}_{\bx{c}}^f) \to \Morph(\mathscr{C}_{\bx{c}}) \to 0$ is strict exact and so, after taking the associated graded, we obtain the short exact sequence 
\begin{equation} \label{gr ses} 0 \to \gr I_{\mathscr{C}_{\bx{c}}} \to \gr \Morph(\mathscr{C}_{\bx{c}}^f) \to \gr \Morph(\mathscr{C}_{\bx{c}}) \to 0. \end{equation} 
Note that the rule in Definition \ref{defi: web category}.b) also defines a grading on $\Morph(\mathscr{C}_{\bx{c}}^f)$, so $\Morph(\mathscr{C}_{\bx{c}}^f)$ is isomorphic as an algebra to its associated graded $\gr \Morph(\mathscr{C}_{\bx{c}}^f)$. 

\begin{proof}[Proof of Theorem \ref{thm: rel A1}]
Firstly, we need to check that the relations  \eqref{R1 relation}-\eqref{R4 relation} hold in $\Ax{Z}{c}'$. By Proposition \ref{pro:transitivity}, the relations  \eqref{R1 relation} and  \eqref{R2 relation} hold in any quiver Schur algebra (associated to any quiver). Relations  \eqref{R3 relation} and  \eqref{R4 relation} follow easily from the properties of Demazure operators. 

Secondly, we need to check that the relations \eqref{R1 relation}-\eqref{R4 relation} generate all the relations in $\Ax{Z}{c}'$. Let  $\widetilde{\mathcal{Z}}_{\bx{c}}'$ be the quotient of the free algebra ${}^f\widetilde{\mathcal{Z}}_{\bx{c}}'$, generated by elementary merges and splits, by the ideal $I_{\bx{c}}$ generated by the relations \eqref{R1 relation}-\eqref{R4 relation} so that we have a short exact sequence 
\begin{equation} \label{Z rest ses} 0 \to I_{\bx{c}} \to {}^f\widetilde{\mathcal{Z}}_{\bx{c}} \to \widetilde{\mathcal{Z}}_{\bx{c}}' \to 0. \end{equation}
It follows from the definitions that $\gr \Morph(\mathscr{C}_{\bx{c}}^f) \cong \Morph(\mathscr{C}_{\bx{c}}^f) \cong {}^f\widetilde{\mathcal{Z}}_{\bx{c}}'$. One also easily sees that after taking the associated graded the relations (2-6)-(2-8) from \cite{TVW} become the relations \eqref{R1 relation}-\eqref{R4 relation}. Hence $I_{\bx{c}} \cong  \gr I_{\mathscr{C}_{\bx{c}}}$. Comparing \eqref{gr ses} with \eqref{Z rest ses} now implies that $\widetilde{\mathcal{Z}}_{\bx{c}}' \cong \gr \Morph(\mathscr{C}_{\bx{c}})$. 

Next, \cite[Theorem 3.20]{TVW} implies that there is a vector space isomorphism $\Hom_{\mathscr{C}_{\bx{c}}}(\ux{d},\ux{e}) \cong \Hom_{\mathfrak{gl}_{\infty}}(\bigwedge^{\ux{d}} \C^\infty , \bigwedge^{\ux{e}} \C^\infty )$, where $\bigwedge^{\ux{d}} \C^\infty = \bigwedge^{\bx{d}_1} \C^\infty  \otimes \hdots \otimes \bigwedge^{\bx{d}_{\ell_{\ux{d}}}} \C^\infty$. Hence 
\[ \textstyle \dim \widetilde{\mathcal{Z}}_{\ux{d},\ux{e}}' = \dim \Hom_{\mathfrak{gl}_{\infty}}(\bigwedge^{\ux{d}} \C^\infty , \bigwedge^{\ux{e}} \C^\infty ) = |\dc{d}{e}{c}| = \dim \Axxy{Z}{d}{e},\]
where the second equality can be deduced from Schur-Weyl duality and the last equality follows from Theorem \ref{thm:basis}. We conclude that the natural map $\widetilde{\mathcal{Z}}_{\bx{c}}' \twoheadrightarrow \Ax{Z}{c}$ is an isomorphism. 
\end{proof} 

Let us record the following corollary of the proof of Theorem \ref{thm: rel A1}. 

\begin{cor} \label{cor: green web A1} 
If $Q$ is the $A_1$ quiver, then there is an algebra isomorphism $\Ax{Z}{c}' \cong \gr \Morph(\mathscr{C}_{\bx{c}})$. 
\end{cor}

Next suppose that $Q$ is the Jordan quiver. 
We can interpret merges as symmetrization operators between rings of invariants. Indeed, $\mathtt{S}_{\ux{d}}^{\ux{e}} = \mathtt{E}_{\ux{d}}^{\ux{e}}$ and so 
\begin{equation} \label{merge = sym opp} \textstyle \bigcurlywedge_{\ux{d}}^{\ux{e}} = \scalebox{1}{$\shf$}_{\ux{d}}^{\ux{e}}. 
\end{equation} 
We will now describe the relations in the reduced quiver Schur algebra. We use the following modification of (R3) (with $\ux{e} = \wedge^k(\ux{d})$):  
\begin{equation} \tag{R3'} \label{R3' relation}
\tikz[thick,xscale=2,yscale=1.2, baseline=-0.2cm]{ 
\small 
\draw (0,0) to [out=90,in=-90](.3,.5)
(.6,0) to [out=90,in=-90] (.3,.5)
(.3,.5) -- (.3,.8) node[above] {$\bx{d}_{k} + \bx{d}_{k+1}$};
\begin{scope}[yscale=-1, yshift=0.4cm]
\draw (0,0) node[above] {$\bx{d}_{k}$} to [out=90,in=-90](.3,.5) 
(.6,0) node[above] {$\bx{d}_{k+1}$} to [out=90,in=-90] (.3,.5)
(.3,.5) -- (.3,.8) node[below] {$\bx{d}_{k} + \bx{d}_{k+1}$};
\end{scope} \normalsize
\node at (1.5,-0.15) {$= \quad \left| \mathsf{D}_{\ux{d}}^{\ux{e}}\right| \cdot 1.$};
}\end{equation}

\begin{thm} \label{thm: Jordan q rels}  
The following hold: 
\begin{enumerate}[label=\alph*), font=\textnormal,noitemsep,topsep=3pt,leftmargin=1cm]
\item
The  reduced quiver Schur algebra algebra $\Ax{Z}{c}'$ associated to the Jordan quiver is generated by elementary merges and splits, subject to the relations  \eqref{R1 relation}, \eqref{R2 relation}, \eqref{R3' relation} and \eqref{R4 relation}. 
\item The algebra $\Ax{Z}{c}'$ is isomorphic to the convolution algebra $\bigoplus_{(\mathsf{P},\mathsf{P}')} \C[\mathsf{P} \backslash \Gx{G}{c} / \mathsf{P}']$ of complex valued functions on double cosets, where $(\mathsf{P},\mathsf{P}')$ runs over all pairs of standard parabolic subgroups of $\Gx{G}{c}$.
\end{enumerate} 
\end{thm}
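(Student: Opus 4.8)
The plan is to prove part (a) by mimicking closely the structure of the proof of Theorem \ref{thm: rel A1}, and then to deduce part (b) from part (a) together with Theorem \ref{cor:QSvMQS}. For part (a), since the relations \eqref{R1 relation}, \eqref{R2 relation} hold in any quiver Schur algebra by Proposition \ref{pro:transitivity}, and since \eqref{merge = sym opp} tells us that for the Jordan quiver the merges are literally the symmetrization operators $\scalebox{1}{$\shf$}_{\ux{d}}^{\ux{e}}$ (rather than the Demazure operators one gets for $A_1$), one first verifies by a direct calculation with symmetrizers that the modified relation \eqref{R3' relation} and the ladder relation \eqref{R4 relation} hold in $\Ax{Z}{c}'$: the composite merge-after-split $\scalebox{1}{$\shf$}_{\ux{d}}^{\ux{e}}$ followed by the inclusion $\Lambda_{\ux{e}} \hookrightarrow \Lambda_{\ux{d}}$ acts on $\Lambda_{\ux{e}}$ as multiplication by $|\mathsf{D}_{\ux{d}}^{\ux{e}}|$, since $\sum_{w \in \mathsf{D}_{\ux{d}}^{\ux{e}}} w$ restricted to $\Gxx{W}{e}$-invariants is just the scalar $[\Gxx{W}{e}:\Gxx{W}{d}] = |\mathsf{D}_{\ux{d}}^{\ux{e}}|$; the ladder relation is again a routine symmetrizer identity. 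Then one sets ${}^f\widetilde{\mathcal{Z}}_{\bx{c}}'$ to be the free algebra on elementary merges and splits, lets $I_{\bx{c}}$ be the ideal generated by \eqref{R1 relation}, \eqref{R2 relation}, \eqref{R3' relation}, \eqref{R4 relation}, and forms the quotient $\widetilde{\mathcal{Z}}_{\bx{c}}'$; the surjection $\widetilde{\mathcal{Z}}_{\bx{c}}' \twoheadrightarrow \Ax{Z}{c}'$ is then established, and it remains to show it is injective, which by Theorem \ref{thm:basis} (giving $\dim \Axxy{Z}{d}{e} = |\dc{d}{e}{c}|$) reduces to the dimension count $\dim \widetilde{\mathcal{Z}}_{\ux{d},\ux{e}}' \leq |\dc{d}{e}{c}|$.

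For the upper bound on $\dim \widetilde{\mathcal{Z}}_{\ux{d},\ux{e}}'$, the natural approach is an associated-graded argument parallel to the one used for $A_1$: introduce a filtration on ${}^f\widetilde{\mathcal{Z}}_{\bx{c}}'$ by letting the degree of an elementary merge or split $\wedge_k$ be $\bx{d}_k + \bx{d}_{k+1}$ (as in Definition \ref{defi: web category}.b)), observe that \eqref{R3' relation} differs from \eqref{R3 relation} only in lower-order terms (the scalar $|\mathsf{D}_{\ux{d}}^{\ux{e}}| \cdot 1$ has strictly smaller filtration degree than the leading merge-split term), so that $\gr \widetilde{\mathcal{Z}}_{\bx{c}}'$ for the Jordan quiver coincides with the algebra $\widetilde{\mathcal{Z}}_{\bx{c}}'$ of Theorem \ref{thm: rel A1} — i.e.\ with $\gr\Morph(\mathscr{C}_{\bx{c}})$. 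Consequently $\dim \widetilde{\mathcal{Z}}_{\ux{d},\ux{e}}' = \dim \Hom_{\mathscr{C}_{\bx{c}}}(\ux{d},\ux{e}) = |\dc{d}{e}{c}|$ by \cite[Theorem 3.20]{TVW} and Schur--Weyl duality, exactly as in the $A_1$ case, forcing $\widetilde{\mathcal{Z}}_{\bx{c}}' \xrightarrow{\sim} \Ax{Z}{c}'$.

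For part (b), the convolution algebra $\bigoplus_{(\mathsf{P},\mathsf{P}')} \C[\mathsf{P}\backslash\Gx{G}{c}/\mathsf{P}']$ has a basis indexed by triples $(\mathsf{P},\mathsf{P}',w)$ with $w$ a $(\mathsf{P},\mathsf{P}')$-double coset representative; since the standard parabolics of $\Gx{G}{c}$ (for the Jordan quiver, $\Gx{G}{c} = \GL_{\bx{c}}$) are parametrized by compositions $\ux{d} \rightslice \bx{c}$, this is precisely $\bigoplus_{\ux{d},\ux{e}} \C[\dc{d}{e}{c}]$, matching $\dim \Ax{Z}{c}'$ on the nose by part (a) and Theorem \ref{thm:basis}. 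One then exhibits the isomorphism: it is cleanest to invoke Theorem \ref{cor:QSvMQS}, which identifies $\Ax{Z}{c}$ (hence its subalgebra $\Ax{Z}{c}'$) with the modified quiver Schur algebra acting on $\Lambda_{\bx{c}}$, and to note that for the Jordan quiver the operators representing merges and splits are the averaging operators $\scalebox{1}{$\shf$}_{\ux{d}}^{\ux{e}}$ and the inclusions, which are exactly the operators realizing the classical double-coset Hecke-type algebra of $\GL_{\bx{c}}$ over $\C$ on the space of functions on $\GL_{\bx{c}}/\mathsf{B}$; identifying $\Lambda_{\bx{c}} = \bigoplus_{\ux{d}} \C[\Gx{G}{c}/\mathsf{P}_{\ux{d}}]^{\Gx{G}{c}\text{-finite}}$ appropriately, the subalgebra of $\End_{\C}(\Lambda_{\bx{c}})$ generated by these operators is canonically the convolution algebra of double-coset functions. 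The main obstacle I expect is the last identification in part (b): making the match between the localization-theoretic formulas of Theorem \ref{thm:polrep} and the genuinely group-theoretic convolution product on $\C[\mathsf{P}\backslash\Gx{G}{c}/\mathsf{P}']$ precise — in particular checking that composition of symmetrizers corresponds exactly to convolution of double-coset indicator functions, including the correct structure constants. A clean way around this is to verify that both algebras are generated by the same merges/splits subject to the same relations \eqref{R1 relation}, \eqref{R2 relation}, \eqref{R3' relation}, \eqref{R4 relation} (the double-coset algebra visibly satisfies them, and has the right dimension), so that part (a) applies verbatim.
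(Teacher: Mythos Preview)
Your proposal for part (a) matches the paper's approach exactly: verify \eqref{R3' relation} and \eqref{R4 relation} via the symmetrizer description \eqref{merge = sym opp}, then use the filtration so that $\gr\widetilde{\mathcal{Z}}_{\bx{c}}'$ for the Jordan quiver coincides with $\gr\Morph(\mathscr{C}_{\bx{c}})$, and conclude by the dimension count already established in the proof of Theorem \ref{thm: rel A1}.

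For part (b), the paper's argument is a single sentence: it follows directly from \eqref{merge = sym opp}, i.e.\ from the fact that $\Ax{Z}{c}'$ is the subalgebra of $\End_{\C}(\Lambda_{\bx{c}})$ generated by the symmetrization operators $\shf_{\ux{d}}^{\ux{e}}$ and the inclusions, which is classically the parabolic double-coset convolution algebra. Your invocation of Theorem \ref{cor:QSvMQS} is misplaced --- that result, as stated, concerns cyclic quivers with at least two vertices or $A_\infty$, not the Jordan quiver --- but you don't actually need it: the relevant input is Theorem \ref{thm:polrep} (valid for all quivers), which already gives the symmetrizer description. Your fallback route, checking that the double-coset algebra satisfies \eqref{R1 relation}, \eqref{R2 relation}, \eqref{R3' relation}, \eqref{R4 relation} and has the right dimension, is a valid alternative and arguably more self-contained than the paper's terse appeal, though it requires redoing the dimension count on the convolution-algebra side.
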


\begin{proof}
The fact that the relations \eqref{R3' relation} and \eqref{R4 relation} hold in $\Ax{Z}{c}'$ follows easily from the properties of symmetrization operators. 
One can define a filtration on $\widetilde{\mathcal{Z}}_{\bx{c}}'$ analogous to the filtration on $\Morph(\mathscr{C}_{\bx{c}})$. It is clear that $\gr \widetilde{\mathcal{Z}}_{\bx{c}}' \cong \gr \Morph(\mathscr{C}_{\bx{c}})$. Hence one can use the same argument as in the proof of Theorem \ref{thm: rel A1} to show that  \eqref{R1 relation}, \eqref{R2 relation}, \eqref{R3' relation} and \eqref{R4 relation} generate all the relations. This proves the first statement of the theorem. The second statement follows from the description of $\Ax{Z}{c}'$ as the algebra of symmetrization operators in \eqref{merge = sym opp}. 
\end{proof}

\section{Mixed quiver Schur algebras}

In this section we define and study a generalization of quiver Schur algebras, depending on a quiver together with a contravariant involution and a duality structure. We call these new algebras \emph{mixed quiver Schur algebras}. From a geometric point of view, our generalization arises by replacing the stack of representations of a quiver with the stack of its supermixed representations in the sense of Zubkov \cite{Zub}. 

\subsection{Involutions and duality structures} 

We begin by recalling the notion of a contravariant involution and a duality structure. These ideas, in the context of quiver representations, were first studied in \cite{DW, Zub}. We use the formulation from \cite{You}. 

\begin{defi}  \label{defi: duality str}
A (contravariant) \emph{involution} of a quiver $Q$ is a pair of involutions $\theta \colon Q_0 \to Q_0$ and $\theta \colon Q_1 \to Q_1$ such that: 
\begin{enumerate}[label=\alph*), font=\textnormal,noitemsep,topsep=2pt] 
\item $s(\theta(a)) = \theta(t(a))$ and $t(\theta(a)) = \theta(s(a))$ for all $a \in Q_1$, 
\item if $t(a) = \theta(s(a))$ then $a = \theta(a)$. 
\end{enumerate} 
A \emph{duality structure} on $(Q, \theta)$ is a pair of functions $\sigma \colon Q_0 \to \{\pm 1\}$ and $\varsigma \colon Q_1 \to \{ \pm 1\}$ such that $\sigma(\theta(i)) = \sigma(i)$ for all $i \in Q_0$ and $\varsigma(a) \cdot \varsigma(\theta(a)) = \sigma(s(a)) \cdot \sigma(t(a))$ for all $a \in Q_1$. 
\end{defi} 

\begin{exa}
Let $n \geq 1$ and suppose that $Q$ is the $A_{n}$ quiver 
\[
\begin{tikzcd}
\underset{i_1}{\bullet} \arrow[r,"a_1"] &\underset{i_2}{\bullet} \arrow[r,"a_2"] & \hdots \arrow[r,"a_{n-1}"] &\underset{i_n}{\bullet}
\end{tikzcd}
\]
There is a unique involution $\theta$ on $Q$. We have $\theta(i_k) = i_{n-k+1}$ for $1 \leq k \leq n$ and $\theta(a_l) = a_{n-l}$ for $1 \leq l \leq n-1$. If $n$ is even then $Q_0^\theta = \varnothing$ and $Q_1^\theta = \{a_{n/2}\}$. If $n$ is odd then $Q_1^\theta = \varnothing$ and $Q_0^\theta = \{i_{(n+1)/2}\}$. There are two inequivalent duality structures: $\sigma = 1$ and $\varsigma = -1$ or $\sigma = -1$ and $\varsigma = -1$. 
\end{exa}

\begin{exa}
Suppose that $Q$ is the quiver with one vertex and $m \geq 0$ loops. There is a unique involution on $Q$. It fixes the vertex and fixes all the loops as well. A duality structure is given by a choice of sign $\sigma$ and a choice of sign $\varsigma(a)$ for each arrow $a$. Hence there are $2^{m+1}$ possible duality structures. 
\end{exa}

For the rest of this section let us fix a quiver $Q$ together with an involution $\theta$ and a duality structure $(\sigma, \varsigma)$. We will now introduce some combinatorics necessary to describe isotropic flag varieties. 
Let us fix partitions  
\[ Q_0 = Q_0^{-} \sqcup Q_0^{\theta} \sqcup Q_0^+, \quad Q_1 = Q_1^{-} \sqcup Q_1^{\theta} \sqcup Q_1^+\]
such that $\theta(Q_0^+) = Q_0^{-}$ and $\theta(Q_1^+) = Q_1^{-}$. 
The involution $\theta$ induces an involution $\theta \colon \Gamma \to \Gamma$ on the monoid of dimension vectors. Let $\Gamma^\theta$ be the submonoid of $\theta$-fixed points. We consider $\Gamma^\theta$ as a $\Gamma$-module via the monoid homomorphism 
\[ \mathrm{D} \colon \Gamma \to \Gamma^\theta, \quad \mathbf{c} \mapsto \mathbf{c} + \theta(\mathbf{c}).\] 

\begin{defi}
Let $\mathbf{c} \in \Gamma^\theta$. 
We call a sequence $\underline{\mathbf{d}} = (\mathbf{d}_1, \hdots, \mathbf{d}_{\ld},\mathbf{d}_\infty) \in \Gamma^{\ld}_+ \times \Gamma^\theta$ (where $\ld$ may equal zero) an \emph{isotropic vector composition} of $\mathbf{c}$, 
denoted $\ux{d}$ $\Rsl$ $\mathbf{c}$, 
if $\langle \underline{\mathbf{d}}\rangle_\theta := \mathbf{d}_\infty + \sum_j \mathrm{D}(\mathbf{d}_j) = \mathbf{c}$. We call $\ld$ the \emph{length} of $\ux{d}$. 
Let ${}^\theta\mathbf{Com}_{\mathbf{c}}$ denote the set of all isotropic vector compositions of $\mathbf{c}$, and let ${}^\theta\mathbf{Com}_{\mathbf{c}}^m$ denote the subset of compositions of length $m$. Consider $\Z_2 \wr \mathsf{Sym}_m$ as the group of signed permutations of the set $\{\pm1, \hdots, \pm m\}$ with $s_m$ changing the sign of $m$. 
We endow ${}^\theta\mathbf{Com}_{\mathbf{c}}^m$ with a right $\Z_2 \wr \mathsf{Sym}_m$-action so that $\mathsf{Sym}_m$ acts by permuting the first $m$ dimension vectors and $s_m$ acts by changing $\bx{d}_{\ld}$ to $\theta(\bx{d}_{\ld})$. 
Set 
\[ \mathrm{D}(\underline{\mathbf{d}}) := (\mathbf{d}_1, \hdots, \mathbf{d}_{\ld},\mathbf{d}_\infty,\theta(\mathbf{d}_{\ld}), \hdots, \theta(\mathbf{d}_1)), \quad \ux{d}^f =  (\mathbf{d}_1, \hdots, \mathbf{d}_{\ld}).\] 
Given $\beta \in \Com(\ell_{\underline{\mathbf{d}}}+1)$, 
let
\[ \wedge_\beta^\theta(\underline{\mathbf{d}}) := ( \langle \vee_{\beta}^{1}(\underline{\mathbf{d}})\rangle, \hdots, \langle \vee_{\beta}^{\ell_\beta-1}(\underline{\mathbf{d}})\rangle, \langle \vee_{\beta}^{\ell_{\beta}}(\underline{\mathbf{d}}) \rangle_\theta).\] 
In particular, if $\beta = (1^{k-1},2,1^{\ell_{\underline{\mathbf{d}}}-k})$ for some $1 \leq k \leq \ell_{\underline{\mathbf{d}}}$, then we abbreviate $\wedge_k^\theta(\underline{\mathbf{d}}) := \wedge_\beta^\theta(\underline{\mathbf{d}})$. 
\end{defi}

\begin{exa}
Consider the $A_3$ quiver together with its unique involution. Let $\bx{c} = 4i_1 + 3i_2 + 4i_3 \in~\Gamma^\theta$ and $\ux{d} = (i_1+i_2,i_3,2i_1 +i_2 +2i_3)$ $\Rsl$ $\bx{c}$. Then $\wedge_1^\theta = (i_1+i_2+i_3, 2i_1 +i_2 +2i_3)$ and $\wedge_2^\theta = (i_1+i_2, 3i_1+i_2+3i_3)$. 
\end{exa}

In analogy to Definition \ref{defi: wedge comp}, 
we define a partial order on ${}^\theta\mathbf{Com}_{\mathbf{c}}$ by setting
\[ \underline{\mathbf{d}} \succcurlyeq \underline{\mathbf{e}} \iff \underline{\mathbf{e}} = \wedge_\beta^\theta(\underline{\mathbf{d}}) \]
for some $\beta \in \Com(\ell_{\underline{\mathbf{d}}}+1)$. 
If $\mathbf{e}_\infty = \mathbf{d}_\infty$, then we write $\ux{d} \succcurlyeq_f \ux{e}$. If $\ux{d} = \ux{d}' \cup \ux{d}''$ and $\langle\ux{d}''\rangle_\theta = \mathbf{e}_\infty$, we write $\ux{d} \succcurlyeq_\infty \ux{e}$.

\subsection{Isotropic flag varieties} 

In this subsection we introduce the notation for isotropic flag varieties, isotropic Steinberg varieties and related objects. 

\begin{defi}
Let $\mathbf{c} \in \Gamma^\theta$. If $i \in Q_0^\theta$ and $\sigma(i) = -1$, we assume that $\mathbf{c}(i)$ is even. Fix a $Q_0$-graded $\C$-vector space $\mathbf{V}_{\mathbf{c}} = \bigoplus_{i \in Q_0} \mathbf{V}_{\mathbf{c}}(i)$ with $\dim \mathbf{V}_{\mathbf{c}}(i) = \mathbf{c}(i)$ and a nondegenerate bilinear form $\langle \cdot , \cdot \rangle \colon \mathbf{V}_{\mathbf{c}} \times \mathbf{V}_{\mathbf{c}} \to \C$ such that: 
\begin{enumerate}[label=\alph*), font=\textnormal,itemsep = 3pt,topsep=3pt] 
\item $\mathbf{V}_{\mathbf{c}}(i)$ and $\mathbf{V}_{\mathbf{c}}(j)$ are orthogonal unless $i = \theta(j)$, 
\item the restriction of $\langle \cdot , \cdot \rangle$ to $\mathbf{V}_{\mathbf{c}}(i) + \mathbf{V}_{\mathbf{c}}(\theta(i))$ satisfies $\langle u, v \rangle = \sigma(i) \langle v, u \rangle$, 
\end{enumerate} 
for $i,j \in Q_0$ and $u,v \in \mathbf{V}_{\mathbf{c}}(i) + \mathbf{V}_{\mathbf{c}}(\theta(i))$. 
Set
\[ {}^\theta\mathfrak{R}_{\mathbf{c}} := \{ \rho \in \mathfrak{R}_{\mathbf{c}} \mid \langle \rho_a(u),v\rangle = \varsigma(a)\langle u,\rho_{\theta(a)}(v)\rangle \ \forall a \in Q_1, u \in \mathbf{V}_{\mathbf{c}}(s(a)), v \in \mathbf{V}_{\mathbf{c}}(t(a))\}.\]
There is a vector space isomorphism 
\[ {}^\theta\mathfrak{R}_{\mathbf{c}} \cong \bigoplus_{a \in Q_1^+} \Hom_{\C}(\mathbf{V}_{\mathbf{c}}(s(a)), \mathbf{V}_{\mathbf{c}}(t(a))) \oplus \bigoplus_{a \in Q_1^\theta} \Bil^{\sigma(s(a)) \cdot \varsigma(a)}(\mathbf{V}_{\mathbf{c}}(s(a))), \]
where $\Bil^\epsilon(\mathbf{V}_{\mathbf{c}}(s(a)))$ is the vector space of symmetric ($\epsilon = 1$) or skew-symmetric ($\epsilon = -1$) bilinear forms on $\mathbf{V}_{\mathbf{c}}(s(a))$. 
\end{defi}

\begin{defi}
Let ${}^\theta\mathsf{G}_{\mathbf{c}}'$ be the subgroup of $\mathsf{G}_{\mathbf{c}}$ which preserves the bilinear form $\langle \cdot , \cdot \rangle$. We have
\begin{equation} \label{G and G'} {}^\theta\mathsf{G}_{\mathbf{c}}' \cong \prod_{i \in Q_0^+} \mathsf{GL}(\mathbf{V}_{\mathbf{c}}(i)) \times \prod_{\substack{i \in Q_0^\theta,\\ \sigma(i)=1}} \mathsf{O}(\mathbf{V}_{\mathbf{c}}(i)) \times \prod_{\substack{i \in Q_0^\theta,\\ \sigma(i)=-1}} \mathsf{Sp}(\mathbf{V}_{\mathbf{c}}(i)).\end{equation}
The group ${}^\theta\mathsf{G}_{\mathbf{c}}'$ acts naturally on ${}^\theta\mathfrak{R}_{\mathbf{c}}$ by conjugation. 
Let ${}^\theta\mathsf{G}_{\mathbf{c}} \subseteq {}^\theta\mathsf{G}_{\mathbf{c}}'$ be the subgroup obtained from \eqref{G and G'} by replacing $ \mathsf{O}(\mathbf{V}_{\mathbf{c}}(i))$ with $\mathsf{SO}(\mathbf{V}_{\mathbf{c}}(i))$ whenever $\bx{c}(i)$ is odd. 
\end{defi}
 
\begin{exa}
Let $Q$ be the Jordan quiver and $\bx{c}=2n$. Let $\sigma = 1$ so that $\tGx{G}{c} = \mathsf{O}_{2n}$. If $\varsigma = -1$ then $\tSx{R}{c} = \mathfrak{so}_{2n}$, while if $\varsigma = 1$ then $\tSx{R}{c} = \Sym^2\C^{2n}$ as $\mathsf{O}_{2n}$-modules. 
Next, let $\sigma = -1$ so that $\tGx{G}{c} = \mathsf{Sp}_{2n}$. If $\varsigma = -1$ then $\tSx{R}{c} = \mathfrak{sp}_{2n}$, while if $\varsigma = 1$ then $\tSx{R}{c} = \bigwedge^2\C^{2n}$ as $\mathsf{Sp}_{2n}$-modules. 
\end{exa}

\begin{defi}
Let $\tGx{T}{c} \subset \tGx{B}{c} \subset \tGx{G}{c}$ be the standard maximal torus (with fundamental weights $\omega_j(i)$) and Borel subgroup in $\tGx{G}{c}$. Let $\tGx{W}{c} = N_{\tGx{G}{c}}(\tGx{T}{c})/\tGx{T}{c}$ be the corresponding Weyl group. There is an isomorphism
\[  
\tGx{W}{c} \cong \prod_{i \in Q_0^+} \mathsf{Sym}_{\bx{c}(i)} \times \prod_{i \in Q_0^\theta} \Z_2 \wr \mathsf{Sym}_{\lfloor\bx{c}(i)/2\rfloor}. 
\]
Given $\ux{d} \in {}^\theta\mathbf{Com}_{\mathbf{c}}$, let $\tGxx{W}{d} = \mathsf{W}_{\ux{d}^f} \times {}^\theta\mathsf{W}_{\bx{d}_\infty} \subset \tGx{W}{c}$.
If $\ux{e},\ux{d} \in {}^\theta\mathbf{Com}_{\bx{c}}$, let ${}^\theta_{\ux{e}}\overset{\mathbf{c}}{\mathsf{D}}_{\ux{d}}$ denote the set of the shortest representatives in $\tGx{W}{c}$ of the double cosets $\tGxx{W}{e} \backslash \tGx{W}{c} /\tGxx{W}{d}$. 
\end{defi} 

\begin{defi}
Given $\underline{\mathbf{d}} \in {}^\theta\mathbf{Com}_{\mathbf{c}}$, we call a sequence $V_\bullet$ of $Q_0$-graded isotropic subspaces 
\[ \{0\} = V_0 \subset V_1 \subset V_2 \subset \hdots \subset V_{\ld} \subset \mathbf{V}_{\mathbf{c}} \] 
an \emph{isotropic flag} of type $\ux{d}$  if $\dim_{Q_0} V_j/V_{j-1} = \mathbf{d}_j$ and $\dim_{Q_0} V_{\ld}^\perp/V_{\ld}=\mathbf{d}_\infty$. 
Any isotropic flag $V_\bullet$ can be extended to a flag $\mathrm{D}(V_{\bullet}) \in \mathfrak{F}_{\mathrm{D}(\underline{\mathbf{d}})}$ of length $2\ld+1$ by setting $V_{2\ld-k+1} = V_k^\perp$ for $k=0,\hdots,\ld$ (if $\mathbf{d}_\infty = 0$ then $V_{{\ld}+1} = V_{\ld}$ is Lagrangian). 
Let ${}^\theta\mathbf{V}_{\underline{\mathbf{d}}}$ denote the standard isotropic flag of type $\underline{\mathbf{d}}$ (consisting of coordinate subspaces with respect to some fixed basis). 
Define 
\[ \textstyle {}^\theta\mathfrak{R}_{\underline{\mathbf{d}}} := \{ \rho \in {}^\theta\mathfrak{R}_{\mathbf{c}} \mid \mathrm{D}({}^\theta\mathbf{V}_{\underline{\mathbf{d}}}) \mbox{ is } \rho\mbox{-stable}\}, \quad {}^\theta\mathsf{P}_{\underline{\mathbf{d}}}:= \Stab_{{}^\theta\mathsf{G}_{\mathbf{c}}}({}^\theta\mathbf{V}_{\underline{\mathbf{d}}} ), \quad {}^\theta\mathsf{L}_{\underline{\mathbf{d}}} := \prod_{j=1}^{l_{\underline{\mathbf{d}}}} \mathsf{G}_{\mathbf{d}_j} \times {}^\theta\mathsf{G}_{\mathbf{d}_\infty}. \]
Let ${}^\theta\mathfrak{F}_{\underline{\mathbf{d}}} \cong {}^\theta\mathsf{G}_{\mathbf{c}} / {}^\theta\mathsf{P}_{\underline{\mathbf{d}}}$ be the projective variety parametrizing isotropic flags of type $\underline{\mathbf{d}}$. 
Given $\ux{d} \succcurlyeq \ux{e}$~$\Rsl$~$\mathbf{c}$, define 
\[ {}^\theta\mathfrak{Q}_{\ux{d}} := \{ (V_\bullet, \rho) \in {}^\theta\mathfrak{F}_{\underline{\mathbf{d}}} \times {}^\theta\mathfrak{R}_{\mathbf{c}} \mid \mathrm{D}(V_\bullet) \mbox{ is } \rho\mbox{-stable} \}. \]  
Let 
\[ \tSxx{F}{d} \xleftarrow{{}^\theta\tau_{\ux{d}}} \tSxx{Q}{d} \xrightarrow{{}^\theta\pi_{\ux{d}}} \tSx{R}{c} \]
be the canonical projections. Note that ${}^\theta\tau_{\ux{d}}$ is a vector bundle while ${}^\theta\pi_{\ux{d}}$ is proper.  We abbreviate 
\[ \tSx{F}{c} :=  \bigsqcup_{\underline{\mathbf{d}} \Rsl \mathbf{c}} \tSxx{F}{d}, \quad \tSx{Q}{c} :=  \bigsqcup_{\underline{\mathbf{d}} \Rsl \mathbf{c}} \tSxx{Q}{d}, \quad {}^\theta\pi_{\mathbf{c}} := \sqcup {}^\theta\pi_{\ux{d}} \colon \tSx{Q}{c} \to \tSx{R}{c}.\] 
\end{defi}

\begin{defi}
Given $\ux{d}, \ux{e}$ $\Rsl$ $\mathbf{c}$, set
\[ {}^\theta\mathfrak{Z}_{\underline{\mathbf{d}},\underline{\mathbf{e}}} := {}^\theta\mathfrak{Q}_{\underline{\mathbf{d}}} \times_{{}^\theta\mathfrak{R}_{\mathbf{c}}} {}^\theta\mathfrak{Q}_{\underline{\mathbf{e}}}, \quad {}^\theta\mathfrak{Z}_{\mathbf{c}} := {}^\theta\mathfrak{Q}_{\mathbf{c}} \times_{{}^\theta\mathfrak{R}_{\mathbf{c}}} {}^\theta\mathfrak{Q}_{\mathbf{c}} = \bigsqcup_{\ux{d},\ux{e} \Rsl \mathbf{c}} \tSxxy{Z}{d}{e}, \] 
where the fibred product is taken with respect to 
${}^\theta\pi_{\mathbf{c}}$. We call $\tSx{Z}{c}$ the \emph{isotropic quiver Steinberg variety}. 
We define the $\tGx{G}{c}$-equivariant Borel-Moore homology groups $\tAxx{Q}{d}, {}^\theta\mathcal{Q}_{\mathbf{c}}, \tAxxy{Z}{d}{e}, {}^\theta\mathcal{Z}_{\mathbf{c}}$ in analogy to \eqref{BM homology groups Q Z}, and $\tSxxy{Z}{d}{d}^e, \tAx{Z}{c}^e := \bigoplus_{\underline{\mathbf{d}} \Rsl \mathbf{c}} \tAxxy{Z}{d}{d}^e$ in analogy to \eqref{Zw lesseq} and \eqref{Zw Ze}, respectively.
\end{defi}

Furthermore, define 
\[ 
{}^\theta\mathcal{P}_{\mathbf{c}} := H^\bullet( B\tGx{T}{c}) = \bigotimes_{i \in Q_0^+} \C[x_1(i), \hdots, x_{\mathbf{c}(i)}(i)] \otimes  \bigotimes_{i \in Q_0^\theta} \C[x_1(i), \hdots, x_{\lfloor \frac{{\mathbf{c}(i)}}{2}\rfloor}(i)], 
\]
where 
$x_j(i) := \mathtt{c}_1({}^\theta\mathfrak{V}_j(i))$ is the first Chern class of the line bundle ${}^\theta\mathfrak{V}_j(i) := E{}^\theta\Gx{T}{c} \times^{\omega_j(i)} \C$. For each $\ux{d}$ $\Rsl$ $\mathbf{c}$, 
set 
\[ {}^\theta\lamxx{d} := \tAx{P}{c}^{\tGxx{W}{d}}, \quad {}^\theta\lamx{c} := \bigoplus_{\underline{\mathbf{d}} \Rsl \mathbf{c}} {}^\theta\lamxx{d}. \] 
As in \eqref{Poincare} and \eqref{Zepols}, we can identify $\tAxxy{Z}{d}{d}^e \cong \tAxx{Q}{d} \cong {}^\theta\lamxx{d}$ and $\tAx{Z}{c}^e \cong {}^\theta\mathcal{Q}_{\mathbf{c}} \cong {}^\theta\lamx{c}$. 

\subsection{Quiver Schur algebras for quivers with an involution} 

We apply the framework of \S \ref{subsec:convolution} to the vector bundle $X = \tSx{Q}{c}$ on the isotropic quiver flag variety $\tSx{F}{c}$, the space of self-dual quiver representations $Y = \tSx{R}{c}$ and the projection $\pi = {}^\theta\pi_{\mathbf{c}}$. Then $Z = \tSx{Z}{c}$ is the isotropic quiver Steinberg variety, and we obtain a convolution algebra structure on its Borel-Moore homology $\tAx{Z}{c} = H^{\tGx{G}{c}}_\bullet(\tSx{Z}{c})$ and a $\tAx{Z}{c}$-module structure on $\tAx{Q}{c} = H_\bullet^{\tGx{G}{c}}(\tSx{Q}{c})$. 

\begin{defi}
We call $\tAx{Z}{c}$ the \emph{mixed quiver Schur algebra} associated to $(Q,\theta,\sigma, \varsigma, \mathbf{c})$, and $\tAx{Q}{c}$ its \emph{polynomial representation}. 
\end{defi}

\begin{rem} 
We would like to remark on the connection between our mixed quiver Schur algebra $\tAx{Z}{c}$ and existing constructions. 
\begin{enumerate}[label=(\roman*),topsep=2pt,itemsep=1pt]
\item In the case when $Q$ is a loopless quiver and $\theta$ is an involution with no fixed vertices, the KLR analogue of $\tAx{Z}{c}$, associated to complete (rather than partial) isotropic flags, was defined and studied by Varagnolo and Vasserot in \cite{VV2}.  
\item Our algebra $\tAx{Z}{c}$ is also related to the parabolic Steinberg algebras defined by Sauter \cite{Sau1, Sau2, Sau3}. On the one hand, Sauter's construction is somewhat more general since she also works with non-classical gauge groups. On the other hand, Sauter's construction is different from ours since she only allows parabolic flags of a certain fixed type, while we consider all the possible types at once. In effect, special cases of Sauter's parabolic Steinberg algebras appear as subalgebras in $\tAx{Z}{c}$. 
\end{enumerate}
\end{rem}

The following result carries over, with analogous proof, from the no-involution case. 

\begin{prop}
The ${}^\theta\mathcal{Z}_{\mathbf{c}}$-module ${}^\theta\mathcal{Q}_{\mathbf{c}}$ is faithful. 
There are canonical isomorphisms  
\begin{equation} \label{Ext alg iso theta} {}^\theta\mathcal{Z}_{\mathbf{c}} \cong \Ext^\bullet_{{}^\theta\mathsf{G}_{\mathbf{c}}}(({}^\theta\pi_{\mathbf{c}})_*\C_{{}^\theta\mathfrak{Q}_{\mathbf{c}}}, ({}^\theta\pi_{\mathbf{c}})_*\C_{{}^\theta\mathfrak{Q}_{\mathbf{c}}}), \quad {}^\theta\mathcal{Q}_{\mathbf{c}} \cong \Ext^\bullet_{{}^\theta\mathsf{G}_{\mathbf{c}}}(\C_{{}^\theta\mathfrak{R}_{\mathbf{c}}}, ({}^\theta\pi_{\mathbf{c}})_*\C_{{}^\theta\mathfrak{Q}_{\mathbf{c}}}) \end{equation} 
intertwining the convolution product with the Yoneda product, and the convolution action with the Yoneda action. 
\end{prop}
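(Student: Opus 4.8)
The plan is to deduce both statements from the corresponding results in the no-involution case by observing that the only inputs used there are formal properties of convolution algebras that survive verbatim when $\mathsf{G}_{\mathbf{c}}$, $\mathfrak{R}_{\mathbf{c}}$, $\mathfrak{Q}_{\mathbf{c}}$, $\pi_{\mathbf{c}}$ are replaced by their $\theta$-decorated analogues ${}^\theta\mathsf{G}_{\mathbf{c}}$, ${}^\theta\mathfrak{R}_{\mathbf{c}}$, ${}^\theta\mathfrak{Q}_{\mathbf{c}}$, ${}^\theta\pi_{\mathbf{c}}$. Concretely, ${}^\theta\mathsf{G}_{\mathbf{c}}$ is a (connected) complex reductive group, ${}^\theta\mathfrak{R}_{\mathbf{c}}$ is a smooth affine ${}^\theta\mathsf{G}_{\mathbf{c}}$-variety (a ${}^\theta\mathsf{G}_{\mathbf{c}}$-representation, by the explicit description), $\tSx{Q}{c}$ is a smooth ${}^\theta\mathsf{G}_{\mathbf{c}}$-variety (the total space of the vector bundle ${}^\theta\tau_{\ux{d}}$ over the projective variety $\tSx{F}{c}$), and ${}^\theta\pi_{\mathbf{c}}\colon\tSx{Q}{c}\to\tSx{R}{c}$ is ${}^\theta\mathsf{G}_{\mathbf{c}}$-equivariant and proper. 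These are exactly the hypotheses needed to run the framework of \S\ref{subsec:convolution} and the Ext-algebra formalism.

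For the Ext-algebra isomorphisms \eqref{Ext alg iso theta}, I would simply invoke the general theory of convolution algebras --- the same reference \cite[Proposition 8.6.35]{CG} that was used to prove \eqref{Ext alg iso} --- applied to the proper equivariant map ${}^\theta\pi_{\mathbf{c}}$. Since $\tSx{Q}{c}$ is smooth, ${}^\theta\pi_{\mathbf{c}}$ proper and ${}^\theta\mathsf{G}_{\mathbf{c}}$-equivariant, the cited result gives canonical isomorphisms $H^{\tGx{G}{c}}_\bullet(\tSx{Z}{c})\cong\Ext^\bullet_{{}^\theta\mathsf{G}_{\mathbf{c}}}(({}^\theta\pi_{\mathbf{c}})_*\C_{\tSx{Q}{c}},({}^\theta\pi_{\mathbf{c}})_*\C_{\tSx{Q}{c}})$ and $H^{\tGx{G}{c}}_\bullet(\tSx{Q}{c})\cong\Ext^\bullet_{{}^\theta\mathsf{G}_{\mathbf{c}}}(\C_{\tSx{R}{c}},({}^\theta\pi_{\mathbf{c}})_*\C_{\tSx{Q}{c}})$, and these intertwine convolution with the Yoneda product and the Yoneda action, respectively. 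One small point to check is that $\tSx{R}{c}$ is smooth (so that $\C_{\tSx{R}{c}}$ is, up to shift, the dualizing complex needed in the identification $H^{\tGx{G}{c}}_\bullet(\tSx{Q}{c})\cong\Ext^\bullet_{{}^\theta\mathsf{G}_{\mathbf{c}}}(\C_{\tSx{R}{c}},({}^\theta\pi_{\mathbf{c}})_*\C_{\tSx{Q}{c}})$); this holds since $\tSx{R}{c}$ is a vector space.

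For the faithfulness of ${}^\theta\mathcal{Q}_{\mathbf{c}}$ as a ${}^\theta\mathcal{Z}_{\mathbf{c}}$-module, I would reproduce the argument for Proposition \ref{pro: faithful}, i.e.\ the proofs of \cite[Lemma 1.8(a)]{VV} and \cite[Proposition 3.1]{VV2}. The mechanism there is: $\tSx{Q}{c}\subset\tSx{Q}{c}\times_{\tSx{R}{c}}\tSx{Q}{c}=\tSx{Z}{c}$ embeds diagonally as the component indexed by $(\ux{c},(\mathbf{c}))$-type data (or, more precisely, one uses the identification of ${}^\theta\mathcal{Q}_{\mathbf{c}}$ with $\tAxy{Z}{(\mathbf{c})}{\mathbf{c}}$-type pieces); a class $a\in{}^\theta\mathcal{Z}_{\mathbf{c}}$ acting by zero on all of ${}^\theta\mathcal{Q}_{\mathbf{c}}$ then convolves trivially against the fundamental classes spanning the relevant homology, forcing $a=0$. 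This uses only the general convolution formalism plus the fact that the ``unit'' component of $\tSx{Z}{c}$ is $\tSx{Q}{c}$, which holds here exactly as in the no-involution case.

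The proof is thus essentially a remark: there is no genuine obstacle, only a verification that the structural hypotheses (smoothness of $\tSx{R}{c}$ and $\tSx{Q}{c}$, properness and equivariance of ${}^\theta\pi_{\mathbf{c}}$, connectedness of ${}^\theta\mathsf{G}_{\mathbf{c}}$) hold in the $\theta$-decorated setting. The only mildly subtle point --- and the one I would state explicitly --- is that one must work with ${}^\theta\mathsf{G}_{\mathbf{c}}$ rather than ${}^\theta\mathsf{G}_{\mathbf{c}}'$ precisely so that the gauge group is connected, since disconnectedness of $\mathsf{O}(\mathbf{V}_{\mathbf{c}}(i))$ for $\bx{c}(i)$ odd would break the equivariant-formality and connectedness inputs used in the convolution-algebra machinery. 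Given this, the phrase ``with analogous proof'' is justified, and I would simply write: ``The proofs are identical to those of Proposition \ref{pro: faithful} and of the Ext-algebra isomorphisms \eqref{Ext alg iso}, using that $\tSx{R}{c}$ and $\tSx{Q}{c}$ are smooth, ${}^\theta\pi_{\mathbf{c}}$ is proper and ${}^\theta\mathsf{G}_{\mathbf{c}}$-equivariant, and ${}^\theta\mathsf{G}_{\mathbf{c}}$ is connected.''
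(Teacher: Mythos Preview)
Your proposal is correct and matches the paper's approach exactly: the paper simply states that the result ``carries over, with analogous proof, from the no-involution case,'' referring back to Proposition~\ref{pro: faithful} and the Ext-algebra isomorphisms \eqref{Ext alg iso}, which in turn cite \cite[Proposition 8.6.35]{CG} and \cite[Lemma 1.8(a)]{VV}, \cite[Proposition 3.1]{VV2}.

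One minor correction to your side remark: ${}^\theta\mathsf{G}_{\mathbf{c}}$ is \emph{not} in general connected, since the paper only replaces $\mathsf{O}$ by $\mathsf{SO}$ when $\mathbf{c}(i)$ is odd, so $\mathsf{O}(2n)$ factors may remain; fortunately the convolution-algebra and faithfulness arguments from \cite{CG,VV,VV2} do not actually require connectedness of the gauge group, so this does not affect your argument.
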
 

\begin{defi} \label{defi:mergesandsplits theta}
We have the following analogues of merges, splits, idempotents and crossings from Definition \ref{defi:mergesandsplits} in $\tAx{Z}{c}$:
\[ \textstyle {}^\theta\bigcurlywedge_{\ux{d}}^{\ux{e}}:= [\tSxxy{Z}{e}{d}^e], \quad {}^\theta\bigcurlyvee_{\ux{e}}^{\ux{d}}:= [\tSxxy{Z}{d}{e}^e], \quad {}^\theta\mathsf{e}_{\ux{d}} := [\tSxxy{Z}{d}{d}^e], \quad \prescript{\theta}{}{\cross_{\ux{d}}^k} := {}^\theta\bigcurlyvee^{s_k(\ux{d})}_{\wedge_k^\theta(\underline{\mathbf{d}})} \star \ {}^\theta\bigcurlywedge_{\ux{d}}^{\wedge_k^\theta(\underline{\mathbf{d}})} \]
for $\ux{d} \succcurlyeq \ux{e}$~$\Rsl$~$\bx{c}$ and $1 \leq k \leq \ell_{\underline{\mathbf{d}}}$. We say that a merge or split is \emph{elementary} if $\ux{e} = \wedge_k^\theta(\ux{d})$. 
If $1 \leq k \leq \ld -1$, we depict elementary merges and splits diagrammatically in analogy to the elementary merges and splits in Definition \ref{defi:mergesandsplits}.  
More precisely, to the elementary merge ${}^\theta\bigcurlywedge_{\ux{d}}^{\wedge^k(\ux{d})}$ we associate the diagram 
\[
\tikz[thick,xscale=2,yscale=1.2]{ \small 
\draw (0,0) node[below] {$\bx{d}_k$} to [out=90,in=-90](.3,.5)
(.6,0) node[below] {$\bx{d}_{k+1}$} to [out=90,in=-90] (.3,.5)
(.3,.5) -- (.3,.8) node[above] {$\bx{d}_k+\bx{d}_{k+1}$};
\node at (1.2,0.4) {$:=$};
\begin{scope}[xshift=3cm]
\draw (-1.2,0) -- (-1.2,.8) node[below,at start]{$\bx{d}_1$} node[above]{$\bx{d}_1$};
\node at (-0.9,.4) {$\cdots$}; 
\draw (-0.6,0) -- (-0.6,.8) node[below,at start]{$\bx{d}_{k-1}$} node[above]{$\bx{d}_{k-1}$};
\draw (0,0) node[below] {$\bx{d}_k$} to [out=90,in=-90](.3,.5)
(.6,0) node[below] {$\bx{d}_{k+1}$} to [out=90,in=-90] (.3,.5)
(.3,.5) -- (.3,.8) node[above] {$\bx{d}_k+\bx{d}_{k+1}$};
\draw (1.2,0) -- (1.2,.8) node[below,at start]{$\bx{d}_{k+2}$} node[above]{$\bx{d}_{k+2}$};
\node at (1.5,.4) {$\cdots$}; 
\draw (1.8,0) -- (1.8,.8) node[below,at start]{$\bx{d}_{\ld}$} node[above]{$\bx{d}_{\ld}$}; 
\draw (2.4,0) -- (2.4,.8) node[below,at start]{$\bx{d}_{\infty}$} node[above]{$\bx{d}_{\infty}$};
\end{scope} 
}\] 
and to the elementary split ${}^\theta\bigcurlyvee^{\ux{d}}_{\wedge^k(\ux{d})}$ the vertically reflected diagram.

If $k = \ld$, we associate to the elementary merge ${}^\theta\bigcurlywedge_{\ux{d}}^{\wedge^{k}(\ux{d})}$  the new diagram
\[
\tikz[thick,xscale=2,yscale=1.2]{ \small 
\draw[line width = 1mm] (0,0) node[below] {$\bx{d}_{\ld}$} to [out=90,in=-90](.3,.5)
(.3,.49) -- (.3,.8) node[above] {$\mathrm{D}(\bx{d}_{\ld})+\bx{d}_\infty$};
\draw 
(.6,0) node[below] {$\bx{d}_{\infty}$} to [out=90,in=-90] (.3,.5); 
\node at (1.2,0.4) {$:=$};
\begin{scope}[xshift=3cm]
\draw (-1.2,0) -- (-1.2,.8) node[below,at start]{$\bx{d}_1$} node[above]{$\bx{d}_1$};
\node at (-0.9,.4) {$\cdots$}; 
\draw (-0.6,0) -- (-0.6,.8) node[below,at start]{$\bx{d}_{\ld-1}$} node[above]{$\bx{d}_{\ld-1}$};
\draw[line width = 1mm]  (0,0) node[below] {$\bx{d}_{\ld}$}  to [out=90,in=-90](.3,.5) 
(.3,.49) -- (.3,.8) node[above]  {$\mathrm{D}(\bx{d}_{\ld})+\bx{d}_\infty$};
\draw
(.6,0) node[below] {$\bx{d}_{\infty}$} to [out=90,in=-90] (.3,.5);
\end{scope} 
}\] 
and to the elementary split ${}^\theta\bigcurlyvee^{\ux{d}}_{\wedge^k(\ux{d})}$ the vertically reflected diagram.
\end{defi}

We have the following analogue of Proposition \ref{pro:transitivity}, which also follows directly from Lemma \ref{lem:conv of classes}. 

\begin{prop} \label{pro:transitivity theta} 
We list several basic relations which hold in $\tAx{Z}{c}$. 
\begin{enumerate}[label=\alph*), font=\textnormal,noitemsep,topsep=3pt,leftmargin=1cm]
\item Let $\ux{d} \succ \ux{e} \succ \ux{f}$ $\Rsl$ $\bx{c}$. Merges and splits satisfy the following \emph{transitivity} relations: 
\[ \textstyle {}^\theta\mer{e}{f} \ \star \ {}^\theta\mer{d}{e} = {}^\theta\mer{d}{f}, \quad {}^\theta\spl{e}{d} \ \star \ {}^\theta\spl{f}{e} = {}^\theta\spl{f}{d}.\]
\item 
Let $\ux{d}$ $\Rsl$ $\bx{c}$. Elementary merges satisfy the relations \eqref{R1 relation} and the following new relation 
\begin{equation} \tag{${}^\theta$R1} \label{theta R1}
\tikz[thick,xscale=2,yscale=1, baseline=0.8cm]{ \small
\draw (0,0) node[below] {$\bx{d}_{\ld-1}$} to [out=90,in=-90](.3,.5)
(.6,0) node[below] {$\bx{d}_{\ld}$} to [out=90,in=-90] (.3,.5)
(.3,.5) -- (.3,.8) (0.3,.79) node {\scalebox{0.8}{$\bullet$}}  
(1.2,0) -- (1.2,.8) node[below,at start]{$\bx{d}_{\infty}$}
(1.2,.8) to [out=90,in=-90](.75,1.3);
\draw[line width = 1mm] (.3,.8) to [out=90,in=-90](.75,1.3)
(.75,1.29) -- (.75,1.6) node[above] {$\mathrm{D}(\bx{d}_{\ld-1}+\bx{d}_{\ld})+\bx{d}_{\infty}$}; 
\node at (2,0.8) {$=$};
\begin{scope}[xshift=2.8cm]
\draw[line width = 1mm]  
(0,.8) to [out=90,in=-90](.45,1.3)
(.45,1.29) -- (.45,1.6) node[above] {$\mathrm{D}(\bx{d}_{\ld-1}+\bx{d}_{\ld})+\bx{d}_{\infty}$}
(0.6,0) node[below] {$\bx{d}_{\ld}$} to [out=90,in=-90](.9,.5)
(.9,.49) -- (.9,.8) (.9,.79) node {\scalebox{0.8}{$\bullet$}}; 
\draw
(0,0) -- (0,.8) node[below,at start]{$\bx{d}_{\ld-1}$} (0,.79) node {\scalebox{0.8}{$\bullet$}}
(1.2,0) node[below] {$\bx{d}_{\infty}$} to [out=90,in=-90] (.9,.5)
(0.9,.8) to [out=90,in=-90](.45,1.3);
\end{scope}
}\end{equation}
Elementary splits satisfy the relations \eqref{R2 relation} and the following new relation 
\begin{equation} \tag{${}^\theta$R2} \label{theta R2}
\tikz[thick,xscale=2,yscale=-1, baseline=-0.8cm]{ \small
\draw (0,0) node[above] {$\bx{d}_{\ld-1}$} to [out=90,in=-90](.3,.5)
(.6,0) node[above] {$\bx{d}_{\ld}$} to [out=90,in=-90] (.3,.5)
(.3,.5) -- (.3,.8)  (0.3,.79) node {\scalebox{0.8}{$\bullet$}} 
(1.2,0) -- (1.2,.8) node[above,at start]{$\bx{d}_{\infty}$} 
(1.2,.8) to [out=90,in=-90](.75,1.3);
\draw[line width = 1mm] (.3,.8) to [out=90,in=-90](.75,1.3)
(.75,1.29) -- (.75,1.6) node[below] {$\mathrm{D}(\bx{d}_{\ld-1}+\bx{d}_{\ld})+\bx{d}_{\infty}$}; 
\node at (2,0.8) {$=$};
\begin{scope}[xshift=2.8cm]
\draw[line width = 1mm]  
(0,.8) to [out=90,in=-90](.45,1.3)
(.45,1.29) -- (.45,1.6) node[below] {$\mathrm{D}(\bx{d}_{\ld-1}+\bx{d}_{\ld})+\bx{d}_{\infty}$}
(0.6,0) node[above] {$\bx{d}_{\ld}$} to [out=90,in=-90](.9,.5)
(.9,.49) -- (.9,.8) (.9,.79) node {\scalebox{0.8}{$\bullet$}}; 
\draw
(0,0) -- (0,.8) node[above,at start]{$\bx{d}_{\ld-1}$} (0,.79) node {\scalebox{0.8}{$\bullet$}}
(1.2,0) node[above] {$\bx{d}_{\infty}$} to [out=90,in=-90] (.9,.5)
(0.9,.8) to [out=90,in=-90](.45,1.3);
\end{scope}
}\end{equation}
\end{enumerate}
\end{prop}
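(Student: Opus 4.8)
The plan is to derive all the relations from Lemma \ref{lem:conv of classes}, exactly as Proposition \ref{pro:transitivity} was derived in the no-involution case, by checking geometrically that the relevant fibre products of isotropic Steinberg strata are submanifolds on which one projection is a submersion and the composition projection $p_{13}$ is an isomorphism. For part a), the transitivity relations, I would unwind the definitions: ${}^\theta\mer{e}{f} = [\tSxxy{Z}{f}{e}^e]$ and ${}^\theta\mer{d}{e} = [\tSxxy{Z}{e}{d}^e]$, and the convolution is taken over ${}^\theta\mathfrak{R}_{\mathbf{c}}$ via ${}^\theta\pi_{\mathbf{c}}$. Because $\ux{d} \succ \ux{e} \succ \ux{f}$ are successive coarsenings (in the isotropic order $\succcurlyeq$ of Definition for ${}^\theta\mathbf{Com}$), the stratum $\tSxxy{Z}{f}{e}^e$ is the graph of the coarsening map $\tSxx{Q}{e} \to \tSxx{Q}{f}$, and similarly for the other factor; the fibre product over $\tSxx{Q}{e}$ is then the graph of the composite coarsening $\tSxx{Q}{d} \to \tSxx{Q}{f}$, which is $\tSxxy{Z}{f}{d}^e$, and $p_{13}$ restricts to an isomorphism onto it. The submersion hypothesis of Lemma \ref{lem:conv of classes} holds because ${}^\theta\tau_{\ux{d}}$ is a vector bundle, hence the relevant projections are smooth. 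The split relations follow by transposing (interchanging the roles of the two factors, which amounts to applying the transpose of the correspondence). Part a) thus reduces to the combinatorial fact that the isotropic coarsening operations compose, which is immediate from the definition of $\wedge_\beta^\theta$.

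For part b), the relations \eqref{R1 relation} and \eqref{R2 relation} already hold in $\tAx{Z}{c}$ for $1 \leq k \leq \ld - 2$ by the same argument as in Proposition \ref{pro:transitivity}, since those involve only the "finite part" $\ux{d}^f$ of the isotropic vector composition and do not interact with the $\infty$-component; here one uses that the diagrammatic strands $\bx{d}_1, \hdots, \bx{d}_{\ld}$ and the correspondences between them behave exactly as in the ordinary quiver Schur algebra. The genuinely new relations \eqref{theta R1} and \eqref{theta R2} concern the interaction of the last finite strand $\bx{d}_{\ld-1}$, the penultimate strand $\bx{d}_{\ld}$, and the isotropic middle part $\bx{d}_\infty$. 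The relation \eqref{theta R1} says that the two ways of merging $\bx{d}_{\ld-1}, \bx{d}_{\ld}, \bx{d}_\infty$ down to $\mathrm{D}(\bx{d}_{\ld-1}+\bx{d}_{\ld})+\bx{d}_\infty$ — either (merge $\bx{d}_{\ld-1}$ with $\bx{d}_{\ld}$ first, then absorb into $\bx{d}_\infty$) or (absorb $\bx{d}_{\ld}$ into $\bx{d}_\infty$ first, then merge $\bx{d}_{\ld-1}$) — agree. I would again apply Lemma \ref{lem:conv of classes}: both composite correspondences are, via ${}^\theta\tau$, graphs of iterated coarsening maps between isotropic flag varieties, and both coarsen the same flag $V_{\ld-2} \subset V_{\ld-1} \subset V_{\ld} \subset \mathbf{V}_{\mathbf{c}}$ down to $V_{\ld-2} \subset V_{\ld-2}^\perp$ (equivalently, to the isotropic flag of type $\wedge^{\ld-1}_\beta(\ux{d})$ for the appropriate $\beta$), hence the two fibre products are canonically isomorphic and $p_{13}$ is an isomorphism on each. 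The bullets in the diagram are bookkeeping for which strand is being modified and do not affect the homology-class computation. The split relation \eqref{theta R2} is the transpose.

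The main obstacle I anticipate is \emph{not} the abstract convolution argument — Lemma \ref{lem:conv of classes} does all the work once the geometric set-up is in place — but rather verifying carefully that the submersion hypothesis still holds when the $\infty$-strand is involved, i.e. that the coarsening map $\tSxx{Q}{d} \to \tSxx{Q}{\wedge^{\ld}_k(\ux{d})}$ forgetting the Lagrangian-type step $V_{\ld}$ is still smooth and that the various isotropic flag bundles ${}^\theta\mathfrak{F}$ involved are in the expected relative position (the orbit $\O_e^\Delta$ for the identity coset representative $e \in {}^\theta_{\ux{e}}\overset{\mathbf{c}}{\mathsf{D}}_{\ux{d}}$). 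This is where the structure of ${}^\theta\mathsf{P}_{\ux{d}}$ as stabilizer of an isotropic flag, and the identity ${}^\theta\mathfrak{Q}_{\ux{d}} \cong {}^\theta\mathsf{G}_{\mathbf{c}} \times^{{}^\theta\mathsf{P}_{\ux{d}}} {}^\theta\mathfrak{R}_{\ux{d}}$ (the analogue of \eqref{Qd as quotient}), needs to be invoked; once one has that, the submersion statement follows because ${}^\theta\tau_{\ux{d}}$ is a vector bundle and the coarsening maps between the ${}^\theta\mathfrak{F}$'s are proper smooth fibrations of homogeneous spaces. I would state this geometric lemma once at the start of part b) and then deduce \eqref{theta R1} and \eqref{theta R2} as instances, noting that the finite-strand relations \eqref{R1 relation}, \eqref{R2 relation} are formally identical to the already-proved ones.
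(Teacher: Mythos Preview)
Your proposal is correct and matches the paper's approach: the paper states that Proposition~\ref{pro:transitivity theta} ``also follows directly from Lemma~\ref{lem:conv of classes}'', exactly as in Proposition~\ref{pro:transitivity}. The one minor difference is that for part b) the paper simply notes that the (co)associativity relations, including \eqref{theta R1} and \eqref{theta R2}, follow \emph{immediately from part a)} (both sides equal the same merge or split by transitivity), whereas you reapply Lemma~\ref{lem:conv of classes} directly to each composite; your route works but is slightly more labored than necessary.
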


\subsection{Basis and generators}

We want to construct a basis for $\tAx{Z}{c}$ analogous to the Bott-Samelson basis of $\Ax{Z}{c}$ from Theorem~\ref{thm:basis}. We begin by adapting the combinatorics of refinements (see \S \ref{subsec: refinements}) to the present setting. 

Let $\theta \colon \mathbf{N}_{\bx{c}} \to \mathbf{N}_{\bx{c}}$ be the involution defined by 
\[
(k,i) \mapsto \left\{ \begin{array}{ll} 
(k,\theta(i)) & \mbox{if } i \in Q_0^+ \mbox{ and } 1 \leq k \leq \bx{c}(i), \\
(\bx{c}(i)-k+1,i) & \mbox{if } i \in Q_0^\theta  \mbox{ and } 1 \leq k \leq \bx{c}(i). 
\end{array} \right. 
\]
If $\lambda \in \mathbf{Par}_{\bx{c}}^n$, we say that $\lambda$ is an \emph{isotropic partitioning} of $\bx{c}$ of length ${}^\theta \ell_\lambda = \lfloor n/2 \rfloor$ if $\lambda^{-1}(k) = \theta(\lambda^{-1}(n-k+1))$ for $0 \leq k \leq n$. 
Let ${}^\theta\mathbf{Par}_{\bx{c}} \subset \mathbf{Par}_{\bx{c}}$ denote the set of isotropic partitionings of $\bx{c}$, and let ${}^\theta\mathbf{Par}_{\bx{c}}^m$ denote the subset of those isotropic partitionings which have length $m$. 

Let ${}^\theta C$ and ${}^\theta P$ be the unique functions making the following diagram commute
\[
\begin{tikzcd}
\mathbf{Par}_{\bx{c}} \arrow[bend left=10,r, "C"] & \mathbf{Com}_{\bx{c}} \arrow[bend left=10, l, "P"] \\
{}^\theta\mathbf{Par}_{\bx{c}} \arrow[u,hookrightarrow] \arrow[bend left=10,r, "{}^\theta C"] & {}^\theta\mathbf{Com}_{\bx{c}} \arrow[hookrightarrow,u,"\mathrm{D}",swap] \arrow[bend left=10, l, "{}^\theta P"]
\end{tikzcd}
\]
The set ${}^\theta\mathbf{Par}_{\bx{c}}^m$ is endowed with natural $\Z_2 \wr \mathsf{Sym}_m$- and $\tGx{W}{c}$-actions. 
It is easy to check that Lemma \ref{lem: C P functions} still holds if we replace $\mathbf{Par}_{\bx{c}}$, $\mathbf{Com}_{\bx{c}}$, $C$, $P$, $\mathsf{Sym}_n$, $\Gx{W}{c}$ and $\Gxx{W}{d}$ by their isotropic analogues. The following lemma follows directly from the definitions.

\begin{lem}
If $\lambda, \mu \in {}^\theta\mathbf{Par}_{\bx{c}}$ then $\lambda \mathrel{\textnormal{\leo}} \mu \in {}^\theta\mathbf{Par}_{\bx{c}}$. 
\end{lem}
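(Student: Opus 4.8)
The plan is to unwind the definition of the ordered intersection $\lambda \mathrel{\textnormal{\leo}} \mu$ and follow the behaviour of the involution $\theta \colon \mathbf{N}_{\bx{c}} \to \mathbf{N}_{\bx{c}}$ through it. Write $n = |\lambda|$ and $n' = |\mu|$; by hypothesis $\lambda^{-1}(k) = \theta(\lambda^{-1}(n-k+1))$ for $0 \leq k \leq n$ and $\mu^{-1}(l) = \theta(\mu^{-1}(n'-l+1))$ for $0 \leq l \leq n'$. Since $\theta$ is a bijection of the finite set $\mathbf{N}_{\bx{c}}$, it carries intersections to intersections, so for $R_{k,l} = \lambda^{-1}(k) \cap \mu^{-1}(l)$ one has $\theta(R_{k,l}) = R_{n-k+1,\,n'-l+1}$. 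In particular $R_{k,l} \neq \varnothing$ if and only if $R_{n-k+1,\,n'-l+1} \neq \varnothing$, so $\theta$ restricts to a bijection of the set $S_{\lambda,\mu}$.

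The one step that needs a line of care is to check that this bijection reverses the total order on $S_{\lambda,\mu}$ used to define $\mathrel{\textnormal{\leo}}$. Recall $R_{k,l} < R_{r,s}$ iff $r > k$, or $r = k$ and $l < s$. Replacing $(k,l)$ by $(n-k+1,\,n'-l+1)$ and $(r,s)$ by $(n-r+1,\,n'-s+1)$ turns the condition $r > k$ into $n-k+1 > n-r+1$ and the condition $l < s$ into $n'-s+1 < n'-l+1$, so one obtains directly $R_{k,l} < R_{r,s} \iff \theta(R_{r,s}) < \theta(R_{k,l})$, i.e.\ $\theta$ is an order-reversing involution of $(S_{\lambda,\mu}, <)$.

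Now set $N = |S_{\lambda,\mu}|$ and $\nu = \lambda \mathrel{\textnormal{\leo}} \mu \in \mathbf{Par}_{\bx{c}}^N$. By definition $\nu^{-1}(m)$ is the $m$-th smallest element of $S_{\lambda,\mu}$ for $1 \leq m \leq N$, so the order-reversing bijection $\theta$ sends it to the $m$-th largest element, namely $\nu^{-1}(N-m+1)$; for $m = 0$ both sides are empty, so $\theta(\nu^{-1}(m)) = \nu^{-1}(N-m+1)$ holds for all $0 \leq m \leq N$. This is precisely the condition that $\nu$ is an isotropic partitioning of $\bx{c}$ (of length $\lfloor N/2 \rfloor$), i.e.\ $\nu \in {}^\theta\mathbf{Par}_{\bx{c}}$. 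I do not anticipate any genuine obstacle: the entire argument is a bookkeeping of how $\theta$ interacts with intersections and with the ordering of $S_{\lambda,\mu}$, with the order-reversal check being the only substantive point.
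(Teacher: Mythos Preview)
Your proof is correct and is exactly the unwinding of definitions the paper alludes to; the paper itself gives no argument beyond stating that the lemma ``follows directly from the definitions.'' Your verification that $\theta$ reverses the total order on $S_{\lambda,\mu}$ is precisely the content one needs to check.
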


\begin{defi} \label{defi: orbit datum theta}
We call a triple $(\ux{e}, \ux{d}, w)$, consisting of $\ux{e},\ux{d} \in {}^\theta\mathbf{Com}_{\bx{c}}$ and $w \in{}^\theta_{\ux{e}}\overset{\mathbf{c}}{\mathsf{D}}_{\ux{d}}$, an \emph{isotropic orbit datum}. This name is motivated by the fact that orbit data naturally label the $\tGx{G}{c}$-orbits in $\tSx{F}{c} \times \tSx{F}{c}$. We define the corresponding \emph{refinement datum} $(\widehat{\ux{e}}, \widehat{\ux{d}}, u)$ in analogy to Definition \ref{defi: orbit datum}. More precisely, if we abbreviate $\lambda = {}^\theta P(\ux{e})$ and $\mu = w \cdot {}^\theta P(\ux{d})$, then 
$$\widehat{\ux{e}} := {}^\theta C(\lambda \mathrel{\leo} \mu), \quad \widehat{\ux{d}} := {}^\theta C(\mu \mathrel{\leo} \lambda)$$ and $u \in \Z_2 \wr \mathsf{Sym}_{{}^\theta \ell_{\lambda \mathrel{\textnormal{\leo}} \mu}}$ is the unique permutation sending $\lambda \mathrel{\textnormal{\leo}} \mu$ to $\mu \mathrel{\textnormal{\leo}} \lambda$. We also choose a reduced expression $u = s_{j_k} \cdot \hdots \cdot s_{j_1}$ and define the associated \emph{crossing datum} $(\ux{e}^{0}, \hdots, \ux{e}^{2k})$ in the same way as in Definition \ref{defi: crossing datum}. 
\end{defi}

\begin{exa} 
Let $Q$ by a quiver such that $Q_0$ is a singleton. Then there is a unique involution on $Q$. Let $\bx{c}=14$ and choose any duality structure on $Q$. We identify $7+k = -k$ for $1 \leq k \leq 7$ so that $\mathbf{N}_{\bx{c}} = \{ \pm 1, \hdots, \pm 7\}$. Let $s_7 \in \tGx{W}{c} = \Z_2 \wr \mathsf{Sym}_7$ be the element swapping $7$ and $-7$. Suppose that $\ux{e} = (3,2,4)$, $\ux{d} = (4,2,2)$ and $w = s_5s_6s_7s_6s_5s_3s_4s_5s_6$. Then 
\begin{alignat*}{8}
\lambda && \ = \ & [1,2,3][4,5][\pm6,\pm7][\text{-}5,\text{-}4][\text{-}3,\text{-}2,\text{-}1],& \quad \mu && \ = \ & [1,2,4,\text{-}5][6,7][\pm3][\text{-}7,\text{-}6][5,\text{-}4,\text{-}2,\text{-}1], \\
\lambda \mathrel{\textnormal{\leo}} \mu && \ = \ & [1,2][3][4][5][6,7][\text{-}7,\text{-}6][\text{-}5][\text{-}4][\text{-}3][\text{-}2,\text{-}1],& \quad \mu \mathrel{\textnormal{\leo}} \lambda && \ = \ & [1,2][4][\text{-}5][6,7][3][\text{-}7,\text{-}6][\text{-}3][5][\text{-}4][\text{-}2,\text{-}1]. 
\end{alignat*}
Hence \[\widehat{\ux{e}} = (2,1,1,1,2,0), \quad \widehat{\ux{d}} = (2,1,1,2,1,0), \quad u = s_4s_3s_2s_4s_5s_4 \in \Z_2 \wr \mathsf{Sym}_5.\]
\end{exa}

\begin{defi}
Given an isotropic orbit datum $(\ux{e}, \ux{d}, w)$, the corresponding refinement and crossing data, and $c \in  {}^\theta\Lambda_{\ux{e}^{2k}}$, we define elements $\ux{d} \underset{w}{\overset{c}{\Longrightarrow}} \ux{e}$ in analogy to Definition \ref{defi: basis elements for QS}, i.e., 
\begin{equation*} \textstyle
 \ux{d} \underset{w}{\overset{c}{\Longrightarrow}} \ux{e} \quad := \quad {}^\theta\bigcurlywedge_{\ux{e}^{0}}^{\ux{e}} \ \star \ {}^\theta\cross_{\ux{e}^{2}}^{j_1} \ \star \ \hdots \ \star \ {}^\theta\cross_{\ux{e}^{2k}}^{j_k} \ \star \ c \ \star \ {}^\theta\bigcurlyvee_{\ux{d}}^{\ux{e}^{2k}} \in {}^\theta\Axxy{Z}{e}{d}. 
\end{equation*}
\end{defi} 
We have the following analogue of Theorem \ref{thm:basis}. 

\begin{thm} \label{thm:basis theta} 
If we let $(\ux{e}, \ux{d}, w)$ range over all isotropic orbit data and $c$ range over a basis of~${}^\theta\Lambda_{\ux{e}^{2k}}$, then the elements $\ux{d} \underset{w}{\overset{c}{\Longrightarrow}} \ux{e}$ form a basis of $\tAx{Z}{c}$. 
\end{thm}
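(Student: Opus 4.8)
The plan is to mimic the proof of Theorem \ref{thm:basis}, replacing every ingredient by its isotropic counterpart. The key structural fact to establish is the analogue of the direct sum decomposition \eqref{Zed sum}, namely
\[ \tAxxy{Z}{e}{d} = \bigoplus_{w \in {}^\theta_{\ux{e}}\overset{\mathbf{c}}{\mathsf{D}}_{\ux{d}}} \tAxxy{Z}{e}{d}^w, \]
together with the identification, for each fixed $w$, of the image of $\ux{d} \overset{c}{\underset{w}{\Longrightarrow}} \ux{e}$ under restriction to $\tSxxy{Z}{e}{d}^w$ with a basis of $\tAxxy{Z}{e}{d}^w$ as $c$ ranges over a basis of ${}^\theta\Lambda_{\ux{e}^{2k}}$. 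Granting these two facts, an induction on the (refined) Bruhat order on ${}^\theta_{\ux{e}}\overset{\mathbf{c}}{\mathsf{D}}_{\ux{d}}$ finishes the argument exactly as in the proof of Theorem \ref{thm:basis}.

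\textbf{Key steps.} First I would set up the relative position stratification: one shows, as in Lemma \ref{lem: Z closed}, that for $w \in {}^\theta_{\ux{e}}\overset{\mathbf{c}}{\mathsf{D}}_{\ux{d}}$ the subvariety $\tSxxy{Z}{e}{d}^{\leqslant w}$ (preimage of the closure of the corresponding diagonal $\tGx{G}{c}$-orbit in $\tSxx{F}{e}\times\tSxx{F}{d}$ under $\tau_{\ux{e},\ux{d}}$) is closed, with $\tSxxy{Z}{e}{d}^w$ open inside it; this uses the Bruhat order on the Weyl group $\tGx{W}{c}$ of the classical group ${}^\theta\mathsf{G}_{\mathbf{c}}$. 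Second, I would prove the isotropic analogues of the four technical lemmas in \S \ref{subsec: proof of basis}: the parabolic Bott--Samelson variety $\mathfrak{BS}_{\ux{e},\ux{d},w}$ is now built from the isotropic parabolics ${}^\theta\mathsf{P}_{\ux{e}^j}$, its map $\phi$ to $\tSxx{F}{d}$ is proper with image $\overline{\O_w}$ and an isomorphism over $\O_w$ (the proof being the usual Bruhat-decomposition argument, now in type B/C/D, fed by the isotropic version of Proposition \ref{lem: p vs w}), and then one lifts to $\widetilde{\mathfrak{BS}}_{\ux{e},\ux{d},w}^{\Delta}$ incorporating quiver representations, using the isotropic version of Lemma \ref{lem: intersection and stability} to see that stability of $F$ and $F'$ forces stability of all intermediate flags $F^{2l}$. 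The crucial combinatorial input is that Lemmas \ref{lem: C P functions}, \ref{lem: comb cap}, \ref{lem: e hat in P} and Proposition \ref{lem: p vs w} all carry over verbatim with ${}^\theta\mathbf{Par}_{\bx{c}}$, ${}^\theta C$, ${}^\theta P$, $\Z_2\wr\mathsf{Sym}_m$, $\tGx{W}{c}$, $\tGxx{W}{d}$; the excerpt already asserts that Lemma \ref{lem: C P functions} does, and the new lemma that $\lambda\mathrel{\leo}\mu \in {}^\theta\mathbf{Par}_{\bx{c}}$ for isotropic $\lambda,\mu$ secures that the ordered-intersection machinery stays inside the isotropic world. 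Third, the analogue of Lemma \ref{lem: basis proof 4} follows as before: $\widetilde{\psi}^{-1}(\tSxxy{Z}{e}{d}^{w})$ is an affine bundle over $\O_w^{\Delta}$, itself an affine bundle over ${}^\theta\mathfrak{F}_{\ux{e}^{2k}}$, so $q_0$ restricts to a homotopy equivalence and $c \mapsto q_0^*c \cap [\widetilde{\psi}^{-1}(\tSxxy{Z}{e}{d}^{w})]$ carries a basis of ${}^\theta\Lambda_{\ux{e}^{2k}}$ to a basis of the equivariant cohomology of the stratum. Finally, equivariant formality (the odd cohomology of each $\tSxxy{Z}{e}{d}^w$ vanishes, as these are iterated affine bundles over isotropic partial flag varieties, which have no odd cohomology) gives the splitting of the long exact sequence in equivariant Borel--Moore homology, hence \eqref{Zed sum}, and the induction concludes.

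\textbf{Main obstacle.} I expect the real work to be in the combinatorial lemmas of \S \ref{subsec: refinements} in type B/C/D, specifically the isotropic version of Proposition \ref{lem: p vs w}: the block-permutation/inversion-counting argument used there for $\mathsf{Sym}_n$ must be redone for signed permutations $\Z_2\wr\mathsf{Sym}_m$, where a sign change on the last block contributes inversions through the pairing with the dual blocks of $\mathrm{D}(\widehat{\ux{e}})$ and through the $\mathbf{d}_\infty$ block, and the parabolic $\tGxx{W}{d} = \mathsf{W}_{\ux{d}^f}\times{}^\theta\mathsf{W}_{\bx{d}_\infty}$ is no longer a product of symmetric groups. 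One must check that $\ell(\widetilde u) = \sum_l \ell(w_l)$ still holds with the correct accounting of these signed inversions, that $w_1\cdots w_l \in {}^\theta\mathsf{D}_{\ux{e}^{2l}}^{\mathbf{c}}$, and that $\mathsf{W}_{\ux{e}^0} = \tGxx{W}{e}\cap w\tGxx{W}{d}w^{-1}$; once this length additivity is in place, Lemma \ref{lem: basis proof 1} and everything downstream go through as in the no-involution case. A secondary point requiring care is the behaviour at the distinguished ``infinite'' strand (the thick-line diagrams and relations \eqref{theta R1}, \eqref{theta R2}): one must verify that the decomposition of a general merge/split into elementary ones via Proposition \ref{pro:transitivity theta} is compatible with the crossing-datum formalism, but this is bookkeeping rather than a genuine difficulty.
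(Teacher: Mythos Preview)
Your proposal is correct and follows essentially the same approach as the paper: the paper's proof simply observes that the proof of Theorem \ref{thm:basis} rests on the Bruhat decomposition, Proposition \ref{lem: Sym refined}, and Lemma \ref{lem: intersection and stability}, and that all three generalize to types B/C/D, with the only nontrivial modification being exactly what you flag as the main obstacle---replacing symmetric-group inversions by type-B inversions in the proof of Proposition \ref{lem: p vs w} (the paper cites \cite[Proposition 8.1.1]{BB} for this). Your write-up is in fact more detailed than the paper's own proof sketch.
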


\begin{proof}
The proof of Theorem \ref{thm:basis} uses only three ingredients: the Bruhat decomposition, Proposition \ref{lem: Sym refined} and Lemma \ref{lem: intersection and stability}. The Bruhat decomposition of course generalizes to reductive groups of type B, C and D. Lemma  \ref{lem: intersection and stability} also generalizes straightforwardly. To generalize Proposition \ref{lem: Sym refined}, one only needs to modify its proof by replacing inversions associated to the symmetric group by inversions associated to the Weyl group of type B (see, e.g., \cite[Proposition 8.1.1]{BB}). 
\end{proof}

Theorem \ref{thm:basis theta} and Proposition \ref{pro:transitivity theta} directly imply the following analogue of Corollary \ref{cor:el spl mer}. 

\begin{cor} \label{cor:el spl mer theta}
Elementary merges, elementary splits and the polynomials $\tAx{Z}{c}^e$ generate $\tAx{Z}{c}$ as an algebra. 
\end{cor}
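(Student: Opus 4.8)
\textbf{Proof proposal for Corollary \ref{cor:el spl mer theta}.}
The plan is to run the argument of Corollary \ref{cor:el spl mer} verbatim in the isotropic setting, using Theorem \ref{thm:basis theta} in place of Theorem \ref{thm:basis} and Proposition \ref{pro:transitivity theta} in place of Proposition \ref{pro:transitivity}. Let $\mathcal{A} \subseteq \tAx{Z}{c}$ be the subalgebra generated by the elementary merges, the elementary splits, and the polynomials $\tAx{Z}{c}^e$. By Theorem \ref{thm:basis theta} it suffices to show that each basis element $\ux{d} \underset{w}{\overset{c}{\Longrightarrow}} \ux{e}$ lies in $\mathcal{A}$.

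Unravelling the definition of $\ux{d} \underset{w}{\overset{c}{\Longrightarrow}} \ux{e}$, it is the convolution product
\[ {}^\theta\bigcurlywedge_{\ux{e}^{0}}^{\ux{e}} \ \star \ {}^\theta\cross_{\ux{e}^{2}}^{j_1} \ \star \ \hdots \ \star \ {}^\theta\cross_{\ux{e}^{2k}}^{j_k} \ \star \ c \ \star \ {}^\theta\bigcurlyvee_{\ux{d}}^{\ux{e}^{2k}}, \]
so it is enough to check that each factor lies in $\mathcal{A}$. For the crossings this is immediate from Definition \ref{defi:mergesandsplits theta}, which writes $\prescript{\theta}{}{\cross_{\ux{e}^{2l}}^{j_l}}$ as the product of the split ${}^\theta\bigcurlyvee^{s_{j_l}(\ux{e}^{2l})}_{\wedge_{j_l}^\theta(\ux{e}^{2l})}$ and the merge ${}^\theta\bigcurlywedge_{\ux{e}^{2l}}^{\wedge_{j_l}^\theta(\ux{e}^{2l})}$, both of which are elementary by construction of the crossing datum. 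The class $c$ lies in $\tAx{Z}{c}^e$ once we use the identification ${}^\theta\Lambda_{\ux{e}^{2k}} \cong \tAxxy{Z}{e}{d}^e\big|_{\ux{d}=\ux{e}=\ux{e}^{2k}} \subseteq \tAx{Z}{c}^e$. It remains to handle the outer merge ${}^\theta\bigcurlywedge_{\ux{e}^{0}}^{\ux{e}}$ and the outer split ${}^\theta\bigcurlyvee_{\ux{d}}^{\ux{e}^{2k}}$, which a priori need not be elementary.

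For these, I would invoke the transitivity relations of Proposition \ref{pro:transitivity theta}(a): ${}^\theta\mer{e}{f} \star {}^\theta\mer{d}{e} = {}^\theta\mer{d}{f}$ and dually for splits. Since $\ux{e}^{0} = \widehat{\ux{e}} \succcurlyeq \ux{e}$ and $\ux{e}^{2k} = \widehat{\ux{d}} \succcurlyeq \ux{d}$, it is enough to observe that any coarsening relation $\ux{a} \succcurlyeq \ux{b}$ in ${}^\theta\mathbf{Com}_{\bx{c}}$ can be refined to a chain $\ux{a} = \ux{a}^{(0)} \succ \ux{a}^{(1)} \succ \hdots \succ \ux{a}^{(r)} = \ux{b}$ of \emph{elementary} coarsenings, i.e.\ each $\ux{a}^{(t+1)} = \wedge_{k_t}^\theta(\ux{a}^{(t)})$; applying transitivity along this chain then writes ${}^\theta\bigcurlywedge_{\ux{a}}^{\ux{b}}$ as a product of elementary merges and ${}^\theta\bigcurlyvee_{\ux{b}}^{\ux{a}}$ as a product of elementary splits. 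This refinement statement is the only point requiring a genuine (though routine) verification: one must check that the operators $\wedge_\beta^\theta$ factor through iterated $\wedge_k^\theta$'s, including the boundary case $k = \ld$ in which the last part $\bx{d}_{\ld}$ is absorbed into $\bx{d}_\infty$ via $\mathrm{D}$. This is a direct unwinding of Definition \ref{defi: wedge comp} (and its isotropic analogue) and presents no real obstacle. Combining all of the above, $\ux{d} \underset{w}{\overset{c}{\Longrightarrow}} \ux{e} \in \mathcal{A}$, and hence $\mathcal{A} = \tAx{Z}{c}$.
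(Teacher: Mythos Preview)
Your proposal is correct and follows essentially the same approach as the paper, which simply states that the corollary follows directly from Theorem \ref{thm:basis theta} and Proposition \ref{pro:transitivity theta} (mirroring how Corollary \ref{cor:el spl mer} follows from Theorem \ref{thm:basis} and Proposition \ref{pro:transitivity}). Your write-up is a careful unpacking of exactly that one-line argument, including the routine check that any $\wedge_\beta^\theta$ factors as a composite of elementary $\wedge_k^\theta$'s.
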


\subsection{Monoidal structure and categorification.}

We now consider the relationship between the categories of modules over $\tAx{Z}{c}$ and $\Ax{Z}{c}$. 
In this subsection we view $\tAx{Z}{c}$ and $\Ax{Z}{c}$ as graded algebras, with the gradings imported from the gradings on the corresponding Ext-algebras via the isomorphisms \eqref{Ext alg iso} and \eqref{Ext alg iso theta}. 
We begin by recalling the monoidal structure on the direct sum $\mathcal{Z}\mbox{-}\pmmod$ of the categories of finitely generated graded projective modules over quiver Schur algebras $\Ax{Z}{c}$ for all dimension vectors $\bx{c} \in \Gamma$. We then show that the monoidal category $\mathcal{Z}\mbox{-}\pmmod$ acts on the corresponding category ${}^\theta\mathcal{Z}\mbox{-}\pmmod$ of modules over the algebras $\tAx{Z}{c}$. Passing to Grothendieck groups, we obtain a $K_0(\mathcal{Z})$-module and -comodule structure on $K_0({}^\theta\mathcal{Z})$, which we relate to the Hall module of the category of self-dual representations of the quiver $Q$ introduced by Young in \cite{You1}. 

One can easily show (as in, e.g., \cite[\S 2.4]{SW} or \cite[\S 2.6]{KL1}) that there are canonical (non-unital) injective graded ring homomorphisms 
\begin{equation} \label{inc c c'} i_{\bx{c},\bx{c}'} \colon \mathcal{Z}_{\bx{c}} \otimes \mathcal{Z}_{\bx{c}'} \hookrightarrow \mathcal{Z}_{\bx{c}+\bx{c}'}, \end{equation} 
for all $\bx{c}, \bx{c}' \in \Gamma$, induced by inclusions of the corresponding polynomial representations
\begin{equation} \label{inc c c' rep} \mathcal{Q}_{\bx{c}} \otimes \mathcal{Q}_{\bx{c}'} \hookrightarrow \mathcal{Q}_{\bx{c}+\bx{c}'}.  \end{equation} 
Diagrammatically, these inclusions are depicted by a horizontal composition of diagrams. 
They define an associative algebra structure on the direct sums $\mathcal{Z} = \bigoplus_{\bx{c} \in \Gamma} \Ax{Z}{c}$ and $\mathcal{Q} = \bigoplus_{\bx{c} \in \Gamma} \Ax{Q}{c}$, which is referred to as the \emph{horizontal multiplication}. 
The inclusions \eqref{inc c c'} also give rise to induction and restriction functors 
\begin{equation} \label{Ind Res functors} \Ind_{\bx{c},\bx{c}'} \colon  \mathcal{Z}_{\bx{c}} \otimes \mathcal{Z}_{\bx{c}'}\mbox{-}\mmod \to \mathcal{Z}_{\bx{c}+\bx{c}'}\mbox{-}\mmod, \quad \Res_{\bx{c},\bx{c}'} \colon \mathcal{Z}_{\bx{c}+\bx{c}'}\mbox{-}\mmod \to \mathcal{Z}_{\bx{c}} \otimes \mathcal{Z}_{\bx{c}'}\mbox{-}\mmod, \end{equation} 
where by, e.g., $\Ax{Z}{c}\mbox{-}\mmod$, we mean the category of finitely generated graded left $\Ax{Z}{c}$-modules. These functors restrict to subcategories of projective modules. Setting 
\[ M \otimes N = \mathcal{Z}_{\bx{c}+\bx{c}'} \otimes_{\mathcal{Z}_{\bx{c}} \otimes \mathcal{Z}_{\bx{c}'}} M \boxtimes N \]
for $M \in \mathcal{Z}_{\bx{c}}\mbox{-}\pmmod$ and $N \in \mathcal{Z}_{\bx{c}'}\mbox{-}\pmmod$, and $\mathbf{1} = \mathcal{Z}_0$ defines a monoidal structure on the direct sum of categories
\[ \mathcal{Z}\mbox{-pmod} = \bigoplus_{\bx{c} \in \Gamma} \Ax{Z}{c}\mbox{-pmod}.\]
Let $K_0(\mathcal{Z}) = K_0(\mathcal{Z}\mbox{-pmod})$ be its Grothendieck group, considered as a $\Z[q^{\pm 1}]$-module. 
The functors \eqref{Ind Res functors} induce maps 
\[ K_0(\mathcal{Z}) \otimes K_0(\mathcal{Z}) \to K_0(\mathcal{Z}), \quad 
K_0(\mathcal{Z}) \to K_0(\mathcal{Z}) \otimes K_0(\mathcal{Z}),\]
which turn $K_0(\mathcal{Z})$ into a $\Gamma$-graded $\Z[q^{\pm 1}]$-bialgebra. 
For special choices of the quiver $Q$, the bialgebra $K_0(\mathcal{Z})$ can be identified with (the opposite of) the generic nilpotent Hall algebra associated to the category of representations of $Q$ over finite fields. For more information about this algebra we refer the reader to, e.g., \cite{Schiff}. 

\begin{prop} \label{pro: K0 gen nilp Hall}
Let $Q$ be one of the following quivers: a Dynkin quiver, the $A_\infty$ quiver, the Jordan quiver or a cyclic quiver. Then $K_0(\mathcal{Z})^{op}$ is canonically isomorphic to the integral form of the generic nilpotent Hall algebra of the quiver $Q$.
\end{prop}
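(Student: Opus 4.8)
The plan is to reduce the statement to the known description of quiver Schur algebras as extension algebras (the isomorphism \eqref{Ext alg iso}) and then to invoke the well-established dictionary between such Ext-algebras, perverse sheaves on moduli stacks of quiver representations, and Hall algebras of the quiver over finite fields. Concretely, I would first recall that, by the proposition containing \eqref{Ext alg iso}, one has $\mathcal{Z}_{\bx{c}} \cong \Ext^\bullet_{\mathsf{G}_{\mathbf{c}}}((\pi_{\mathbf{c}})_*\C_{\mathfrak{Q}_{\mathbf{c}}}, (\pi_{\mathbf{c}})_*\C_{\mathfrak{Q}_{\mathbf{c}}})$, and that the complex $(\pi_{\mathbf{c}})_*\C_{\mathfrak{Q}_{\mathbf{c}}}[\dim]$ is (after the usual shift) a semisimple complex on the stack $[\mathfrak{R}_{\bx{c}}/\mathsf{G}_{\bx{c}}]$ whose simple summands are exactly the IC-sheaves appearing in Lusztig's construction of the (negative half of the) quantized enveloping algebra — but now with \emph{partial} flags rather than just complete ones, so that one gets all the summands whose supports are the closures of the orbits $\mathfrak{R}_{\ux{d}}$. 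For the quivers in the statement (Dynkin, $A_\infty$, Jordan, cyclic) the relevant categories of representations are well understood: for Dynkin and $A_\infty$ there are finitely many indecomposables and the generic nilpotent Hall algebra is the classical Ringel-Hall algebra; for the Jordan and cyclic quivers one must restrict to nilpotent representations, which is exactly what the superscript ``nilp'' and the generic specialization encode, and here the relevant references are Schiffmann's lecture notes and \cite{SW}.

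The second step is to make the $K_0$-level identification precise. I would pass to the split Grothendieck group $K_0(\mathcal{Z}\text{-}\pmmod)$, which by general theory (graded projective modules over a positively graded algebra whose degree-zero part is semisimple, or more directly by the ``geometric'' basis of Theorem \ref{thm:basis}) is a free $\Z[q^{\pm1}]$-module with basis indexed by the isomorphism classes of graded indecomposable projectives, equivalently by the simple perverse summands of $(\pi_{\mathbf{c}})_*\C_{\mathfrak{Q}_{\mathbf{c}}}$, equivalently by the $\Gx{G}{c}$-orbits on $\Sx{R}{c}$ that arise (those with ``generic semisimple associated graded'' in the cyclic/Jordan case). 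The horizontal multiplication \eqref{inc c c'} and the induction/restriction functors \eqref{Ind Res functors} induce the bialgebra structure on $K_0(\mathcal{Z})$; the key computation is that under the trace-of-Frobenius map sending the class of a semisimple perverse sheaf to the function on $\mathfrak{R}_{\bx{c}}(\mathbb{F}_q)$ given by the alternating sum of traces of Frobenius on stalk cohomology, the induction functor goes to Hall-algebra multiplication (this is Lusztig's computation, adapted to partial flags — the partial-flag version only adds the ``idempotented'' pieces $H^\bullet_{\Gxx{P}{d}}$, which correspond to the characteristic functions of orbits rather than the IC functions, but these span the same space). The appearance of ${}^{op}$ is the usual artefact of the convolution product on $\mathfrak{Z}_{\bx{c}}$ being read in the opposite order to Hall-algebra multiplication; one checks the orientation of $p_{13}$ in \S\ref{subsec:convolution} against the exact-sequence convention in the Hall algebra. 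The result is an algebra isomorphism $K_0(\mathcal{Z})^{op} \xrightarrow{\sim} \mathcal{H}^{\mathrm{nilp}}_{\mathrm{gen}}(Q)$ between $\Z[q^{\pm1}]$-forms, which I would then upgrade to a bialgebra isomorphism by checking that the restriction functor $\Res_{\bx{c},\bx{c}'}$ corresponds to the Hall-algebra comultiplication (Green's coproduct), again a standard geometric computation.

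For the degree bookkeeping I would match the grading shift: the $\Z[q^{\pm1}]$-action on $K_0$ comes from the internal grading on $\mathcal{Z}_{\bx{c}}$ (imported from the Ext-grading), which under the dictionary corresponds exactly to Lusztig's $v$-grading on the Hall algebra, so the generic parameter $q$ on the Hall side is identified with the grading variable. One should also record that for Dynkin and $A_\infty$ the ``nilpotent'' qualifier is vacuous (every representation is nilpotent, or every finite-support one is), so the statement reduces to the classical fact that $K_0$ of the KLR/quiver-Schur category is the integral form of $\mathbf{f}$ (Khovanov-Lauda, Varagnolo-Vasserot) together with the observation that allowing partial flags does not change $K_0$ up to isomorphism of bialgebras since the extra projectives are built from the KLR ones by the merge/split relations of Corollary \ref{cor:el spl mer}. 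For the cyclic quiver I would cite \cite[Proposition 2.12]{SW} directly, noting that Theorem \ref{thm: our vs SW} identifies our $\Ax{Z}{c}$ with the Stroppel-Webster algebra, so their computation applies verbatim.

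The main obstacle is the comparison of the two constructions at the level of the \emph{partial}-flag complex: Lusztig's and Ringel's classical statements are phrased for complete flags (KLR algebras), and although it is ``well known'' that the partial-flag version still computes the Hall algebra, I would need to either cite a clean reference (Stroppel-Webster for the cyclic case, and perhaps a Minamoto/Schiffmann-type statement for the general nilpotent case) or spell out that the idempotents $\mathsf{e}_{\ux{d}}$ decompose the partial-flag complex into the complete-flag one in a way compatible with induction — equivalently that the extra generators (non-elementary merges/splits) act by scalars on $K_0$ after the generic specialization. I expect that the cleanest route is to combine Theorem \ref{thm: our vs SW}, \cite[Proposition 2.12]{SW}, and the Dynkin/$A_\infty$ input, treating the four families essentially case by case rather than proving a uniform geometric statement; the only genuinely new verification is then the orientation/op-ness and the grading match, both of which are routine once the dictionary is set up.
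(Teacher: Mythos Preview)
Your final paragraph lands on essentially the paper's argument, but the route you take to get there is far more elaborate than necessary, and the one point you flag as ``routine'' is actually the only thing that needs a concrete argument.

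The paper's proof is three lines. It invokes Theorem~\ref{thm: our vs SW} to get $\Ax{Z}{c}\cong\Ax{Z}{c}^{SW}$ for \emph{every} quiver (not just cyclic), then observes from the explicit form of this isomorphism in \cite[Propositions~9.4,~9.6]{MS} that it intertwines the horizontal inclusion $i_{\bx{c},\bx{c}'}$ on the $\mathcal{Z}$-side with the inclusion on the $\mathcal{Z}^{SW}$-side \emph{composed with the flip} $\mathcal{Z}_{\bx{c}}^{SW}\otimes\mathcal{Z}_{\bx{c}'}^{SW}\to\mathcal{Z}_{\bx{c}'}^{SW}\otimes\mathcal{Z}_{\bx{c}}^{SW}$. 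Passing to $K_0$, this flip is exactly what produces the opposite algebra: $K_0(\mathcal{Z})^{op}\cong K_0(\mathcal{Z}^{SW})$. One then quotes \cite{SW} for the identification of $K_0(\mathcal{Z}^{SW})$ with the generic nilpotent Hall algebra. No case analysis, no Frobenius traces, no perverse-sheaf dictionary.

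Your treatment of the ``op'' is the weak point. You attribute it to a convention about the order in which convolution is read, but that is not what is happening here: both $\Ax{Z}{c}$ and $\Ax{Z}{c}^{SW}$ are convolution algebras with the \emph{same} convention, and the algebra isomorphism between them preserves multiplication. The reversal appears only at the level of the \emph{monoidal} structure (the horizontal inclusions \eqref{inc c c'}), and its source is the specific form of the isomorphism in \cite{MS}, which matches $\Sxx{Q}{d}$ with $\Sxx{Q}{d}^s$ in a way that reverses the order of the factors in $\ux{d}$. This is not an orientation check; it is a feature of the comparison theorem that you have to extract from \cite{MS}. Everything else in your proposal --- the Ext-algebra description, the Lusztig dictionary, the separate handling of Dynkin/$A_\infty$/Jordan --- is either already packaged inside the cited result from \cite{SW} or simply unnecessary once you have Theorem~\ref{thm: our vs SW} for arbitrary quivers.
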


\begin{proof}
By Theorem \ref{thm: our vs SW}, there is an isomorphism of algebras $\Ax{Z}{c} \cong \Ax{Z}{c}^{SW}$ for each $\bx{c} \in \Gamma$. The explicit description of this isomorphism from \cite[Proposition 9.4, 9.6]{MS} implies that there is a commutative diagram of ring homomorphisms 
\[
\begin{tikzcd} 
\mathcal{Z}_{\bx{c}'}^{SW} \otimes \mathcal{Z}_{\bx{c}}^{SW} \arrow[r, hookrightarrow] & \mathcal{Z}_{\bx{c}+\bx{c}'}^{SW}  \\
\mathcal{Z}_{\bx{c}}^{SW} \otimes \mathcal{Z}_{\bx{c}'}^{SW} \arrow{u}{\wr}[swap]{flip} \\
\mathcal{Z}_{\bx{c}} \otimes \mathcal{Z}_{\bx{c}'}  \arrow[r, hookrightarrow] \arrow[u,"\wr"] & \mathcal{Z}_{\bx{c}+\bx{c}'} \arrow[uu, "\wr"]
\end{tikzcd} 
\] 
Passing to Grothendieck groups, we see that $K_0(\mathcal{Z})^{op} \cong K_0(\mathcal{Z}^{SW})$ as algebras. The proposition now follows from \cite[Proposition 5.12]{SW}. 
\end{proof}

We now bring the mixed quiver Schur algebras $\tAx{Z}{c}$ into the picture.

\begin{lem} If $\mathbf{a} \in \Gamma$, $\mathbf{b} \in \Gamma^\theta$ satisfy $\mathrm{D}(\mathbf{a}) + \mathbf{b} = \mathbf{c}$, 
then there is an injective (non-unital) ring homomorphism 
\begin{equation} \label{nonunital inclusions} \textstyle i_{\mathbf{a},\mathbf{b}} \colon \Ax{Z}{a} \otimes \tAx{Z}{b} \hookrightarrow \tAx{Z}{c}, \quad \quad \bigcurlywedge_{\ux{d}}^{\ux{e}} \ \otimes \ {}^\theta\bigcurlywedge_{\ux{d}'}^{\ux{e}'}  \mapsto {}^\theta\bigcurlywedge_{\ux{d}''}^{\ux{e}''}, \quad \bigcurlyvee_{\ux{e}}^{\ux{d}} \ \otimes \ {}^\theta\bigcurlywedge_{\ux{d}'}^{\ux{e}'} \mapsto {}^\theta\bigcurlyvee^{\ux{d}''}_{\ux{e}''},
\end{equation}
sending a polynomial $f \otimes g \in \lamxx{d} \otimes {}^\theta\Lambda_{\ux{d}'}$ to $f\cdot g \in {}^\theta\Lambda_{\ux{d}''}$, where $\ux{d}'' = \ux{d} \cup \ux{d}'$ and $\ux{d}'' \succ \ux{e}''$ $\Rsl$ $\bx{c}$. 
\end{lem}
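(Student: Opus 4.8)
The plan is to produce the ring homomorphism $i_{\mathbf{a},\mathbf{b}}$ geometrically, exactly as the horizontal multiplication $i_{\bx{c},\bx{c}'}$ of \eqref{inc c c'} is obtained, by exhibiting it first on the level of polynomial representations and then transporting it to the convolution algebras. The key geometric input is a ``parabolic induction'' picture: fix an isotropic subspace $W \subset \mathbf{V}_{\bx{c}}$ of graded dimension $\mathbf{a}$, so that $W^{\perp}/W$ carries the induced nondegenerate form and has graded dimension $\mathbf{b}$, and let ${}^\theta\mathsf{P}_{\mathbf{a}} \subset {}^\theta\mathsf{G}_{\bx{c}}$ be the stabilizer of $W$, with Levi quotient $\mathsf{G}_{\mathbf{a}} \times {}^\theta\mathsf{G}_{\mathbf{b}}$. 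An isotropic flag $\ux{d}'' = \ux{d} \cup \ux{d}'$ of $\mathbf{V}_{\bx{c}}$ with $\ux{d} \rightslice \mathbf{a}$ an ordinary vector composition and $\ux{d}'$ $\Rsl$ $\mathbf{b}$ is precisely a full flag of $W$ of type $\ux{d}$ together with an isotropic flag of $W^\perp/W$ of type $\ux{d}'$; this is the combinatorial content of $\succ_\infty$ and $\succ_f$ from the preliminaries. Taking the fibre over the point $W$ of the map ${}^\theta\mathfrak{F}_{\ux{d}''} \to {}^\theta\mathfrak{F}_{\mathbf{a}}$ (forget everything but $W$) realizes $\mathfrak{F}_{\ux{d}} \times {}^\theta\mathfrak{F}_{\ux{d}'}$ inside ${}^\theta\mathfrak{F}_{\ux{d}''}$, compatibly with the quiver-representation conditions since a representation in ${}^\theta\mathfrak{R}_{\bx{c}}$ stabilizing $W$ induces a representation of $Q$ on $W$ and a self-dual representation on $W^\perp/W$. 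This gives closed embeddings $\mathfrak{Q}_{\ux{d}} \times {}^\theta\mathfrak{Q}_{\ux{d}'} \hookrightarrow {}^\theta\mathfrak{Q}_{\ux{d}''}$ and, fibred over ${}^\theta\mathfrak{R}_{\bx{c}}$, embeddings $\mathfrak{Z}_{\ux{e},\ux{d}} \times {}^\theta\mathfrak{Z}_{\ux{e}',\ux{d}'} \hookrightarrow {}^\theta\mathfrak{Z}_{\ux{e}'',\ux{d}''}$.

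\textbf{Main steps.} First I would set up the subspace $W$ and the parabolic ${}^\theta\mathsf{P}_{\mathbf{a}}$, and record the identifications $H^\bullet(B{}^\theta\mathsf{P}_{\mathbf{a}}) \cong \lamxx{d} \otimes {}^\theta\Lambda_{\ux{d}'}$-type statements, together with the cohomological consequence that pullback along $B{}^\theta\mathsf{P}_{\mathbf{a}} \twoheadrightarrow B(\mathsf{G}_{\mathbf{a}} \times {}^\theta\mathsf{G}_{\mathbf{b}})$ and the homotopy equivalences ${}^\theta\mathfrak{Q}_{\ux{d}''} \simeq {}^\theta\mathfrak{F}_{\ux{d}''}$ etc.\ produce the map \eqref{inc c c' rep}'s isotropic analogue $\lamxx{d} \otimes {}^\theta\Lambda_{\ux{d}'} \hookrightarrow {}^\theta\Lambda_{\ux{d}''}$, which on the polynomial side is literally multiplication of polynomials $f \otimes g \mapsto f \cdot g$ (the variables of $\ux{d}$ and of $\ux{d}'$ sit in disjoint blocks of the torus $\tGx{T}{c}$). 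Second, I would check that these geometric embeddings are compatible with convolution: because the projections to the middle factor are submersions on the relevant strata and $p_{13}$ restricts to an isomorphism, Lemma \ref{lem:conv of classes} applies and shows that the pushforward along $\mathfrak{Z}_{\ux{e},\ux{d}} \times {}^\theta\mathfrak{Z}_{\ux{e}',\ux{d}'} \hookrightarrow {}^\theta\mathfrak{Z}_{\ux{e}'',\ux{d}''}$ intertwines the external product $\star \otimes \star$ with $\star$, i.e.\ sends $[\mathfrak{Z}_{\ux{e},\ux{d}}^e] \otimes [{}^\theta\mathfrak{Z}_{\ux{e}',\ux{d}'}^e]$ to $[{}^\theta\mathfrak{Z}_{\ux{e}'',\ux{d}''}^e]$, which is exactly the stated assignment $\bigcurlywedge_{\ux{d}}^{\ux{e}} \otimes {}^\theta\bigcurlywedge_{\ux{d}'}^{\ux{e}'} \mapsto {}^\theta\bigcurlywedge_{\ux{d}''}^{\ux{e}''}$ and $\bigcurlyvee_{\ux{e}}^{\ux{d}} \otimes {}^\theta\bigcurlywedge_{\ux{d}'}^{\ux{e}'} \mapsto {}^\theta\bigcurlyvee^{\ux{d}''}_{\ux{e}''}$, together with the polynomial formula $f \otimes g \mapsto f \cdot g$. (The split case is obtained by reversing the two factors of the fibre product, and the idempotents go to idempotents by construction.) Third, I would invoke faithfulness of the polynomial representations (Proposition \ref{pro: faithful} and its isotropic analogue) together with Corollary \ref{cor:el spl mer theta}: since $\Ax{Z}{a} \otimes \tAx{Z}{b}$ is generated by (elementary) merges, splits and polynomials, and the geometric map respects all of these and the polynomial representation is faithful, $i_{\mathbf{a},\mathbf{b}}$ is a well-defined ring homomorphism; injectivity follows because the induced map on polynomial representations $\mathcal{Q}_{\mathbf{a}} \otimes {}^\theta\mathcal{Q}_{\mathbf{b}} \hookrightarrow {}^\theta\mathcal{Q}_{\mathbf{c}}$ is injective (polynomial multiplication from disjoint variable blocks into a larger polynomial ring is injective), intertwining the actions, so a nonzero element of $\Ax{Z}{a} \otimes \tAx{Z}{b}$ cannot map to zero.

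\textbf{The main obstacle.} The routine parts are the diagram chases with pullbacks, homotopy equivalences and Lemma \ref{lem:conv of classes}; these go through as in the non-involution case essentially verbatim. The genuinely delicate point is the compatibility at the ``infinite'' block, i.e.\ at the join $\ux{d}'' = \ux{d} \cup \ux{d}'$ where an ordinary flag of $W$ is glued to an isotropic flag of $W^\perp/W$ via $\mathrm{D}$: one has to verify that the embedding $\mathfrak{F}_{\ux{d}} \times {}^\theta\mathfrak{F}_{\ux{d}'} \hookrightarrow {}^\theta\mathfrak{F}_{\ux{d}''}$ is ${}^\theta\mathsf{P}_{\mathbf{a}}$-equivariant in a way that matches the Weyl-group combinatorics $\tGxx{W}{d''} = \mathsf{W}_{(\ux{d}'')^f} \times {}^\theta\mathsf{W}_{\bx{d}''_\infty}$ and, crucially, that the extension-by-orthogonal-complement operation $V_\bullet \mapsto \mathrm{D}(V_\bullet)$ on a flag of $\mathbf{V}_{\bx{c}}$ restricting correctly to $W$ and $W^\perp/W$ — in particular that the stability condition ``$\mathrm{D}(V_\bullet)$ is $\rho$-stable'' on $\mathbf{V}_{\bx{c}}$ decouples into ``$V_\bullet$ is $\rho|_W$-stable'' on $W$ and ``$\mathrm{D}$ of the induced flag is $\bar\rho$-stable'' on $W^\perp/W$. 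This requires a short but careful check that $W$ being $\rho$-stable forces $W^\perp$ to be $\rho$-stable (self-duality of $\rho$) and that the induced form on $W^\perp/W$ together with the induced $\bar\rho$ indeed lies in ${}^\theta\mathfrak{R}_{\mathbf{b}}$ with the same duality structure $(\sigma,\varsigma)$; this is where one must use Definition \ref{defi: duality str} and the orthogonality axioms of the form $\langle\cdot,\cdot\rangle$. Once this decoupling is established, the rest is formal.
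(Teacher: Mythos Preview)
Your approach is correct and arrives at the same map as the paper, but the packaging is different. The paper works in the \emph{opposite direction}: rather than embedding the product of small Steinberg varieties into the big one, it first isolates the convolution subalgebra ${}^\theta\mathcal{Z}_{\mathbf{a},\mathbf{b}} \subset \tAx{Z}{c}$ supported on those components ${}^\theta\mathfrak{Z}_{\ux{d}'',\ux{e}''}$ with $\ux{d}'',\ux{e}''$ of concatenated type, and then uses the \emph{forgetful} maps ${}^\theta\mathfrak{Q}_{\ux{d}''} \to \mathfrak{Q}_{\ux{d}}$ (keep the first $\ell_{\ux{d}}$ steps) and ${}^\theta\mathfrak{Q}_{\ux{d}''} \to {}^\theta\mathfrak{Q}_{\ux{d}'}$ (keep the last $\ell_{\ux{d}'}+1$ steps) to produce a map ${}^\theta\mathfrak{Z}_{\mathbf{a},\mathbf{b}} \to \mathfrak{Z}_{\mathbf{a}} \times {}^\theta\mathfrak{Z}_{\mathbf{b}}$; the homomorphism $i_{\mathbf{a},\mathbf{b}}$ is then the \emph{pullback} along this map. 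This avoids any discussion of the polynomial representation or faithfulness and gives the map globally in one stroke; compatibility with convolution and the explicit formulas are then declared ``easy to check''. Your route---building the map on polynomial representations, verifying where generators go, and using faithfulness for well-definedness and injectivity---is more hands-on and has the advantage that injectivity is genuinely justified, whereas the paper simply asserts it. One caution: when you speak of the ``pushforward along $\mathfrak{Z}_{\ux{e},\ux{d}} \times {}^\theta\mathfrak{Z}_{\ux{e}',\ux{d}'} \hookrightarrow {}^\theta\mathfrak{Z}_{\ux{e}'',\ux{d}''}$'' sending fundamental classes to fundamental classes, this is not literally a proper pushforward of classes (the image is only a fibre), but rather the induction equivalence between ${}^\theta\mathsf{P}_{\mathbf{a}}$-equivariant and ${}^\theta\mathsf{G}_{\bx{c}}$-equivariant homology; you clearly have this in mind, but it should be said explicitly.
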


\begin{proof}
Let ${}^\theta\mathfrak{Z}^{}_{\mathbf{a,b}} := \bigsqcup {}^\theta\mathfrak{Z}_{\ux{d}'',\ux{e}''}$, where the disjoint union ranges over all $\ux{d}'', \ux{e}''$ $\Rsl$ $\bx{c}$ which can be expressed as a concatenation $\ux{d} \cup \ux{d}'$, for some $\ux{d} \rightslice \bx{a}$ and $\ux{d}'$ $\Rsl$ $\bx{b}$ (and analogously for $\ux{e}''$). Clearly ${}^\theta\mathcal{Z}^{}_{\mathbf{a,b}} := H^{{}^\theta\mathsf{G}_{\mathbf{c}}}_\bullet({}^\theta\mathfrak{Z}^{}_{\mathbf{a,b}})$ is a convolution subalgebra of $\tAx{Z}{c}$. The forgetful maps ${}^\theta\mathfrak{Q}_{\ux{d}''} \to \mathfrak{Q}_{\ux{d}}$ (remembering only the first $\ld$ steps in an isotropic flag) and ${}^\theta\mathfrak{Q}_{\ux{d}''} \to {}^\theta\mathfrak{Q}_{\ux{d}'}$ (remembering only the last $\ell_{\ux{d}'}+1$ steps) induce a map ${}^\theta\mathfrak{Z}^{}_{\mathbf{a,b}} \to \Sx{Z}{a}\times\tSx{Z}{b}$. The pullback $\Ax{Z}{a} \otimes \tAx{Z}{b} \to \tAx{Z}{c}$ with respect to the latter is injective, and it is easy to check that it is compatible with the convolution product and that, explicitly, it is given by \eqref{nonunital inclusions}. 
\end{proof} 

As before, the inclusions  \eqref{nonunital inclusions} are depicted diagrammatically via a horizontal composition of diagrams. 
They give rise to functors 
\begin{equation} \label{Ind Res functors theta} \Ind_{\bx{a},\bx{b}} \colon  \mathcal{Z}_{\bx{a}} \otimes {}^\theta\mathcal{Z}_{\bx{b}}\mbox{-}\pmmod \to {}^\theta\mathcal{Z}_{\mathrm{D}(\bx{a})+\bx{b}}\mbox{-}\pmmod, \quad \Res_{\bx{a},\bx{b}} \colon {}^\theta\mathcal{Z}_{\mathrm{D}(\bx{a})+\bx{b}}\mbox{-}\pmmod \to \mathcal{Z}_{\bx{a}} \otimes {}^\theta\mathcal{Z}_{\bx{b}}\mbox{-pmod}. \end{equation} 
Let ${}^\theta\mathcal{Z}\mbox{-pmod}$ be the direct sum of categories 
\[ {}^\theta\mathcal{Z}\mbox{-pmod} = \bigoplus_{\bx{c} \in \Gamma^\theta} \tAx{Z}{c}\mbox{-pmod} \]
and let $K_0({}^\theta\mathcal{Z}) = K_0({}^\theta\mathcal{Z}\mbox{-pmod})$ be its Grothendieck group. The following proposition, whose proof is standard, summarizes the relation between the categories $\mathcal{Z}\mbox{-pmod}$ and ${}^\theta\mathcal{Z}\mbox{-pmod}$. 

\begin{prop} \label{pro: monoidal action}
The following hold. 
\begin{enumerate}[label=\alph*), font=\textnormal,noitemsep,topsep=3pt,leftmargin=1cm] 
\item 
The monoidal category $\mathcal{Z}\mbox{-}\pmmod$ acts (see, e.g., \cite{HO}) on ${}^\theta\mathcal{Z}\mbox{-}\pmmod$ via 
\[ M * N = {}^\theta\mathcal{Z}_{\mathrm{D}(\bx{a})+\bx{b}} \otimes_{\mathcal{Z}_{\bx{a}} \otimes \tAx{Z}{b}} M \boxtimes N, \]
for $M \in \mathcal{Z}_{\bx{a}}\mbox{-}\pmmod$ and $N \in {}^\theta\mathcal{Z}_{\bx{b}}\mbox{-}\pmmod$. 
\item 
The functors \eqref{Ind Res functors theta} induce maps 
\[ K_0(\mathcal{Z}) \otimes K_0({}^\theta\mathcal{Z}) \to K_0({}^\theta\mathcal{Z}), \quad 
K_0({}^\theta\mathcal{Z}) \to K_0(\mathcal{Z}) \otimes K_0({}^\theta\mathcal{Z}),\]
which turn $K_0({}^\theta\mathcal{Z})$ into a $\Gamma^\theta$-graded $K_0(\mathcal{Z})$-module and -comodule. 
\end{enumerate} 
\end{prop}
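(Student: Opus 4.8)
The plan is to follow the standard blueprint for monoidal actions on module categories arising from inclusions of convolution algebras, adapted to the self-dual setting. The key structural input is already in place: the injective non-unital ring homomorphisms $i_{\mathbf{a},\mathbf{b}} \colon \Ax{Z}{a} \otimes \tAx{Z}{b} \hookrightarrow \tAx{Z}{c}$ from \eqref{nonunital inclusions}, together with the analogous (unital) inclusions $i_{\mathbf{c},\mathbf{c}'}$ from \eqref{inc c c'} and the associativity of horizontal multiplication on $\mathcal{Z}$. For part a), I would first check the coherence conditions defining an action of a monoidal category: that the functor $- * - \colon \mathcal{Z}\mbox{-}\pmmod \times {}^\theta\mathcal{Z}\mbox{-}\pmmod \to {}^\theta\mathcal{Z}\mbox{-}\pmmod$ is well-defined on projectives (it is, since $\tAx{Z}{c}$ is free as a right $\mathcal{Z}_{\bx{a}} \otimes \tAx{Z}{b}$-module by the Bott-Samelson basis theorem, Theorem \ref{thm:basis theta}, so $M * N$ is again projective), that $(L \otimes M) * N \cong L * (M * N)$ naturally, and that $\mathbf{1} * N \cong N$. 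The first isomorphism reduces to the compatibility of the two families of inclusions, i.e.\ the commutativity of the square relating $i_{\mathbf{a}+\mathbf{a}',\mathbf{b}}\circ(i_{\mathbf{a},\mathbf{a}'}\otimes \id)$ with $i_{\mathbf{a},\mathbf{a}'+\mathbf{b}}'$-type maps; this follows from the description of both inclusions as horizontal composition of diagrams (or equivalently from tracing through the forgetful maps on the relevant $\Sx{Z}{\bullet}$-varieties), exactly as in \cite[\S 2.4]{SW}. The unit axiom is immediate since $\mathcal{Z}_0 = \C$.

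For part b), I would pass to Grothendieck groups. The functors $\Ind_{\bx{a},\bx{b}}$ and $\Res_{\bx{a},\bx{b}}$ from \eqref{Ind Res functors theta} are exact on the relevant subcategories of projectives ($\Ind$ because $\tAx{Z}{c}$ is flat — indeed free — over $\mathcal{Z}_{\bx{a}} \otimes \tAx{Z}{b}$; $\Res$ because restriction along a ring map is always exact, and it preserves projectivity here again by freeness), hence they descend to $\Z[q^{\pm 1}]$-linear maps on $K_0$. Summing over all decompositions $\mathrm{D}(\bx{a}) + \bx{b} = \bx{c}$ gives the module map $K_0(\mathcal{Z}) \otimes K_0({}^\theta\mathcal{Z}) \to K_0({}^\theta\mathcal{Z})$ and the comodule map $K_0({}^\theta\mathcal{Z}) \to K_0(\mathcal{Z}) \otimes K_0({}^\theta\mathcal{Z})$. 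The module axiom (associativity and unitality) is the $K_0$-shadow of part a). The comodule axiom (coassociativity and counitality) follows by the same diagrammatic compatibility of the inclusions, now applied to iterated restriction $\Res_{\bx{a}',\bx{b}'}\circ\Res_{\bx{a},\bx{b}}$ versus a single restriction to a three-fold tensor product, using transitivity of induction/restriction for nested non-unital subalgebras. The $\Gamma^\theta$-grading is respected because all the maps $i_{\mathbf{a},\mathbf{b}}$ are homogeneous of the appropriate degree with respect to the gradings imported via \eqref{Ext alg iso} and \eqref{Ext alg iso theta}.

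The main obstacle — and the only place requiring genuine care rather than bookkeeping — is verifying that the two coherence squares (the associativity constraint for the action in part a), and the dual coassociativity for the comodule in part b)) actually commute on the nose. In the self-dual setting the horizontal composition of diagrams is asymmetric: the "$\mathcal{Z}$-part" is inserted on the \emph{outside} of an isotropic flag while the "${}^\theta\mathcal{Z}$-part" sits in the \emph{middle}, so concatenation $\ux{d}'' = \ux{d} \cup \ux{d}'$ followed by the doubling $\mathrm{D}$ must be checked to be compatible with further concatenation on the $\mathcal{Z}$-side. Concretely, one must confirm that the forgetful maps ${}^\theta\mathfrak{Q}_{\ux{d}''} \to \mathfrak{Q}_{\ux{d}}$ and ${}^\theta\mathfrak{Q}_{\ux{d}''} \to {}^\theta\mathfrak{Q}_{\ux{d}'}$ used in the proof of the lemma above are themselves compatible with the analogous forgetful maps $\mathfrak{Q}_{\ux{d}\cup\ux{d}'''} \to \mathfrak{Q}_{\ux{d}} \times \mathfrak{Q}_{\ux{d}'''}$ on the ordinary side, so that the induced pullback maps on equivariant Borel--Moore homology agree when composed in either order. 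This is a routine but slightly fiddly compatibility of fibre products of flag-type varieties; I expect it to go through exactly as the corresponding check in \cite[\S 2.4]{SW} and \cite[\S 2.6]{KL1}, which is why the proposition is stated with the proof merely "standard".
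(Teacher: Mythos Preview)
Your proposal is correct and aligns with the paper's approach: the paper gives no proof at all, stating only that ``the following proposition, whose proof is standard, summarizes the relation between the categories''. You have correctly identified the standard ingredients (freeness via the Bott--Samelson basis, compatibility of the non-unital inclusions under horizontal composition, and the usual passage to $K_0$), which is precisely what the paper leaves to the reader.
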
 

\begin{rem}
In \cite{You1}, Young defined a Hall module associated to the category of self-dual representations of a quiver with an involution. The Hall module is a module as well as a comodule over the Hall algebra associated to the same quiver. We expect that, for a general quiver $Q$ with an involution $\theta$, $K_0(\mathcal{Z})$ is isomorphic to a subalgebra of the Hall algebra of $Q$ and $K_0({}^\theta\mathcal{Z})$ is isomorphic to a subspace of the Hall module of $(Q, \theta)$ stable under the action and coaction of $K_0(\mathcal{Z})$. Since $K_0(\mathcal{Z})$ contains the composition subalgebra associated to $Q$, \cite[Theorem 3.5]{You1} implies that $K_0({}^\theta\mathcal{Z})$ is also a module over $B_\theta(\g_Q)$, the algebra introduced by Enomoto and Kashiwara \cite{EK1, EK2} in the context of symmetric crystals. 
The KLR analogue of $K_0({}^\theta\mathcal{Z})$ was studied by Varagnolo and Vasserot \cite{VV2}, who showed that it is isomorphic to a certain highest weight module over $B_\theta(\g_Q)$. 
\end{rem} 

\section{Connection to cohomological Hall algebras}

In this section we relate quiver Schur algebras to the cohomological Hall algebra (CoHA) of a quiver $Q$ (without potential) introduced by Kontsevich and Soibelman \cite{KS}. More specifically, we interpret merges and splits as iterated multiplication and comultiplication in the CoHA. This gives an action of quiver Schur algebras on the tensor algebra of the CoHA, which we identify with the direct sum of the polynomial representations of all the quiver Schur algebras associated to $Q$. 
In the case of a quiver endowed with an involution and a duality structure, we relate mixed quiver Schur algebras to the cohomological Hall module (CoHM) introduced by Young \cite{You}, realizing merges and splits as action and coaction operators. An algebraic manifestation of these connections is a new interpretation of the shuffle description of the CoHA and the CoHM in terms of Demazure operators.

\subsection{The cohomological Hall algebra.}

We start by recalling the definition of the CoHA from \cite[\S 2.2]{KS}. 
Let $Q$ be a finite quiver. 
Given $\mathbf{c} \in \Gamma$ and $\underline{\mathbf{d}} \rightslice \mathbf{c}$, set 
\[ \textstyle \mathcal{H}_{\mathbf{c}} := H_{\mathsf{G}_{\mathbf{c}}}^\bullet (\mathfrak{R}_{\mathbf{c}}), \quad 
\mathcal{H}_{\underline{\mathbf{d}}} := \bigotimes_{j=1}^{\ell_{\ux{d}}} \mathcal{H}_{\mathbf{d}_j}, \quad \mathcal{H} := \bigoplus_{\mathbf{c} \in \Gamma} \mathcal{H}_{\mathbf{c}}. \]
The K\"{u}nneth map and the homotopy equivalences $ \textstyle \mathfrak{R}_{\underline{\mathbf{d}}} \twoheadrightarrow \prod_{j} \mathfrak{R}_{\mathbf{d}_j}$ and $ \mathsf{P}_{\underline{\mathbf{d}}} \twoheadrightarrow \mathsf{L}_{\underline{\mathbf{d}}}$ yield canonical isomorphisms 
\begin{equation} \label{levitopara} \textstyle \Axx{H}{d} \cong H_{\mathsf{L}_{\underline{\mathbf{d}}}}^\bullet(\prod_{j}^{\ell_{\ux{d}}} \mathfrak{R}_{\mathbf{d}_j}) \cong H_{\mathsf{P}_{\underline{\mathbf{d}}}}^\bullet(\mathfrak{R}_{\underline{\mathbf{d}}}). \end{equation}

\begin{defi} 
Given $\ux{d} \succ \ux{e}$, we have a closed embedding $(\Sxx{R}{d})_{\Gxx{P}{d}} \overset{i}{\hookrightarrow} (\Sxx{R}{e})_{\Gxx{P}{d}}$ and a fibration $(\Sxx{R}{e})_{\Gxx{P}{d}}  \overset{p}{\twoheadrightarrow}(\Sxx{R}{e})_{\Gxx{P}{e}}$ 
with smooth and compact fibre. 
Using the identification \eqref{levitopara}, we get operators 
\begin{equation} \label{CoHA mult} \textstyle \mul{d}{e} \colon \Axx{H}{d} \xrightarrow{p_* \circ i_*} \Axx{H}{e}, \quad  \com{e}{d} \colon \Axx{H}{e} \xrightarrow{i^* \circ p^*}  \Axx{H}{d}.\end{equation} 
We abbreviate $\mathsf{m}_{\ux{d}}^{\bx{c}} = \mathsf{m}_{\ux{d}}^{(\bx{c})}$, etc. 
Let $\mathsf{m} \colon \mathcal{H} \otimes \mathcal{H} \to \mathcal{H}$ and $\mathsf{com} \colon \mathcal{H} \to \mathcal{H} \otimes \mathcal{H}$ be the operators defined by the condition that $\mathsf{m}|_{\Axx{H}{d}} = \mathsf{m}_{\ux{d}}^{\bx{c}}$, and that the projection of $\mathsf{com}|_{\Ax{H}{c}}$ onto $\Axx{H}{d}$ equals $\mathsf{com}^{\ux{d}}_{\bx{c}}$, for all dimension vectors $\bx{c} \in \Gamma$ and vector compositions $\ux{d} \in \mathbf{Com}_{\mathbf{c}}^2$ of length two. 
\end{defi}

\begin{defi} \label{defi: CoHA}
The \emph{cohomological Hall algebra} associated to the quiver $Q$ is the $\Gamma$-graded vector space $\mathcal{H}$ together with multiplication given by $\mathsf{m}$. By \cite[Theorem 1]{KS}, $(\mathcal{H}, \mathsf{m})$ is indeed an associative algebra. 
The operation $\mathsf{com}$ also makes $\mathcal{H}$ into a coassociative coalgebra. However, the multiplication and comultiplication are in general not compatible, i.e., $(\mathcal{H}, \mathsf{m}, \mathsf{com})$ is not a bialgebra. 
\end{defi}

In light of Definition \ref{defi: CoHA}, the operators \eqref{CoHA mult} can be viewed as multifactor versions of multiplication and comultiplication in $\mathcal{H}$. 

\begin{defi} \label{operators on T} 
Let $\mathbb{T}(\mathcal{H}):=T(\mathcal{H}_+)$ be the tensor algebra of $\mathcal{H}_+ := \bigoplus_{\mathbf{c} \in \Gamma_+} \mathcal{H}_{\mathbf{c}}$. We regard it as a $\Gamma$-graded vector space in the following way: 
\begin{equation} \label{tensor algebras} \textstyle \mathbb{T}(\mathcal{H}) = \bigoplus_{\mathbf{c} \in \Gamma} \mathbb{T}_{\mathbf{c}}(\mathcal{H}), \quad \mathbb{T}_{\mathbf{c}}(\mathcal{H}) := \bigoplus_{\underline{\mathbf{d}} \rightslice \mathbf{c}} \mathcal{H}_{\underline{\mathbf{d}}}. \end{equation} 
We consider $\mul{d}{e}$ and $\com{e}{d}$ as operators on $\mathbb{T}_{\mathbf{c}}(\mathcal{H})$. 
Given $\gamma \in \mathcal{H}_{\underline{\mathbf{d}}}$, let $\cup_\gamma = \gamma \cup \mbox{-} \  \colon \mathcal{H}_{\underline{\mathbf{d}}} \to \mathcal{H}_{\underline{\mathbf{d}}}$  
be the operator given by taking the cup product with $\gamma$. 
\end{defi}

\subsection{The CoHA and quiver Schur algebras.}

We will now explain the connection between the cohomological Hall algebra $\mathcal{H}$ and quiver Schur algebras associated to the same quiver $Q$. 

\begin{lem} \label{lem: tensor vs pol rep}
For each $\bx{c} \in \Gamma$, there is a vector space isomorphism 
\begin{equation} \label{tensor polrep iso} \mathbb{T}_{\mathbf{c}}(\mathcal{H}) \xrightarrow{\sim} \mathcal{Q}_{\mathbf{c}}. \end{equation} 
\end{lem}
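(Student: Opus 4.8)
The goal is a vector space isomorphism $\mathbb{T}_{\mathbf{c}}(\mathcal{H}) \xrightarrow{\sim} \mathcal{Q}_{\mathbf{c}}$. By the definition \eqref{tensor algebras}, $\mathbb{T}_{\mathbf{c}}(\mathcal{H}) = \bigoplus_{\underline{\mathbf{d}} \rightslice \mathbf{c}} \mathcal{H}_{\underline{\mathbf{d}}}$, while by \eqref{Poincare} we have $\mathcal{Q}_{\mathbf{c}} \cong \bigoplus_{\underline{\mathbf{d}} \rightslice \mathbf{c}} \Lambda_{\ux{d}}$. So it suffices to exhibit, for each vector composition $\ux{d} \rightslice \mathbf{c}$, a natural isomorphism $\mathcal{H}_{\underline{\mathbf{d}}} \xrightarrow{\sim} \Lambda_{\ux{d}}$ and then take the direct sum over all $\ux{d} \rightslice \mathbf{c}$.

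\textbf{Key steps.} First I would unwind the left-hand side: by definition $\mathcal{H}_{\underline{\mathbf{d}}} = \bigotimes_{j=1}^{\ell_{\ux{d}}} \mathcal{H}_{\mathbf{d}_j} = \bigotimes_{j=1}^{\ell_{\ux{d}}} H^\bullet_{\mathsf{G}_{\mathbf{d}_j}}(\mathfrak{R}_{\mathbf{d}_j})$. Since $\mathfrak{R}_{\mathbf{d}_j}$ is a $\mathsf{G}_{\mathbf{d}_j}$-equivariantly contractible affine space (it is a vector space on which $\mathsf{G}_{\mathbf{d}_j}$ acts linearly), the projection $\mathfrak{R}_{\mathbf{d}_j} \to \mathrm{pt}$ induces an isomorphism $H^\bullet_{\mathsf{G}_{\mathbf{d}_j}}(\mathfrak{R}_{\mathbf{d}_j}) \cong H^\bullet(B\mathsf{G}_{\mathbf{d}_j}) = H^\bullet(B\mathsf{T}_{\mathbf{d}_j})^{\mathsf{W}_{\mathbf{d}_j}} = \mathcal{P}_{\mathbf{d}_j}^{\mathsf{W}_{\mathbf{d}_j}}$. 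Next, using the K\"{u}nneth isomorphism together with the identification \eqref{levitopara}, namely $\mathcal{H}_{\underline{\mathbf{d}}} \cong H^\bullet_{\mathsf{L}_{\ux{d}}}(\prod_j \mathfrak{R}_{\mathbf{d}_j}) \cong H^\bullet_{\mathsf{P}_{\ux{d}}}(\mathfrak{R}_{\ux{d}})$, I would identify $\mathcal{H}_{\underline{\mathbf{d}}} \cong H^\bullet(B\mathsf{L}_{\ux{d}}) \cong H^\bullet(B\mathsf{P}_{\ux{d}})$. Finally, the canonical surjection $B\mathsf{T}_{\mathbf{c}} \twoheadrightarrow B\mathsf{P}_{\ux{d}}$ induces (as recalled just before \eqref{BM homology groups Q Z}) an injective algebra homomorphism $H^\bullet(B\mathsf{P}_{\ux{d}}) \hookrightarrow H^\bullet(B\mathsf{T}_{\mathbf{c}}) = \mathcal{P}_{\mathbf{c}}$ whose image is precisely $\Lambda_{\ux{d}} = \mathcal{P}_{\mathbf{c}}^{\mathsf{W}_{\ux{d}}}$; concretely, under the $Q_0$-graded factorization, $\bigotimes_{j} \mathcal{P}_{\mathbf{d}_j}^{\mathsf{W}_{\mathbf{d}_j}}$ maps isomorphically onto $\bigotimes_{i \in Q_0} \big(\bigotimes_j \C[x](i)\text{-block}\big)^{\mathsf{Sym}_{\mathbf{d}_j(i)}} = \mathcal{P}_{\mathbf{c}}^{\mathsf{W}_{\ux{d}}}$ by concatenating the variable sets along each vertex (matching $x_k(i)$ in the $j$-th factor with $x_{\mathring{\mathbf{d}}_{j-1}(i)+k}(i)$). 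Composing these three isomorphisms gives $\mathcal{H}_{\underline{\mathbf{d}}} \xrightarrow{\sim} \Lambda_{\ux{d}}$, and summing over $\ux{d} \rightslice \mathbf{c}$ yields \eqref{tensor polrep iso} after invoking \eqref{Poincare}.

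\textbf{Main obstacle.} The content is essentially bookkeeping — none of the three isomorphisms is hard individually — so the only real care needed is to make the variable-matching in the last step canonical and consistent with the conventions of the rest of the paper (in particular with the identification $\mathcal{Q}_{\ux{d}} \cong \Lambda_{\ux{d}}$ from \eqref{Poincare} and with the description of the multifactor operators $\mathsf{m}_{\ux{d}}^{\ux{e}}$, $\mathsf{com}_{\ux{e}}^{\ux{d}}$ in \eqref{CoHA mult}). I expect the proof to be short: it is enough to remark that both $\mathbb{T}_{\mathbf{c}}(\mathcal{H})$ and $\mathcal{Q}_{\mathbf{c}}$ decompose over $\mathbf{Com}_{\mathbf{c}}$ and that on each summand the isomorphism is the composite of the K\"{u}nneth map, the equivariant contractibility of $\mathfrak{R}_{\mathbf{d}_j}$, \eqref{levitopara}, and the standard identification $H^\bullet(B\mathsf{P}_{\ux{d}}) = \Lambda_{\ux{d}}$. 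I would flag explicitly that this isomorphism is \emph{not} claimed to be an algebra map for any obvious product; its significance (that it intertwines merges/splits with CoHA multiplication/comultiplication) is the subject of the subsequent theorem, and here we only need it at the level of graded vector spaces.
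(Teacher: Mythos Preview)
Your proof is correct and follows essentially the same approach as the paper: both arguments reduce to constructing, summand by summand over $\ux{d} \rightslice \mathbf{c}$, an isomorphism $\mathcal{H}_{\ux{d}} \xrightarrow{\sim} \mathcal{Q}_{\ux{d}}$ using \eqref{levitopara} together with standard homotopy equivalences. The only cosmetic difference is that the paper bypasses the intermediate identification with $\Lambda_{\ux{d}}$ by directly observing that the Borel constructions $(\Sxx{R}{d})_{\Gxx{P}{d}}$ and $(\Sxx{Q}{d})_{\Gx{G}{c}}$ are naturally isomorphic (via \eqref{Qd as quotient}), which yields $H^\bullet_{\Gxx{P}{d}}(\Sxx{R}{d}) \cong H^\bullet_{\Gx{G}{c}}(\Sxx{Q}{d}) = \Axx{Q}{d}$ in one step; your route through $H^\bullet(B\Gxx{P}{d}) = \Lambda_{\ux{d}}$ amounts to the same chain of identifications unwound one level further.
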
 

\begin{proof} 
It is easy to see that the Borel constructions $(\Sxx{R}{d})_{\Gxx{P}{d}}$ and $(\Sxx{Q}{d})_{\Gx{G}{c}}$ are naturally isomorphic. Composing \eqref{levitopara} with the induced isomorphism  of equivariant cohomology groups $H^\bullet_{\Gxx{P}{d}}(\Sxx{R}{d}) \cong H^\bullet_{\Gx{G}{c}}(\Sxx{Q}{d})$ yields an isomorphism $\mathcal{H}_{\underline{\mathbf{d}}} \xrightarrow{\sim} \mathcal{Q}_{\underline{\mathbf{d}}}$. The lemma follows by summing over all $\underline{\mathbf{d}} \rightslice \mathbf{c}$. 
\end{proof}

\begin{rem}
If we sum over all dimension vectors $\bx{c} \in \Gamma$, the identification \eqref{tensor polrep iso} gives rise to an isomorphism between the entire tensor algebra $\mathbb{T}(\mathcal{H})$ and the direct sum $\mathcal{Q} = \bigoplus_{\bx{c} \in \Gamma} \Ax{Q}{c}$ of the polynomial representations of all the quiver Schur algebras associated to the quiver $Q$. Under this isomorphism, multiplication in the tensor algebra corresponds to the horizontal multiplication on~$\mathcal{Q}$ defined by the inclusions~\eqref{inc c c' rep}. 
\end{rem} 

Since, by Proposition \ref{pro: faithful}, the $\Ax{Z}{c}$-module $\Ax{Q}{c}$ is faithful, \eqref{tensor polrep iso} induces an injective algebra homomorphism
\begin{equation} \label{QS endos of CoHA} \Ax{Z}{c} \hookrightarrow \End_{\C}(\mathbb{T}_{\mathbf{c}}(\mathcal{H})). \end{equation}
The following theorem gives an explicit description of this homomorphism.

\begin{thm} \label{thm: TH vs Pol}
The algebra homomorphism \eqref{QS endos of CoHA} is given by 
\[
\textstyle \mer{d}{e} \mapsto \mul{d}{e}, \quad \spl{e}{d} \mapsto \com{e}{d}, 
\quad \gamma \mapsto \cup_\gamma, 
\]
where $\underline{\mathbf{d}} \succ \underline{\mathbf{e}} \rightslice \mathbf{c}$ and $\gamma \in \Axx{Q}{d} \cong \Axx{H}{d}$. 
\end{thm}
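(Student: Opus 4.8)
The plan is to verify the three assignments in Theorem~\ref{thm: TH vs Pol} one at a time by comparing the geometric description of the quiver Schur algebra action on $\mathcal{Q}_{\mathbf{c}}$ (Theorem~\ref{thm:polrep}) with the definitions of $\mathsf{m}_{\ux{d}}^{\ux{e}}$, $\mathsf{com}_{\ux{e}}^{\ux{d}}$ and $\cup_\gamma$ (Definition~\ref{operators on T}), transported through the isomorphism \eqref{tensor polrep iso}. Since by Proposition~\ref{pro: faithful} the module $\mathcal{Q}_{\mathbf{c}}$ is faithful, it suffices to check that the operator on $\mathcal{Q}_{\mathbf{c}}$ induced by $\bigcurlywedge_{\ux{d}}^{\ux{e}}$ (resp.\ $\bigcurlyvee_{\ux{e}}^{\ux{d}}$, resp.\ a polynomial $\gamma$) coincides, under the identification $\mathcal{H}_{\ux{d}} \cong \mathcal{Q}_{\ux{d}}$, with $\mathsf{m}_{\ux{d}}^{\ux{e}}$ (resp.\ $\mathsf{com}_{\ux{e}}^{\ux{d}}$, resp.\ $\cup_\gamma$); moreover, Corollary~\ref{cor:el spl mer} tells us merges, splits and polynomials generate $\mathcal{Z}_{\mathbf{c}}$, so it is enough to treat these classes.

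First I would make the isomorphism of Lemma~\ref{lem: tensor vs pol rep} completely explicit. Recall that it is the composite of the K\"unneth/Levi-to-parabolic identification \eqref{levitopara}, $\mathcal{H}_{\ux{d}} \cong H^\bullet_{\mathsf{P}_{\ux{d}}}(\mathfrak{R}_{\ux{d}})$, with the isomorphism $H^\bullet_{\mathsf{P}_{\ux{d}}}(\mathfrak{R}_{\ux{d}}) \cong H^\bullet_{\mathsf{G}_{\mathbf{c}}}(\mathfrak{Q}_{\ux{d}})$ coming from the identification of Borel constructions $(\mathfrak{R}_{\ux{d}})_{\mathsf{P}_{\ux{d}}} \cong (\mathfrak{Q}_{\ux{d}})_{\mathsf{G}_{\mathbf{c}}}$ (this is \eqref{Qd as quotient} applied with $\ux{e} = \ux{d}$). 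The polynomial $\cup_\gamma$ statement is then essentially immediate: the cup product on $\mathcal{H}_{\ux{d}}$ transports to cup product on $\mathcal{Q}_{\ux{d}} \cong \Lambda_{\ux{d}}$, which by Theorem~\ref{thm:polrep}(c) is exactly how the polynomial subalgebra $\mathcal{Z}_{\ux{d},\ux{d}}^e$ acts.

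For the merge, the key point is to match the two factorizations of the relevant map. On the CoHA side, $\mathsf{m}_{\ux{d}}^{\ux{e}}$ is $p_* \circ i_*$ where $i \colon (\mathfrak{R}_{\ux{d}})_{\mathsf{P}_{\ux{d}}} \hookrightarrow (\mathfrak{R}_{\ux{e}})_{\mathsf{P}_{\ux{d}}}$ is a closed embedding of Borel constructions and $p \colon (\mathfrak{R}_{\ux{e}})_{\mathsf{P}_{\ux{d}}} \twoheadrightarrow (\mathfrak{R}_{\ux{e}})_{\mathsf{P}_{\ux{e}}}$ is the (smooth, proper-fibre) fibration. In the proof of Theorem~\ref{thm:polrep}(a) the action of $\bigcurlywedge_{\ux{d}}^{\ux{e}}$ is computed as pushforward along $p_{13} \colon \mathfrak{Q}_{\ux{d}} \twoheadrightarrow \mathfrak{Q}_{\ux{e}}$, factored as in \eqref{iotap} as $\mathfrak{Q}_{\ux{d}} \overset{\iota}{\hookrightarrow} \mathfrak{Q}_{\ux{d},\ux{e}} \overset{q}{\twoheadrightarrow} \mathfrak{Q}_{\ux{e}}$. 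I would argue that under the Borel-construction identifications, $\iota$ corresponds to $i$ and $q$ corresponds to $p$ (using that $\mathfrak{Q}_{\ux{d},\ux{e}} = \mathfrak{F}_{\ux{d}} \times_{\mathfrak{F}_{\ux{e}}} \mathfrak{Q}_{\ux{e}}$ and $(\mathfrak{Q}_{\ux{d}})_{\mathsf{G}_{\mathbf{c}}} = (\mathfrak{R}_{\ux{d}})_{\mathsf{P}_{\ux{d}}}$, $(\mathfrak{Q}_{\ux{d},\ux{e}})_{\mathsf{G}_{\mathbf{c}}} = (\mathfrak{R}_{\ux{e}})_{\mathsf{P}_{\ux{d}}}$), so that $p_* i_* = q_* \iota_* = $ the action of the merge. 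The split statement is dual: in the proof of Theorem~\ref{thm:polrep}(b) the action of $\bigcurlyvee_{\ux{e}}^{\ux{d}}$ is computed as pullback $\iota^* \circ q^*$ along the same \eqref{iotap}, which matches $\mathsf{com}_{\ux{e}}^{\ux{d}} = i^* \circ p^*$ under the identifications just described. Finally, multifactor merges and splits are handled by transitivity (Proposition~\ref{pro:transitivity}(a)) together with associativity/coassociativity of $\mathsf{m}$ and $\mathsf{com}$, and the homomorphism being an algebra map forces the general element to go to the corresponding composite.

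The main obstacle I anticipate is the careful bookkeeping in identifying the maps $\iota$ and $q$ with $i$ and $p$ at the level of Borel constructions --- in particular making sure the identification \eqref{levitopara} (which factors through the Levi $\mathsf{L}_{\ux{d}}$ and the K\"unneth map) is compatible with the single identification $(\mathfrak{R}_{\ux{d}})_{\mathsf{P}_{\ux{d}}} \cong (\mathfrak{Q}_{\ux{d}})_{\mathsf{G}_{\mathbf{c}}}$ used in Lemma~\ref{lem: tensor vs pol rep}, and that the ``smooth compact fibre'' fibration $p$ really is the flag-bundle projection $\bar{q} \colon \mathfrak{F}_{\ux{d}} \to \mathfrak{F}_{\ux{e}}$ after Borelization. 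Once the diagram of spaces is set up correctly, the comparison of operators is formal; no genuinely new computation beyond what is already in the proof of Theorem~\ref{thm:polrep} is needed.
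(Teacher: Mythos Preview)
Your proposal is correct and follows essentially the same approach as the paper: both reduce to generators via Corollary~\ref{cor:el spl mer}, then identify the Borel constructions $(\mathfrak{R}_{\ux{d}})_{\mathsf{P}_{\ux{d}}} \cong (\mathfrak{Q}_{\ux{d}})_{\mathsf{G}_{\mathbf{c}}}$, $(\mathfrak{R}_{\ux{e}})_{\mathsf{P}_{\ux{d}}} \cong (\mathfrak{Q}_{\ux{d},\ux{e}})_{\mathsf{G}_{\mathbf{c}}}$, $(\mathfrak{R}_{\ux{e}})_{\mathsf{P}_{\ux{e}}} \cong (\mathfrak{Q}_{\ux{e}})_{\mathsf{G}_{\mathbf{c}}}$ so that $(i,p)$ match $(\iota,q)$ from \eqref{iotap}, whence the merge/split actions (pushforward/pullback from the proof of Theorem~\ref{thm:polrep}) coincide with $\mathsf{m}_{\ux{d}}^{\ux{e}}$ and $\mathsf{com}_{\ux{e}}^{\ux{d}}$. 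Your final paragraph invoking transitivity for multifactor merges is unnecessary, since the diagram argument already applies to arbitrary $\ux{d} \succ \ux{e}$, not just elementary ones.
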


\begin{proof} 
By Corollary \ref{cor:el spl mer}, $\Ax{Z}{c}$ is generated by merges, splits and polynomials. Therefore, it suffices to describe the image of these elements. We have a commutative diagram
\begin{equation}\label{diag:PvsGeq}
\begin{tikzcd}[row sep = tiny]
(\Sxx{R}{d})_{\Gxx{P}{d}} \arrow[r, "i"] \arrow[d, Cong] & (\Sxx{R}{e})_{\Gxx{P}{d}} \arrow[r, "p"]  \arrow[d, Cong] & (\Sxx{R}{e})_{\Gxx{P}{e}} \arrow[d, Cong]  \\ 
(\Sxx{Q}{d})_{\Gx{G}{c}} \arrow[r,"\iota"] & (\Sxxy{Q}{d}{e})_{\Gx{G}{c}} \arrow[r,"q"] & (\Sxx{Q}{e})_{\Gx{G}{c}} 
\end{tikzcd}
\end{equation}
where $\iota$ and $q$ are as in \eqref{iotap}. 
As explained in the proof of Theorem \ref{thm:polrep}, the action of $\mer{d}{e}$ is given by the pushforward along the two lower horizontal maps in \eqref{diag:PvsGeq}. But this is the same as the pushforward along the two upper horizontal maps, which is, by definition, $\mul{d}{e}$. Similarly, the action of $\spl{e}{d}$ is given by the pullback along the two lower horizontal maps in \eqref{diag:PvsGeq}, and this is the same as the pullback along the two upper horizontal maps, which is, by definition, $\com{e}{d}$. 
The third statement is clear. 
\end{proof}

\begin{rem} We make several remarks about Theorem \ref{thm: TH vs Pol}. 
\begin{enumerate}[label=(\roman*),topsep=2pt,itemsep=1pt]
\item In light of Theorem \ref{thm: TH vs Pol}, the associativity of the merges \eqref{R1 relation} and the coassociativity of the splits \eqref{R2 relation} relations in the quiver Schur algebra express the fact that $\mathcal{H}$ is an associative algebra and a coassociative coalgebra, respectively. 
\item When $Q$ is the $A_1$ quiver, $\mathcal{H}$ is isomorphic to the exterior algebra in infinitely many variables (see \cite[\S 2.5]{KS}). This fact explains the connection between quiver Schur algebras associated to the $A_1$ quiver and web categories, discussed in \S \ref{subsec: A1 and Jordan}. 
\end{enumerate}
\end{rem}

Next, we interpret multiplication in the cohomological Hall algebra in terms of  Demazure operators. 

\begin{prop} \label{pro: mult Dem op}
Let $\bx{a}, \bx{b} \in \Gamma$, $\mathbf{c} = \bx{a} + \bx{b}$ and $\ux{d} = (\bx{a},\bx{b})$. Given $f \in \mathcal{H}_{\bx{a}}$ and $g \in \Ax{H}{b}$, the multiplication of $f$ and $g$ is given by
\[ \mathsf{m}(f,g) = (-1)^{r_{\ux{d}}} \cdot \Delta_{\ux{d}}^{\bx{c}}(f \cdot g \cdot \mathtt{E}_{\ux{d}}),\]
where $\cdot$ stands for polynomial multiplication (i.e., the cup product). 
\end{prop}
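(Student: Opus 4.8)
The plan is to derive Proposition \ref{pro: mult Dem op} directly from Theorem \ref{thm: TH vs Pol} (or equivalently from Theorem \ref{thm:polrep}.a) together with the interpretation of the shuffle operator as a Demazure operator in Proposition \ref{lem: merge = Demazure}. Indeed, by Theorem \ref{thm: TH vs Pol}, under the identification \eqref{tensor polrep iso} the CoHA multiplication $\mathsf{m}|_{\mathcal{H}_{\ux{d}}} = \mathsf{m}_{\ux{d}}^{\bx{c}}$ corresponds precisely to the action of the merge $\mer{d}{(\bx{c})} = \mer{d}{\bx{c}}$ on the polynomial representation $\Ax{Q}{c} \cong \Lambda_{\bx{c}}$, for $\ux{d} = (\bx{a},\bx{b}) \rightslice \bx{c}$.

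Next, I would invoke Theorem \ref{thm:polrep}.a), which computes this action explicitly: for $f\cdot g \in \Lambda_{\ux{d}}$ we have
\[ \mer{d}{\bx{c}}(f\cdot g) = \scalebox{1.5}{$\shf$}_{\ux{d}}^{\bx{c}}\left( \frac{\mathtt{E}_{\ux{d}}^{\bx{c}}}{\mathtt{S}_{\ux{d}}^{\bx{c}}}\, f\cdot g\right). \]
Since $\ux{e} = (\bx{c})$ consists of a single dimension vector, both $\mathtt{S}_{(\bx{c})}$ and $\mathtt{E}_{(\bx{c})}$ are empty products equal to $1$, so $\mathtt{S}_{\ux{d}}^{\bx{c}} = \mathtt{S}_{\ux{d}}$ and $\mathtt{E}_{\ux{d}}^{\bx{c}} = \mathtt{E}_{\ux{d}}$. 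Hence
\[ \mathsf{m}(f,g) = \scalebox{1.5}{$\shf$}_{\ux{d}}^{\bx{c}}\left( (\mathtt{S}_{\ux{d}})^{-1}\cdot \mathtt{E}_{\ux{d}}\cdot f\cdot g\right). \]
Because $\mathtt{E}_{\ux{d}}$ is $\Gxx{W}{d}$-invariant and the operator $\scalebox{1.5}{$\shf$}_{\ux{d}}^{\bx{c}}(\mathtt{S}_{\ux{d}})^{-1}$ is $\Gxx{W}{d}$-linear on the right (i.e. it factors through functions left-multiplied by $\mathtt{S}_{\ux{d}}^{-1}$), one can pull $\mathtt{E}_{\ux{d}}$ out: $\scalebox{1.5}{$\shf$}_{\ux{d}}^{\bx{c}}((\mathtt{S}_{\ux{d}})^{-1}\mathtt{E}_{\ux{d}}\, h) = \scalebox{1.5}{$\shf$}_{\ux{d}}^{\bx{c}}((\mathtt{S}_{\ux{d}})^{-1} h)\cdot \text{(nothing)}$ — more carefully, since $\mathtt{E}_{\ux{d}}$ is $\Gxx{W}{d}$-invariant it commutes past every $w\in \mathsf{D}^{\bx{c}}_{\ux{d}}$ only up to applying $w$, so I will instead keep $\mathtt{E}_{\ux{d}}$ inside and simply apply Proposition \ref{lem: merge = Demazure} directly: with $\ux{e}=(\bx{c})$ we have $r_{\ux{d}}^{\bx{c}} = |R_{\bx{c}}^+ - R_{\ux{d}}^+| = r_{\ux{d}}$ and $\scalebox{1}{$\shf$}_{\ux{d}}^{\bx{c}}(\mathtt{S}_{\ux{d}}^{\bx{c}})^{-1} = (-1)^{r_{\ux{d}}}\Delta_{\ux{d}}^{\bx{c}}$ as operators. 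Applying this to the polynomial $\mathtt{E}_{\ux{d}}\cdot f\cdot g \in \Lambda_{\ux{d}}$ gives exactly $\mathsf{m}(f,g) = (-1)^{r_{\ux{d}}}\Delta_{\ux{d}}^{\bx{c}}(f\cdot g\cdot \mathtt{E}_{\ux{d}})$, which is the claim.

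The only genuinely delicate point is bookkeeping: one must check that the sign exponent $r_{\ux{d}}$ in the statement coincides with $r_{\ux{d}}^{(\bx{c})}$ as used in Proposition \ref{lem: merge = Demazure}, and that the $\mathtt{S}$- and $\mathtt{E}$-classes appearing in Theorem \ref{thm:polrep} really do reduce to $\mathtt{S}_{\ux{d}}$ and $\mathtt{E}_{\ux{d}}$ when the target composition is $(\bx{c})$ of length one — both are immediate from the definitions \eqref{Sd classes}, \eqref{Ed classes} and the definition of $r_{\ux{d}}$, $r_{\ux{d}}^{\ux{e}}$ in the Demazure subsection, since $R_{(\bx{c})}^+ = R_{\bx{c}}^+$. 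I expect no real obstacle here; the proof is essentially a direct citation of Theorem \ref{thm: TH vs Pol} (for the identification of $\mathsf{m}$ with the merge), Theorem \ref{thm:polrep}.a) (for the shuffle formula) and Proposition \ref{lem: merge = Demazure} (to rewrite the shuffle operator as a Demazure operator), specialised to $\ux{e} = (\bx{c})$.
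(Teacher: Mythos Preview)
Your proposal is correct and essentially matches the paper's argument: both reduce to applying Proposition~\ref{lem: merge = Demazure} to the shuffle formula for CoHA multiplication. The only difference is that the paper cites the shuffle formula directly from \cite[Theorem~2]{KS}, whereas you derive it internally via Theorem~\ref{thm: TH vs Pol} together with Theorem~\ref{thm:polrep}.a); these two routes to the shuffle formula are equivalent, and the remaining step (specialising $\ux{e}=(\bx{c})$ and invoking Proposition~\ref{lem: merge = Demazure}) is identical.
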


\begin{proof}
The proposition follows directly from Proposition \ref{lem: merge = Demazure} and the shuffle formula for multiplication in $\mathcal{H}$ from \cite[Theorem 2]{KS}. 
\end{proof}

\begin{rem}
Yang and Zhao defined in \cite{YZ} a formal version of the CoHA associated to any equivariant oriented Borel-Moore homology theory and described multiplication in the formal CoHA in terms of a shuffle formula depending on a formal group law. We expect this formula can be rephrased in terms of the formal Demazure operators from \cite{HMSZ}. 
\end{rem}

\subsection{Cohomological Hall modules.}

We recall the definition of the cohomological Hall module from \cite[\S 3.1]{You}. 
Suppose that $Q$ admits an involution $\theta$ and a duality structure $(\sigma,\varsigma)$. 
Given $\mathbf{c} \in \Gamma^\theta$ and $\underline{\mathbf{d}}$ $\Rsl$ $\mathbf{c}$, let 
\[ \textstyle {}^\theta\mathcal{M}_{\mathbf{c}} := H_{{}^\theta\mathsf{G}_{\mathbf{c}}}^\bullet ({}^\theta\mathfrak{R}_{\mathbf{c}}), \quad 
{}^\theta\mathcal{M}_{\underline{\mathbf{d}}} := \bigotimes_{j=1}^{\ell_{\ux{d}}} \mathcal{H}_{\mathbf{d}_j} \otimes {}^\theta\mathcal{M}_{\mathbf{d}_\infty}, \quad {}^\theta\mathcal{M} := \bigoplus_{\mathbf{c} \in \Gamma^\theta} {}^\theta\mathcal{M}_{\mathbf{c}}. \]
In analogy to \eqref{levitopara}, we have canonical isomorphisms 
\begin{equation} \label{levitopara theta} {}^\theta\mathcal{M}_{\underline{\mathbf{d}}} \cong \textstyle H_{{}^\theta\mathsf{L}_{\underline{\mathbf{d}}}}^\bullet(\prod_{j=1}^{\ld} \mathfrak{R}_{\mathbf{d}_j} \times {}^\theta\mathfrak{R}_{\bx{d}_\infty} ) \cong H_{{}^\theta\mathsf{P}_{\underline{\mathbf{d}}}}^\bullet({}^\theta\mathfrak{R}_{\underline{\mathbf{d}}}). \end{equation} 
\begin{defi}
Given $\ux{d} \succ \ux{e}$, we have a closed embedding 
${}^\theta i \colon (\tSxx{R}{d})_{\tGxx{P}{d}} \hookrightarrow (\tSxx{R}{e})_{\tGxx{P}{d}}$ and a fibration
${}^\theta p \colon (\tSxx{R}{e})_{\tGxx{P}{d}} \twoheadrightarrow (\tSxx{R}{e})_{\tGxx{P}{e}}$. 
Using the identification \eqref{levitopara theta}, we get operators  
\begin{equation} \label{CoHM mult} \textstyle {}^\theta\mul{d}{e} \colon \tAxx{M}{d} \xrightarrow{{}^\theta p_* \circ {}^\theta i_*}  \tAxx{M}{e}, \quad
{}^\theta\com{e}{d} \colon \tAxx{M}{e} \xrightarrow{{}^\theta i^* \circ {}^\theta p^*} \tAxx{M}{d}.
\end{equation} 
Let $\mathsf{act} \colon \mathcal{H} \otimes {}^\theta\mathcal{M} \to {}^\theta\mathcal{M}$ and $\mathsf{coact} \colon {}^\theta\mathcal{M} \to \mathcal{H} \otimes {}^\theta\mathcal{M}$ be the operators defined by the condition that $\mathsf{act}|_{\tAxx{M}{d}} = \mathsf{m}_{\ux{d}}^{\bx{c}}$, and that the projection of $\mathsf{coact}|_{\tAx{M}{c}}$ onto $\tAxx{M}{d}$ equals $\mathsf{com}^{\ux{d}}_{\bx{c}}$, for all $\bx{c} \in \Gamma^\theta$ and $\ux{d} \in {}^\theta\mathbf{Com}_{\mathbf{c}}^1$. 
\end{defi}

\begin{defi}
The \emph{cohomological Hall module} associated to $(Q, \theta, \sigma, \varsigma)$ is the $\Gamma^\theta$-graded vector space ${}^\theta\mathcal{M}$ together with the $\mathcal{H}$-action given by $\mathsf{act}$. By \cite[Theorem 3.1]{You}, $({}^\theta\mathcal{M}, \mathsf{act})$ is indeed an $\mathcal{H}$-module. The operation $\mathsf{coact}$ also makes ${}^\theta\mathcal{M}$ into an $\mathcal{H}$-comodule. However, the action and the coaction are in general not compatible, i.e.,  $({}^\theta\mathcal{M}, \mathsf{act}, \mathsf{coact})$ is not a Hopf module. 
\end{defi}

Let us interpret the operators \eqref{CoHM mult} in the two special cases when $\ux{d} \succ_f \ux{e}$ or $\ux{d} \succ_\infty \ux{e}$. 
If $\ux{d} \succ_f \ux{e}$ then ${}^\theta\mul{d}{e}$ and ${}^\theta\com{e}{d}$ are multifactor multiplication and comultiplication operators, respectively. On the other hand, if $\ux{d} \succ_\infty \ux{e}$ then ${}^\theta\mul{d}{e}$ and ${}^\theta\com{e}{d}$ can be interpreted as iterated action and coaction operators, respectively.

\subsection{The CoHM and mixed quiver Schur algebras.}

Let $\mathbb{T}({}^\theta\mathcal{M}) := \mathbb{T}(\mathcal{H}) \otimes {}^\theta\mathcal{M}$. We regard it as a $\Gamma^\theta$-graded vector space as follows: 
\[ \textstyle \mathbb{T}({}^\theta\mathcal{M}) = \bigoplus_{\mathbf{c} \in \Gamma^\theta} \mathbb{T}_{\mathbf{c}}( {}^\theta\mathcal{M}), \quad \mathbb{T}_{\mathbf{c}}({}^\theta\mathcal{M}) := 
\bigoplus_{\ux{d} \Rsl \mathbf{c}} {}^\theta\mathcal{M}_{\underline{\mathbf{d}}}.\] 
In analogy to Lemma \ref{lem: tensor vs pol rep}, one easily shows that there is a vector space isomorphism
\begin{equation} \label{tensor polrep iso theta} \mathbb{T}_{\bx{c}}({}^\theta\mathcal{M}) \xrightarrow{\sim} {}^\theta\mathcal{Q}_{\mathbf{c}}. \end{equation} 
Since the $\Ax{Z}{c}$-module $\Ax{Q}{c}$ is faithful, \eqref{tensor polrep iso theta} induces an injective algebra homomorphism
\begin{equation} \label{QS endos of CoHM} \tAx{Z}{c} \hookrightarrow \End_{\C}(\mathbb{T}_{\mathbf{c}}({}^\theta\mathcal{M})). \end{equation}
Theorem \ref{thm: TH vs Pol} carries over, with analogous proof (using Corollary \ref{cor:el spl mer theta}), to our current setting, yielding an explicit description of this homomorphism. 

\begin{thm} \label{thm: TH vs Pol theta}
The algebra homomorphism \eqref{QS endos of CoHM} is given by 
\[
\textstyle {}^\theta\mer{d}{e} \mapsto {}^\theta\mul{d}{e}, \quad {}^\theta\spl{e}{d} \mapsto {}^\theta\com{e}{d}, \quad \gamma \mapsto \cup_\gamma, 
\]
where $\ux{d}$ $\Rsl$ $\mathbf{c}$ and $\gamma \in {}^\theta\mathcal{Q}_{\underline{\mathbf{d}}} \cong {}^\theta\mathcal{M}_{\underline{\mathbf{d}}}$. 
\end{thm}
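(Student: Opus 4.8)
\textbf{Proof plan for Theorem \ref{thm: TH vs Pol theta}.} The strategy is to mirror the proof of Theorem \ref{thm: TH vs Pol} essentially line by line, with the ordinary quiver-theoretic objects replaced by their $\theta$-decorated counterparts. First I would invoke Corollary \ref{cor:el spl mer theta}, which tells us that $\tAx{Z}{c}$ is generated by elementary merges ${}^\theta\mer{d}{e}$, elementary splits ${}^\theta\spl{e}{d}$, and the polynomials $\tAx{Z}{c}^e$. Combined with the faithfulness of $\tAx{Q}{c}$ over $\tAx{Z}{c}$ (the first part of the Proposition following Definition \ref{defi:mergesandsplits theta}) and the vector space isomorphism \eqref{tensor polrep iso theta}, this reduces the theorem to identifying, for each such generator, the induced endomorphism of $\mathbb{T}_{\bx{c}}({}^\theta\mathcal{M})$ under the transported $\tAx{Z}{c}$-module structure. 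So it suffices to prove the three displayed formulas on generators.

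The key geometric input is a commutative diagram analogous to \eqref{diag:PvsGeq}: for $\ux{d} \succ \ux{e}$ $\Rsl$ $\bx{c}$, the Borel constructions fit into
\[
\begin{tikzcd}[row sep = small]
(\tSxx{R}{d})_{\tGxx{P}{d}} \arrow[r, "{}^\theta i"] \arrow[d, Cong] & (\tSxx{R}{e})_{\tGxx{P}{d}} \arrow[r, "{}^\theta p"] \arrow[d, Cong] & (\tSxx{R}{e})_{\tGxx{P}{e}} \arrow[d, Cong] \\
(\tSxx{Q}{d})_{\tGx{G}{c}} \arrow[r, "\iota"] & (\Sxxy{Q}{d}{e}^\theta)_{\tGx{G}{c}} \arrow[r, "q"] & (\tSxx{Q}{e})_{\tGx{G}{c}}
\end{tikzcd}
\]
where the vertical isomorphisms come from the natural identification of $(\tSxx{R}{d})_{\tGxx{P}{d}}$ with $(\tSxx{Q}{d})_{\tGx{G}{c}}$ used in the proof of \eqref{tensor polrep iso theta}, and $\iota$, $q$ factor the forgetful map $(\tSxx{Q}{d})_{\tGx{G}{c}} \to (\tSxx{Q}{e})_{\tGx{G}{c}}$ just as in \eqref{iotap}. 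Granting this, the action of ${}^\theta\mer{d}{e}$ on $\tAx{Q}{c}$ is pushforward along the lower row — this follows from the localization computation in the proof of Theorem \ref{thm:polrep}, whose argument only uses the Bruhat decomposition and localization for the relevant torus, both available in types B, C, D — and commutativity of the diagram identifies it with $({}^\theta p)_* \circ ({}^\theta i)_* = {}^\theta\mul{d}{e}$ by the definition \eqref{CoHM mult}. Dually, ${}^\theta\spl{e}{d}$ acts by pullback along the lower row, hence by $({}^\theta i)^* \circ ({}^\theta p)^* = {}^\theta\com{e}{d}$. Finally, $\Axxy[\theta]{Z}{d}{d}^e \cong {}^\theta\lamxx{d}$ acts on ${}^\theta\lamxx{d} \cong {}^\theta\mathcal{Q}_{\ux{d}} \cong {}^\theta\mathcal{M}_{\ux{d}}$ by multiplication, which is exactly $\cup_\gamma$; this is the standard fact cited from \cite[Example 2.7.10(i)]{CG}, as in Theorem \ref{thm:polrep}(c).

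The main obstacle, and the only place where genuine care is needed, is verifying that the polynomial representation $\tAx{Q}{c}$ is computed by the same localization formulas as in Theorem \ref{thm:polrep} — i.e., that the isotropic analogue of the Tu-type theorem (Theorem \ref{Tu theorem}) and the localization theorem (Theorem \ref{thm: loc thm}) hold for $\tSx{F}{c}$ with respect to $\tGx{T}{c}$, with fixed points parametrized by $\dcb[\theta]{d}{c}$ and appropriate Euler classes of normal bundles and of $T_{\tSxx{Q}{d}}\mathfrak{Q}^\theta_{\ux{d},(\bx{c})}$. For the purposes of this theorem, however, one does not actually need the explicit shuffle-type formula for ${}^\theta\mer{d}{e}$: all that is required is the \emph{identification of the operator with a pushforward along a forgetful map of Borel constructions}, which follows purely formally from the convolution description exactly as at the start of the proof of Theorem \ref{thm:polrep} (the step establishing $\bigcurlywedge_{\ux{d}}^{\ux{e}} \star f$ equals the pushforward of $f$ along $p_{13}$, using $\Sxxy{Z}{e}{d}^e \times_{\Sxx{Q}{d}} \Sxx{Q}{d} \cong \Sxx{Q}{d}$ and Lemma \ref{lem:conv of classes}). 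That step generalizes to the isotropic setting verbatim. Hence I would phrase the proof so as to bypass the explicit localization computation entirely, invoking only: Corollary \ref{cor:el spl mer theta}, faithfulness, the isomorphism \eqref{tensor polrep iso theta}, the convolution-theoretic identification of merges/splits with pushforward/pullback along $\iota$ and $q$ (Lemma \ref{lem:conv of classes}), and commutativity of the diagram above.
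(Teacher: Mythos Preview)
Your proposal is correct and follows essentially the same approach as the paper: reduce to generators via Corollary~\ref{cor:el spl mer theta}, set up the $\theta$-analogue of the commutative diagram~\eqref{diag:PvsGeq}, and identify the merge/split actions with pushforward/pullback along the upper row using the convolution-theoretic description from the proof of Theorem~\ref{thm:polrep}. Your observation that the explicit localization computation is unnecessary here---only the formal identification with pushforward/pullback is needed---is exactly right and matches what the paper does.
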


\begin{rem}
Summing over all $\bx{c} \in \Gamma^\theta$, \eqref{tensor polrep iso theta} gives an identification of $\mathbb{T}(\mathcal{H}) \otimes {}^\theta \mathcal{M}$ with the direct sum of the polynomial representations of all the mixed quiver Schur algebras associated to $(Q,\theta,\sigma,\varsigma)$. Moreover, the relations \eqref{theta R1}-\eqref{theta R2} express the fact that ${}^\theta\mathcal{M}$ is an $\mathcal{H}$-module and -comodule, respectively. 
\end{rem}

\subsection{The polynomial representation.}

We will now use Theorem \ref{thm: TH vs Pol theta} to deduce an explicit description of the polynomial representation $\tAx{Q}{c}$ of $\tAx{Z}{c}$ from the corresponding description of the cohomological Hall module ${}^\theta\mathcal{M}$ as a shuffle module in \cite{You}. 

\begin{defi} \label{defi: Sd Ed theta}
Let $\ux{d} = (\bx{a},\bx{b})$ $\Rsl$ $\bx{c}$. We first define an analogue of the classes $\mathtt{S}_{\ux{d}}$ from \eqref{Sd classes}. 

If $i \in Q_0^+$, then let 
\[ 
{}^\theta \mathtt{S}_{\ux{d}}(i) = \prod_{k = 1}^{\bx{a}(i)} \prod_{l = \bx{a}(i)+1}^{\bx{a}(i)+\bx{b}(i)} \prod_{m = \bx{a}(i)+\bx{b}(i) + 1}^{\bx{c}(i)} (x_l(i) - x_k(i)) (x_m(i) - x_l(i)) (x_m(i) - x_k(i)). 
\]

If $i \in Q_0^\theta$ then
\[
{}^\theta \mathtt{S}_{\ux{d}}(i) = g_i(x_1(i), \hdots, x_{\bx{a}(i)}(i)) \prod_{1 \leq k < l \leq \bx{a}(i)} (-x_k(i) - x_l(i)) \prod_{k=1}^{\bx{a}(i)} \prod_{l=\bx{a}(i)+1}^{\lfloor \bx{c}(i)/2\rfloor}(x_k(i)^2 - x_l(i)^2), 
\]
where
\[
g_i(x_1(i), \hdots, x_{\bx{a}(i)}(i)) = \left\{ \begin{array}{ll} \displaystyle
(-1)^{\bx{a}(i)} \prod_{k=1}^{\bx{a}(i)} x_k(i) & \mbox{if } \sigma(i) = 1 \mbox{ and } \bx{c}(i) \mbox{ is odd},\\ \displaystyle
(-2)^{\bx{a}(i)} \prod_{k=1}^{\bx{a}(i)} x_k(i) & \mbox{if } \sigma(i)= -1,\\ \displaystyle
1 & \mbox{if } \sigma(i) = 1 \mbox{ and } \bx{c}(i) \mbox{ is even}. 
\end{array} \right. 
\]

Next, we define an analogue of the classes $\mathtt{E}_{\ux{d}}$ from \eqref{Ed classes}. 
To simplify exposition, let us write $x_k(\theta(i)) = - x_{\bx{a}(i)+\bx{b}(i)+k}(i)$ if $i \in Q_0^+$ and $x_k(\theta(i)) = x_k(i)$ if $i \in Q_0^\theta$. 

If $i \xrightarrow{a} j \in Q_1^+$, then let 
$${}^\theta\mathtt{E}_{\ux{d}}(a) := {}^\theta\mathtt{E}_{\ux{d}}(a,i)  {}^\theta\mathtt{E}_j(a,j) \prod_{m = 1}^{\bx{a}(\theta(j))}\prod_{k = 1}^{\bx{a}(i)} (-x_m(\theta(j))-x_k(i)),$$ where
\[ 
{}^\theta\mathtt{E}_{\ux{d}}(a,i) = \left\{ \begin{array}{ll} \displaystyle
\prod_{l = \bx{a}(i)+1}^{\bx{a}(i)+\bx{b}(i)} \prod_{m = 1}^{\bx{a}(\theta(j))} (-x_m(\theta(j))-x_l(i)) & \mbox{if } i \notin Q_0^\theta, \\ \displaystyle
\prod_{l=\bx{a}(i)+1}^{\lfloor \bx{c}(i)/2\rfloor} \prod_{m = 1}^{\bx{a}(\theta(j))}  (x_m(\theta(j))^2 - x_l(i)^2) 
(-x_m(j))^{\epsilon(i)} &  \mbox{if } i \in Q_0^\theta,
\end{array} \right. 
\]
\[ 
{}^\theta\mathtt{E}_{\ux{d}}(a,j) = \left\{ \begin{array}{ll} \displaystyle
 \prod_{k = 1}^{\bx{a}(i)} \prod_{l = \bx{a}(j)+1}^{\bx{a}(j)+\bx{b}(j)} (x_l(j)-x_k(i)) & \mbox{if } j \notin Q_0^\theta, \\ \displaystyle
\prod_{k = 1}^{\bx{a}(i)} \prod_{l=\bx{a}(j)+1}^{\lfloor \bx{c}(j)/2\rfloor} (x_k(i)^2 - x_l(j)^2) 
(-x_k(i))^{\epsilon(j)} &  \mbox{if } j \in Q_0^\theta, 
\end{array} \right. 
\]
and $\epsilon(i) = 1$ if $\bx{c}(i)$ is odd, and $\epsilon(i) = 0$ if $\bx{c}(i)$ is even. 

If $\theta(i) \xrightarrow{a} i \in Q_1^\theta$, then let  $${}^\theta\mathtt{E}_{\ux{d}}(a) := {}^\theta\widetilde{\mathtt{E}}_{\ux{d}}(a) \prod_{1 \leq k \leq_{\sigma(i) \varsigma(a)} l \leq \bx{a}(\theta(i))} (-x_k(\theta(i)) - x_l(\theta(i))),$$ where $\leq_{1} \ = \ \leq$ and $\leq_{-1} \ = \ <$, and 
\[
{}^\theta\widetilde{\mathtt{E}}_{\ux{d}}(a) = \left\{ \begin{array}{ll} \displaystyle
\prod_{l = \bx{a}(i)+1}^{\bx{a}(i)+\bx{b}(i)} \prod_{m = 1}^{\bx{a}(\theta(i))} (x_l(i)-x_m(\theta(i))) & \mbox{if } i \notin Q_0^\theta,\\    \displaystyle 
\prod_{m = 1}^{\bx{a}(i)} \prod_{l=\bx{a}(i)+1}^{\lfloor \bx{c}(i)/2\rfloor} (x_m(i)^2 - x_l(j)^2) (x_l(i))^{\epsilon(i)} &  \mbox{if } i \in Q_0^\theta. 
\end{array} \right. 
\]
Finally, define
\[
{}^\theta \mathtt{S}_{\ux{d}} := \prod_{i \in Q_0^+ \sqcup Q_0^\theta} {}^\theta \mathtt{S}_{\ux{d}}(i), \quad {}^\theta \mathtt{E}_{\ux{d}} := \prod_{a \in Q_1^+ \sqcup Q_1^\theta} {}^\theta \mathtt{E}_{\ux{d}}(a), \quad {}^\theta\scalebox{1.5}{$\shf$}_{\ux{d}}^{\bx{c}} := \sum_{w \in {}^\theta\dcb{d}{c}} w \in \tGx{W}{c}. 
\]
\end{defi} 

\begin{thm} \label{thm: pol rep theta} 
Let $\ux{d} \succ \ux{e}$ $\Rsl$ $\bx{c}$. The action of the generators of $\tAx{Z}{c}$ on $\tAx{Q}{c}\cong {}^\theta\Lambda_{\mathbf{c}}$ admits the following description. 
\begin{enumerate}[label=\alph*), font=\textnormal,noitemsep,topsep=3pt,leftmargin=1cm]
\item The action of ${}^\theta\bigcurlyvee_{\ux{e}}^{\ux{d}}$ is given by the inclusion
\[ \textstyle {}^\theta\bigcurlyvee_{\ux{e}}^{\ux{d}} \displaystyle 
\colon {}^\theta\Lambda_{\ux{e}} \hookrightarrow {}^\theta\Lambda_{\ux{d}}, \quad 
f \mapsto f, \quad \quad \quad \textstyle {}^\theta\bigcurlyvee_{\ux{e}}^{\ux{d}}|_{{}^\theta\Lambda_{\ux{f}}} = 0 \quad \mbox{if} \quad \ux{f} \neq \ux{e}.
\] 
\item The action of ${}^\theta\Axxy{Z}{d}{d}^e$ on ${}^\theta\lamxx{f}$ is trivial unless $\ux{f} = \ux{d}$. In the latter case, if we identify ${}^\theta\Axxy{Z}{d}{d}^e\cong {}^\theta\lamxx{d}$, then ${}^\theta\Axxy{Z}{d}{d}^e$ acts on ${}^\theta\lamxx{d}$ by usual multiplication. 
\item The action of $\bigcurlywedge_{\ux{d}}^{\ux{e}}$ on ${}^\theta\lamxx{f}$ is trivial unless $\ux{f} = \ux{d}$. In the latter case, if 
$\ux{d} \succ_f \ux{e}$, then $$\textstyle {}^\theta\bigcurlywedge_{\ux{d}}^{\ux{e}}|_{{}^\theta\Lambda_{\ux{d}}} = \bigcurlywedge_{\ux{d}^f}^{\ux{e}^f}|_{\Lambda_{\ux{d}^f}} \otimes 1|_{{}^\theta\Lambda_{\bx{d}_\infty}}.$$ If $\ux{d} = (\bx{a},\bx{b})$ and $\ux{e} = (\mathbf{c})$ then 
\[ \textstyle {}^\theta\bigcurlywedge_{\ux{d}}^{\ux{e}} \displaystyle 
\colon {}^\theta\Lambda_{\ux{d}} \to {}^\theta\Lambda_{(\bx{c})}, \quad 
f \mapsto {}^\theta\scalebox{1.5}{$\shf$}_{\ux{d}}^{\bx{c}} \left( \frac{{}^\theta\mathtt{E}_{\ux{d}}}{{}^\theta\mathtt{S}_{\ux{d}}} f \right). 
\]
\end{enumerate}
\end{thm}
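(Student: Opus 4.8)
\textbf{Proof plan for Theorem \ref{thm: pol rep theta}.}

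The strategy is to reduce every statement to a combination of already-established results: Theorem \ref{thm: TH vs Pol theta}, which identifies the generators of $\tAx{Z}{c}$ with the CoHA/CoHM action, coaction, multiplication, comultiplication and cup-product operators, together with the shuffle description of $^\theta\mathcal{M}$ from \cite[Theorem 3.3]{You}, and the ordinary polynomial representation computed in Theorem \ref{thm:polrep}. First I would record, via the isomorphism \eqref{tensor polrep iso theta}, that under $\tAx{Q}{c} \cong {}^\theta\mathcal{Q}_{\mathbf{c}} \cong {}^\theta\Lambda_{\mathbf{c}}$ the summand indexed by $\ux{d}$ corresponds to $^\theta\mathcal{M}_{\ux{d}}$, so that all the grading/vanishing clauses ($|_{{}^\theta\Lambda_{\ux{f}}} = 0$ unless $\ux{f}$ equals the appropriate composition) are immediate from the fact that the relevant geometric correspondences $\tSxxy{Z}{e}{d}^{e}$ live over a single pair of connected components of $\tSx{Q}{c} \times \tSx{Q}{c}$. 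For parts a) and b), I would invoke Theorem \ref{thm: TH vs Pol theta}: the split ${}^\theta\spl{e}{d}$ acts as the comultiplication/coaction operator ${}^\theta\com{e}{d} = {}^\theta i^{*} \circ {}^\theta p^{*}$, and the argument from the proof of Theorem \ref{thm:polrep} (localization computation) applies verbatim in the isotropic setting to show ${}^\theta p^{*}$ is the inclusion of invariants $^\theta\Lambda_{\ux{e}} \hookrightarrow {}^\theta\Lambda_{\ux{d}}$ while ${}^\theta i^{*}$ is an isomorphism; part b) is the standard fact that the polynomial subalgebra $\tAx{Z}{c}^{e}$ acts by cup product, i.e.\ by multiplication under the identification $^\theta\Lambda_{\ux{d}} \cong \tAxxy{Z}{d}{d}^{e}$.

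The substantive content is part c). The first clause, the case $\ux{d} \succ_{f} \ux{e}$, is a factorization statement: the correspondence $\tSxxy{Z}{e}{d}^{e}$ only modifies the ``finite part'' of the isotropic flag (the first $\ell_{\ux{d}}$ steps) and leaves the $\infty$-block untouched, so the associated pushforward operator is $\bigcurlywedge_{\ux{d}^{f}}^{\ux{e}^{f}}$ (an ordinary quiver Schur merge, already computed in Theorem \ref{thm:polrep}) tensored with the identity on $^\theta\Lambda_{\bx{d}_\infty}$; concretely this follows by comparing the Borel constructions $(\tSxx{R}{d})_{\tGxx{P}{d}} \hookrightarrow (\tSxx{R}{e})_{\tGxx{P}{d}} \twoheadrightarrow (\tSxx{R}{e})_{\tGxx{P}{e}}$ with the product of the ordinary sequence for $\ux{d}^{f} \succ \ux{e}^{f}$ and the constant sequence for $\bx{d}_\infty$. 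For the main clause, $\ux{d} = (\bx{a},\bx{b})$, $\ux{e} = (\mathbf{c})$: by Theorem \ref{thm: TH vs Pol theta} the merge ${}^\theta\mer{d}{e}$ acts as the coaction-type operator ${}^\theta\mul{d}{e} = {}^\theta p_{*} \circ {}^\theta i_{*}$, which by the interpretation recorded after \eqref{CoHM mult} is precisely the (iterated, one-step) action operator $\mathsf{act}$ of the CoHA on the CoHM. The plan is then to quote the explicit shuffle formula for $\mathsf{act}$ from \cite[Theorem 3.3]{You}: it is a symmetrization over the coset representatives $^\theta\dcb{d}{c}$ of the product $f$ with an explicit rational kernel. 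I would then need to match Young's kernel with the ratio ${}^\theta\mathtt{E}_{\ux{d}}/{}^\theta\mathtt{S}_{\ux{d}}$ as defined in Definition \ref{defi: Sd Ed theta}; the classes ${}^\theta\mathtt{S}_{\ux{d}}(i)$ account for the $\tGx{T}{c}$-equivariant Euler classes of normal bundles to torus-fixed points in the isotropic flag variety (types A, B, C, D), computed as in Theorem \ref{Tu theorem}, while the ${}^\theta\mathtt{E}_{\ux{d}}(a)$ are the $\tGx{T}{c}$-equivariant Euler classes of the relevant pieces of $\tSx{R}{c}/\tSxx{R}{e}$, computed as in Lemma \ref{lem: eu TQ} but now with contributions from arrows in $Q_1^{+}$ (ordinary $\Hom$-type summands, giving the ``$j \notin Q_0^\theta$'' factors) and arrows in $Q_1^\theta$ (bilinear-form-type summands, giving the ``$i \in Q_0^\theta$'' factors with the $g_i$, $\leq_{\sigma(i)\varsigma(a)}$ and $\epsilon(i)$ corrections).

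The main obstacle I anticipate is precisely this bookkeeping match: verifying that the product formula of Definition \ref{defi: Sd Ed theta}, assembled from the various type-dependent cases, agrees term-by-term with the shuffle kernel in \cite{You}, including the sign conventions ($g_i$ with its $(-1)^{\bx{a}(i)}$, $(-2)^{\bx{a}(i)}$ cases) and the treatment of the half-integer indexing $\lfloor \bx{c}(i)/2\rfloor$ coming from $\tGx{T}{c}$ having rank $\lfloor \bx{c}(i)/2\rfloor$ at a type B/C/D vertex. This is essentially the isotropic analogue of the computation underlying Proposition \ref{pro: mult Dem op}, and while conceptually routine it requires care: one must carry out the localization computation for the isotropic flag variety $\tSxx{F}{d} = {}^\theta\mathsf{G}_{\mathbf{c}}/{}^\theta\mathsf{P}_{\ux{d}}$, identify the fixed points with $^\theta\dcb{d}{c}$ and their normal-bundle Euler classes with $w \cdot {}^\theta\mathtt{S}_{\ux{d}}$, and push forward along $\tSxx{F}{d} \to \tSxx{F}{(\mathbf{c})}$ exactly as in the proof of Theorem \ref{thm:polrep}. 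Once the kernels are shown to coincide, the theorem follows. I would, if space permits, also note the alternative self-contained route: compute ${}^\theta p_{*} \circ {}^\theta i_{*}$ directly by the same localization argument, bypassing \cite{You} entirely, which has the advantage of making Corollary \ref{cor: action Demazure theta} (the Demazure-operator reinterpretation, via the type A--D analogue of Proposition \ref{lem: merge = Demazure}) a transparent consequence.
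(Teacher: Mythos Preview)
Your proposal is correct and follows essentially the same route as the paper: parts a) and b) are handled by the same argument as in Theorem~\ref{thm:polrep}, and part c) is reduced, via Theorem~\ref{thm: TH vs Pol theta}, to the shuffle description of the CoHM action in \cite[Theorem~3.3]{You}. The one place where you overcomplicate matters is the ``main obstacle'' of matching Definition~\ref{defi: Sd Ed theta} with Young's kernel: those classes ${}^\theta\mathtt{E}_{\ux{d}}$ and ${}^\theta\mathtt{S}_{\ux{d}}$ are \emph{defined} precisely so that the shuffle formula from \cite{You} takes the stated form, so there is no separate verification to perform---the bookkeeping you describe is absorbed into the definition rather than appearing as a step in the proof.
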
 

\begin{proof}
Parts a) and b) are proven in the same was as in Theorem \ref{thm:polrep}. Part c) follows directly from Theorem \ref{thm: TH vs Pol theta} and \cite[Theorem 3.3]{You}. 
\end{proof}

We would like to illuminate the formulas from Definition \ref{defi: Sd Ed theta} by relating them to Demazure operators, generalizing Proposition \ref{lem: merge = Demazure}.

\begin{defi}
Let $\ux{d}$ $\Rsl$ $\bx{c}$. Let ${}^\theta R_{\bx{c}}^+$ and ${}^\theta R_{\ux{d}}^+$ denote the set of positive roots corresponding to $(\tGx{B}{c},\tGx{G}{c})$ and $(\tGx{B}{c},\tGxx{P}{d})$, respectively. We abbreviate ${}^\theta r_{\ux{d}} = |{}^\theta R_{\bx{c}}^+ - {}^\theta R_{\ux{d}}^+|$. Define $\blacktriangle_{\ux{d}} = \prod_{\alpha \in {}^\theta R^+_{\ux{d}}} \alpha$ and $\blacktriangle_{\bx{c}} = \blacktriangle_{(\bx{c})}$. 
Given $w \in \tGx{W}{c}$, let ${}^\theta\Delta_w$ be the corresponding Demazure operator. Let $w_{\ux{d}}$ and $w_{\ux{d}}^{\bx{c}}$ be the longest elements in $\tGxx{W}{d}$ and ${}^\theta\dcb{d}{c}$, respectively. We set $w_{\bx{c}} = w_{(\bx{c})}$, ${}^\theta\Delta_{\ux{d}} = {}^\theta\Delta_{w_{\ux{d}}}$, ${}^\theta\Delta_{\bx{c}} = {}^\theta\Delta_{w_{\bx{c}}}$ and ${}^\theta\Delta_{\ux{d}}^{\bx{c}} = {}^\theta\Delta_{w_{\ux{d}}^{\bx{c}}}$. 
\end{defi}

\begin{lem} \label{lem: Demazure ops properties theta}
Let $\ux{d}$ $\Rsl$ $\bx{c}$. 
\begin{enumerate}[label=\alph*), font=\textnormal,noitemsep,topsep=3pt,leftmargin=1cm]
\item The Demazure operator ${}^\theta\Delta_{\ux{d}}$ is given by the following explicit formula
\begin{equation} \label{theta Demazure formula} {}^\theta\Delta_{\ux{d}} = \sum_{w \in \tGxx{W}{d}} w (\blacktriangle_{\ux{d}}^{-1}). \end{equation}
\item There exists 
some polynomial $h \in {}^\theta\mathcal{P}_{\mathbf{c}}$ such that ${}^\theta\Delta_{\ux{d}}(h) = 1$. 
\item If $h \in {}^\theta\mathcal{P}_{\mathbf{c}}$ and $f \in {}^\theta\Lambda_{\ux{d}}$, then ${}^\theta\Delta_{\ux{d}}(fh) = f \cdot {}^\theta\Delta_{\ux{d}}(h)$. 
\item If $\ux{d} = (\bx{a},\bx{b})$, then $\blacktriangle_{\bx{c}} = (-1)^{{}^\theta r_{\ux{d}}} \cdot \blacktriangle_{\ux{d}} \cdot {}^\theta\mathtt{S}_{\ux{d}}$ and ${}^\theta\mathtt{S}_{\ux{d}} \in {}^\theta\Lambda_{\ux{d}}$. 
\end{enumerate}
\end{lem}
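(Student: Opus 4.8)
The four parts are largely formal consequences of standard facts about Demazure (divided difference) operators for the Weyl group $\tGx{W}{c}$, which is a product of symmetric groups (for the $Q_0^+$-part) and hyperoctahedral groups $\Z_2 \wr \mathsf{Sym}_m$ (for the $Q_0^\theta$-part). The strategy is to reduce each assertion to the classical theory, citing \cite{Tu} and standard references on Demazure operators, and then to identify the resulting polynomial $\blacktriangle_{\ux{d}}$ and its cosets with the explicit products ${}^\theta\mathtt{S}_{\ux{d}}$, ${}^\theta\mathtt{E}_{\ux{d}}$ of Definition \ref{defi: Sd Ed theta}.

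For part a), I would start from the well-known closed formula for the longest Demazure operator of a Weyl group: if $w_{\ux{d}}$ is the longest element of $\tGxx{W}{d}$ and $\blacktriangle_{\ux{d}} = \prod_{\alpha \in {}^\theta R^+_{\ux{d}}} \alpha$ is the product of the corresponding positive roots, then ${}^\theta\Delta_{\ux{d}} = {}^\theta\Delta_{w_{\ux{d}}}$ acts as $f \mapsto \sum_{w \in \tGxx{W}{d}} w(f/\blacktriangle_{\ux{d}})$; this is the standard identity (see e.g.\ the references on divided differences already used in the paper, and the analogous statement \cite[Proposition 8.13]{MS} invoked for Proposition \ref{lem: merge = Demazure}). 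Since $\tGxx{W}{d} = \mathsf{W}_{\ux{d}^f} \times {}^\theta\mathsf{W}_{\bx{d}_\infty}$ is itself a Weyl group, the formula applies verbatim, giving \eqref{theta Demazure formula}.

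Parts b) and c) are then immediate formal consequences. For c): if $f$ is $\tGxx{W}{d}$-invariant, then $w(fh) = f \cdot w(h/\blacktriangle_{\ux{d}})\cdot\blacktriangle_{\ux{d}}$ in each summand of \eqref{theta Demazure formula} — more precisely $w(fh/\blacktriangle_{\ux{d}}) = f\cdot w(h/\blacktriangle_{\ux{d}})$ — so ${}^\theta\Delta_{\ux{d}}(fh) = f\cdot {}^\theta\Delta_{\ux{d}}(h)$ by linearity. For b): the operator ${}^\theta\Delta_{\ux{d}} \colon {}^\theta\mathcal{P}_{\mathbf{c}} \to {}^\theta\Lambda_{\ux{d}}$ is surjective (it is, up to sign, the Gysin pushforward $H^\bullet_{\tGx{T}{c}}(\tSx{F}{c}) \to H^\bullet_{\tGx{T}{c}}(\tSxx{F}{d})$ along a proper fibration with nonzero Euler characteristic, or alternatively one observes directly that applying it to the Schubert-type monomial dual to $\blacktriangle_{\ux{d}}$ yields $1$), so some $h$ maps to $1$.

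The main work, and the main obstacle, is part d): the identity $\blacktriangle_{\bx{c}} = (-1)^{{}^\theta r_{\ux{d}}}\cdot\blacktriangle_{\ux{d}}\cdot {}^\theta\mathtt{S}_{\ux{d}}$ for $\ux{d} = (\bx{a},\bx{b})$, together with ${}^\theta\mathtt{S}_{\ux{d}} \in {}^\theta\Lambda_{\ux{d}}$. This is a bookkeeping statement about root systems of type $A$, $B$, $C$, $D$: one must check that the set ${}^\theta R^+_{\bx{c}} \setminus {}^\theta R^+_{\ux{d}}$ of positive roots not in the Levi $\tGxx{P}{d}$ consists precisely of the linear forms whose product (up to the sign $(-1)^{{}^\theta r_{\ux{d}}}$ coming from orienting some of them negatively) equals ${}^\theta\mathtt{S}_{\ux{d}}(i)$ as $i$ ranges over $Q_0^+\sqcup Q_0^\theta$. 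Here $\bx{c}$ decomposes over $Q_0$, the factor at $i \in Q_0^+$ contributes a type-$A$ root system $\mathfrak{gl}_{\bx{c}(i)}$ (whose complement-of-Levi roots are the $x_l(i)-x_k(i)$, $x_m(i)-x_l(i)$, $x_m(i)-x_k(i)$ appearing in ${}^\theta\mathtt{S}_{\ux{d}}(i)$), while the factor at $i \in Q_0^\theta$ contributes a type $B$, $C$ or $D$ root system according to the parity of $\bx{c}(i)$ and the sign $\sigma(i)$ — the roots $\pm x_k(i)\pm x_l(i)$, the short roots $x_k(i)$ (type $B$), and the long roots $2x_k(i)$ (type $C$) precisely account for the factors $(-x_k(i)-x_l(i))$, $(x_k(i)^2 - x_l(i)^2) = (x_k(i)-x_l(i))(x_k(i)+x_l(i))$, and the polynomial $g_i$ in ${}^\theta\mathtt{S}_{\ux{d}}(i)$, with $g_i$ encoding the $\prod x_k(i)$ (type $B$/$D$, times $(-1)^{\bx{a}(i)}$) or $\prod (2x_k(i))$ (type $C$, giving the $(-2)^{\bx{a}(i)}$). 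I would carry this out one vertex $i$ at a time, matching factors of the product $\blacktriangle_{\bx{c}}/\blacktriangle_{\ux{d}}$ against the explicit product defining ${}^\theta\mathtt{S}_{\ux{d}}(i)$, tracking signs by counting how many of the relevant roots $w$-project to negatives (this count is ${}^\theta r_{\ux{d}}$ by definition). Finally, ${}^\theta\mathtt{S}_{\ux{d}} \in {}^\theta\Lambda_{\ux{d}}$ because each factor is manifestly invariant under the Levi Weyl group $\tGxx{W}{d} = \mathsf{W}_{\ux{d}^f}\times{}^\theta\mathsf{W}_{\bx{d}_\infty}$ (it is, after all, a product of roots outside the Levi, which is permuted among itself by the Levi), exactly as in the remark following \eqref{Sd classes}.
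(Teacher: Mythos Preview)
Your proposal is correct and follows essentially the same route as the paper. Part a) is cited as a standard identity (the paper points to \cite[Lemma 12]{Ful}); part c) is the obvious $\tGxx{W}{d}$-linearity of the formula in a); and part d) is exactly the observation you spell out vertex by vertex, which the paper compresses into the single line ${}^\theta\mathtt{S}_{\ux{d}} = \prod_{\alpha \in {}^\theta R_{\bx{c}}^+ \setminus {}^\theta R^+_{\ux{d}}} (-\alpha)$. (A small slip: ${}^\theta\mathtt{E}_{\ux{d}}$ plays no role in this lemma, only ${}^\theta\mathtt{S}_{\ux{d}}$; and your phrase ``roots that $w$-project to negatives'' is not quite the mechanism for the sign --- it is simply that each complement root enters ${}^\theta\mathtt{S}_{\ux{d}}$ with an overall minus, giving $(-1)^{{}^\theta r_{\ux{d}}}$.)

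The only place your argument genuinely differs from the paper is part b). You argue abstractly that ${}^\theta\Delta_{\ux{d}}$ is surjective onto ${}^\theta\Lambda_{\ux{d}}$ (via Gysin pushforward or existence of a Schubert monomial). The paper instead produces an explicit $h$ by reducing, as in \cite[Lemma 8.12]{MS}, to each simple factor of $\tGxx{W}{d}$: the type-$A$ factors use the classical $x_1^{n-1}\cdots x_{n-1}$, and for the hyperoctahedral factors one checks by induction that $-{}^\theta\Delta_{\bx{c}}(x_1 x_2^3\cdots x_n^{2n-1}) = (-2)^n$, which after rescaling gives $1$. Both arguments are valid; yours is cleaner conceptually, while the paper's has the advantage of writing down a concrete witness that is then reused in the proof of Proposition \ref{pro: Demazures theta}.
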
 

\begin{proof}
Part a) is proven as in \cite[Lemma 12]{Ful}. The proof of part b) is analogous to the proof of \cite[Lemma 8.12]{MS} and requires only the following modification: one needs to replace the equality $\Delta_{\bx{c}}(x_1^{n-1}x_2^{n-2}\cdot \hdots \cdot x_{n-1} =1$ by $- {}^\theta\Delta_{\bx{c}}(x_1x_2^3\cdot \hdots \cdot x_n^{2n-1}) =  (-2)^n$. The latter can be easily proven by induction. Part c) is a standard property of Demazure operators. Part d) follows directly from the observation that ${}^\theta\mathtt{S}_{\ux{d}} = \prod_{\alpha \in {}^\theta R_{\bx{c}}^+ - {}^\theta R^+_{\ux{d}}} -\alpha$. 
\end{proof}

\begin{prop} \label{pro: Demazures theta}
There is an equality of operators on ${}^\theta\Lambda_{\mathbf{c}}$: 
\[ {}^\theta\shf_{\ux{d}}^{\bx{c}} ({}^\theta\mathtt{S}_{\ux{d}})^{-1} = (-1)^{{}^\theta r_{\ux{d}}} \cdot  {}^\theta\Delta_{\ux{d}}^{\bx{c}}.\]
\end{prop}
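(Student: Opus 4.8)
The statement is the type-B/C/D analogue of Proposition \ref{lem: merge = Demazure}, so I would follow the same strategy as in \cite[Proposition 8.13]{MS}, adapted to the isotropic setting using Lemma \ref{lem: Demazure ops properties theta}. Write $\ux{d} = (\bx{a},\bx{b})$ with $\bx{b} = \bx{d}_\infty \in \Gamma^\theta$. By Lemma \ref{lem: Demazure ops properties theta}.d), $\blacktriangle_{\bx{c}} = (-1)^{{}^\theta r_{\ux{d}}} \blacktriangle_{\ux{d}} \cdot {}^\theta\mathtt{S}_{\ux{d}}$ and ${}^\theta\mathtt{S}_{\ux{d}} \in {}^\theta\Lambda_{\ux{d}}$. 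Using the explicit formula \eqref{theta Demazure formula} for both ${}^\theta\Delta_{\ux{d}}$ and ${}^\theta\Delta_{\bx{c}}$, together with the fact that ${}^\theta\dcb{d}{c}$ is a set of (shortest) double coset representatives so that every $w \in \tGx{W}{c}$ factors uniquely as $w = v \cdot x$ with $v \in {}^\theta\dcb{d}{c}$ and $x \in \tGxx{W}{d}$ (note $\tGxx{W}{d} = \mathsf{W}_{\bx{a}}\times {}^\theta\mathsf{W}_{\bx{b}}$ fixes ${}^\theta\mathtt{S}_{\ux{d}}$ and $\blacktriangle_{\ux{d}}$), I would compute
\[
{}^\theta\Delta_{\bx{c}} = \sum_{w \in \tGx{W}{c}} w(\blacktriangle_{\bx{c}}^{-1}) = (-1)^{{}^\theta r_{\ux{d}}}\sum_{v \in {}^\theta\dcb{d}{c}} \sum_{x \in \tGxx{W}{d}} vx\big( ({}^\theta\mathtt{S}_{\ux{d}})^{-1}\blacktriangle_{\ux{d}}^{-1}\big) = (-1)^{{}^\theta r_{\ux{d}}} \, {}^\theta\shf_{\ux{d}}^{\bx{c}} \circ \big(({}^\theta\mathtt{S}_{\ux{d}})^{-1}\cdot {}^\theta\Delta_{\ux{d}}\big),
\]
where in the last step I pull the $v$-sum out as the operator ${}^\theta\shf_{\ux{d}}^{\bx{c}}$ and recognise the inner $x$-sum, after using $x({}^\theta\mathtt{S}_{\ux{d}}^{-1}) = {}^\theta\mathtt{S}_{\ux{d}}^{-1}$, as multiplication by $({}^\theta\mathtt{S}_{\ux{d}})^{-1}$ followed by ${}^\theta\Delta_{\ux{d}}$ (this requires care about the order of operations and is the kind of bookkeeping that is routine but must be done honestly).

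\textbf{Comparing the two sides.} The identity above expresses ${}^\theta\Delta_{\ux{d}}^{\bx{c}}$ indirectly; the cleaner route is to compare the operator ${}^\theta\shf_{\ux{d}}^{\bx{c}}({}^\theta\mathtt{S}_{\ux{d}})^{-1}$ with $(-1)^{{}^\theta r_{\ux{d}}}\,{}^\theta\Delta_{\ux{d}}^{\bx{c}}$ directly on ${}^\theta\Lambda_{\bx{c}} = {}^\theta\mathcal{P}_{\bx{c}}^{\tGx{W}{c}}$. Since $w_{\ux{d}}^{\bx{c}} = $ longest element of ${}^\theta\dcb{d}{c}$, one has the reduced-word factorisation $w_{\bx{c}} = w_{\ux{d}}^{\bx{c}}\cdot w_{\ux{d}}$ with lengths adding (here is where the type-B combinatorics from \cite[Proposition 8.1.1]{BB} enters), hence ${}^\theta\Delta_{\bx{c}} = {}^\theta\Delta_{\ux{d}}^{\bx{c}} \circ {}^\theta\Delta_{\ux{d}}$. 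On the other hand, by Lemma \ref{lem: Demazure ops properties theta}.b) choose $h \in {}^\theta\mathcal{P}_{\bx{c}}$ with ${}^\theta\Delta_{\ux{d}}(h) = 1$; then for any $f \in {}^\theta\Lambda_{\bx{c}}$, Lemma \ref{lem: Demazure ops properties theta}.c) gives ${}^\theta\Delta_{\ux{d}}(fh) = f$. Applying both sides of the claimed identity to $fh$ and using the first display with $f$ replaced by $fh$, together with ${}^\theta\Delta_{\bx{c}}(fh) = {}^\theta\Delta_{\ux{d}}^{\bx{c}}({}^\theta\Delta_{\ux{d}}(fh)) = {}^\theta\Delta_{\ux{d}}^{\bx{c}}(f)$, reduces everything to checking that ${}^\theta\shf_{\ux{d}}^{\bx{c}}(({}^\theta\mathtt{S}_{\ux{d}})^{-1}f) = {}^\theta\shf_{\ux{d}}^{\bx{c}}(({}^\theta\mathtt{S}_{\ux{d}})^{-1}\cdot {}^\theta\Delta_{\ux{d}}(fh))$, which is immediate since ${}^\theta\Delta_{\ux{d}}(fh) = f$. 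Assembling these equalities yields ${}^\theta\shf_{\ux{d}}^{\bx{c}}(({}^\theta\mathtt{S}_{\ux{d}})^{-1}f) = (-1)^{{}^\theta r_{\ux{d}}}{}^\theta\Delta_{\ux{d}}^{\bx{c}}(f)$ for all $f \in {}^\theta\Lambda_{\bx{c}}$.

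\textbf{Main obstacle.} The only genuinely delicate point is the length-additive factorisation $w_{\bx{c}} = w_{\ux{d}}^{\bx{c}}\cdot w_{\ux{d}}$ in the Weyl group $\tGx{W}{c}$ of mixed type $A\times(B\text{ or }C\text{ or }D)$, and the corresponding compatibility ${}^\theta\Delta_{\bx{c}} = {}^\theta\Delta_{\ux{d}}^{\bx{c}}\circ{}^\theta\Delta_{\ux{d}}$; in type $D$ one must also be careful that $\tGxx{W}{d}$ is a genuine parabolic (i.e.\ that ${}^\theta\mathsf{W}_{\bx{b}}$ is the full Weyl subgroup and not an index-two subgroup) so that the double-coset decomposition and the invariance ${}^\theta\mathtt{S}_{\ux{d}} \in {}^\theta\Lambda_{\ux{d}}$ hold as stated — this last fact is already provided by Lemma \ref{lem: Demazure ops properties theta}.d). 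Everything else is a direct transcription of the type-$A$ argument in \cite[Proposition 8.13]{MS}, with $\mathtt{S}_{\ux{d}}$, $R^+$, $r_{\ux{d}}$, $\Delta$, $\shf$ replaced throughout by their $\theta$-decorated counterparts and with the explicit Demazure formula \eqref{theta Demazure formula} of Lemma \ref{lem: Demazure ops properties theta}.a) playing the role of the classical one.
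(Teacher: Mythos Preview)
Your proposal is correct and follows essentially the same route as the paper: choose $h$ with ${}^\theta\Delta_{\ux{d}}(h)=1$, use the length-additive factorisation $w_{\bx{c}}=w_{\ux{d}}^{\bx{c}}w_{\ux{d}}$ to write ${}^\theta\Delta_{\ux{d}}^{\bx{c}}(f)={}^\theta\Delta_{\bx{c}}(fh)$, expand via \eqref{theta Demazure formula}, and split the sum over $\tGx{W}{c}$ into the coset sum ${}^\theta\shf_{\ux{d}}^{\bx{c}}$ times the inner $\tGxx{W}{d}$-sum using Lemma~\ref{lem: Demazure ops properties theta}.d). The only slip is that you should take $f\in{}^\theta\Lambda_{\ux{d}}$ (i.e.\ $\tGxx{W}{d}$-invariant), not $f\in{}^\theta\Lambda_{(\bx{c})}$, so that Lemma~\ref{lem: Demazure ops properties theta}.c) applies; with that correction your argument matches the paper's.
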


\begin{proof} 
Let $f \in {}^\theta\Lambda_{\ux{d}}$. 
We claim that 
\[ 
{}^\theta\Delta_{\ux{d}}^{\bx{c}}(f) = {}^\theta\Delta_{\ux{d}}^{\bx{c}}(f \cdot 1) = {}^\theta\Delta_{\ux{d}}^{\bx{c}}(f \cdot {}^\theta\Delta_{\ux{d}}(h)) = {}^\theta\Delta_{\ux{d}}^{\bx{c}}({}^\theta\Delta_{\ux{d}}(fh)) = {}^\theta\Delta_{\bx{c}}(fh)
\]
for some $h \in {}^\theta\mathcal{P}_{\mathbf{c}}$. 
Indeed, the second equality follows from part b) of Lemma \ref{lem: Demazure ops properties theta}, the third equality from part c) and the last equality from the fact that $w_{\bx{c}} = w_{\ux{d}}^{\bx{c}} w_{\ux{d}}$ and $\ell(w_{\bx{c}}) = \ell(w_{\ux{d}}^{\bx{c}}) + \ell(w_{\ux{d}})$. Next, \eqref{theta Demazure formula} implies that 
\[
{}^\theta\Delta_{\bx{c}}(fh) = \sum_{u \in {}^\theta\dcb{d}{c}} \sum_{v \in \tGxx{W}{d}} uv(fh \cdot \blacktriangle_{\bx{c}}^{-1}).  
\]
Since $f$ and ${}^\theta\mathtt{S}_{\ux{d}}$ are $\tGxx{W}{d}$-invariant, part d) of Lemma \ref{lem: Demazure ops properties theta} implies that 
\[
{}^\theta\Delta_{\bx{c}}(fh) =  (-1)^{{}^\theta r_{\ux{d}}} \sum_{u \in {}^\theta\dcb{d}{c}} u(f \cdot ({}^\theta\mathtt{S}_{\ux{d}})^{-1}) \cdot u \sum_{v \in \tGxx{W}{d}} v(h \cdot \blacktriangle_{\ux{d}}^{-1}). 
\]
By \eqref{theta Demazure formula} and the choice of the polynomial $h$, we have 
\[ 
\sum_{v \in \tGxx{W}{d}} v(h \cdot \blacktriangle_{\ux{d}}^{-1}) = {}^\theta\Delta_{\ux{d}}(h) = 1, 
\]
Hence 
\[ 
{}^\theta\Delta_{\ux{d}}^{\bx{c}}(f) = {}^\theta\Delta_{\bx{c}}(fh) = (-1)^{{}^\theta r_{\ux{d}}} \cdot {}^\theta\shf_{\ux{d}}^{\bx{c}} (f \cdot ({}^\theta\mathtt{S}_{\ux{d}})^{-1}). \qedhere
\] 
\end{proof}

Proposition \ref{pro: Demazures theta} yields a new interpretation of the action of the cohomological Hall algebra $\mathcal{H}$ on the cohomological Hall module ${}^\theta\mathcal{M}$ in terms of Demazure operators. 

\begin{cor} \label{cor: action Demazure theta}
Let $\bx{a} \in \Gamma$, $\mathbf{b} \in \Gamma^\theta$, $\mathbf{c} = \mathrm{D}(\bx{a}) + \bx{b}$ and $\ux{d} = (\bx{a},\bx{b})$. Given $f \in \mathcal{H}_{\bx{a}}$ and $g \in \tAx{M}{b}$, the action of $f$ on $g$ is given by
\[ \mathsf{act}(f,g) = (-1)^{{}^\theta r_{\ux{d}}} \cdot  {}^\theta\Delta_{\ux{d}}^{\bx{c}}(f \cdot g \cdot {}^\theta\mathtt{E}_{\ux{d}}),\]
where $\cdot$ stands for polynomial multiplication (i.e., the cup product). 
\end{cor}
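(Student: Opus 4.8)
The plan is to deduce Corollary~\ref{cor: action Demazure theta} from Theorem~\ref{thm: pol rep theta}(c) in exactly the same way that Proposition~\ref{pro: mult Dem op} follows from Proposition~\ref{lem: merge = Demazure}, the only new ingredient being the type B--D Demazure identity of Proposition~\ref{pro: Demazures theta}. First I would invoke Theorem~\ref{thm: TH vs Pol theta}, which identifies the elementary merge ${}^\theta\bigcurlywedge_{\ux{d}}^{(\bx{c})}$ with the action operator $\mathsf{act} = {}^\theta\mul{d}{(\bx{c})}$ under the isomorphism $\mathbb{T}_{\bx{c}}({}^\theta\mathcal{M}) \xrightarrow{\sim} {}^\theta\mathcal{Q}_{\bx{c}} \cong {}^\theta\Lambda_{\bx{c}}$ of \eqref{tensor polrep iso theta}. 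So computing $\mathsf{act}(f,g)$ is the same as computing the action of ${}^\theta\bigcurlywedge_{\ux{d}}^{(\bx{c})}$ on $f \otimes g \in {}^\theta\mathcal{M}_{\ux{d}} \cong {}^\theta\Lambda_{\ux{d}}$, where under the tensor-to-polynomial identification $f\otimes g$ corresponds to the polynomial $f\cdot g$.

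Next I would apply Theorem~\ref{thm: pol rep theta}(c): in the case $\ux{d} = (\bx{a},\bx{b})$ and $\ux{e} = (\bx{c})$, it gives
\[ \mathsf{act}(f,g) = {}^\theta\bigcurlywedge_{\ux{d}}^{(\bx{c})}(f\cdot g) = {}^\theta\scalebox{1.5}{$\shf$}_{\ux{d}}^{\bx{c}}\!\left( \frac{{}^\theta\mathtt{E}_{\ux{d}}}{{}^\theta\mathtt{S}_{\ux{d}}}\, f\cdot g\right). \]
Then I would rewrite the shuffle-over-symmetriser operator $f \mapsto {}^\theta\scalebox{1.5}{$\shf$}_{\ux{d}}^{\bx{c}}(({}^\theta\mathtt{S}_{\ux{d}})^{-1} f)$ using Proposition~\ref{pro: Demazures theta}, which identifies it with $(-1)^{{}^\theta r_{\ux{d}}}\,{}^\theta\Delta_{\ux{d}}^{\bx{c}}$. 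Since ${}^\theta\mathtt{E}_{\ux{d}}$ and the product $f\cdot g$ are just polynomials being multiplied together before the operator is applied, this yields directly
\[ \mathsf{act}(f,g) = (-1)^{{}^\theta r_{\ux{d}}}\,{}^\theta\Delta_{\ux{d}}^{\bx{c}}\big(f\cdot g\cdot {}^\theta\mathtt{E}_{\ux{d}}\big),\]
which is the claimed formula. (Strictly, to match Proposition~\ref{pro: Demazures theta} one needs that multiplication by ${}^\theta\mathtt{E}_{\ux{d}}$ followed by the shuffle operator equals ${}^\theta\scalebox{1.5}{$\shf$}_{\ux{d}}^{\bx{c}}(({}^\theta\mathtt{S}_{\ux{d}})^{-1}\cdot({}^\theta\mathtt{E}_{\ux{d}}f g))$, which is just associativity of the product inside the argument.)

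I do not expect any real obstacle here: all the substantive work has been front-loaded into Theorem~\ref{thm: pol rep theta}(c) (which itself rests on \cite[Theorem 3.3]{You} and Theorem~\ref{thm: TH vs Pol theta}) and into Proposition~\ref{pro: Demazures theta}. The only point requiring a word of care is bookkeeping around the two flavours of refinement $\succ_f$ versus $\succ_\infty$ — Corollary~\ref{cor: action Demazure theta} is the ``$\succ_\infty$'' case where the merge genuinely passes a block into the self-dual part, so one must make sure Theorem~\ref{thm: pol rep theta}(c) is being applied in its second bullet (with $\ux{e} = (\bx{c})$) and not its first. Beyond that the proof is a one-line chain of substitutions, so I would simply present it as such.
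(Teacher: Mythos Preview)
Your proposal is correct and is essentially the paper's own argument: the paper's proof cites \cite[Theorem~3.3]{You} (the shuffle formula for $\mathsf{act}$) together with Proposition~\ref{pro: Demazures theta}, and your route through Theorem~\ref{thm: pol rep theta}(c) simply unpacks the same shuffle formula (whose proof is precisely Theorem~\ref{thm: TH vs Pol theta} plus \cite[Theorem~3.3]{You}) before applying Proposition~\ref{pro: Demazures theta}. The two arguments coincide up to which intermediate statement one names.
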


\begin{proof}
The corollary follows directly from \cite[Theorem 3.3]{You} and Proposition \ref{pro: Demazures theta}.  
\end{proof}

\end{document}